\newcommand{\gk}[1]{\todo[color=green!40!]{#1}}
\colorlet{shadecolor}{blue!20}
\newaliascnt{conj}{theorem}
 \newtheorem{conj}[conj]{Conjecture}
\theoremstyle{definition}
\newaliascnt{obj}{theorem}
 \newtheorem{obj}[obj]{Objective}
\newaliascnt{problem}{theorem}
 \newtheorem{problem}[problem]{Problem}
\theoremstyle{theorem}
\newtheorem{theorem}{Theorem}[section]
\newaliascnt{lem}{theorem}
\newaliascnt{cor}{theorem}
\newaliascnt{prop}{theorem}
\newaliascnt{claim}{theorem}
\newtheorem{claim}[claim]{Claim}
\theoremstyle{definition}
\newaliascnt{example}{theorem}
\newtheorem{example}[example]{Example}
\newaliascnt{defin}{theorem}
\newaliascnt{rem}{theorem}
\newaliascnt{conj}{theorem}
\newtheorem{conj}[conj]{Conjecture}
\newaliascnt{obj}{theorem}
\numberwithin{equation}{section}
\newcommand{\seq}[3][{}]{\langle #2 \rangle_{#3}^{#1}}
\renewcommand{\Re}{\operatorname{Re}}
\renewcommand{\S}{\textsection}
\renewcommand{\R} {\mathbb{R}} \newcommand{\Z} {\mathbb{Z}} \renewcommand{\C} {\mathbb{C}} \newcommand{\N} {\mathbb{N}}    \newcommand{\Q} {\mathbb{Q}}
\DeclareMathOperator{\sign}{sign}
\providecommand*{\eu}%
{\ensuremath{\mathrm{e}}}
\providecommand*{\iu}%
{\ensuremath{\mathrm{i}}}
\DeclareRobustCommand\bigop[1]{%
  \mathop{\vphantom{\sum}\mathpalette\bigop@{#1}}\slimits@
}
\newcommand{\bigop@}[2]{%
  \vcenter{%
    \sbox\z@{$#1\sum$}%
    \hbox{\resizebox{\ifx#1\displaystyle.9\fi\dimexpr\ht\z@+\dp\z@}{!}{$\m@th#2$}}%
  }%
}
\newcommand{\KF}{\DOTSB\bigop{\mathrm{K}}}
\newcommand{\hypgeo}[2]{%
  \operatorname{%
    {\vphantom{\mathnormal{F}}}_{#1}%
    \kern-\scriptspace
    \mathnormal{F}_{#2}%
  }%
}
\renewcommand{\P}{\ensuremath\mathbb{P}}
\renewcommand{\epsilon}{\ensuremath\varepsilon}
\newclass{\PTIME}{PTIME}
\newclass{\PCF}{PCF}
\newcommand{\nats}{\mathbb{N}}
  \DeclareFontShape{T1}{lmr}{m}{scit}{<->ssub*lmr/m/scsl}{}%
\title{On Positivity and Minimality for Second-Order Holonomic Sequences}
\titlerunning{On Positivity and Minimality for Second-Order Holonomic Sequences}
\author{George Kenison}%
{Department of Computer Science, University of Oxford, Oxford, United Kingdom}%
{george.kenison@cs.ox.ac.uk}{}{}
\author{Oleksiy Klurman}%
{Max Planck Institute for Mathematics, Bonn, Germany}{lklurman@gmail.com}{}{}
\author{Engel Lefaucheux}%
{Max Planck Institute for Software Systems, Saarland Informatics Campus, Germany}%
{elefauch@mpi-sws.org}{}{}
\author{Florian Luca}%
{School of Mathematics, University of the Witwatersrand, South Africa
\and
Max Planck Institute for Mathematics, Bonn, Germany}%
{florian.luca@wits.ac.za}{}{}
\author{Pieter Moree}%
{Max Planck Institute for Mathematics, Bonn, Germany}%
{moree@mpim-bonn.mpg.de}%
{0000-0002-5318-2587}{}
\author{Jo\"el Ouaknine}%
{Max Planck Institute for Software Systems, Saarland Informatics Campus, Germany
\and
Department of Computer Science, University of Oxford, Oxford, United Kingdom}%
{joel@mpi-sws.org}{}{}
\author{Markus A. Whiteland}%
{Max Planck Institute for Software Systems, Saarland Informatics Campus, Germany}%
{mawhit@mpi-sws.org}%
{0000-0002-6006-9902}{}
\author{James Worrell}%
{Department of Computer Science, University of Oxford, Oxford, United Kingdom}%
{jbw@cs.ox.ac.uk}{}{}
\authorrunning{G. Kenison et al.}
\keywords{decision problems, holonomic sequences, recurrence sequences, minimal solutions, Postivity Problem, continued fractions, special functions, periods, exponential periods}
\begin{document}

\maketitle

\begin{abstract}
  An infinite sequence $\seq{u_n}{n\in\N}$ of real numbers is \emph{holonomic} (also
  known as \emph{$P$-recursive} or \emph{$P$-finite}) if it satisfies a linear
  recurrence relation with polynomial coefficients. Such a sequence is
  said to be \emph{positive} if each $u_n \geq 0$, and \emph{minimal} if, given any other
  linearly independent sequence $\seq{v_n}{n\in\N}$ satisfying the same
  recurrence relation, the ratio $u_n/v_n$ converges to $0$.

  In this paper, we focus on holonomic sequences satisfying a
  second-order recurrence
\[ g_3(n)u_n = g_2(n)u_{n-1} + g_1(n)u_{n-2} \, , \] 
where each coefficient
  $g_3, g_2,g_1 \in \mathbb{Q}[n]$ is a polynomial of degree at
  most $1$. We establish two main results. First, we show that
  deciding positivity for such sequences reduces to deciding
  minimality. And second, we prove that deciding minimality is
  equivalent to determining whether certain numerical expressions
  (known as \emph{periods}, \emph{exponential periods}, and
  \emph{period-like integrals}) are equal to zero. Periods and related
  expressions are classical objects of study in algebraic geometry and number
  theory, and several established conjectures (notably those of
  Kontsevich and Zagier) imply that they have a decidable equality
  problem, which in turn would entail decidability of Positivity and
  Minimality for a large class of second-order holonomic sequences.
\end{abstract}

\section{Introduction} \label{sec: introduction}

\gk{Draft mode on.  Use global option \emph{final} to hide todo notes and labels. Toggle line numbers with \texttt{\textbackslash{nolinenumbers}}}

\emph{Holonomic} sequences (also known as \emph{\(P\)-recursive} or
  \emph{\(P\)-finite} sequences) are infinite sequences of real (or complex) numbers that satisfy a
  linear recurrence relation with polynomial coefficients. The
  earliest and best-known example is the Fibonacci sequence, given by
  Leonardo of Pisa in the 12th century;
  %\footnote{In fact, the Fibonacci sequence obeys a linear recurrence relation with \emph{constant} coefficients; it therefore belongs to a particularly simple    subclass of holonomic sequences, so called \emph{\(C\)-finite}.}
more recently, Ap\'ery famously
  made use of certain holonomic sequences satisfying the recurrence relation 
\begin{equation*} \label{eq:apery}
  (n+1)^3 u_{n+1} = (34n^3 + 51n^2 + 27n + 5)u_n - n^3 u_{n-1} \quad (n\in\N)
\end{equation*}
to prove that \(\zeta(3) := \sum_{n=1}^\infty n^{-3}\) is
irrational~\cite{apery1979irrationalite}. Holonomic sequences now form a vast subject in their own
right, with numerous applications in mathematics and
other sciences; see, for instance, the monographs \cite{aeqb,everest2003recurrence,flajolet2009analytic}
or the seminal paper~\cite{Zei90}.

Formally, a holonomic recurrence is a relation of the following form:
\[
g_{k+1}(n)u_{n+k} = g_k(n)u_{n+k-1} + \ldots + g_1(n)u_n \, ,
\]
where $g_{k+1}, \ldots, g_1 \in \mathbb{Q}[n]$ are polynomials with
rational coefficients. We define the \emph{order} of the recurrence to
be $k$, and its \emph{degree} to be the maximum degree of the
polynomials $g_i$. Assuming that $g_{k+1}(n) \neq 0$ for non-negative
integer $n$, the above recurrence uniquely defines an infinite
sequence $\seq[\infty]{u_n}{n=0}$ once the $k$ initial values
$u_0, \ldots, u_{k-1}$ are specified.\footnote{In the sequel, it will
  in fact often be convenient to start the sequence at $u_{-1}$
  instead of $u_0$.} Such a sequence is said to be \emph{holonomic},
and---in slight abuse of terminology---will be understood to inherit
the \emph{order} and \emph{degree} of its defining
recurrence. Degree-$0$ holonomic sequences---i.e., such that all
polynomial coefficients appearing in the recurrence relation are
constant---are also known as \emph{\(C\)-finite}
sequences, and first-order holonomic sequences are known as
\emph{hypergeometric} sequences.

Holonomic sequences naturally give rise to \emph{holonomic functions} by
considering the associated generating power series $\mathcal{F}(x) =
\sum_{n=0}^{\infty} u_n x^n$. As is well-known, the generating
functions of $C$-finite sequences are rational functions, and those of
hypergeometric sequences are hypergeometric functions. Properties of
holonomic functions---and in particular the differential equations
that they obey---will play a central r\^ole in our analysis of their
defining sequences.

There is a voluminous literature devoted to the study of identities
%and asymptotic behaviour 
 for holonomic sequences. However, as noted by Kauers and Pillwein,
\emph{``in contrast, [\ldots] almost no algorithms are available for inequalities''}~\cite{KP10}.
For example, the \emph{Positivity Problem} (i.e., whether every
term of a given sequence is non-negative) for \(C\)-finite
sequences
is only known to be decidable at
low orders, and there is strong evidence that the problem is
mathematically intractable in general~\cite{ouaknine2014positivity, ouaknine2015linear}; 
see
also~\cite{halava2006positivity,laohakosol2009positivity,ouaknine2014positivity,OW14a}. 
For holonomic sequences that are not \(C\)-finite, virtually no decision
procedures currently exist for Positivity, although several partial results
and heuristics are
known (see, for example
\cite{Liu2010positivity,KP10,mezzarobba2010effective,XY11,Pil13,PS15}).

Another extremely important property of holonomic sequences is \emph{minimality}; a sequence
$\seq{u_n}{n}$ is minimal if, given any other linearly independent sequence $\seq{v_n}{n}$ satisfying the
same recurrence relation, the ratio $u_n/v_n$ converges to $0$. Minimal holonomic sequences
play a crucial r\^{o}le, among others, in numerical calculations and asymptotics, as noted for
example in \cite{gautschi1967computational,Gau77,Gau81,DS07,AST07,DST10}---see also the references therein.
Unfortunately, there is also ample evidence
that determining algorithmically whether a given holonomic sequence is minimal is a very
challenging task, for which no satisfactory solution is at present
known to exist.
%Indeed,
%one of our main results reduces Positivity to Minimality for a certain class of holonomic
%sequences---see Theorem~\ref{th:oracleRel}.

The systematic study of Positivity and Minimality for holonomic
sequences of order two and above is a vast undertaking.\footnote{At
  order one, both problems are algorithmically trivial: indeed, the
  positivity of a hypergeometric sequence is readily determined by
  inspecting the polynomial coefficients of its defining recurrence,
  together with the sign of the first few values of the sequence;
  and since the solution set of a hypergeometric recurrence is a
  one-dimensional vector space, such recurrences cannot possibly admit
  minimal sequences.}  Accordingly, our focus in the present paper is
on second-order, degree-1 sequences.\footnote{Positivity and
  minimality for second-order $C$-finite sequences can
  straightforwardly be determined from their closed-form solutions;
  see~\cite{halava2006positivity}.}  The generating functions of such
sequences satisfy certain linear differential equations, whose
solutions involve integrals of a particular shape; depending
on the original sequence, the definite forms of these integrals
are known either
as \emph{periods}, \emph{exponential periods}, or \emph{period-like
  integrals}. Periods and related expressions are classical objects of
study in algebraic geometry and number theory, and several established
conjectures---notably those of Kontsevich and
Zagier~\cite{kontsevich2001periods}---imply that they have a
decidable equality problem (see Appendix~\ref{app:Conjectures} for a
more detailed account of these facts and considerations). At a high level, whether a given holonomic
sequence is minimal or not is related to the radius of convergence of
its associated generating function, which in turn hinges on the
precise value of these definite integrals. Consequently, we reduce
the problem of determining minimality of a given sequence to whether
the corresponding integral is zero. Unfortunately, for holonomic
sequences of order greater than two, or of degree higher than one, solving
the attendant differential equations no longer yields integrals of the
appropriate shape.\\[1ex]
\noindent \textbf{\textsf{Main results.}}
We summarise our main results as follows. Consider the class of
real-algebraic, second-order, degree-1 holonomic sequences. For this class:
\begin{enumerate}

\item The Positivity Problem reduces to the Minimality Problem (Theorem~\ref{th:oracleRel}).

\item The Minimality Problem reduces to determining whether a period,
  an exponential period, or a period-like integral is equal to zero
  (Theorem~\ref{thm:MinimalityRedToPeriodLike}).
 
\end{enumerate}

\section{Preliminaries}

\subsection{Second-order linear recurrences}
\label{sub:linrec}

We study the 
%quantitative and qualitative 
behaviour of solutions
\(\seq[\infty]{v_n}{n=-1}\) to second-order recurrence relations of the form
	\begin{subequations} \label{eq: ogform}
		\begin{align}
		g_3(n)v_n &= g_2(n)v_{n-1} + g_1(n)v_{n-2}, \quad \text{or} \label{eq: ogforma} \\
		v_n &= \frac{g_2(n)}{g_3(n)} v_{n-1} + \frac{g_1(n)}{g_3(n)} v_{n-2},  \quad n\in\N. \label{eq: ogformb}
		\end{align}
	\end{subequations}
where \(g_1,g_2,g_3\in\mathbb{Q}[x]\).
Solutions to such recurrences are called \emph{holonomic sequences}.
In the sequel it is useful to transform recurrence \eqref{eq: ogform} as follows.
For \(g_3\in\mathbb{Q}[x]\), let \(\seq[\infty]{u_n}{n=-1}\) and \(\seq{v_n}{n}\) be real-valued sequences such that \(u_{-1}=v_{-1}\) and \(u_n = g_3(n) \cdots g_3(0) v_n\) for \(n\in\N_0\).  
Then it is easily seen that \(\seq{v_n}{n}\) is a solution to \eqref{eq: ogform} if and only if \(\seq{u_n}{n}\) is a solution to the recurrence 
%
%
%\begin{subequations} \label{eq: sform}
	\begin{equation} \label{eq: sform}
		u_n = g_2(n) u_{n-1} + g_1(n)g_3(n-1) u_{n-2}. %, \quad \text{or} 
%				&= b_n u_{n-1} + a_n u_{n-2}, \quad n\in\N \label{eq: sformb}.
	\end{equation}
%\end{subequations}
%
%
With this transformation we can translate statements about minimality and positivity of solutions to \eqref{eq: sform}, subject to the condition that \(g_3(n)>0\) for each \(n\in\N_0\).
If \(g_3(n)>0\) for each \(n\in\N_0\) then  \(\seq{u_n}{n}\) is minimal (positive) if and only if \(\seq{v_n}{n}\) is minimal (positive).
%On the one hand, this transformed recurrence has polynomial coefficients.
%On the other hand, the transformation does not scale the \(n\)th characteristic roots
%\gk{give definition for characteristic polynomial and roots}
% associated to \eqref{eq: ogform} and so it is less useful when one wants to detect the asymptotic behaviour of solutions.

%The next transformations scales the charactDegree-1eristic roots associated to \eqref{eq: ogform} and we use this form of the recurrence in the sequel to detect whether a solution of a given recurrence is minimal.
%Given \(\seq[\infty]{v_n}{n=-1}\), we define the sequence \(\seq[\infty]{w_n}{n=-1}\) so that \(w_{-1}=v_{-1}\) and \(v_n = w_n \prod_{j=-1}^n \tfrac{g_2(j)}{2g_3(j)}\) for each \(n\in\N_0\).  It is easily shown that \(\seq{v_n}{n}\) satisfies recurrence \eqref{eq: ogform} if and only if \(\seq{w_n}{n}\) satisfies the following recurrence
%%
%%
%	\begin{subequations} \label{eq: kform}
%		\begin{align}
%			w_n &= 2w_{n-1} + \frac{4g_1(n)g_3(n-1)}{g_2(n-1)g_2(n)} w_{n-2}, \quad \text{or} \label{eq: kforma} \\
%			w_n &= 2w_{n-1} - \frac{4q_n}{p_{n-1}p_n} w_{n-2} , \quad n\in\N \label{eq: kformb}
%		\end{align}
%	\end{subequations}	
%where \(p_n\) and \(q_n\) are rational functions of the polynomial coefficients from \eqref{eq: ogforma} given by \(p_n = g_2(n)/g_3(n)\) and \(q_n = - g_1(n)/g_3(n)\).
%\todo{This transformation is currently only used in the appendix?}
%
%
%

Let $\seq{u_n}n$ be a sequence satisfying the second-order
relation~\eqref{eq: ogform}. Note that if $g_2$ is identically $0$,
then $\seq{u_n}n$ consists of the interleaving of two hypergeometric
sequences, in which case positivity of $\seq{u_n}n$ is simply
equivalent to the positivity of both individual hypergeometric
sequences, something which can readily be determined as noted in the
Introduction. Moreover, such a recurrence admits no minimal
solutions: this follows straightforwardly from the observation that the
limit $\lim_{n\to \infty} A_n/B_n$ does not exist for the linearly
independent solutions $\seq[\infty]{A_n}{n=-1}$ and
$\seq[\infty]{B_n}{n=-1}$ defined by $A_{-1} = 1$, $A_0 = 0$,
$B_{-1} = 0$, $B_{0} = 1$. Similarly, if $g_1$ is identially $0$, then positivity and minimality
of $\seq{u_n}n$ likewise become trivial. In what follows, we will therefore
assume that none of $g_1,g_2,g_3$ are identically $0$.

Moreover (considering a shifted recurrence relation if
necessary), we can assume without loss of generality that each
polynomial coefficient has constant sign, and has no roots for $n\geq
0$.
%\todo{Changed $n\geq 1$ to $n\geq 0$. This is needed for the transformations in the above anyway for $g_3$. This could be problematic for \autoref{rem:All1Sol}, unless we add some
%disclaimer of the sort: When dealing with recurrence relations of the form \eqref{eq: ogform},
%where the coefficient are not explicit, then we assume the following."}
%The assumption that \(g_3(n)\) does not vanish for \(n\geq 1\) means that if \(\deg(g_3)=0\) then \(g_3(n)=\alpha_0 \neq 0\) and we take \(\alpha_1=0\).
%We apply the same convention to the parameters for the other polynomial coefficients.
Additionally we can assume that \(\sign(g_3)=1\).
The \emph{signature} of this relation is defined as the ordered tuple 
\((\sign(g_2(n)), \sign(g_1(n)))\).

\subsection{Asymptotic equalities for second-order linear recurrences}

%Classical results for linear recurrence sequences with non-constant coefficients appear as early as the 19th century.
Here we state asymptotic results established by Poincar\'{e} and Perron in the restricted setting of second-order recurrence relations.
Let \(\seq[\infty]{a_n}{n=1}\) and \(\seq[\infty]{b_n}{n=1}\) be real-valued sequences.
We say that \(u_n = b_n u_{n-1} + a_n u_{n-2}\) is a \emph{Poincar\'{e} recurrence} if the limits \(\lim_{n\to\infty} a_n = a\) and \(\lim_{n\to\infty} b_n= b\) exist and are finite.
The next result, initially considered by Poincar\'{e}
\cite{poincare1885difference} and expanded upon by Perron
\cite{perron1909poincare},  considers Poincar\'{e} recurrences as
perturbations of \(C\)-finite recurrences.

\begin{theorem}[Poincar\'{e}--Perron Theorem] \label{thm: poincareperron}
Suppose that \(u_n = b_n u_{n-1} + a_n u_{n-2}\) is a Poincar\'{e} recurrence and \(a_n,b_n \neq 0\) for each \(n\in\N\).
Let \(\lambda\) and \(\Lambda\) be the roots of the associated characteristic polynomial \(x^2 -bx - a\) and suppose that \(|\lambda|\neq |\Lambda|\).
Then the  above recurrence has two linearly independent solutions \(\seq{u_n^{(1)}}{n}\) and \(\seq{u_n^{(2)}}{n}\) such that \(u_{n+1}^{(1)}/u_n^{(1)} \sim \lambda\) and \( u_{n+1}^{(2)}/u_n^{(2)} \sim \Lambda\).
\end{theorem}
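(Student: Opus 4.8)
The plan is to prove the Poincaré--Perron Theorem by reducing the asymptotic analysis of the recurrence to the study of the associated Riccati-type recurrence for the ratios of consecutive terms. Writing $r_n := u_{n+1}/u_n$, the linear recurrence $u_{n+1} = b_{n+1} u_n + a_{n+1} u_{n-1}$ transforms into the nonlinear first-order recurrence
\begin{equation*}
r_n = b_{n+1} + \frac{a_{n+1}}{r_{n-1}} \, .
\end{equation*}
Since $b_n \to b$ and $a_n \to a$, this is a vanishing perturbation of the fixed-point equation $r = b + a/r$, i.e.\ $r^2 - br - a = 0$, whose two roots are precisely $\lambda$ and $\Lambda$. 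The hypothesis $|\lambda| \neq |\Lambda|$ ensures these fixed points are \emph{hyperbolic}: one is attracting and one is repelling for the limiting map $\varphi(r) = b + a/r$, since $|\varphi'(r)| = |a|/|r|^2 = |{-\lambda\Lambda}|/|r|^2$ equals $|\Lambda/\lambda|$ at $r=\lambda$ and $|\lambda/\Lambda|$ at $r=\Lambda$, exactly one of which is less than $1$.

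First I would establish existence of the \emph{dominant} solution, corresponding to the attracting fixed point (say $\lambda$ with $|\lambda| > |\Lambda|$). The strategy is a standard contraction/perturbation argument: one shows that the map sending ratio-sequences to ratio-sequences induced by the Riccati recurrence is eventually a contraction in a neighbourhood of $\lambda$, so that any solution with $r_{N}$ sufficiently close to $\lambda$ for some large $N$ satisfies $r_n \to \lambda$. Choosing initial data accordingly (or simply observing that generic initial data lands in the basin of attraction) yields a solution $\seq{u_n^{(1)}}{n}$ with $u_{n+1}^{(1)}/u_n^{(1)} \to \lambda$, which is the assertion $u_{n+1}^{(1)}/u_n^{(1)} \sim \lambda$.

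The harder part is to produce a second, linearly independent solution whose ratios converge to the repelling fixed point $\Lambda$. One cannot obtain this by forward iteration of a generic orbit, since the repelling fixed point is unstable under the forward map and generic orbits are drawn towards $\lambda$. The cleanest route is to pick any solution $\seq{u_n^{(2)}}{n}$ that is linearly independent from $\seq{u_n^{(1)}}{n}$ and to analyse its ratio directly. Writing the general solution as a combination, once the dominant solution is known one can use the Casoratian (the discrete Wronskian) $W_n := u_n^{(1)} u_{n+1}^{(2)} - u_{n+1}^{(1)} u_n^{(2)}$, which satisfies the first-order relation $W_n = -a_{n+1} W_{n-1}$ and hence has the explicit product form $W_n = W_0 \prod_{k=1}^{n}(-a_{k+1})$. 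Since $a_n \to a = -\lambda\Lambda$, this product is comparable to $(-a)^n$, giving control on $W_n$ up to subexponential factors. Combining the Casoratian estimate with $u_{n}^{(1)} \asymp \prod \lambda$ then forces $u_{n+1}^{(2)}/u_n^{(2)} \to \Lambda$, since the alternative $u_{n+1}^{(2)}/u_n^{(2)} \to \lambda$ would make $W_n/(u_n^{(1)} u_n^{(2)}) \to \lambda - \lambda = 0$ too fast, contradicting the growth of $W_n$ relative to the product of the two solutions.

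The main obstacle I anticipate is making the second-solution argument fully rigorous, because the naïve Casoratian ratio $W_n/(u_n^{(1)}u_n^{(2)})$ requires knowing that $u_n^{(2)}$ does not itself grow like $\lambda^n$, which is exactly what we are trying to prove; care is needed to rule out this degenerate coincidence using the precise rate $|\lambda/\Lambda| < 1$ at the repelling point rather than circular reasoning. An alternative that sidesteps this subtlety is to invoke the hyperbolicity to construct the subdominant solution via a \emph{backward} stable-manifold argument (or equivalently, a convergent continued-fraction / series expansion whose tail is governed by the contraction towards $\lambda$ of the time-reversed dynamics), thereby exhibiting $\seq{u_n^{(2)}}{n}$ with $u_{n+1}^{(2)}/u_n^{(2)} \to \Lambda$ by direct construction; linear independence is then immediate from the differing ratio limits. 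In writing up, I would favour whichever of these two formulations keeps the perturbation bookkeeping cleanest, deferring the quantitative contraction estimates to a lemma.
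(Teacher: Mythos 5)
The paper offers no proof of this statement: it is the classical Poincar\'e--Perron theorem, quoted with citations to Poincar\'e and Perron, so there is nothing in-paper to compare against. Judged on its own terms, your overall strategy---passing to the Riccati recurrence $r_n = b_{n+1} + a_{n+1}/r_{n-1}$ for consecutive ratios and exploiting hyperbolicity of the fixed points $\lambda$, $\Lambda$ of the limiting map---is the standard route, and the forward-contraction construction of the dominant solution (seed $r_N$ in the basin of the attracting fixed point, e.g.\ by taking $u_{N+1} = \lambda u_N$ for large $N$) is sound once the quantitative contraction lemma is supplied.

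The genuine gap is in your primary argument for the second solution. It is false that an arbitrary solution linearly independent from the dominant one has ratio tending to $\Lambda$: writing a general solution as $c_1 u^{(1)} + c_2 u^{(2)}$, every solution with $c_1 \neq 0$ is again dominant, so the subdominant solution is unique up to scaling and cannot be found by picking any linearly independent solution. Correspondingly, the Casoratian computation yields no contradiction: one has $W_n/(u_n^{(1)}u_n^{(2)}) = r_n^{(2)} - r_n^{(1)}$ exactly, and if both ratios tended to $\lambda$ this quotient would decay like $(|\Lambda|/|\lambda|)^n$ up to subexponential factors, which is perfectly consistent with $r_n^{(2)} - r_n^{(1)} \to 0$---the rates even match. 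What you call a ``degenerate coincidence'' to be ruled out is in fact the generic behaviour. The correct route is the one you relegate to an alternative: construct the subdominant solution directly, either by backward iteration of the Riccati map (the fixed point $\Lambda$ is attracting for the time-reversed dynamics since the relevant multiplier is $|\Lambda|/|\lambda| < 1$; this is essentially the continued-fraction construction underlying \autoref{thm: pincherle}), or by reduction of order: the series $\sum_n W_n/(u_n^{(1)}u_{n+1}^{(1)})$ converges geometrically, so $\rho_n := u_n^{(2)}/u_n^{(1)}$ has a finite limit $\rho_\infty$, and $u_n^{(2)} - \rho_\infty u_n^{(1)}$ is the sought subdominant solution. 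Linear independence then does follow from the differing ratio limits, as you say, but this construction must be the main argument rather than a fallback.
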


Later work by Perron \cite{perron1921poincare2} considered the case
that the two roots are equal in modulus, as follows.

\begin{theorem} \label{thm: perron}
Suppose that \(u_n = b_n u_{n-1} + a_n u_{n-2}\) is a Poincar\'{e} recurrence and \(a_n,b_n \neq 0\) for each \(n\in\N\).  
Let \(\lambda\) and \(\Lambda\) be the roots of the associated characteristic polynomial
\(x^2 -bx - a\).
Then the above recurrence has two linearly independent solutions \(\seq{u_n^{(1)}}{n}\) and \(\seq{u_n^{(2)}}{n}\) such that \(\limsup_{n\to\infty} \sqrt[n]{|u_n^{(1)}|} = |\lambda|\) and \(\limsup_{n\to\infty} \sqrt[n]{|u_n^{(2)}|} = |\Lambda|\).
\end{theorem}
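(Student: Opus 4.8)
The plan is to split the argument according to whether the characteristic roots have equal modulus. If $|\lambda|\neq|\Lambda|$, the statement is immediate from the Poincar\'e--Perron Theorem (Theorem~\ref{thm: poincareperron}): the two linearly independent solutions it supplies satisfy $u_{n+1}^{(i)}/u_n^{(i)}\to\lambda$ (resp. $\Lambda$), and the standard fact that $u_{n+1}/u_n\to L$ forces $\sqrt[n]{|u_n|}\to|L|$ (via a Ces\`aro argument on $\tfrac1n\log|u_n|=\tfrac1n\sum_k\log|u_{k+1}/u_k|$) shows that the respective $\limsup$ values equal $|\lambda|$ and $|\Lambda|$. The substance of the theorem therefore lies in the remaining case $|\lambda|=|\Lambda|=:\rho$, where the ratios $u_{n+1}/u_n$ need not converge and a different device is required. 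Note that here $\rho^2=|\lambda\Lambda|=|a|$, so $\rho>0$ forces $a\neq0$, a fact I will use below.

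First I would encode the recurrence through its companion matrices: writing $U_n:=(u_n,u_{n-1})^\top$ and $M_n:=\left(\begin{smallmatrix} b_n & a_n \\ 1 & 0\end{smallmatrix}\right)$, one has $U_n=M_nU_{n-1}$ and $M_n\to M:=\left(\begin{smallmatrix} b & a \\ 1 & 0\end{smallmatrix}\right)$, whose eigenvalues are exactly $\lambda,\Lambda$, so that the spectral radius of $M$ equals $\rho$. To obtain the upper bound $\limsup_n\sqrt[n]{|u_n|}\le\rho$ for \emph{every} solution, I would fix $\epsilon>0$, choose a vector norm $\|\cdot\|_*$ adapted to $M$ with $\|M\|_*\le\rho+\epsilon$, and use $M_n\to M$ to get $\|M_n\|_*\le\rho+2\epsilon$ for all large $n$; then $\|U_n\|_*$ grows at most like $(\rho+2\epsilon)^n$. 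Since all norms on $\R^2$ are equivalent and $|u_n|\le\|U_n\|_*$, letting $\epsilon\to0$ yields $\limsup_n\sqrt[n]{|u_n|}\le\rho$.

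For the matching lower bound I would exploit the Casoratian $C_n:=u_n^{(1)}u_{n-1}^{(2)}-u_{n-1}^{(1)}u_n^{(2)}$ of any two solutions, which satisfies $C_n=-a_nC_{n-1}$ and hence $\sqrt[n]{|C_n|}\to|a|=\rho^2$. Let $S$ be the set of solutions with $\limsup_n\sqrt[n]{|u_n|}<\rho$, together with $0$; the elementary inequality $\limsup_n\sqrt[n]{|x_n+y_n|}\le\max\{\limsup_n\sqrt[n]{|x_n|},\,\limsup_n\sqrt[n]{|y_n|}\}$ shows that $S$ is a linear subspace. If $S$ contained two linearly independent solutions, then the product bound $\limsup_n\sqrt[n]{|C_n|}\le\ell_1\ell_2<\rho^2$ (with $\ell_1,\ell_2<\rho$) would contradict $\sqrt[n]{|C_n|}\to\rho^2$; hence $\dim S\le1$. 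Because every solution already obeys $\limsup_n\sqrt[n]{|u_n|}\le\rho$, every solution outside $S$ realises the value exactly $\rho$, and since $\dim S\le1$ one can select two linearly independent solutions avoiding $S$ (for instance $w$ and $w+s$, where $s$ spans $S$ and $w\notin S$). These are the required $u^{(1)},u^{(2)}$, with $\limsup_n\sqrt[n]{|u_n^{(i)}|}=\rho=|\lambda|=|\Lambda|$.

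The main obstacle I anticipate is the lower-bound half: ratio methods are unavailable once the moduli coincide, so the Casoratian asymptotic---together with the observation that the ``subdominant'' solutions form a subspace of dimension at most one---carries the essential weight and must be set up carefully. The degenerate possibility $\rho=0$ (both roots zero, forcing $a,b\to0$) should be dispatched separately, since there the Casoratian argument no longer yields a positive target rate.
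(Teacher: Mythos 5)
The paper does not actually prove this statement: it is quoted as a classical theorem of Perron with a citation to his 1921 paper, so there is no in-paper argument to compare yours against. Judged on its own, your proof is essentially correct and self-contained. The case split is sound: for $|\lambda|\neq|\Lambda|$ the Ces\`aro argument correctly upgrades ratio convergence from \autoref{thm: poincareperron} to convergence of $n$-th roots (with the understanding that when $\lambda=0$ the Ces\`aro means tend to $-\infty$ and the root test limit is $0$); for $|\lambda|=|\Lambda|=\rho$ the companion-matrix norm estimate gives the uniform upper bound $\limsup_n\sqrt[n]{|u_n|}\le\rho$, and the Casoratian identity $C_n=-a_nC_{n-1}$ (which is exactly \autoref{lem: solutions}) together with $\rho^2=|a|$ forces the subspace $S$ of strictly subdominant solutions to have dimension at most one, so two linearly independent solutions attaining $\rho$ exist. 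Two small tidy-ups: the degenerate case $\rho=0$, which you defer, in fact requires no separate treatment --- your upper bound already gives $\limsup_n\sqrt[n]{|u_n|}=0$ for \emph{every} solution, so any basis works; and in the product estimate $\limsup_n\sqrt[n]{|C_n|}\le\ell_1\ell_2$ you should note (or at least be aware) that the index shift in $u_{n-1}^{(2)}$ does not change the value of the $\limsup$ of the $n$-th roots, which is routine but worth a line. With those remarks the argument is complete.
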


We note that one cannot obtain the neat asymptotic equalities of the form given in the Poincar\'{e}--Perron Theorem when the moduli of the roots coincide (consider, for example the Poincar\'{e} recurrence \(u_n = u_{n-2}\) whose characteristic roots are \(\pm 1\)).
%with a basis of linearly independent solutions \(u_n^{(1)} =1\) and \(u_n^{(2)} = (-1)^n\).
However, later work by Kooman gives a complete characterisation of the asymptotic behaviour of linearly independent solutions for a family of second-order Poincar\'{e} recurrence relations. 
We give two illustrating examples illustrating when the characteristic roots
have equal modulus. 
The proof is a straightforward application of
results in \cite{kooman2007asymptotic}. 
We shall make use of these particular forms in the sequal.

\begin{example}[\autoref{app:asymptotics}]\label{ex:KoomanAsymptotics}
\hfil
\begin{enumerate}
	\item The recurrence relation $(n+\alpha) u_n = \beta u_{n-1} + (n + \gamma) u_{n-2}$ with $\alpha$
$\beta$, $\gamma \in \R$ and $\beta > 0$ admits linearly independent solutions
$\seq{u_n^{(1)}}{n}$ and $\seq{u_n^{(2)}}{n}$ with the following asymptotic equalities:
\(u_n^{(1)} \sim
	n^{\frac{1}{2}(\beta + \gamma -  \alpha)}\) and 
\(u_n^{(2)} \sim \mleft(-1 \mright)^n
	n^{\frac{1}{2}(-\beta + \gamma -  \alpha)}\).
	\item The recurrence relation $(n+\alpha) u_n = (2n + \beta) u_{n-1} - (n + \gamma) u_{n-2}$,
with $\alpha$, $\beta$, $\gamma \in \R$ with $\beta > \alpha + \gamma$ admits linearly independent
solutions
$\seq{u_n^{(1)}}{n}$ and $\seq{u_n^{(2)}}{n}$  with the following asymptotic equalities:
\begin{align*}
u_n^{(1)} &\sim  n^{(1-2\alpha +2\gamma)/4} \exp(2\sqrt{(\beta - \alpha - \gamma)n}),  \quad \text{and} \\
u_n^{(2)} &\sim  n^{(1-2\alpha +2\gamma)/4} \exp(-2\sqrt{(\beta - \alpha - \gamma)n}).
\end{align*}
\end{enumerate}
\end{example}

\subsection{Continued fractions}

A \emph{continued fraction} is an ordered pair \(\left((\seq[\infty]{a_n}{n=1}, \seq[\infty]{b_n}{n=0}), \seq[\infty]{f_n}{n=0}\right)\) where \(\seq{a_n}{n}\) and \(\seq{b_n}{n}\) are sequences of complex numbers such that for each \(n\in\N\), \(a_n\neq 0\) and \(\seq{f_n}{n}\) is a sequence in \(\hat{\mathbb{C}} = \mathbb{C}\cup\{\infty\}\) recursively defined by the following composition of linear fractional transformations.
For \(w\in\hat{\C}\), define
\begin{equation*}
	 s_0(w) = b_0 + w \text{ and } s_n(w) = \ \frac{a_n}{b_n + w} \text{ for each } n\in\{1,2,\ldots\}.
%	 S_0(w) &= s_0(w) \text{ and } S_n(w) = S_{n-1}(s_n(w)) \text{ for each } n\in\{1,2,\ldots\}.
\end{equation*}
We set \(f_n := s_0 \circ \cdots \circ s_n(0)\) so that
\begin{equation*}
 f_n = b_0+\cfrac{a_1}{b_1 +\cfrac{a_2}{b_2 +\cfrac{a_3}{\ddots + \cfrac{a_n}{b_n}}}}.
\end{equation*}
It is convenient to introduce concise notation for continued fractions and their convergents.  
We shall make use of Gauss's \text{Kettenbruch} notation  \(f_n =: b_0 + \KF_{k=1}^n  ({a_k}/{b_k})\),
%and second, Pringsheim's notation \(f_n =: b_0 + \cFrac{a_1}{b_1} + \cdots + \cFrac{a_n}{b_n}\). 
and abuse the infinite form of this notation to refer both to the
continued fractions and to their limits (if they converge).

We respectively call \(\seq{a_n}{n}\) and \(\seq{b_n}{n}\) the sequences of \emph{partial numerators} and \emph{partial denominators} (together the \emph{partial quotients}) of the continued fraction \(\KF (a_n/b_n)\).  
We call \(\seq{f_n}{n}\) the sequence of \emph{convergents}.
Let \(\seq[\infty]{A_n}{n=-1}\) and \(\seq[\infty]{B_n}{n=-1}\) satisfy the recurrence relation \(u_n = b_n u_{n-1} + a_n u_{n-2}\) 
with initial values \(A_{-1}=1, A_0=0, B_{-1}=0,\) and \(B_0=1\).
Then \(\seq{A_n}{n}\) and \(\seq{B_n}{n}\) are respectively called the sequences of \emph{canonical numerators} and \emph{canonical denominators} of \(\KF(a_n/b_n)\) because \(f_n = A_n/B_n\) for each \(n\in\N_0\).
We call \(f\in\hat{\mathbb{C}}\)  the \emph{limit} of the continued fraction \(\KF(a_n/b_n)\) if \(f_n \to f\) as \(n\to\infty\) and say that \(\KF(a_n/b_n)\) \emph{converges} if such a limit exists.
The results presented herein consider continued fractions whose partial quotients are real-valued.
Nevertheless it is often useful to adopt the standard notion of convergence in \(\hat{\mathbb{C}}\) in order to exploit the algebraic properties of \(\hat{\mathbb{C}}\).
%\todo[color=green!40!,inline]{Include a note about the continued fractions and recurrence relations having real-valued parameters}
%
%
%

Two continued fractions are said to be \emph{equivalent} if they have the same sequence of convergents.
From the standard equivalence transformation (as described in \cite[\S1.4]{cuyt2008handbook} or \cite[Chapter II, Cor.~10]{LW1992}),
We have the following equivalences for
the continued fractions associated to the respective recurrences \eqref{eq: ogformb} and \eqref{eq: sform}: %, and \eqref{eq: kforma}:
		\begin{align*}
				\KF_{n=1}^\infty \frac{g_1(n)/g_3(n)}{g_2(n)/g_3(n)} &\equiv \frac{{g_1(1)}/{g_2(1)}}{1 + \KF_{n=2}^\infty (d_n/1)}, \quad \text{and} \\
				\KF_{n=1}^\infty \frac{g_1(n)g_3(n-1)}{g_2(n)} &\equiv \frac{{g_1(1)g_3(0)}/{g_2(1)}}{1 + \KF_{n=2}^\infty (d_n/1)}.
		\end{align*}	

%	\begin{subequations} \label{eq: equivform}
%%		\begin{align*}
%%				\KF_{n=1}^\infty \frac{g_1(n)/g_3(n)}{g_2(n)/g_3(n)} &\equiv \cFrac{\tfrac{g_1(1)}{g_2(1)}}{1} + \cFrac{d_2}{1} + \cFrac{d_3}{1} + \cdots, \quad \text{and} \\ 
%%% \KF_{n=2}^\infty \frac{d_n}{1}, \\
%%				\KF_{n=1}^\infty \frac{g_1(n)g_3(n-1)}{g_2(n)} &\equiv \cFrac{\tfrac{g_1(1)g_3(0)}{g_2(1)}}{1} + \cFrac{d_2}{1} + \cFrac{d_3}{1} + \cdots. %\\
%%			%	 \KF_{n=1}^\infty \frac{\tfrac{4g_1(n)g_3(n-1)}{g_2(n-1)g_2(n)} }{2} &\equiv % \cFrac{2\tfrac{g_1(1)g_3(0)}{g_2(1)}}{1} + \cFrac{d_2}{1} + \cFrac{d_3}{1} + \cdots.
%%		\end{align*}	
%	\end{subequations}
	%
Here \(d_n = \tfrac{g_1(n)g_3(n-1)}{g_2(n-1)g_2(n)}\) for each \(n\in\N\).
Note that we are permitted to make these transformations under the assumption that \(g_2(n)\neq 0\) for each \(n\in\N\).
It is clear from the tails of the above continued fractions and Pincherle's Theorem (\autoref{thm: pincherle}) that the transformations described between the recurrence forms preserves the existence of minimal solutions. %\textcolor{red}{The partial numerators also agree with the scaling of the \(u_{0}/u_{-1}\) one expects.}

A \emph{simple continued fraction} takes the form \(b_0 +\KF_{n=1}^\infty (1/b_n)\) where each partial denominator is a positive integer.
The number \(\pi\) has an erratic simple continued fraction expansion whose  sequence of partial denominators begins \((3; 7, 15, 1, 292, 1, 1, 1, 2, 1, 3, 1,  \ldots)\).
However, Lord Brouncker (as reported by Wallis in \cite{wallis1655arithmetica}\footnote{See the translation by Stedall \cite{wallis2004arithmetic}.}) gave a continued fraction expansion for \(4/\pi\) whose partial quotients are polynomials
as follows:
	\begin{equation*}
		\frac{4}{\pi} = 1 + \KF_{n=1}^\infty \frac{(2n-1)^{2}}{2}.
	\end{equation*}
Ap\'{e}ry's constant \(\zeta(3)\) has a continued fraction expansion (see \cite{poorten1979proof})
	\begin{equation*}
		\zeta(3) = \frac{6}{5 + \KF_{n=1}^\infty ({-n^6}/({34n^3 + 51n^2 + 27n + 5}))}
	\end{equation*}
whose partial quotients are ultimately polynomials.
We refer the reader to \cite{cuyt2008handbook} for further examples of continued expansions of famous constants.
Motivated by such constructions, Bowman and Mc~Laughlin
\cite{bowman2002polynomial} (see also \cite{mclaughlin2005polynomial})
coined the term \emph{polynomial continued fraction}.
A polynomial continued fraction \(\KF (a_n/b_n)\) has integer partial quotients such that for sufficiently large \(n\in\N\) we have \(a_n = p(n)\) and \(b_n = q(n)\) for \(p,q\in\Z[x]\).  
The evaluation of polynomial continued fractions whose partial quotients have low degrees appears in the accounts \cite{perron1957lehre, LW1992, cuyt2008handbook}.
For \(\deg(a_n)\le 2\) and \(\deg(b_n)\le 1\), Lorentzen and Waadeland \cite[ \S6.4]{LW1992} express the polynomial continued fraction \(\KF (a_n/b_n)\) as a ratio of two hypergeometric functions with algebraic parameters.
However, their methods do not cover all cases at low degrees; for example, the polynomial continued fraction \(\KF_{n=1}^\infty \tfrac{-n(n+1)}{2n+1}\) corresponding to the recurrence relation \((n+1)u_n = (2n+1)u_{n-1} - (n+1)u_{n-2}\) cannot be so treated. % cannot be treated by the methods in \cite{LW1992}.  
Indeed the method presented in \cite{LW1992} cannot handle cases where the corresponding recurrence has a single repeated characteristic root---the above is one such example with its associated characteristic polynomial \(x^2-2x+1 =(x-1)^2\).

\begin{remark}
%The set of all polynomial continued fractions (including those that do not converge to a real number)
%is countable.
Let \(\P\subset\R\) be the set of real numbers that have a polynomial continued fraction expansion.  
We have that \(\Q\subset \P\) and, as can be seen from the literature,
there is a plethora of examples of 
both algebraic and transcendental numbers in \(\P\).
%
%
%The authors of \cite{mclaughlin2005polynomial} give an account of open problems for polynomial continued fractions.  One such problem is determining whether there is a real (algebraic) number that does not have polynomial continued fraction expansion.
%
We shall be interested in the problem of determining whether a polynomial continued fraction expansion and an algebraic number are equal.
\end{remark}

\subsection{Convergence criteria for continued fractions}

A continued fraction \(\KF(a_n/b_n)\) is said to be \emph{positive} if \(a_n>0\) and \(b_n\ge 0\) for each \(n\in\N\).%
%
%A sequence \((f_n)\) \emph{converges absolutely} if \(\sum_{n=1}^\infty |f_n - f_{n-1}|<\infty\).  We note that absolute converges implies convergence.
%We recall the following result (see for example \cite[Theorem 2, Chapter III]{LW1992} and \cite[Theorem 3.12]{lorentzen2008continued}).

\begin{lemma} \label{prop:continued}
Suppose that for each \(n\in\N\) the sequences \(\seq{a_n}{n}\) and \(\seq{b_n}{n}\) are positive.  
Let \(\seq{f_n}{n}\) be the sequence of convergents associated to the continued fraction \(\KF_{n=1}^\infty (a_n/b_n)\).
Then
	\begin{equation} \label{eq: inequalities}
	f_2 \le f_4 \le \cdots \le f_{2m} \le \cdots \le f_{2m+1} \le \cdots \le f_3 \le f_1.
	\end{equation}
If, in addition, \(b_1>0\) then the subsequences \(\seq{f_{2n}}{n}\) and \(\seq{f_{2n+1}}{n}\) converge to finite, non-negative limits.  If \(b_n>0\) for each \(n\in\N\) then \eqref{eq: inequalities} above holds with strict inequalities.
\end{lemma}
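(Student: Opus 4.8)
The plan is to reduce everything to the canonical numerators $\seq[\infty]{A_n}{n=-1}$ and denominators $\seq[\infty]{B_n}{n=-1}$ introduced above, for which $f_n = A_n/B_n$ and which satisfy the common three-term recurrence $u_n = b_n u_{n-1} + a_n u_{n-2}$ with $A_{-1}=1$, $A_0=0$, $B_{-1}=0$, $B_0=1$. First I would record two determinant identities, each proved by a one-line induction from the recurrence:
\begin{equation*}
A_n B_{n-1} - A_{n-1} B_n = (-1)^{n-1}\prod_{k=1}^n a_k, \qquad A_n B_{n-2} - A_{n-2} B_n = (-1)^n\, b_n \prod_{k=1}^{n-1} a_k .
\end{equation*}
Dividing by the appropriate products of denominators turns these into the two difference formulas
\begin{equation*}
f_n - f_{n-1} = \frac{(-1)^{n-1}\prod_{k=1}^n a_k}{B_{n-1}B_n}, \qquad f_n - f_{n-2} = \frac{(-1)^n\, b_n \prod_{k=1}^{n-1} a_k}{B_{n-2}B_n},
\end{equation*}
which are the engine of the whole argument.

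Next I would establish positivity of the canonical denominators. From $B_{-1}=0$, $B_0=1$ and $B_n = b_n B_{n-1} + a_n B_{n-2}$ with $a_n>0$ and $b_n\ge 0$, a straightforward induction gives $B_n>0$ for all $n\ge 0$ as soon as $b_1>0$ (so that $B_1=b_1>0$); without that hypothesis one still has $B_n>0$ for all $n\ge 2$, while $B_1=0$ forces $f_1=\infty$, so that the rightmost inequality $f_3\le f_1$ in \eqref{eq: inequalities} holds trivially in $\hat{\C}$. This positivity is exactly what is needed to read off the signs of the two difference formulas. With the denominators positive, the inequality chain is immediate from the signs of the numerators: the first formula gives $f_n-f_{n-1}>0$ precisely when $n$ is odd, so in particular $f_{2m}\le f_{2m+1}$ for every $m$; the second gives $f_n-f_{n-2}\ge 0$ when $n$ is even and $f_n-f_{n-2}\le 0$ when $n$ is odd, so the even-indexed convergents increase and the odd-indexed convergents decrease. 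Splicing these three facts together yields exactly \eqref{eq: inequalities}.

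Finally, assuming $b_1>0$, the even subsequence $\seq{f_{2n}}{n}$ is increasing and bounded above by $f_1$, and the odd subsequence $\seq{f_{2n+1}}{n}$ is decreasing and bounded below by $f_2$; by the monotone convergence theorem both converge to finite limits, and since $f_2 = a_1 b_2/(b_1 b_2 + a_2)\ge 0$ both limits are non-negative. For the strict inequalities under the stronger hypothesis $b_n>0$ for all $n$, the factor $b_n$ in the second difference formula is now strictly positive, making the same-parity inequalities strict, while the odd-even gaps $f_{2m+1}-f_{2m}$ are strictly positive regardless. The argument is entirely standard; the only points demanding care are the correct bookkeeping of signs in the two determinant identities and the verification that the denominators $B_n$ stay positive, which is what legitimises passing from the numerator signs to inequalities between the ratios $f_n=A_n/B_n$.
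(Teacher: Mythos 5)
The paper does not actually print a proof of this lemma; it is invoked as a classical fact about positive continued fractions (the Seidel--Stern circle of results, cf.\ \cite{LW1992,lorentzen2008continued}), so there is no in-paper argument to measure yours against. Your proposal is the standard textbook proof, and its skeleton is sound: both determinant identities are correct (the first is \autoref{lem: solutions} specialised to $(A_n)$ and $(B_n)$, the second follows from it via the recurrence), the resulting difference formulas carry the right signs, and the splicing of the three monotonicity facts, the bounded-monotone convergence argument with the bound $f_2 = a_1b_2/(b_1b_2+a_2)\ge 0$, and the strictness discussion are all fine.

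There is, however, one false intermediate claim. You assert that even without the hypothesis $b_1>0$ one still has $B_n>0$ for all $n\ge 2$. Since ``positive'' in this paper only requires $b_n\ge 0$, this fails: taking $b_1=b_3=0$ gives $B_3=b_3B_2+a_3B_1=0$, and in general $B_{2m+1}=0$ precisely when $b_1=\cdots=b_{2m+1}=0$. What is true is that every even-indexed denominator is positive ($B_{2m}\ge a_{2m}B_{2m-2}>0$ by induction), whereas the odd-indexed ones may vanish on an initial segment of indices, where $f_{2m+1}=+\infty$ because the odd-indexed numerators satisfy $A_{2m+1}\ge a_{2m+1}A_{2m-1}>0$. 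On that segment your formula for $f_n-f_{n-2}$ (odd $n$) divides by $B_{n-2}B_n=0$, so the monotonicity of the odd subsequence needs a separate, trivial remark: the inequalities involving those convergents hold automatically at $+\infty$, and once some $B_{2M+1}>0$ all later odd-indexed denominators are positive and your argument resumes verbatim. This is a repairable degenerate corner rather than a conceptual error, but the claim as written is wrong and should be patched before the difference formulas are used for odd $n$.
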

We recall a necessary and sufficient criterion for convergence of a positive continued fraction \cite[Theorem 3.14]{lorentzen2008continued}.
\begin{theorem}[Seidel--Stern Theorem]
\label{th:Stern2}
A positive continued fraction \(\KF(a_n/b_n)\) converges if and only if its \emph{Stern--Stolz series}
	\begin{equation*}
		\mathcal{S}:= \sum_{n=1}^\infty \mleft| b_n \prod_{k=1}^n a_k^{(-1)^{n-k+1}} \mright| %< \infty,
	\end{equation*}
 diverges to \(\infty\).
\end{theorem}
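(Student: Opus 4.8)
The plan is to reduce to the special form $\KF(1/c_n)$ by the standard equivalence transformation and then to characterise convergence through the growth of the canonical denominators. First I would set $r_0 = 1$ and $r_n = \prod_{k=1}^n a_k^{(-1)^{n-k+1}}$, so that $r_{n-1} r_n a_n = 1$ for every $n$; since each $a_n > 0$ this is well defined. Applying the equivalence transformation that sends $(a_n,b_n)$ to $(r_{n-1}r_n a_n,\, r_n b_n)$ produces an equivalent continued fraction $\KF(1/c_n)$ with $c_n := r_n b_n = b_n \prod_{k=1}^n a_k^{(-1)^{n-k+1}} \ge 0$. Equivalent continued fractions share the same sequence of convergents, so the original fraction converges if and only if $\KF(1/c_n)$ does; moreover $c_n$ is precisely the $n$-th summand of $\mathcal{S}$, whence $\mathcal{S} = \sum_{n\ge1} c_n$. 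It therefore suffices to prove that $\KF(1/c_n)$ converges if and only if $\sum_n c_n$ diverges.

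Next I would work with the canonical denominators $B_n$ of $\KF(1/c_n)$, which satisfy $B_n = c_n B_{n-1} + B_{n-2}$ with $B_{-1} = 0$ and $B_0 = 1$. The determinant identity $A_n B_{n-1} - A_{n-1} B_n = (-1)^{n-1}$ (all partial numerators being $1$) gives $f_n - f_{n-1} = (-1)^{n-1}/(B_n B_{n-1})$. By \autoref{prop:continued} the even convergents increase to a limit and the odd convergents decrease to a limit, and (as $b_1 = c_1 > 0$) both limits are finite; the fraction converges if and only if these two limits coincide, that is, if and only if $f_{2m+1} - f_{2m} = 1/(B_{2m+1}B_{2m}) \to 0$. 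Thus everything reduces to deciding whether $B_n B_{n-1} \to \infty$.

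The crux is the telescoping identity $B_n B_{n-1} = \sum_{k=1}^n c_k B_{k-1}^2$, obtained from $B_n B_{n-1} - B_{n-1}B_{n-2} = B_{n-1}(B_n - B_{n-2}) = c_n B_{n-1}^2$ together with $B_0 B_{-1} = 0$. Using the elementary monotonicities $B_{2m} \ge B_0 = 1$ and $B_{2m+1} \ge B_1 = c_1$ (immediate from $B_n \ge B_{n-2}$), I would bound the identity according to the parity of $k$: $B_n B_{n-1} \ge \sum_{k \le n,\, k \text{ odd}} c_k$ and $B_n B_{n-1} \ge c_1^2 \sum_{k \le n,\, k \text{ even}} c_k$. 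If $\sum_n c_n = \infty$ then at least one parity class has divergent sum, so $B_n B_{n-1} \to \infty$ and the fraction converges. For the converse, when $\sum_n c_n < \infty$, setting $M_n = \max(B_n, B_{n-1})$ and using $B_{n+1} \le (1 + c_{n+1})M_n$ gives $M_n \le M_1 \prod_{k=2}^{n}(1+c_k) \le M_1 \exp\bigl(\sum_k c_k\bigr) < \infty$; hence $B_n$ is bounded, $1/(B_n B_{n-1})$ is bounded away from $0$, the two limits of \autoref{prop:continued} are distinct, and the fraction diverges.

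The main obstacle I anticipate is the forward direction, namely proving $B_n B_{n-1} \to \infty$ when $\sum_n c_n$ diverges, for which the telescoping identity and the parity bookkeeping are essential. A secondary nuisance is the degenerate case $c_1 = 0$ (equivalently $b_1 = 0$), where $B_1 = 0$ spoils the odd-index lower bound; this is handled by discarding the harmless first partial quotient and running the same estimates on the tail, since neither convergence of $\mathcal{S}$ nor convergence of the fraction is affected by finitely many partial quotients.
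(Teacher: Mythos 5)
Your proof is correct. Note first that the paper itself offers no proof of this statement: it imports the Seidel--Stern theorem verbatim from the literature (Lorentzen, \emph{op.\ cit.}, Theorem 3.14), so there is no in-paper argument to compare against. What you have written is a complete and essentially self-contained version of the classical Stern--Stolz argument, and every step checks out: the choice $r_n = \prod_{k=1}^n a_k^{(-1)^{n-k+1}}$ does satisfy $r_{n-1}r_n a_n = 1$, so the equivalence transformation legitimately reduces matters to $\KF(1/c_n)$ with $c_n$ exactly the $n$-th term of $\mathcal{S}$; the determinant identity (the specialisation of \autoref{lem: solutions}) gives $f_n - f_{n-1} = (-1)^{n-1}/(B_nB_{n-1})$; and the telescoping identity $B_nB_{n-1} = \sum_{k=1}^n c_k B_{k-1}^2$ together with $B_{2m}\ge 1$, $B_{2m+1}\ge c_1$ yields the forward direction, while $M_n \le M_1\prod_{k}(1+c_k)\le M_1\exp\bigl(\sum_k c_k\bigr)$ yields the converse. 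Two small remarks. First, you were right to flag $c_1 = 0$; just note that after discarding the first quotient the new leading denominator may again vanish, so in the convergence direction one should shift to the first index $N$ with $b_N > 0$ (such an $N$ exists since $\mathcal{S}$ diverges), and your observation that shifting multiplies the terms of $\mathcal{S}$ by the bounded alternating factors $a_1^{\pm 1}$ keeps both hypotheses intact. Second, in the divergence direction the extreme degenerate case in which every $b_n = 0$ is not reached by shifting, but there the convergents oscillate between $0$ and $\infty$ and divergence (in $\hat{\C}$, the paper's notion of convergence) is immediate, consistent with $\mathcal{S} = 0$. Neither point is a genuine gap; your argument is the standard proof of the cited theorem.
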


\subsection{Second-order linear recurrences and continued fractions}

A non-trivial solution \(\seq[\infty]{u_n}{n=-1}\) of the recurrence \(u_n = b_n u_{n-1} + a_n u_{n-2}\) is called \emph{minimal} if there exists another linearly independent solution \(\seq[\infty]{v_n}{n=-1}\) such that \(\lim_{n\to\infty} u_n/v_n = 0\).  
(In such cases the solution \(\seq{v_n}{n}\) is called \emph{dominant}).
If \(\seq{u_n}{n}\) is minimal then all solutions of the form \(\seq{cu_n}{n}\) where \(c\neq0\) are also minimal.
Note that if \(\seq{y_n}{n}\) and \(\seq{z_n}{n}\) are linearly independent solutions of the above recurrence such that \(y_n/z_n \sim C\in\hat{\C}\) then the recurrence relation has a minimal solution \cite{LW1992}.
%All other non-trivial solutions are easily seen to be dominant.
In general, a system of recurrences may not have a minimal solution.
Nevertheless, if \(\seq{u_n}{n}\) and \(\seq{v_n}{n}\) are respectively minimal and dominant solutions of the recurrence, then together they form a basis of the solution space.

\begin{remark} %\label{rem:minimal}
When a second-order recurrence relation has minimal solutions, it is often beneficial from a numerical standpoint to provide a basis of solutions where one of the elements is a minimal solution.
Such a basis can be used to approximate any element of the vector space of solutions:
taking \(\seq{u_n}{n}\) and \(\seq{v_n}{n}\) as above, a general
solution \(\seq{w_n}{n}\) is given by \(w_n = \alpha_1 u_n + \alpha_2
v_n\) and is therefore dominant unless \(\alpha_2=0\).
%The criterion \(\alpha_2=0\) for minimality would be highly unusual when working with finite precision and round-off errors.
%This is a problem when attempting to compute the minimal solution: small errors in the initial values will, in general, lead to unbounded errors in any method employing forward recursion \cite{gautschi1967computational}.\todo{This could suit well in the introduction?}
\end{remark}

Let \(\seq[\infty]{u_n}{n=-1}\) be a non-trivial solution of the recurrence relation \(u_n =b_nu_{n-1} + a_nu_{n-2}\).  If \(u_{n-1}\neq 0\) then we can rearrange the relation to obtain
	\begin{equation*}
		-\frac{u_{n-1}}{u_{n-2}} = \frac{a_n}{b_n - \frac{u_n}{u_{n-1}}}
	\end{equation*}
for \(n\in\N\).
In the event that \(u_{n-2}=0\) we take the usual interpretation in \(\hat{\C}\).
Since \(\seq{u_n}{n}\) is non-trivial  and \(a_n\neq 0\) for each \(n\in\N\), the sequence \(\seq{u_n}{n}\) does not vanish at two consecutive indices.
Thus if \(u_{n-1}=0\) then \(u_{n-2},u_n\neq 0\) and so both the left-hand the right-hand sides of the last equation are well-defined in \(\hat{\C}\) and are equal to \(0\).  Thus the sequence with terms \(-u_n/u_{n-1}\) is well-defined in \(\hat{\C}\) for each \(n\in\N_0\).

The next theorem due to Pincherle \cite{pincherle1894} connects the existence of minimal solutions for a second-order recurrence to the convergence of the associated continued fraction (see also \cite{gautschi1967computational, LW1992, cuyt2008handbook}).
\begin{theorem}[Pincherle] \label{thm: pincherle}
% \gk{Asymptotic equalities commented out of statement of Pincherle.}
Let \(\seq[\infty]{a_n}{n=1}\) and \(\seq[\infty]{b_n}{n=1}\) be real-valued sequences such that each of the terms \(a_n\) is non-zero.  
First, the recurrence \(u_n = b_n u_{n-1} + a_n u_{n-2}\) has a minimal solution if and only if the continued fraction \(\KF ({a_n}/{b_n})\) converges.
Second, if \(\seq{u_n}{n}\) is a minimal solution of this recurrence 
then the limit of \(\KF ({a_n}/{b_n})\) is \(-u_0/u_{-1}\).
As a consequence, the sequence of canonical denominators \(\seq[\infty]{B_n}{n=-1}\) is a minimal solution if and only if the value of \(\KF ({a_n}/{b_n})\) is \(\infty\in\hat{\mathbb{C}}\).
%Finally, if there is a minimal solution \((u_n)\) and a dominant solution \(\seq{v_n}{n}\) then one of the following asymptotic equalities holds:
%	\begin{align*}
%		(f_n + u_0/u_{-1}) &\sim C_1 u_n/v_n \quad \text{if } u_{-1}\neq 0, \quad \text{or} \\
%		f_n &\sim C_2 v_n/u_n \quad \text{if } u_{-1}= 0
%	\end{align*}
%where \(C_1,C_2\neq 0\).
%We note that since \(\seq{u_n}{n}\) and \(\seq{v_n}{n}\) are linearly independent there is no \(n\in\{-1,0,1,\ldots\}\) such that \(u_n=v_n=0\).  Hence \(u_n/v_n\in\hat\C\) for each \(n\in\N\).
\end{theorem}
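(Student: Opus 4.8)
The plan is to work throughout with the canonical numerators \(\seq[\infty]{A_n}{n=-1}\) and denominators \(\seq[\infty]{B_n}{n=-1}\), which are themselves solutions of \(u_n = b_n u_{n-1} + a_n u_{n-2}\) and which, crucially, form a basis of the two-dimensional solution space. To see this I would first record the Casoratian identity: writing \(D_n := A_n B_{n-1} - A_{n-1} B_n\), the recurrence gives \(D_n = -a_n D_{n-1}\), so that \(D_n = (-1)^{n+1}\prod_{k=1}^n a_k\) using \(D_0 = -1\). Since each \(a_k \neq 0\) by hypothesis, \(D_n \neq 0\) for all \(n\), whence \(\seq{A_n}{n}\) and \(\seq{B_n}{n}\) are linearly independent. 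Comparing initial values at \(n=-1,0\), every solution \(\seq{u_n}{n}\) is then uniquely \(u_n = u_{-1}A_n + u_0 B_n\).

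The bridge between the two sides of the statement is the elementary identity \(u_n/B_n = u_{-1} f_n + u_0\), valid whenever \(B_n \neq 0\), where \(f_n = A_n/B_n\) is the \(n\)th convergent. For the direction ``convergence \(\Rightarrow\) minimal solution'' I would argue as follows. Suppose first that \(f_n \to f\) with \(f\) finite, and set \(u_n := A_n - f B_n\), so that \(u_n/B_n = f_n - f \to 0\). To upgrade this to genuine minimality I take an arbitrary solution \(\seq{v_n}{n}\) independent of \(\seq{u_n}{n}\), write \(v_n = v_{-1}A_n + v_0 B_n\), and note \(v_n/B_n \to v_{-1}f + v_0\); this limit is nonzero, for otherwise \(v_n\) would be a scalar multiple of \(u_n\). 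Dividing, \(u_n/v_n = (u_n/B_n)/(v_n/B_n) \to 0\), so \(\seq{u_n}{n}\) is minimal. If instead \(f_n \to \infty\), the same computation with the roles of \(A_n\) and \(B_n\) exchanged (using \(B_n/A_n = 1/f_n \to 0\)) shows that \(\seq{B_n}{n}\) itself is minimal.

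For the converse, suppose a minimal solution \(\seq{m_n}{n}\) exists and complete it to a basis with a dominant solution \(\seq{d_n}{n}\). Expanding \(A_n = \alpha m_n + \beta d_n\) and \(B_n = \gamma m_n + \delta d_n\) and dividing numerator and denominator by \(d_n\), the ratio \(f_n = (\alpha (m_n/d_n) + \beta)/(\gamma (m_n/d_n) + \delta)\) tends to \(\beta/\delta\) whenever \(\delta \neq 0\), and when \(\delta = 0\) (equivalently \(B_n\) is a scalar multiple of \(m_n\), hence minimal) a direct computation gives \(f_n \to \infty\). Either way \(\KF(a_n/b_n)\) converges in \(\hat{\C}\), completing the first claim. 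The limit formula follows cleanly once one notes that minimal solutions are proportional: in the finite case \(m_n = c(A_n - fB_n)\), whence \(m_{-1} = c\) and \(m_0 = -cf\), giving \(\lim f_n = f = -m_0/m_{-1}\); in the case \(f=\infty\) we have \(m_n = cB_n\), so \(m_{-1} = 0\) and \(-m_0/m_{-1} = \infty\), again consistent. The stated consequence is then immediate: \(\seq{B_n}{n}\) is minimal exactly when the minimal solution has \(m_{-1} = 0\), i.e.\ exactly when \(-m_0/m_{-1} = \infty\), i.e.\ exactly when the value of \(\KF(a_n/b_n)\) is \(\infty\).

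The main obstacle is not any single hard estimate but the careful bookkeeping in \(\hat{\C}\): one must treat finite and infinite limits on an equal footing, check that the denominators \(B_n\) (or \(A_n\)) appearing in the various quotients are eventually nonzero so that the ratios are legitimately defined, and verify minimality against \emph{every} linearly independent solution rather than merely against \(\seq{B_n}{n}\). The edge case \(\delta = 0\) (equivalently \(B_n\) minimal, equivalently continued-fraction value \(\infty\)) is where these considerations concentrate and must be handled explicitly rather than absorbed into the generic argument.
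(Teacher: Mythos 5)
Your argument is correct: the Casoratian computation, the identity $u_n/B_n=u_{-1}f_n+u_0$, and the case split between finite and infinite limits constitute the standard proof of Pincherle's theorem, and the edge cases (eventual non-vanishing of $B_n$ or $A_n$, the $\delta=0$ case, proportionality of minimal solutions) are all handled or at least correctly flagged. The paper itself gives no proof of this statement — it is quoted as a classical result of Pincherle with references to Gautschi and Lorentzen--Waadeland — and your argument is essentially the one found in those sources.
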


We refer to the problem of determining whether the value a given convergent polynomial
continued fraction is equal to a particular algebraic number as the
\emph{PCF Equality Problem}. We now have:

\begin{corollary}[\autoref{app:PCFmin}]
\label{cor:PCFmin}
The PCF Equality Problem and the Minimality Problem are interreducible.
\end{corollary}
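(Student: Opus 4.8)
The plan is to derive both reductions directly from Pincherle's Theorem (\autoref{thm: pincherle}), which already supplies almost all of the content. The starting point is to recast that theorem as a single uniform criterion: for a recurrence \(u_n = b_n u_{n-1} + a_n u_{n-2}\) with each \(a_n \neq 0\), a non-trivial solution \(\seq{u_n}{n}\) is minimal if and only if the continued fraction \(\KF(a_n/b_n)\) converges in \(\hat{\C}\) and its value equals \(-u_0/u_{-1}\). Indeed, when the value is a finite limit \(L\), the second part of Pincherle's Theorem identifies the one-dimensional space of minimal solutions as exactly those with \(-u_0/u_{-1} = L\); when the value is \(\infty\), its final assertion identifies the minimal solutions as the scalar multiples of the canonical denominators, i.e.\ those with \(u_{-1}=0\), which is precisely \(-u_0/u_{-1}=\infty\). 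Since minimality is scale-invariant and the minimal solution (when it exists) is unique up to a scalar, minimality depends only on the projective datum \([u_{-1}:u_0]\), so this criterion is exhaustive.

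First I would treat the direction from the PCF Equality Problem to the Minimality Problem, which is then immediate. Given a convergent polynomial continued fraction \(\KF(a_n/b_n)\) and an algebraic number \(\alpha\), I form the associated recurrence \(u_n = b_n u_{n-1} + a_n u_{n-2}\) and single out the solution \(\seq{u_n}{n}\) with initial values \(u_{-1}=1\) and \(u_0 = -\alpha\), so that \(-u_0/u_{-1} = \alpha\). This is a real-algebraic second-order sequence of the required shape. By the criterion above, \(\seq{u_n}{n}\) is minimal if and only if the value of \(\KF(a_n/b_n)\) equals \(\alpha\); hence a single call to a Minimality oracle decides the equality \(\KF(a_n/b_n)=\alpha\).

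For the converse I would start from a recurrence of the form \eqref{eq: sform} together with a non-trivial solution with algebraic initial values \(u_{-1},u_0\), and reduce deciding its minimality to the PCF Equality Problem. After clearing denominators by a standard equivalence transformation (which rescales the partial quotients while leaving the convergents, and hence the value, unchanged) I may assume the partial quotients are integers that are ultimately given by polynomials, so that \(\KF(a_n/b_n)\) is a genuine polynomial continued fraction. Assuming \(u_{-1}\neq 0\), I rescale so that \(u_{-1}=1\); then by the criterion \(\seq{u_n}{n}\) is minimal precisely when \(\KF(a_n/b_n)\) converges with value \(-u_0\), which is exactly a PCF Equality query for the algebraic number \(-u_0\). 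The remaining configurations—namely \(u_{-1}=0\) (equivalently, testing whether the value is \(\infty\)) and the possibility that the continued fraction diverges—do not involve comparison with a finite algebraic number; whether \(\KF(a_n/b_n)\) converges, and if so whether its value is finite or \(\infty\), is decidable for the class at hand by combining the signature analysis with the Seidel--Stern Theorem (\autoref{th:Stern2}) and the asymptotic descriptions of \autoref{ex:KoomanAsymptotics}, so these cases are disposed of before any oracle call.

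The main obstacle is not the invocation of Pincherle's Theorem, which is essentially turnkey, but reconciling the two problem formulations so that each reduction genuinely stays within the prescribed class. Concretely, one must check that the correspondence between the recurrence forms \eqref{eq: ogformb}, \eqref{eq: sform} and their Kettenbruch expansions respects the relevant degree restriction—in the degree-1 setting this means a polynomial continued fraction with \(\deg b_n \le 1\) and with \(a_n\) factoring as a product of two degree-1 polynomials corresponds exactly to a second-order degree-1 holonomic recurrence—and one must handle the point at infinity with care, ensuring that the \(\infty\)-valued and divergent cases are resolved by the direct convergence analysis rather than by the PCF Equality oracle, which compares only against a finite algebraic number. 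Once these bookkeeping points are settled, the uniform criterion above yields many-one reductions in both directions, establishing interreducibility.
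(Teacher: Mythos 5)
Your proof follows essentially the same route as the paper's: both directions are immediate consequences of Pincherle's Theorem, with the same constructions (matching the initial ratio \(-u_0/u_{-1}\) against the continued-fraction value in one direction, and building the solution with \(u_{-1}=1\) and \(u_0\) chosen so that \(-u_0/u_{-1}\) equals the target algebraic number in the other). Your extra care with the \(\infty\)-valued and divergent cases addresses an edge case the paper's own proof only flags in passing, but this is a refinement of the same argument rather than a different approach.
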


We denote by \(\Q(x)\) the field of rational functions; that is, the field of fractions of the polynomial ring \(\Q[x]\).  We define the \emph{degree} of \(r=p(x)/q(x)\in\Q(x)\) as follows: if \(r=0\) set \(\deg(r)=-\infty\), otherwise set \(\deg(r)=\deg(p) - \deg(q)\).
%
%
%
%

%Consider the recurrence relation \(u_n = p_nu_{n-1} + q_n u_{n-2}\) 
%where \(p_n,q_n\in\Q[n]\).  
The following theorem relates the convergence of the polynomial continued fraction \(\KF_{n=1}^\infty ({p(n)}/{q(n)}) \) to the behaviour of an associated rational function \cite{kooman1990convergence} (see also the version of the theorem presented in \cite{kooman1991convergence} for the field of meromorphic fractions).
	\begin{theorem} \label{thm:contFracConvChar}
		For \(p,q\in\Q[n]\) such that neither \(p\) nor \(q\) is the zero polynomial, let \(r\in\Q(n)\) be the rational function given by \(r(n) = 1 + \tfrac{4q(n)}{p(n-1) p(n)}\) with \(\deg(r)=d\).
		The continued fraction \(\KF_{n=1}^\infty ({p(n)}/{q(n)})\) converges if and only if one of the following holds:
		\begin{enumerate}
			\item \(\deg(r) \le -2\) \text{ and } \(\lim_{n\to\infty} r(n) n^2 \ge -1/4\), or
			\item \(-1\le \deg(r) \le 2\) \text{ and } \(\lim_{n\to\infty} r(n) n^{d} >0\).
		\end{enumerate}	
	\end{theorem}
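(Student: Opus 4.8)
The plan is to obtain the stated dichotomy from the convergence theory of continued fractions with asymptotically constant partial quotients, the hard analytic input being Kooman's asymptotic expansions \cite{kooman1990convergence}; the classical results of \autoref{thm: poincareperron} and \autoref{thm: perron} will dispatch the generic cases, and Kooman's refinement the critical one. First I would apply the standard equivalence transformation (the same normalisation used to pass to the tails $1+\KF_{n=2}^\infty(d_n/1)$ above) to replace $\KF_{n=1}^\infty(p(n)/q(n))$ by an equivalent continued fraction $\KF_{n=1}^\infty(\alpha_n/1)$ with unit partial denominators, whose partial numerators $\alpha_n$ form, up to finitely many initial terms, a single rational function of $n$. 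Since equivalence transformations preserve the sequence of convergents, this does not affect convergence. A short computation identifies $r(n)$ with the discriminant $1+4\alpha_n$ of the local characteristic polynomial $x^2-x-\alpha_n$ of the associated recurrence $u_n=u_{n-1}+\alpha_n u_{n-2}$. By \autoref{thm: pincherle}, the continued fraction converges if and only if this recurrence admits a minimal solution, so it suffices to decide the latter from the asymptotics of $r$.

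Second, I would treat the generic range $\deg(r)\ge -1$, where the leading term of $r$ dominates any contribution of order $n^{-2}$, so that $\sign(r(n))$ is eventually constant and equal to the sign of the leading coefficient of $r$. If this sign is positive then the characteristic roots of $x^2-x-\alpha_n$ are eventually real and of distinct modulus, so \autoref{thm: poincareperron} produces a minimal solution and the fraction converges; if it is negative the roots are complex conjugates of equal modulus, and \autoref{thm: perron}---together with the fact that the ratio of two solutions sharing a common growth rate but with genuinely rotating arguments cannot tend to $0$---shows that no minimal solution exists, so the fraction diverges. The same reasoning applies when $\alpha_n\to 0$ (so $r\to 1>0$) and when $\alpha_n\to\pm\infty$. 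This is precisely the content of item~(2): convergence holds exactly when the leading coefficient of $r$ is positive.

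The delicate, and genuinely hard, case is the critical range $\deg(r)\le -2$, where $\alpha_n\to -\tfrac14$ and the characteristic polynomial acquires the double root $\tfrac12$; both \autoref{thm: poincareperron} and \autoref{thm: perron} are then silent, yielding only $\limsup_n\sqrt[n]{|u_n|}=\tfrac12$ for every solution. Here I would invoke Kooman's sharper asymptotics. The substitution $u_n=2^{-n}z_n$ turns the recurrence into $z_n-2z_{n-1}+z_{n-2}=r(n)\,z_{n-2}$, a perturbation of the vanishing second difference whose continuous analogue is the Euler equation $z''=c\,x^{-2}z$ with $c=\lim_{n\to\infty}r(n)n^2$. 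Its indicial exponents $\tfrac12\bigl(1\pm\sqrt{1+4c}\bigr)$ are real and distinct when $c>-\tfrac14$ (a dominant/minimal pair, hence convergence) and complex when $c<-\tfrac14$ (oscillation, hence divergence); at the threshold $c=-\tfrac14$ the repeated exponent produces two solutions differing by a logarithmic factor, whose ratio still tends to $0$, so $c=-\tfrac14$ converges and the inequality in item~(1) is non-strict. I expect the main obstacle to be making this critical analysis rigorous directly at the level of the difference equation---controlling the subdominant terms of $r$ and ruling out that the $O(n^{-2})$ perturbation destroys the minimal/dominant splitting predicted by the formal continuous model---which is exactly where one must appeal to Kooman's theorem rather than to the classical Poincar\'e--Perron machinery.
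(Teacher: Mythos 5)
The paper offers no proof of this statement: it is imported wholesale from Kooman \cite{kooman1990convergence}, so your sketch has to be judged on its own. Its skeleton --- equivalence transformation to $\KF(\alpha_n/1)$, Pincherle's theorem to convert convergence into existence of a minimal solution, then a case analysis on the behaviour of $\alpha_n=(r(n)-1)/4$ --- is sound and is essentially how Kooman organises the result. But your partition of the cases by $\deg(r)$ is wrong in two places, and in one of them the argument would prove a false statement.

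First, the ``generic range $\deg(r)\ge -1$'' is not generic. \autoref{thm: poincareperron} applies only when $\alpha_n$ tends to a finite limit and the limiting roots have distinct moduli, i.e.\ only when $\deg(r)=0$. When $\deg(r)=-1$ one still has $\alpha_n\to -1/4$, the limiting characteristic polynomial has the double root $1/2$, and both \autoref{thm: poincareperron} and \autoref{thm: perron} are silent; the true behaviour (solutions $\sim n^{\cdot}\exp(\pm2\sqrt{cn})$, exactly \autoref{ex:KoomanAsymptotics}(2)) requires the same Kooman-type critical analysis you reserve for $\deg(r)\le-2$. Second, for $\deg(r)\in\{1,2\}$ one has $\alpha_n\to\pm\infty$, so the recurrence is not a Poincar\'e recurrence and neither classical theorem applies; worse, your proposed conclusion for this regime --- ``convergence holds exactly when the leading coefficient of $r$ is positive'' --- is false once $\deg(r)>2$. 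For instance $\KF_{n=1}^\infty(n^3/1)$ is a positive continued fraction whose Stern--Stolz series behaves like $\sum_n n^{-3/2}$ and hence converges, so by \autoref{th:Stern2} the continued fraction \emph{diverges} although the corresponding $r(n)=1+4n^3$ has positive leading coefficient. The upper bound $\deg(r)\le 2$ in item~(2) is precisely this Seidel--Stern threshold, and your argument never produces it. Two smaller points: the claim that complex conjugate limiting roots preclude a minimal solution is asserted rather than proved (\autoref{thm: perron} only controls $\limsup_n\sqrt[n]{|u_n|}$, which does not by itself rule out $u_n/v_n\to0$); and your identification $r(n)=1+4\alpha_n$ with $\alpha_n=p(n)/\bigl(q(n)q(n-1)\bigr)$ matches the way the paper actually uses the theorem (cf.\ \autoref{lem:whenHasMinimalSolution}) but not the literal formula $1+4q(n)/(p(n-1)p(n))$ in the statement, which appears to have $p$ and $q$ interchanged (and $n^{d}$ should presumably read $n^{-d}$); you silently corrected this, which is fine, but it deserves a remark.
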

We remark the immediate corollary by \autoref{thm: pincherle}.
\begin{corollary}\label{cor:minimalityDecidable}
Given a recurrence relation of the form \eqref{eq: ogform}, it is decidable whether the recurrence admits a minimal solution.
\end{corollary}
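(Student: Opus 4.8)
The plan is to chain together the structural reductions already assembled in this section so that the existence of a minimal solution becomes a purely arithmetic question about a single rational function. First I would dispose of the degenerate cases recorded in \autoref{sub:linrec}: if $g_2\equiv 0$ the recurrence interleaves two hypergeometric sequences and admits no minimal solution, while if $g_1\equiv 0$ minimality is again trivial; so I may assume that none of $g_1,g_2,g_3$ is identically zero. Passing to a shifted recurrence if necessary, I invoke the standing normalisation that each coefficient has constant sign and no root for $n\ge 0$ (in particular $g_2(n)\neq 0$ for all $n\in\N$, which is precisely what licenses the continued-fraction transformations).

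With $g_2$ nonvanishing I apply the transformation from \eqref{eq: ogform} to \eqref{eq: sform}, which, as noted after the equivalence of the associated continued fractions, preserves the existence of minimal solutions. By Pincherle's Theorem (\autoref{thm: pincherle}), the recurrence \eqref{eq: sform}, namely $u_n = g_2(n)u_{n-1} + g_1(n)g_3(n-1)u_{n-2}$, admits a minimal solution if and only if the associated polynomial continued fraction $\KF_{n=1}^\infty \bigl(g_1(n)g_3(n-1)/g_2(n)\bigr)$ converges. This converts the decision problem into a convergence question for a polynomial continued fraction with rational partial quotients.

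Finally I would feed this continued fraction into \autoref{thm:contFracConvChar} with partial numerator $p(n) = g_1(n)g_3(n-1)$ and partial denominator $q(n) = g_2(n)$; both are nonzero polynomials by the reduction above, so the hypotheses are met and, in particular, $p(n-1)p(n)$ is not the zero polynomial. Forming the rational function $r(n) = 1 + 4q(n)/\bigl(p(n-1)p(n)\bigr)$ together with its degree $d$ is a matter of polynomial arithmetic over $\Q$, and the quantities $\lim_{n\to\infty} r(n)n^2$ and $\lim_{n\to\infty} r(n)n^{d}$ featuring in the two cases of the theorem are limits of rational functions, hence effectively computable rationals obtained from leading-coefficient ratios. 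Comparing these against $-1/4$ and $0$ is decidable, so the theorem's criterion can be evaluated algorithmically; this decides convergence of the continued fraction, and therefore, by the chain above, whether the original recurrence admits a minimal solution.

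I do not expect a genuine obstacle here, as the statement is billed as an immediate corollary: the entire content lies in verifying that each reduction step is effective. The only care required is the bookkeeping of degenerate and normalisation cases (so that the hypotheses of \autoref{thm: pincherle} and \autoref{thm:contFracConvChar} are literally satisfied) and the observation that the two limits in \autoref{thm:contFracConvChar} are leading-term data of explicit rational functions and hence computable, after which comparison against the fixed thresholds $-1/4$ and $0$ is trivially decidable.
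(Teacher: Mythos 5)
Your proposal is correct and follows exactly the route the paper intends: the corollary is stated as an immediate consequence of Pincherle's Theorem (\autoref{thm: pincherle}) combined with the convergence criterion of \autoref{thm:contFracConvChar}, and you have simply made explicit the effective bookkeeping (degenerate cases, normalisation, the passage to \eqref{eq: sform}, and computability of the leading-term limits) that the paper leaves implicit. No further comment is needed.
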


%
%%\begin{corollary} \todo[color=green!40!]{To be removed}
%%The critical ratio \(u_0/u_{-1}\) associated to the minimal solutions of a second-order linear recurrence sequence with polynomial coefficients is Turing computable. 
%%\end{corollary}

The following technical lemma is well-known (see, for example, \cite[Lemma 4, \S IV]{LW1992}).
\begin{lemma} \label{lem: solutions}
	Suppose that \(\seq{u_n}{n}\) and \(\seq{v_n}{n}\) are both solutions to the recurrence relation \(u_n = b_nu_{n-1} + a_nu_{n-2}\).  Then
		\begin{equation*}
			u_n v_{n-1} - u_{n-1} v_n = (u_0 v_{-1} - u_{-1} v_0)\prod_{k=1}^n (-a_k).
		\end{equation*}
\end{lemma}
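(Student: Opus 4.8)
The plan is to treat the bilinear quantity $W_n := u_n v_{n-1} - u_{n-1} v_n$ (the \emph{Casoratian} of the two solutions, the discrete analogue of a Wronskian) as an auxiliary sequence, to show that it satisfies a simple first-order recurrence, and then to solve that recurrence by telescoping. This is the cleanest route because it bypasses any attempt to find closed forms for $\seq{u_n}{n}$ and $\seq{v_n}{n}$ individually and instead exploits only the structure of the recurrence they share.

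First I would substitute both instances of the defining relation, namely $u_n = b_n u_{n-1} + a_n u_{n-2}$ and $v_n = b_n v_{n-1} + a_n v_{n-2}$, into the definition of $W_n$. Expanding $u_n v_{n-1} - u_{n-1} v_n$ and grouping terms by their coefficient, the two contributions proportional to $b_n$ are $b_n u_{n-1} v_{n-1}$ and $-b_n u_{n-1} v_{n-1}$, which cancel identically; what survives is $a_n(u_{n-2} v_{n-1} - u_{n-1} v_{n-2}) = -a_n(u_{n-1} v_{n-2} - u_{n-2} v_{n-1}) = -a_n W_{n-1}$. Hence $W_n = -a_n W_{n-1}$ for every $n \in \N$.

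With this one-step recurrence established, a routine induction on $n$—with base value $W_0 = u_0 v_{-1} - u_{-1} v_0$—yields $W_n = W_0 \prod_{k=1}^n (-a_k)$, which is exactly the claimed identity. I do not anticipate any genuine obstacle: the entire content of the lemma is the observation that the diagonal $b_n$-terms vanish while the off-diagonal $a_n$-terms telescope, so the only thing requiring care is the bookkeeping in the cancellation step above (and, implicitly, that the identity is purely formal and therefore valid over any field, regardless of whether the $a_n$ are nonzero).
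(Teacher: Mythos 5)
Your proof is correct: the cancellation of the $b_n u_{n-1}v_{n-1}$ terms giving $W_n = -a_nW_{n-1}$, followed by telescoping from $W_0 = u_0v_{-1}-u_{-1}v_0$, is exactly the standard Casoratian argument. The paper does not prove this lemma itself but cites it as well-known, and your argument is the one the cited reference uses, so there is nothing to add.
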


\begin{remark}
Given a \(k\)th-order recurrence relation \(R\) with coefficients in
\(\Q(x)\), let \(Z(R)\) be the set of solution sequences with rational initial values.
For \(\seq{u_n}{n},\seq{v_n}{n}\in Z(R)\), consider the limit \(u_n/v_n\) as \(n\to\infty\) if the limit exists and let \(L(R)\) be the set of such limits
	\begin{equation*}
		L(R) := \mleft\{\alpha\in\R : \alpha = \lim_{n\to\infty} \frac{u_n}{v_n},\, \seq{u_n}{n},\seq{v_n}{n}\in Z(R)	\mright\}.
	\end{equation*}
Because \(Z(R)\) is a vector space over \(\Q\), it follows that \(\Q\subset L(R)\subset \R\). 
Let \(\mathbb{L}\) be the union of \(L(R)\) over all \(R\).
Kooman \cite[Chapter 2]{kooman1991convergence} makes the following observations:
the set \(\mathbb{L}\) is a field, is countable, and \(\overline{\mathbb{Q}}\cap\R \subset \mathbb{L} \subset \R\).  
We note the inclusion \(\overline{\mathbb{Q}}\cap\R \subset
\mathbb{L}\) follows from limits associated to \(C\)-finite recurrence relations.
The set \(\mathbb{L}\) also contains real transcendental numbers.
In fact, any real number of the form \(\sum_{k=0}^\infty \prod_{m=1}^k q_m\) with \(q_m\in\Q(m)\) such that \(q_m, 1/q_m \neq 0\) is a limit of a solution to the second-order recurrence \(u_n = (1+q_n)u_{n-1} - q_n u_{n-2}\).
We connect such limits to minimal solutions of second-order recurrence relations in the next remark.
\end{remark}

\begin{remark}\label{rem:All1Sol}
Consider the recurrence relation
	\begin{equation} \label{eq: exform}
		u_n = (1+q_n)u_{n-1} - q_n u_{n-2}.  
	\end{equation}
The constant sequence \(\seq[\infty]{1}{n=-1}\) is clearly a solution to the recurrence. 
By \autoref{lem: solutions}, we obtain the second solution \(\seq[\infty]{v_n}{n=-1}\) with initial terms \(v_{-1}=0\), \(v_0=1\), and for \(n\in\N\), \(v_n = \sum_{k=0}^n \prod_{m=1}^k q_m\) where the empty product is equal to unity.  These two solutions are linearly independent and it is interesting to ask whether the above recurrence relation has a minimal solution.

Let \(\xi := \lim_{n\to\infty} v_n/u_n = \sum_{k=0}^\infty \prod_{m=1}^k q_m\) if the limit exists.
We have the following characterisation for minimal solutions in terms of \(\xi\).
If \(\xi=\infty\) then \(\seq{u_n}{n}\) is a minimal solution of \eqref{eq: exform}.
If \(\xi\in\R\) then consider the non-trivial sequence \(\seq{w_n}{n}\) with terms \(w_n = v_n - \xi u_n\).  
Clearly \(\lim_{n\to\infty} w_n/u_n = 0\) and so we conclude that \(\seq{w_n}{n}\) is a minimal solution.  As a side note in the case that \(\xi=0\), \(\seq{v_n}{n}\) is a minimal solution.
\end{remark}

\begin{example}\label{ex:criticalRatioValues}
A series \(\sum c_kx^k\) is called \emph{hypergeometric} if the ratio of consecutive summands \(c_{k+1}/c_k\) is equal to a rational function of \(k\) for each \(k\in\N_0\).
%Such series occur naturally in the discussion of elementary functions \cite{andrews1999special, cuyt2008handbook}.
It can be shown (see \cite{andrews1999special}) that a hypergeometric series can be written as follows
	\begin{equation*}
		\sum_{k=0}^\infty c_k = c_0 \sum_{k=0}^\infty \frac{(\alpha_1)_k \cdots (\alpha_j)_k}{(\beta_1)_k \cdots (\beta_\ell)_k} \frac{x^k}{k!} =: c_0 \hypgeo{\mathnormal{j}}{\ell}(\alpha_1,\ldots,\alpha_j; \beta_1,\ldots, \beta_{\ell}; x).
	\end{equation*}
%	\todo{added "mathnormal\{j\}" to subscript of hypergeometric.
%	
%Defined Pochhammer (rising factorial)}
For $\alpha \in \C$ the \emph{(rising) Pochhammer symbol} $(\alpha)_n$ is defined as
$(\alpha)_0 = 1$, and $(\alpha)_n = \prod_{j = 0}^{n-1}(\alpha + j)$ for $n\geq 1$.
Here the parameters \(\beta_m\) are not negative integers or zero for otherwise the denominator would vanish for some \(k\).
It is useful in the sequel (\autoref{prop:111repeatedRoot}) to connect hypergeometric series and the recurrence relation in \autoref{rem:All1Sol}.
If we choose
\begin{equation*}
q_m = \frac{(\alpha_1+m-1)\cdots (\alpha_j+m-1)}{(\beta_1+m-1)\cdots (\beta_{\ell}+m-1)m}x
\end{equation*}
in order that \(q_m,1/q_m\neq 0\) for each \(m\in\N\),
then 
	\begin{equation*}
		\xi = \sum_{k=0}^\infty \prod_{m=1}^k q_m = \hypgeo{\mathnormal{j}}{\ell}(\alpha_1,\ldots,\alpha_j; \beta_1,\ldots, \beta_{\ell}; x).
	\end{equation*}
%Using standard representations for special functions, let us give examples of $\xi$ as follows:
%$\eu^k = \hypgeo{0}{0}(; ; k)$ for $k\in \Q$; $\cos(k) = \hypgeo{0}{1}(;1/2; -k^2/4)$ for
%$k \in \Q$; and $\log(1 + k)/k = \hypgeo{2}{1}(1,1;2;-k)$ for $k \in \Q$, $|k|<1$.
%Interestingly, we also have $\zeta(s) = \hypgeo{\mathnormal{s+1}}{\mathnormal{s}}(1,\ldots,1; 2,\ldots,2; 1)$, $s\in \N$ with $s\geq 2$, where $\zeta(s):= \sum_{n=1}^{\infty}n^{-s}$.
\end{example}

\section{Relations Between Oracles  for Holonomic Sequences}
\label{sec:posmin}
%{A relation between positivity and minimality}

In this section, we examine how the problems of Minimality,
Positivity, and Ultimate Positivity\footnote{The \emph{Ultimate
    Positivity Problem} asks whether a holonomic sequence takes on
  non-negative values for all but finitely many terms.} for
second-order holonomic sequences relate to each
other. We shall assume throughout that each of the polynomial
coefficients in the associated recurrence~\eqref{eq: ogform} has
degree at most \(1\).
It is convenient to introduce the following notation:
%We can assume that none of the polynomial are the zero polynomial as the equation would then be 
%equivalent to a order 1 recurrence relation, which are easy to solve.
we set $g_3(n) = \alpha_1 n + \alpha_0$, $g_2(n) = \beta_1 n + \beta_0$, and,
$g_1(n) = \gamma_1 n + \gamma_0$.
Thus the recurrence relation under focus is of the form
\begin{equation}\label{eq:lowDegreeOGForm}
(\alpha_1 n + \alpha_0) u_{n} = (\beta_1 n + \beta_0) u_{n-1} + (\gamma_1 n + \gamma_0)u_{n-2}.
\end{equation}

%In the case that $\deg(g_3) = 0$, we understand that $\alpha_1 = 0$.
We have the following results.

\begin{theorem}
\label{th:oracleRel}
The following hold for the family of holonomic sequences satisfying second-order recurrences of degree at most one.
%In each of the following statements the implied holonomic sequence \(\seq{u_n}{n}\) solves a second-order recurrence relation of the form \eqref{eq:lowDegreeOGForm}.% such that each polynomial coefficient has degree at most \(1\).
\begin{enumerate}
%\item The Minimality Problem reduces to the PCF Equality Problem (\autoref{cor:PCFmin})\footnote{We note there is also a reduction from the PCF Equality Problem to the Minimality Problem; however, this reduction requires polynomial coefficients of degree higher than 1.}.
\item The Positivity Problem reduces to the Minimality Problem.
\item The Positivity Problem and the Ultimate Positivity
  Problems are interreducible (see \autoref{app:Ultpos}).
%\item For a given sequence \(\seq{u_n}{n}\), the Minimality Problem and the Ultimate Threshold Problem are interreducible. \todo[color=green!40]{Up for removal.}
%\item If one can decide the PCF equality problem, then one can decide whether a holonomic sequence \(\seq{u_n}{n}\) is minimal.
%\item If one can decide whether a holonomic sequence \(\seq{u_n}{n}\) is minimal, then one can decide whether each of the terms in the sequence is positive.
%\item For a given sequence \(\seq{u_n}{n}\), the Positivity Problem and the Ultimate Positivity Problems are interreducible.
%\item For a given sequence \(\seq{u_n}{n}\), the Minimality Problem and the Ultimate Threshold Problem are interreducible. \todo[color=green!40]{Up for removal.}
%
\end{enumerate}
\end{theorem}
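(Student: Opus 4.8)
The plan is to prove both parts by a case analysis on the \emph{signature} $(\sign(g_2),\sign(g_1))$, reducing each instance to a single transcendental comparison that the relevant oracle resolves, followed by an effective finite check. Throughout I work with the normalised recurrence \eqref{eq:lowDegreeOGForm} under the standing assumptions ($g_3>0$, each $g_i$ of constant sign with no nonnegative roots, none identically zero), and I use that the initial values and coefficients are real-algebraic, so exact arithmetic and comparisons with algebraic numbers are decidable. For Part~1 I first invoke \autoref{cor:minimalityDecidable} to decide whether the recurrence admits a minimal solution, and I compute the characteristic data of the associated Poincar\'e recurrence when $\alpha_1\neq 0$, or the explicit Kooman asymptotics of \autoref{ex:KoomanAsymptotics} and \cite{kooman2007asymptotic} when the coefficients grow. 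Examining the characteristic polynomial $x^2-(\beta_1/\alpha_1)x-\gamma_1/\alpha_1$ classifies, in each signature, the \emph{eventual sign type} (eventually of one sign, or eventually alternating) of the minimal and dominant solutions. The guiding principle is that the eventual sign of an arbitrary solution $c_1u_n^{(1)}+c_2u_n^{(2)}$ is governed by its dominant component, so positivity is controlled by which asymptotic regime the sequence lies in and by the sign of the dominant coefficient.

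The case distinction then proceeds as follows. The signature $(+,+)$ is trivial, since there positivity of $\langle u_n\rangle$ is equivalent to $u_{-1},u_0\ge 0$; a short computation shows that $(-,-)$ and the complex-root subcase of $(+,-)$ force every nontrivial solution to alternate or oscillate in sign, so positivity fails outright. The two substantive regimes are $(-,+)$, where by \autoref{thm: poincareperron} the dominant solution alternates while the minimal solution is eventually of one sign, and $(+,-)$ with two positive real roots (respectively the $\exp(\pm 2\sqrt{\wc\,n})$ regime of \autoref{ex:KoomanAsymptotics}), where both solutions are eventually positive. In the first regime positivity forces minimality, so I query the Minimality oracle on $\langle u_n\rangle$: a negative answer means the sequence is dominant, hence alternating, hence not positive; an affirmative answer lets me read off the constant eventual sign of the minimal solution from its leading asymptotic term and verify an effectively bounded initial segment. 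In the second regime I must instead determine $\sign(c_2)$; by \autoref{thm: pincherle} the minimal direction satisfies $u_0/u_{-1}=-\KF(a_n/b_n)$, so $\sign(c_2)=\sign(u_0/u_{-1}+\KF(a_n/b_n))$. I query the oracle for equality (the minimal case, handled as above), and otherwise compute convergents, which by \autoref{prop:continued} bracket the limit monotonically and so separate it from the algebraic number $u_0/u_{-1}$ after finitely many steps, fixing $\sign(c_2)$ and thus the eventual sign, to be combined with the finite initial check.

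The main obstacle is to make the finite check genuinely effective, i.e.\ to produce an explicit index $N^\ast$ beyond which the asymptotic sign provably dominates. I obtain this from effective error terms in \autoref{thm: poincareperron} and in the Kooman expansions of \autoref{ex:KoomanAsymptotics}, together with the monotone bracketing of \autoref{prop:continued}, which, through the equivalence transformations relating \eqref{eq: ogform} and \eqref{eq: sform} and the identity of \autoref{lem: solutions}, controls the sign of the minimal solution via the tails of the continued fraction. Once $N^\ast$ is known the remaining verification is a finite computation over real-algebraic numbers. Crucially, all transcendence enters only through the single comparison of $u_0/u_{-1}$ with the continued-fraction value, which is exactly a \autoref{cor:PCFmin} instance, hence precisely what the Minimality oracle supplies.

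For Part~2 (deferred to \autoref{app:Ultpos}) the same anatomy yields interreducibility, since both problems share the hard core of determining the eventual sign regime. Positivity reduces to Ultimate Positivity because $\langle u_n\rangle$ is positive if and only if it is ultimately positive and its prefix up to the index $N^\ast$ above is non-negative; the Ultimate Positivity oracle resolves the eventual regime, after which $N^\ast$ is effective and the prefix condition is a finite check. Conversely, Ultimate Positivity reduces to Positivity: in every signature a nontrivial solution reaches its eventual regime after an effectively bounded transient, so ultimate positivity is equivalent to positivity of the shifted sequence $\langle u_{n+N^\ast}\rangle$, which is again a second-order degree-one holonomic sequence and hence a single Positivity query. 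The technical heart here is bounding that transient independently of the (possibly transcendental) fundamental basis, which I again extract from the effective asymptotics and the monotone continued-fraction bracketing.
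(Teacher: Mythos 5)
Your overall architecture --- case analysis on the signature, dispatching $(+,+)$, $(-,-)$ and the negative-discriminant subcase of $(+,-)$ as trivial or impossible, and reducing the substantive cases to a comparison of $-u_0/u_{-1}$ with the limit $\mu$ of the associated continued fraction, which is exactly what the Minimality oracle answers --- matches the paper's. But there is a genuine gap at the one place where the reduction is delicate: deciding, for a \emph{dominant} solution of a $(+,-)$ recurrence with $\Delta(n)\ge 0$, on which side of $\mu$ the ratio $-u_0/u_{-1}$ lies. You propose to ``compute convergents, which by \autoref{prop:continued} bracket the limit monotonically and so separate it from the algebraic number $u_0/u_{-1}$ after finitely many steps.'' \autoref{prop:continued} gives two-sided bracketing only for \emph{positive} continued fractions, i.e.\ for the $(-,+)$ signature after negation; in the $(+,-)$ case the partial numerators are negative and, by \autoref{lem:fndecreasing}, the convergents decrease strictly to $\mu$ from one side only. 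Consequently, if $-u_0/u_{-1}<\mu$ (a positive but dominant solution), every convergent satisfies $f_n>\mu>-u_0/u_{-1}$ and no finite computation of convergents can certify the inequality. This is precisely why the paper needs the separate threshold-crossing argument of \autoref{prop:compxtomu} (Appendix~\ref{ap:compxtomu}), which tracks the ratio sequence $x_n=u_n/u_{n-1}$ against the moving characteristic roots $\lambda_n$ and detects a crossing in finite (but not a priori bounded) time. Your proof, as written, has no mechanism for this case.

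Relatedly, the ``explicit index $N^\ast$ beyond which the asymptotic sign provably dominates,'' extracted from ``effective error terms'' in \autoref{thm: poincareperron} and the Kooman expansions, does not exist in the form you need: the index at which the dominant component overtakes the minimal one depends on the coefficients of the solution in the asymptotic basis, and these are exactly the transcendental quantities (equivalently, the distance $\lvert -u_0/u_{-1}-\mu\rvert$) that the whole problem turns on. The paper is careful here: its procedure terminates on every input but admits no uniform bound on its running time, and the interreduction with Ultimate Positivity in \autoref{app:Ultpos} is not obtained by shifting past a bounded transient but by showing structurally that in each signature the sign either changes at most once or alternates forever, so that positivity and ultimate positivity coincide or both fail. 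You should replace the appeal to an effective $N^\ast$ by one of these two mechanisms.
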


%
%

%\subsection{A critical ratio for both minimality and positivity}
%\label{subsec:minpos}

The rest of this section is devoted to the proof of the first item of~\autoref{th:oracleRel}.
It is useful to separate the problem into subcases according to the signature of the recurrence relation \(u_n = b_n u_{n-1} + a_n u_{n-2}\) with $b_n= g_2(n)/g_3(n)$ and $a_n=g_1(n)/g_3(n)$.
When the signature of the recurrence is either \((+,+)\) or \((-,-)\) then the problem of deciding whether a solution sequence \(\seq{u_n}{n}\) with initial terms \(u_{-1},u_0\ge 0\) is trivial.
If the recurrence has signature \((+,+)\) then \(\seq{u_n}{n}\) is positive, whilst if the recurrence has signature \((-,-)\) then \(u_1 < 0\) and so the solution sequence is not positive.
%if their sign is negative, one of the three first terms will be negative.
%
It remains to consider the cases \((-,+)\) and \((+,-)\).
Recall the canonical solutions \(\seq[\infty]{A_n}{n=-1}\) and \(\seq[\infty]{B_n}{n=-1}\) defined in the preliminaries.
For a recurrence relation with signature \((-,+)\), the canonical solutions \(\seq[\infty]{A_n}{n=-1}\) and \(\seq[\infty]{B_n}{n=-1}\) have terms \(A_2<0\) and \(B_1=b_1 <0\).
Thus in our discussion of the Positivity Problem for non-trivial solutions we can assume that \(u_{-1},u_0 >0\).
For a recurrence relation with signature \((+,-)\) we have that \(A_1=a_1<0\) and so one can assume that \(u_0>0\).
We defer our discussion of the positive terms in the solution sequence \(\seq[\infty]{B_n}{n=-1}\) until later in this section.

%%% case (-,+)
We shall treat the two signatures separately. We shall first handle recurrence relations with signature $(-,+)$. In this case we have the following
result.
%Assume first that \((g_2(n))_{n=1}^\infty\) is negative and \((g_1(n))_{n=1}^\infty\) is 
%positive. We define the continued fraction
%\(\left(((a_n)_{n=1}^\infty, (b_n)_{n=0}^\infty), (f_n)_{n=0}^\infty \right)\) where $b_0=0$ and
%for all $n\geq 1$, $b_n= g_2(n)/g_3(n)$ and $a_n=g_1(n)/g_3(n)$. We have the following:
%

\begin{proposition}
\label{prop:pos}
Suppose that \(\seq{u_n}{n}\) with initial values \(u_{-1},u_0>0\) is a solution sequence for recurrence \eqref{eq:lowDegreeOGForm} with signature \((-,+)\)
and the associated continued fraction \(\KF(a_n/b_n)\) converges to a finite limit \(\mu\).
 The following statements are equivalent:
\begin{enumerate}
\item the sequence \(\seq{u_n}{n}\) is positive,
\item the sequence \(\seq{u_n}{n}\) is minimal, and 
\item $-u_0/u_{-1}=\mu$.
\end{enumerate}
%\gk{Is the reader aware that \(\mu\) is finite or infinite here?}
\end{proposition}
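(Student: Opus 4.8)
The plan is to separate the two equivalences, disposing of $(2)\Leftrightarrow(3)$ with Pincherle's Theorem and concentrating the real work on $(1)\Leftrightarrow(2)$. First I would collect the consequences of \autoref{thm: pincherle}: since $\KF(a_n/b_n)$ converges to the finite value $\mu$, a minimal solution exists, and by the last clause of that theorem the canonical denominators $\seq{B_n}{n}$ are \emph{not} minimal (their associated value is $\mu\neq\infty$). I would then introduce the explicit solution $m_n:=A_n-\mu B_n$, which has $m_{-1}=1$, $m_0=-\mu$, and satisfies $m_n/B_n=f_n-\mu\to 0$; hence $m_n$ is minimal, and since minimal solutions form a one-dimensional subspace, every minimal solution is a scalar multiple of $\seq{m_n}{n}$ while $\{m_n,B_n\}$ is a basis of the solution space. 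The equivalence $(2)\Leftrightarrow(3)$ then follows directly: if $\seq{u_n}{n}$ is minimal, Pincherle gives $-u_0/u_{-1}=\mu$; conversely $-u_0/u_{-1}=\mu$ with $u_{-1}>0$ forces $(u_{-1},u_0)=u_{-1}(m_{-1},m_0)$, so $u_n=u_{-1}m_n$ is minimal.

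The heart of the matter is a sign analysis. I would first prove by induction that $\sign(B_n)=(-1)^n$ for every $n\ge 0$: the base cases are $B_0=1>0$ and $B_1=b_1<0$, and in the step the signature $(-,+)$ (so $b_n<0$, $a_n>0$) makes both summands of $B_n=b_nB_{n-1}+a_nB_{n-2}$ carry the common sign $(-1)^n$. In particular $\seq{B_n}{n}$ oscillates in sign, and being non-minimal it is a dominant solution. For $(1)\Rightarrow(2)$ I argue by contraposition: if $\seq{u_n}{n}$ is not minimal, then in the basis $\{m_n,B_n\}$ we have $u_n=c\,m_n+d\,B_n$ with $d\neq 0$, whence $u_n/B_n\to d$ and eventually $\sign(u_n)=\sign(d)\,(-1)^n$. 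This alternates, so $\seq{u_n}{n}$ takes negative values and is not positive.

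For $(2)\Rightarrow(1)$ I would show the normalised minimal solution is positive and then rescale. Writing $m_n=B_n\,(f_n-\mu)$ and using $\sign(B_n)=(-1)^n$, positivity of $m_n$ reduces to $\sign(f_n-\mu)=(-1)^n$, i.e.\ the interlacing $f_{2k+1}<\mu<f_{2k}$. I would obtain this from the determinant identity of \autoref{lem: solutions} applied to $\seq{A_n}{n}$ and $\seq{B_n}{n}$, giving $A_nB_{n-1}-A_{n-1}B_n=(-1)^{n+1}\prod_{k=1}^n a_k$ and, after one further use of the recurrence, $A_{n+1}B_{n-1}-A_{n-1}B_{n+1}=b_{n+1}(-1)^{n+1}\prod_{k=1}^n a_k$. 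Dividing by $B_nB_{n-1}$ and $B_{n+1}B_{n-1}$, whose signs are now known, shows that $f_n-f_{n-1}$ and $f_{n+1}-f_{n-1}$ both have sign $(-1)^n$, so the even convergents strictly decrease to $\mu$ while the odd convergents strictly increase to $\mu$; this yields $f_{2k+1}<\mu<f_{2k}$ and hence $m_n>0$ for all $n$. Since a minimal $\seq{u_n}{n}$ with $u_{-1}>0$ equals $u_{-1}m_n$, it is positive. (The same interlacing, together with $f_0=0$ and $f_2<0$, shows $\mu<0$, so that $m_0=-\mu>0$ is consistent.)

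I expect the main obstacle to be precisely this last sign bookkeeping: extracting the strict interlacing $f_{2k+1}<\mu<f_{2k}$ from the two determinant identities and the parity of $\sign(B_n)$, and verifying that the strictness (which rests on $a_k>0$ and $b_n\neq 0$) places $\mu$ strictly between consecutive convergents. Once this is in place, both implications reduce to a single clean dichotomy: a non-minimal solution inherits the oscillation of the dominant solution $\seq{B_n}{n}$, whereas the unique (up to positive scaling) minimal solution is termwise positive.
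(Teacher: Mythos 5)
Your proof is correct, and it reaches the same two key facts as the paper---the sign pattern $\sign(B_n)=(-1)^n$ and the strict interlacing of the convergents about $\mu$---but it organises the argument differently. The paper proves the analogue of your sign claim inside \autoref{lem: ctdfractail}, converts positivity into the two-sided bound $\ell_2\le -u_0/u_{-1}\le \ell_1$ on the limits of the odd and even convergents, obtains those limits from the general theory of positive continued fractions (\autoref{prop:continued}, applied to $\KF(a_n/(-b_n))$), collapses them via the Seidel--Stern criterion (\autoref{th:Stern2}), and only then invokes Pincherle; positivity and minimality are each characterised by the single condition $-u_0/u_{-1}=\mu$ rather than being compared directly. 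You instead derive the interlacing by hand from the determinant identity of \autoref{lem: solutions} (plus one extra application of the recurrence), use the hypothesis $f_n\to\mu$ in place of Seidel--Stern to identify the two subsequential limits, and route the equivalence $(1)\Leftrightarrow(2)$ through the explicit basis $\{m_n,B_n\}$ with $m_n=A_n-\mu B_n$: non-minimal solutions inherit the eventual sign alternation of $B_n$, while the normalised minimal solution is termwise positive. What your version buys is self-containedness (no appeal to \autoref{prop:continued} or \autoref{th:Stern2}) and a more structural picture of \emph{why} dominance forces sign changes; what the paper's version buys is brevity and reuse of \autoref{lem: ctdfractail}, which it needs anyway, at the cost of quoting two external convergence results. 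Both rest on \autoref{thm: pincherle} for the link between minimality and the value $\mu$.
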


For the proof, we need the next lemma, which
links positivity of a solution sequence $\seq{u_n}{n}$ to the sequence of the convergents
\(\seq{f_n}{n}\) of the continued fraction \(\KF(a_n/b_n)\).

\begin{lemma}
\label{lem: ctdfractail}
Suppose that \(\seq{u_n}{n}\) is a solution sequence for recurrence \eqref{eq:lowDegreeOGForm} with signature \((-,+)\).
Assume that \(u_{-1}>0\).
For even \(n\in\N\), \(u_n>0\) if and only if \(f_n > -u_0/u_{-1}\).
For odd \(n\in\N\), \(u_n>0\) if and only if \(f_n < -u_0/u_{-1}\).
\end{lemma}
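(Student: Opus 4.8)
The plan is to express the solution $\seq{u_n}{n}$ explicitly in terms of the canonical numerators and denominators, and then read off the sign of $u_n$ from the sign of $B_n$ together with the position of $f_n$ relative to $-u_0/u_{-1}$. Since $\seq[\infty]{A_n}{n=-1}$ and $\seq[\infty]{B_n}{n=-1}$ are linearly independent solutions with $A_{-1}=1$, $A_0=0$, $B_{-1}=0$, $B_0=1$, matching initial conditions gives $u_n = u_{-1}A_n + u_0 B_n$ for every $n$. Factoring out $B_n$ and recalling that $f_n = A_n/B_n$ then yields
\[
u_n = u_{-1}B_n\mleft(f_n + \frac{u_0}{u_{-1}}\mright) = u_{-1}B_n\mleft(f_n - \mleft(-\frac{u_0}{u_{-1}}\mright)\mright).
\]
As $u_{-1}>0$, the sign of $u_n$ is the product of $\sign(B_n)$ and the sign of $f_n - (-u_0/u_{-1})$, provided $B_n\neq 0$ so that $f_n$ is finite.

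The key step is therefore to determine $\sign(B_n)$ for the signature $(-,+)$. Writing the recurrence as $u_n = b_n u_{n-1} + a_n u_{n-2}$ with $b_n = g_2(n)/g_3(n)$ and $a_n = g_1(n)/g_3(n)$, the standing assumption $\sign(g_3)=1$ together with signature $(-,+)$ gives $b_n<0$ and $a_n>0$ for all $n$. I would prove by induction that $\sign(B_n)=(-1)^n$ (in particular $B_n\neq 0$). The base cases $B_0=1>0$ and $B_1=b_1<0$ are immediate, and in the inductive step $B_n = b_n B_{n-1} + a_n B_{n-2}$: since $b_n<0$ the first summand has sign $-\sign(B_{n-1}) = (-1)^n$, while since $a_n>0$ the second summand has sign $\sign(B_{n-2}) = (-1)^n$. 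The two terms reinforce rather than cancel, so $\sign(B_n)=(-1)^n$.

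Combining the two ingredients finishes the proof by a case distinction on the parity of $n$. For even $n$ we have $B_n>0$, so $u_n>0$ exactly when $f_n - (-u_0/u_{-1})>0$, i.e.\ $f_n > -u_0/u_{-1}$; for odd $n$ we have $B_n<0$, so $u_n>0$ exactly when $f_n - (-u_0/u_{-1})<0$, i.e.\ $f_n < -u_0/u_{-1}$. (When $u_n = 0$ the factorisation forces $f_n = -u_0/u_{-1}$, so both strict inequalities fail, keeping the biconditional valid.) The only genuinely substantive point is the sign induction for $B_n$, and this is precisely where the hypothesis that the signature equals $(-,+)$ is used: it guarantees that the two terms of the recurrence have matching signs, so that no cancellation can disturb the clean alternation of $\sign(B_n)$.
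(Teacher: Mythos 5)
Your proof is correct and follows essentially the same route as the paper: decompose $u_n = u_{-1}A_n + u_0 B_n$, establish $\sign(B_n) = (-1)^n$ by induction using the $(-,+)$ signature, and divide through by $B_n$. The paper merely asserts the sign alternation of $B_n$ as "easy to show by induction"; you have supplied that induction explicitly, and your handling of the $u_n=0$ edge case is a welcome extra precision.
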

\begin{proof}
For the canonical solution sequences \(\seq[\infty]{A_n}{n=-1}\) and \(\seq[\infty]{B_n}{n=-1}\) we have that \(u_n = A_n u_{-1} + B_n u_{0}\) for each \(n\in\{-1,0,\ldots\}\).
For recurrences with signature \((-,+)\), it is easy to show by induction that \(B_n<0\) for each odd \(n\in\N\), and \(B_n>0\) for each even \(n\in\N\).
Thus for even \(n\in\N\) we have that \(u_n>0\) if and only if \(A_n/B_n + u_0/u_{-1} = f_n + u_0/u_{-1} > 0\).
The case for odd \(n\in\N\) is handled in the same fashion.
\end{proof}

Equipped with the above observation, we are in the position to conclude \autoref{prop:pos}.

\begin{proof}[Proof of \autoref{prop:pos}]
We note that \(\seq[\infty]{-f_n}{n=1}\) is the sequence of convergents associated with the positive continued fraction \(\KF_{n=1}^\infty \tfrac{a_n}{-b_n}\).
By \autoref{prop:continued}, the subsequences 
\(\seq[\infty]{-f_{2n}}{n=1}\) and \(\seq[\infty]{-f_{2n-1}}{n=1}\)
converge to finite limits $-\ell_1$ and $-\ell_2$, respectively. 
By~\autoref{lem: ctdfractail}, a solution sequence $\seq{u_{n}}{n}$ is positive if and only if $\ell_2 \leq -u_0/u_{-1} \leq \ell_1$.
The Stern--Stolz series (from \autoref{th:Stern2}) associated to the continued fraction \(\KF_{n=1}^\infty \tfrac{a_n}{-b_n}\) diverges due to our assumption that each of the coefficients in \eqref{eq:lowDegreeOGForm} is a polynomial with degree in \(\{0,1\}\).
We conclude that $\ell_1 = \ell_2$ by \autoref{th:Stern2}.
Thus \(\seq{u_n}{n}\) is positive if and only if $-u_0/u_{-1}$ is equal to $\mu=\ell_1=\ell_2$.
From \autoref{thm: pincherle}, a solution sequence \(\seq{u_n}{n}\) is minimal if and only if \(-u_0/u_{-1}\) is the value of the continued fraction \(\KF_{n=1}^\infty (a_n/b_n)\).
\end{proof}

We now consider recurrences \(u_n = b_n u_{n-1} + a_n u_{n-2}\) with signature \((+,-)\).
Given our restriction on the degrees of the polynomial coefficients, we can assume, without loss of generality,
that the sequences of coefficients  $\seq{b_n}{n}$ and $\seq{a_n}{n}$ are monotonic.
In the work that follows we split the \((+,-)\) case into two further subcases depending on the sign of the discriminant of the recurrence
$\Delta(n):= b_n^2+4a_n$. 
We shall assume that \(\sign(\Delta(n))\) is constant as this can be achieved by a suitable computable shift of the recurrence relation.
%The positivity of the sequence $(u_n)$ can be obtained by comparing
%$x_0$ to the sequence of convergents.
%
The discussion of the subcase \(\Delta(n)<0\) is given in 
\autoref{app:ComplexCharacteristicRoots}.
%\autoref{app:ComplexCharacteristicRoots}. 
Let us summarise the results established therein with the following proposition.
\begin{proposition}\label{prop:noPositiveMinimalSolution}
A recurrence relation of the form \eqref{eq:lowDegreeOGForm} with signature \((+,-)\) and discriminant $\Delta(n) < 0$ for each $n\in \mathbb{N}$ has no positive non-trivial solutions.
\end{proposition}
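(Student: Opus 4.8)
The plan is to analyse a hypothetical positive solution through its sequence of consecutive ratios, exploiting that a negative discriminant forbids any real fixed point of the associated one-step map. Write the recurrence in the normalised form $u_n = b_n u_{n-1} + a_n u_{n-2}$ with $b_n = g_2(n)/g_3(n) > 0$ and $a_n = g_1(n)/g_3(n) < 0$, so that the hypothesis reads $\Delta(n) = b_n^2 + 4a_n < 0$. Suppose, for contradiction, that $\seq{u_n}{n}$ is a positive non-trivial solution. First I would upgrade positivity to strict positivity: if $u_n = 0$ for some index, then since a non-trivial solution cannot vanish at two consecutive indices and $a_{n+1}<0$, the next term is $u_{n+1} = a_{n+1}u_{n-1} < 0$, contradicting positivity. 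Hence $u_m > 0$ for all large $m$, and the ratios $\rho_n := u_n/u_{n-1}$ are well defined, positive, and satisfy $\rho_n = b_n + a_n/\rho_{n-1}$.

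The key observation is purely algebraic: one has $\rho_{n-1} - \rho_n = (\rho_{n-1}^2 - b_n \rho_{n-1} - a_n)/\rho_{n-1}$, and the quadratic $t \mapsto t^2 - b_n t - a_n$ has discriminant $\Delta(n) < 0$, hence is strictly positive for \emph{every} real $t$, with minimum value $-\Delta(n)/4 > 0$ attained at $t = b_n/2$ (this is exactly the statement that the step map has no real fixed point). Consequently $\seq{\rho_n}{n}$ is strictly decreasing, and moreover $\rho_{n-1} - \rho_n \ge -\Delta(n)/(4\rho_{n-1})$. Being positive and decreasing, $\rho_n$ converges to some $L \ge 0$; the value $L = 0$ is impossible, since $\rho_{n-1} \to 0^+$ together with $a_n$ bounded away from $0$ would force $\rho_n = b_n + a_n/\rho_{n-1} \to -\infty$. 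So it remains to contradict $L > 0$.

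The contradiction comes from summing the decrement bound. Since $a_n < 0$ gives $\rho_n < b_n$ immediately, the ratios are bounded above by $B := \sup_m b_m$ whenever the $b_n$ are bounded, so $\rho_{n-1} - \rho_n \ge -\Delta(n)/(4B)$ and $\sum_n (\rho_{n-1} - \rho_n) \ge \frac{1}{4B}\sum_n (-\Delta(n))$. If $\sum_n (-\Delta(n))$ diverges, the telescoping left-hand side would be infinite, contradicting convergence of $\seq{\rho_n}{n}$. I would then verify that this covers the generic situation. When $\deg g_3 = 1$ the recurrence is of Poincar\'e type with $\Delta(n) \to \Delta_\infty \le 0$, and the series $\sum_n(-\Delta(n))$ diverges whenever $\Delta_\infty < 0$ or $\Delta(n) \sim C/n$ with $C < 0$; the cases with $a_n \to -\infty$ are handled identically. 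The only escapees from the boundedness hypothesis occur when $\deg g_3 = 0$ and $\deg g_2 = 1$, but there $b_n^2 \sim c\,n^2$ dominates $|a_n| = \mathcal{O}(n)$, forcing $\Delta(n) \to +\infty$, so those cases are vacuous.

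The hard part is the single boundary regime $\Delta_\infty = 0$ in which the first-order term of $\Delta(n)$ also vanishes, so that $\Delta(n) = \Theta(1/n^2)$; here $\sum_n(-\Delta(n)) < \infty$, the telescoping estimate is inconclusive, and in fact $\rho_n$ genuinely converges to the merged root $b/2$. For this regime I would abandon the elementary estimate and appeal to the precise asymptotics of Kooman used in \autoref{ex:KoomanAsymptotics}: a negative discriminant decaying like $1/n^2$ places the recurrence at a regular-singular-type point whose two independent solutions behave like $n^{\mu_1}$ and $n^{\mu_2}$ with complex-conjugate exponents, so that every non-trivial solution oscillates as $n^{\operatorname{Re}\mu}\cos(\operatorname{Im}\mu \cdot \log n + \varphi)$ and therefore changes sign infinitely often. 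No such solution can be eventually positive, which closes the argument. Making this last step rigorous and uniform---confirming that the $\Theta(1/n^2)$ case always produces non-real exponents and genuine sign changes---is where I expect the principal difficulty to lie.
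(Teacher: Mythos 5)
Your argument follows the same basic route as the paper's proof (given in the appendix on complex characteristic roots): pass to the ratio sequence, note that $\rho_{n-1}-\rho_n=(\rho_{n-1}^2-b_n\rho_{n-1}-a_n)/\rho_{n-1}$ where the numerator is a quadratic with negative discriminant, conclude that the ratios of a putative positive solution strictly decrease, and contradict this by summing the decrements. The gap lies in your lower bound for the decrement. Bounding the quadratic by its global minimum $-\Delta(n)/4$ yields the series $\sum_n(-\Delta(n))$, and, as you yourself observe, this series can converge: precisely when $\deg g_3=1$ and the polynomial $g_2(n)^2+4g_3(n)g_1(n)$ degenerates to a constant, so that $\Delta(n)=b_n^2+4a_n=\Theta(1/n^2)$. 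Your proposed repair for that regime is not sound as stated: a negative $\Theta(1/n^2)$ perturbation does not force non-real exponents --- in Kooman's classification (compare the threshold $\lim_n r(n)n^2\ge -1/4$ in \autoref{thm:contFracConvChar}) the exponents remain real in the sub-range corresponding to $[-1/4,0)$, so no oscillation argument is available there. The case is therefore genuinely left open in your write-up.

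The paper closes it with a sharper, $n$-uniform bound rather than a case analysis on the decay of the discriminant. Writing the decrement as $f_n(\rho_{n-1})/g_3(n)$ with $f_n(x)=g_3(n)x-g_2(n)-g_1(n)/x$, one splits $f_n(x)=h_0(x)+n\,h_\infty(x)$, where $h_0=f_0$ and $h_\infty(x)=\alpha_1 x-\beta_1-\gamma_1/x$. Since $\Delta(0)<0$, the function $h_0$ is continuous, strictly positive on $(0,\infty)$ and tends to $+\infty$ at both ends, so $h_0\ge\varepsilon_0>0$ uniformly; and $h_\infty\ge 0$ on $(0,\infty)$ because the leading coefficient of $g_2(n)^2+4g_3(n)g_1(n)$ is $\le 0$. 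Hence every decrement is at least $\varepsilon_0/g_3(n)$, and $\sum_n 1/g_3(n)$ diverges harmonically --- no hypothesis on how fast $\Delta(n)$ approaches $0$ is needed. (As it happens, your hard case is vacuous: if the $n^2$ and $n$ coefficients of $g_2(n)^2+4g_3(n)g_1(n)$ both vanish and $\alpha_1\neq 0$, its constant term works out to $(\beta_0-\alpha_0\beta_1/\alpha_1)^2\ge 0$, contradicting negativity of the discriminant. Verifying this identity would be another way to complete your proof, but the uniform bound above is the cleaner fix.)
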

Let us turn our attention to the subcase \(\Delta(n)\ge 0\). %The proof of the following proposition is delayed to \autoref{app:relshort}.
We first need some technical lemmas, the first of which shows that such a recurrence relation admits a non-trivial positive solution.

\begin{lemma} \label{lem:Bpositive}
Consider a normalised recurrence \(u_n = b_nu_{n-1} + a_nu_{n-2}\) with signature \((+,-)\) such that \(\Delta(n)\ge 0\) for each \(n\in\N\).
Let \(\seq[\infty]{B_n}{n=-1}\) be the canonical solution sequence with initial conditions \(B_{-1}=0\) and \(B_0=1\) associated to this recurrence.
Then for each \(n\in\N\), \(B_n > 0\).
\end{lemma}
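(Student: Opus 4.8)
The plan is to reduce the claim to a positivity assertion about a single auxiliary sequence obtained by \emph{deflating} the recurrence by a carefully chosen positive constant. Write $e_n := -a_n$, so that signature $(+,-)$ means $b_n>0$ and $e_n>0$, and let $h_n(x) := x^2 - b_n x + e_n$ be the frozen-coefficient characteristic polynomial at index $n$. Since $\Delta(n)=b_n^2-4e_n\ge 0$, the roots $\lambda_n^-\le\lambda_n^+$ of $h_n$ are real, and as their sum $b_n$ and product $e_n$ are positive, both roots are positive. The key object I would isolate is a single constant $c>0$ lying in $[\lambda_n^-,\lambda_n^+]$ (equivalently $h_n(c)\le 0$) for \emph{every} $n\ge 1$.

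Granting such a $c$, the main step is a short induction. Set $G_n := B_n - c\,B_{n-1}$, so $G_0 = B_0 - cB_{-1} = 1$. Substituting $B_n = b_n B_{n-1} - e_n B_{n-2}$ and eliminating $B_{n-2}$ through $B_{n-2}=(B_{n-1}-G_{n-1})/c$ gives
\[
G_n = \frac{-h_n(c)}{c}\,B_{n-1} + \frac{e_n}{c}\,G_{n-1}.
\]
Because $h_n(c)\le 0$ and $c,e_n>0$, both coefficients are nonnegative and the second is strictly positive. Starting from $B_0=1>0$ and $G_0=1>0$, induction yields $G_n>0$ and hence $B_n = G_n + c\,B_{n-1}>0$ for all $n$ (the first deflated value $G_1=b_1-c$ is positive since $c\le\lambda_1^+<\lambda_1^++\lambda_1^-=b_1$). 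Note this recursion refers only to $B_{n-1}$ and $G_{n-1}$, so the induction closes cleanly.

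The crux, and the step I expect to be the main obstacle, is producing the constant $c$: showing the intervals $[\lambda_n^-,\lambda_n^+]$ have a common positive point. Here I would exploit that $b_n=g_2(n)/g_3(n)$ and $e_n=-g_1(n)/g_3(n)$ share the denominator $g_3$ and have degree-$1$ numerators, so (when $b_n$ is nonconstant) there are constants $\kappa,c_0$ with $e_n=\kappa\,b_n+c_0$ identically. This produces the index-independent identity
\[
(\lambda_n^+-\kappa)(\lambda_n^--\kappa) = \lambda_n^+\lambda_n^- - \kappa(\lambda_n^++\lambda_n^-) + \kappa^2 = e_n-\kappa\,b_n+\kappa^2 = \kappa^2+c_0 =: C ,
\]
constant in $n$. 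If $C<0$ then $\lambda_n^-<\kappa<\lambda_n^+$ for every $n$, and $c=\kappa$ works. If $C\ge 0$ the two factors keep a common sign across all $n$ (a sign change would force a root to equal $\kappa$, i.e.\ $C=0$, which is checked separately), and writing $p_n,q_n$ for the absolute values of the larger and smaller factor we have $p_nq_n=C$ with $p_n>q_n>0$, hence $p_n>\sqrt{C}$; this gives $\sup_n\lambda_n^-\le\inf_n\lambda_n^+$, and one may take $c=\kappa\pm\sqrt{C}$, the sign chosen according to which side of $\kappa$ the roots lie. The degenerate situations are dispatched directly: when $b_n$ is constant $\kappa$ is no longer determined, but then the roots are monotone functions of the monotone sequence $e_n$, so their intervals are nested and still share a point; and in every case $c$ is selected inside the nonempty intersection, so $c>0$ automatically since it lies between positive roots.
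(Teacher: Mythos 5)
Your deflation step itself is fine: granted a single constant \(c>0\) with \(h_n(c)\le 0\) for every \(n\), the recursion \(G_n=\frac{-h_n(c)}{c}B_{n-1}+\frac{e_n}{c}G_{n-1}\) is correct and closes the induction. The gap is precisely at the step you flagged as the crux: such a constant need not exist, and your argument that the factors \(\lambda_n^{\pm}-\kappa\) ``keep a common sign across all \(n\)'' when \(C>0\) is where it fails. That argument is implicitly an intermediate-value argument in a real variable, but the roots are only real where the discriminant is nonnegative, and the hypothesis only gives \(\Delta(n)\ge 0\) at \emph{integers}: the discriminant can dip below zero strictly between two consecutive integers \(J\) and \(J+1\), and the root intervals on the two sides of that dip can then lie on opposite sides of \(\kappa\) without any root ever equalling \(\kappa\) at an integer. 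Concretely, take \(g_3(n)=400\), \(g_2(n)=400n+800\), \(g_1(n)=-(700n+179)\), i.e.\ \(b_n=n+2\) and \(e_n=1.75n+0.4475\). The signature is \((+,-)\) and \(\Delta(n)=(n+2)^2-4e_n=n^2-3n+2.21>0\) at every integer \(n\ge 0\) (it is negative only on \((1.3,1.7)\)). Here \(\kappa=1.75\) and \(C=0.01>0\), and one computes \([\lambda_1^-,\lambda_1^+]\approx[1.271,\,1.729]\) while \([\lambda_2^-,\lambda_2^+]\approx[1.771,\,2.229]\): these intervals are disjoint, so no constant \(c\) satisfies \(h_1(c)\le 0\) and \(h_2(c)\le 0\) simultaneously. (The lemma itself still holds here, \(B_1=3\), \(B_2\approx 8.05\), \(B_3\approx 23.2,\dots\); it is only the uniform-threshold strategy that cannot see it.)

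The repair is to let the threshold depend on \(n\). One always has \(\sqrt{e_n}\in[\lambda_n^-,\lambda_n^+]\) (the geometric mean of two positive reals lies between them, and \(h_n(\sqrt{e_n})=\sqrt{e_n}(2\sqrt{e_n}-b_n)\le 0\)), and running your deflation with \(c_n=\sqrt{e_n}\) requires \(c_{n-1}(b_n-c_n)\ge e_n\), which via \(b_n\ge 2\sqrt{e_n}\) reduces exactly to \(e_{n-1}\ge e_n\). This is the paper's proof in disguise: it proves \(B_n/B_{n-1}\ge\sqrt{-a_n}\) by induction when \(\seq{a_n}{n}\) is increasing (\(e_n\) decreasing), and in the opposite case first applies the transformation \(v_n=(-1)^n u_n/\prod_{k=1}^{n+1}g_1(k)\), which yields an equivalent \((+,-)\) recurrence falling under the first case. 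So you would need to introduce exactly this case split and transformation; the constant-\(c\) shortcut cannot be salvaged.
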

\begin{proof}
We separate the proof into two cases depending on the monotonicity of the coefficients \(\seq[\infty]{a_n}{n=1}\).
Let us suppose that \(\seq{a_n}{n}\) is increasing.
It is sufficient to show that \(B_n / B_{n-1} \ge \sqrt{-a_n}\) as
\(B_1 = b_1 \ge 2\sqrt{-a_1} > 0\).
For the induction step, we have the inequalities below using our assumptions on the discriminant and the monotonicity of \(\seq{a_n}{n}\):
	\begin{equation*}
		B_n / B_{n-1} = b_n + a_n B_{n-2}/ B_{n-1} \ge 2\sqrt{-a_n} + a_n/\sqrt{-a_{n-1}} \ge \sqrt{-a_n}.
	\end{equation*}

Now suppose that \(\seq{a_n}{n}\) is decreasing.
Consider the recurrence sequence \(\seq[\infty]{v_n}{n=-1}\) with terms \(v_{-1}=0\), \(v_0=1\), and for \(n\in\N\), \(v_n = (-1)^n u_n / \prod_{k=1}^{n+1} g_1(k)\).
The sequence \(\seq{v_n}{n}\) satisfies the recurrence \(v_n = b_n' v_{n-1} + a_n' v_{n-2}\) with coefficients \(b_n' = -g_2(n)/(g_1(n+1)g_3(n))\) and \(a_n' = 1/(g_1(n+1)g_3(n))\), and signature \((+,-)\).
Clearly \(B_n>0\) for each \(n\in\N\) if and only if \(v_n >0\) for each \(n\in\N\).
By assumption, \(\seq[\infty]{a_n'}{n=1}\) is an increasing sequence and so we have that \(\seq{v_n}{n}\) is a positive sequence from the previous case.
Note the above transformation does not preserve the degrees of coefficients in the recurrence relation.
However, the induction proof above does not depend on the degrees of the polynomial coefficients in the recurrence relation.
\end{proof}

\begin{lemma} \label{lem:fndecreasing}
Suppose that recurrence \eqref{eq:lowDegreeOGForm} has discriminant \(\Delta(n)\ge 0\) and signature \((+,-)\).
Then the sequence of convergents \(\seq{f_n}{n}\) associated with the continued fraction \(\KF(a_n/b_n)\) is strictly decreasing.
% and \(f_n > -b_1\) for each \(n\in\N\).
%\gk{can we prove second statement?}
%\el{Where is this second statement used? I have some ideas to try and prove it, but they are not 
%trivial at all.}
\end{lemma}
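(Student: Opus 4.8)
The plan is to express the difference of consecutive convergents $f_n - f_{n-1}$ as a single fraction and to read off its sign from the two structural facts already at our disposal: the cross-product (determinant) identity of \autoref{lem: solutions}, and the positivity of the canonical denominators furnished by \autoref{lem:Bpositive}.

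First I would recall that $f_n = A_n/B_n$, where $\seq[\infty]{A_n}{n=-1}$ and $\seq[\infty]{B_n}{n=-1}$ are the canonical numerator and denominator sequences, both solutions of $u_n = b_nu_{n-1} + a_nu_{n-2}$ with $A_{-1}=1$, $A_0=0$, $B_{-1}=0$, $B_0=1$. Applying \autoref{lem: solutions} to the pair $(A_n,B_n)$ gives
\[
  A_n B_{n-1} - A_{n-1} B_n = (A_0 B_{-1} - A_{-1} B_0)\prod_{k=1}^n(-a_k) = -\prod_{k=1}^n(-a_k),
\]
since $A_0 B_{-1} - A_{-1} B_0 = -1$.

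Next I would compute, for each $n\in\N$,
\[
  f_n - f_{n-1} = \frac{A_n}{B_n} - \frac{A_{n-1}}{B_{n-1}} = \frac{A_n B_{n-1} - A_{n-1} B_n}{B_n B_{n-1}} = \frac{-\prod_{k=1}^n(-a_k)}{B_n B_{n-1}}.
\]
Because the signature is $(+,-)$ we have $a_k<0$, hence $-a_k>0$ for every $k$, so that $\prod_{k=1}^n(-a_k)>0$. By \autoref{lem:Bpositive}, whose hypotheses (discriminant $\Delta(n)\ge 0$ and signature $(+,-)$) are precisely those assumed here, $B_n>0$ for every $n\in\N$, and $B_0=1>0$; thus $B_n B_{n-1}>0$. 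It follows that $f_n - f_{n-1}<0$, so $\seq{f_n}{n}$ is strictly decreasing.

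I do not anticipate a genuine obstacle here: the argument is essentially bookkeeping on top of \autoref{lem: solutions} and \autoref{lem:Bpositive}. The only points requiring care are that the discriminant hypothesis is invoked solely to license the appeal to \autoref{lem:Bpositive}, and that $a_k\neq 0$ for all $k$ (guaranteed by our standing assumption that none of $g_1,g_2,g_3$ vanishes identically and that each coefficient has constant sign with no roots for $n\ge 0$), which is what makes the product genuinely nonzero and hence the decrease \emph{strict}.
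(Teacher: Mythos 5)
Your proof is correct and follows essentially the same route as the paper: both invoke \autoref{lem: solutions} to obtain $A_n B_{n-1} - A_{n-1}B_n = -\prod_{k=1}^n(-a_k)$ and \autoref{lem:Bpositive} for the positivity of the $B_n$, then read off the sign of $f_n - f_{n-1}$. Your additional remarks on why the product is strictly positive (signature $(+,-)$ and non-vanishing of the $a_k$) simply make explicit what the paper leaves implicit.
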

\begin{proof}
By \autoref{lem:Bpositive}, \(B_n > 0\) for each \(n\in\N\).
From \autoref{lem: solutions} we have that \(A_n B_{n-1} - A_{n-1} B_n = -\prod_{k=1}^n (-a_k)\).
Thus% a simple division by \(B_{n-1}B_n\) gives
	\begin{equation*}
		f_n - f_{n-1} = \frac{A_n}{B_n} - \frac{A_{n-1}}{B_{n-1}} = -\frac{\prod_{k=1}^n (-a_k)}{B_{n-1}B_{n}} < 0,
	\end{equation*}
and so \(\seq{f_n}{n}\) is strictly decreasing.
\end{proof}

We again link the positivity of a solution to the sequence of convergents of the associated continued fraction using the lemma that follows.
\begin{lemma} \label{lem:positivity}
Suppose that \(\seq[\infty]{u_n}{n=-1}\) is a solution of the normalised recurrence \(u_n = b_nu_{n-1} + a_nu_{n-2}\) with signature \((+,-)\) such that \(\Delta(n)\ge 0\) for each \(n\in\N\).
Assume that \(u_{-1}>0\).  
Given \(N\in\N\), we have that \(-u_0/u_{-1} < f_N\) if and only if \(u_{N}>0\).
\end{lemma}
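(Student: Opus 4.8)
The plan is to reduce everything to the canonical decomposition of an arbitrary solution together with the sign information already secured in \autoref{lem:Bpositive}. First I would recall that, writing \(\seq{A_n}{n}\) and \(\seq{B_n}{n}\) for the sequences of canonical numerators and denominators, every solution of the recurrence satisfies \(u_n = A_n u_{-1} + B_n u_0\) for each \(n \in \{-1, 0, \ldots\}\); this identity was already invoked in the proof of \autoref{lem: ctdfractail} and follows from the fact that \(\seq{A_n}{n}\) and \(\seq{B_n}{n}\) are the two solutions determined by the initial data \(A_{-1}=1\), \(A_0=0\), \(B_{-1}=0\), \(B_0=1\). In particular \(u_N = A_N u_{-1} + B_N u_0\).

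The crucial observation is that the hypotheses of the lemma---signature \((+,-)\) and \(\Delta(n)\ge 0\) for each \(n\in\N\)---are exactly those of \autoref{lem:Bpositive}, which therefore guarantees \(B_N > 0\). Together with the standing assumption \(u_{-1}>0\), this means the quantity \(B_N u_{-1}\) is strictly positive, so dividing an inequality through by it preserves its direction. I would then simply compute: \(u_N > 0\) is equivalent to \(A_N u_{-1} + B_N u_0 > 0\), which upon division by \(B_N u_{-1} > 0\) becomes \(A_N/B_N + u_0/u_{-1} > 0\), that is \(f_N > -u_0/u_{-1}\), using \(f_N = A_N/B_N\). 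This is precisely the asserted equivalence.

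I do not anticipate any genuine obstacle, since the entire substance of the argument is the positivity \(B_N > 0\) delivered by \autoref{lem:Bpositive} (whose own proof treats separately the cases where \(\seq{a_n}{n}\) is increasing and decreasing). The only point demanding care is to confirm that the division by \(B_N u_{-1}\) does not reverse the inequality, which holds precisely because both \(B_N\) and \(u_{-1}\) are positive; were \(u_{-1}\) permitted to be negative the equivalence would flip, which is exactly why that hypothesis is recorded explicitly in the statement.
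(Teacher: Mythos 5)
Your proposal is correct and follows essentially the same route as the paper: both decompose $u_N = A_N u_{-1} + B_N u_0$ via the canonical solutions, invoke \autoref{lem:Bpositive} to get $B_N > 0$, and divide by $B_N u_{-1} > 0$ to obtain the equivalence with $f_N = A_N/B_N$. No gaps.
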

\begin{proof}
For each \(N\in\N\), \(u_N = u_{-1} A_N +  u_0 B_N\) where \(\seq[\infty]{A_n}{n=-1}\) and \(\seq[\infty]{B_n}{n=-1}\) are the canonical solutions.
It follows that \(-u_0/u_{-1} < A_N/B_N = f_N\) if and only if \(u_N>0\).
Here we have used the assumption that \(u_{-1}>0\) and \(B_N > 0\) (from \autoref{lem:Bpositive}).
\end{proof}

We are now in the position to characterise positive solutions to the considered recurrence relations via the ratio of the initial terms.
\begin{proposition} \label{prop:mono}
Suppose that \(\seq[\infty]{u_n}{n=-1}\) is a solution of recurrence \eqref{eq:lowDegreeOGForm} with signature \((+,-)\) such that \(\Delta(n)\ge 0\) for each \(n\in\N\).
First, the associated continued fraction \(\KF_{n=1}^\infty (a_n/b_n)\) converges to a finite limit 
\(\mu < 0\).
Second, a solution \(\seq[\infty]{u_n}{n=-1}\) with \(u_{-1},u_0>0\) is positive if and only if \(-u_0/u_{-1} \le \mu\).
\end{proposition}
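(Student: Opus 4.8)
The plan is to obtain the second assertion almost for free from the lemmas already established, so that all the real work lies in the first assertion, namely the convergence of $\KF(a_n/b_n)$ to a finite negative number.

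\emph{Reduction of the second assertion.} By \autoref{lem:fndecreasing} the convergents $\seq{f_n}{n}$ are strictly decreasing, and since $f_1=a_1/b_1<0$ (as $a_1<0<b_1$ in signature $(+,-)$) every $f_n$ is negative. Granting the first assertion, $f_n\downarrow\mu$ with $\mu\le f_1<0$. Fix a solution with $u_{-1},u_0>0$. By \autoref{lem:positivity}, for each $N\in\N$ one has $u_N>0$ precisely when $-u_0/u_{-1}<f_N$; combined with $u_{-1},u_0>0$, the whole sequence is positive exactly when $-u_0/u_{-1}<f_N$ holds for every $N$. Since $\seq{f_n}{n}$ strictly decreases to $\mu$, this is equivalent to $-u_0/u_{-1}\le\mu$, which is the claim.

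\emph{Setting up the first assertion.} It remains to bound $\seq{f_n}{n}$ from below; its limit $\mu$ is then finite and, being $\le f_1$, negative. Telescoping $f_n-f_{n-1}$ via \autoref{lem: solutions} and using $B_n>0$ from \autoref{lem:Bpositive} yields $-f_n=\sum_{m=1}^n t_m$, where $t_m:=\bigl(\prod_{k=1}^m(-a_k)\bigr)/(B_{m-1}B_m)>0$ (each $-a_k>0$ by the signature). Thus boundedness below of $\seq{f_n}{n}$ amounts to convergence of the positive series $\sum_m t_m$, whose sum is $-\mu$.

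\emph{Convergence of the series, and the main obstacle.} Writing $\rho_m:=B_m/B_{m-1}$, the recurrence gives $\rho_{m+1}=b_{m+1}+a_{m+1}/\rho_m$ and hence $t_{m+1}/t_m=(-a_{m+1})/(\rho_m\rho_{m+1})$. The hypothesis $\Delta(n)\ge0$ forces the local characteristic roots $r_n^\pm=\tfrac12(b_n\pm\sqrt{\Delta(n)})$ to be real, and $a_n<0<b_n$ makes both positive; the fixed-point analysis of $\rho\mapsto b+a/\rho$ shows $\rho_m$ is attracted to the larger root $r_m^+$ (so $B_n$ is dominant). When the limiting roots have distinct moduli---the regime of \autoref{thm: poincareperron}---one gets $t_{m+1}/t_m\to r^-/r^+<1$, so $\sum_m t_m$ converges by the ratio test and $\mu$ is finite. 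The hard part is the degenerate regime: either the limiting characteristic polynomial has a repeated root (equal moduli, $\Delta(n)\to0$), or the coefficients fail to converge (e.g.\ $g_3$ constant while $g_2$ or $g_1$ has degree one), so that \autoref{thm: poincareperron} no longer applies and $t_{m+1}/t_m\to1$. Here I would pass to the reduced form \eqref{eq: sform} and invoke the sharper asymptotics of \autoref{ex:KoomanAsymptotics}: its second item treats exactly the repeated-root-at-infinity shape, under the condition $\beta>\alpha+\gamma$ that corresponds to $\Delta(n)>0$ eventually, and produces two independent solutions whose ratio decays like $\exp(-4\sqrt{cn})\to0$; the slower one is minimal, so by \autoref{thm: pincherle} the continued fraction converges, necessarily to the finite value $-m_0/m_{-1}$. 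An alternative, more economical route is to apply the convergence criterion \autoref{thm:contFracConvChar} to the polynomial continued fraction $\KF_{n=1}^\infty(g_1(n)g_3(n-1)/g_2(n))$ attached to \eqref{eq: sform}, which is equivalent up to a nonzero constant to $\KF(a_n/b_n)$: the discriminant condition $\Delta(n)\ge0$ places its associated rational function in the convergent regime. Either way, the sole genuinely delicate point is excluding $\mu=-\infty$ in the equal-modulus case; once finiteness is in hand, $\mu\le f_1<0$ supplies the sign.
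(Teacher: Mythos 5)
Your treatment of the second assertion coincides with the paper's (strict monotonicity of the convergents from \autoref{lem:fndecreasing} combined with \autoref{lem:positivity}), and your reduction of the first assertion to excluding $\mu=-\infty$ is also how the paper frames it. But that exclusion --- the only nontrivial step --- is not actually carried out, and the tools you propose cannot carry it out. Since the convergents strictly decrease, the limit always exists in $[-\infty,f_1]$; the real question is whether the canonical denominator sequence $\seq{B_n}{n}$ is dominant or minimal, because by \autoref{thm: pincherle} the value of the continued fraction is $\infty\in\hat{\mathbb{C}}$ (i.e.\ $\mu=-\infty$) exactly when $\seq{B_n}{n}$ is minimal. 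Your ratio-test computation $t_{m+1}/t_m=(-a_{m+1})/(\rho_m\rho_{m+1})$ presupposes $\rho_m=B_m/B_{m-1}\to r^+$, i.e.\ it presupposes the dominance of $\seq{B_n}{n}$, which is the very thing to be proved; the ``fixed-point analysis'' is not an argument for a nonautonomous iteration. In the degenerate regime neither \autoref{ex:KoomanAsymptotics} nor \autoref{thm:contFracConvChar} helps: the former only guarantees that \emph{some} solution is minimal, and the latter only gives convergence in $\hat{\mathbb{C}}$, which includes the value $\infty$. Writing the limit as $-m_0/m_{-1}$ for a minimal solution $\seq{m_n}{n}$ does not yield finiteness unless you also show $m_{-1}\neq 0$, which is once more the statement that $\seq{B_n}{n}$ is not minimal.

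The paper closes this gap with a short shift argument that your proposal is missing: if $\KF_{n=1}^{\infty}(a_n/b_n)$ converged to $\infty$, then the backward-extended fraction $\KF_{n=0}^{\infty}(a_n/b_n)=a_0/\bigl(b_0+\KF_{n=1}^{\infty}(a_n/b_n)\bigr)$ would converge to $0$; but the shifted recurrence still has signature $(+,-)$ and nonnegative discriminant, so its convergents are strictly decreasing from $f_0=g_1(0)/g_2(0)<0$ and cannot tend to $0$. (In the distinct-root case your route could be salvaged by observing, as in the proof of \autoref{lem:Bpositive}, that $B_m/B_{m-1}\ge\sqrt{-a_m}\to\sqrt{r^+r^-}>r^-$, which rules out $\rho_m\to r^-$ and hence forces dominance of $\seq{B_n}{n}$; but this bound collapses precisely in the equal-modulus case that you yourself flag as the delicate one, so some version of the shift argument is still required there.)
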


\begin{proof}
Since the sequence of convergents \(\seq{f_n}{n}\) associated with the continued fraction 
\(\KF_{n=1}^\infty (a_n/b_n)\) is strictly decreasing (by \autoref{lem:fndecreasing}), it is clear the limit exists.
Since \(f_1 = a_1/b_1 < 0\), if the value \(\mu\) is finite then \(\mu<0\).

We claim that $\mu$ is finite.  Subject to this assumption, let \(\seq[\infty]{u_n}{n=-1}\) be a solution to the 
recurrence relation.
For each \(N\in\N_0\), \(u_{-1}, u_0,\ldots, u_{N}>0\) if and only if \(-u_0/u_{-1} < f_N\) 
by \autoref{lem:positivity}.
Thus \(\seq[\infty]{u_n}{n=-1}\) is a positive solution if and only if \(-u_0/u_{-1} \le \mu\).

Let us prove that, subject to our assumptions, $\mu$ is indeed finite. 
Suppose, for a contradiction, that $\mu$ is infinite. 
As we have assumed that for each $\ell\in\{1,2,3\}$, $g_\ell(0)\neq 0$, we can define the recurrence corresponding to a one-step backward shift and extend uniquely any given sequence \(\seq[\infty]{u_n}{n=-1}\) to a sequence \(\seq[\infty]{u_n}{n=-2}\). 
It follows from the recursive definition of the sequence of convergents that if \(\KF_{n=1}^{\infty}(a_n/b_n)\) converges to $\infty$, then \(\KF_{n=0}^{\infty}(a_n/b_n)\) converges to $0$.
This conclusion is not possible as the sequence of convergents is strictly decreasing and \(f_0 = a_0/b_0 = g_1(0)/g_2(0) < 0\).
\end{proof}

We combine the results in Proposition~\ref{prop:mono} and \autoref{thm: pincherle}
into the following corollary.  
%Let $\mu$ denote the limit of the sequence $(r_n)$.
%
%
\begin{corollary}\label{cor:MinimalMuPositiveMu}
Let \(\seq[\infty]{u_n}{n=-1}\) be a solution of recurrence relation \eqref{eq:lowDegreeOGForm} with signature \((+,-)\) and \(\Delta(n)\ge 0\) for each \(n\in\N\).
Then a solution sequence $\seq{u_n}{n}$ with \(u_{-1},u_0>0\) is positive if and only if $-{u_0}/{u_{-1}} \le \mu$.
In addition, if $-{u_0}/{u_{-1}} = \mu$ then the sequence \(\seq{u_n}{n}\) is a minimal solution.
%\label{corl:pos2}
\end{corollary}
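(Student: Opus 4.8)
The first assertion requires no new work: it is precisely the second item of \autoref{prop:mono}, asserting that a solution with $u_{-1},u_0>0$ is positive exactly when $-u_0/u_{-1}\le\mu$. My plan is simply to invoke that proposition and carry its conclusion forward.

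For the minimality claim the strategy is to feed the convergence established in \autoref{prop:mono} into Pincherle's criterion. First I would note that, by the first item of \autoref{prop:mono}, the continued fraction $\KF_{n=1}^\infty(a_n/b_n)$ converges to the finite limit $\mu$. The first statement of \autoref{thm: pincherle} then guarantees that the recurrence admits a minimal solution, say $\seq[\infty]{m_n}{n=-1}$, and the second statement identifies the limit of the continued fraction as $-m_0/m_{-1}$. Hence $-m_0/m_{-1}=\mu$, and since $\mu$ is finite we must have $m_{-1}\neq 0$.

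It then remains to transfer minimality from $\seq{m_n}{n}$ to $\seq{u_n}{n}$ under the hypothesis $-u_0/u_{-1}=\mu$. From $-u_0/u_{-1}=\mu=-m_0/m_{-1}$, together with $u_{-1},m_{-1}\neq 0$, I would deduce that the vectors of initial values $(u_{-1},u_0)$ and $(m_{-1},m_0)$ are proportional, say $(u_{-1},u_0)=c\,(m_{-1},m_0)$ with $c=u_{-1}/m_{-1}\neq 0$. Because a solution of the second-order recurrence is uniquely determined by its two initial values, this forces $u_n=c\,m_n$ for every $n$. Finally, as recorded in the preliminaries, any nonzero scalar multiple of a minimal solution is again minimal, so $\seq{u_n}{n}$ is minimal.

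I do not expect a substantive obstacle, since the corollary is a clean splice of two earlier results. The one point demanding care is the bookkeeping around nonvanishing denominators: I must use the finiteness of $\mu$ to ensure $m_{-1}\neq 0$, so that $-m_0/m_{-1}$ is meaningful and the proportionality of the initial data is legitimate. Once that is secured, the uniqueness-of-solutions step and the closure of minimal solutions under scaling finish the proof.
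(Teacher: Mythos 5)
Your proof is correct and is essentially the argument the paper intends: the paper offers no separate proof, simply stating that the corollary follows by combining Proposition~\ref{prop:mono} (for the positivity equivalence) with Pincherle's Theorem (for the minimality claim), which is exactly the splice you carry out. Your added care about $m_{-1}\neq 0$ via the finiteness of $\mu$, the proportionality of initial data, and closure of minimality under scaling just makes explicit what the paper leaves implicit.
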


The difficulty one encounters when determining positivity arises when \(-u_0/u_{-1}\) is equal to the value \(\mu\) of the associated continued fraction.
%We delay our discussion of this point and the proof of the next proposition to \autoref{ap:compxtomu}.

%
%
\begin{proposition}[Proof in \autoref{ap:compxtomu}] \label{prop:compxtomu}
Let \(\seq[\infty]{u_n}{n=-1}\) be a non-trivial solution sequence for recurrence \eqref{eq:lowDegreeOGForm} 
with signature \((+,-)\) and suppose that $\Delta(n) \ge 0$ for each $n\in \mathbb{N}$.
Then one can detect if 
$-{u_0}/{u_{-1}} < \mu$.
\end{proposition}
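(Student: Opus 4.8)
The plan is to give a semi-decision procedure that halts and reports ``yes'' precisely when \(-u_0/u_{-1} < \mu\); combined with the complementary test for \(-u_0/u_{-1} > \mu\), this isolates the single recalcitrant case \(-u_0/u_{-1} = \mu\), which is exactly the minimality question treated later. Throughout I may assume \(u_{-1}>0\) and recall from \autoref{prop:mono} that the convergents \(f_n = A_n/B_n\) strictly decrease to the finite limit \(\mu<0\), that \(B_n>0\) (\autoref{lem:Bpositive}), and that \(-u_0/u_{-1} < f_N\) if and only if \(u_N>0\) (\autoref{lem:positivity}). The easy direction is dispatched first: since \(f_n \searrow \mu\), the convergents are computable upper bounds on \(\mu\), so computing \(\seq{u_n}{n}\) term by term and halting as soon as some \(u_N \le 0\) certifies \(-u_0/u_{-1} \ge f_N > \mu\), hence \(-u_0/u_{-1} > \mu\). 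This detects every case with \(-u_0/u_{-1} > \mu\).

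The substance is to certify \(-u_0/u_{-1} < \mu\), for which I need computable lower bounds on \(\mu\) converging up to \(\mu\). The device is the telescoped identity coming from \autoref{lem:fndecreasing}: writing \(f_{k-1}-f_k = \prod_{j=1}^k(-a_j)/(B_{k-1}B_k) > 0\), one obtains \(\mu = f_n - T_n\) with tail \(T_n = \sum_{k>n} \prod_{j=1}^k(-a_j)/(B_{k-1}B_k)\). Thus any computable upper bound \(T_n \le \eps_n\) yields the lower bound \(\mu \ge f_n - \eps_n =: \ell_n\). To bound \(T_n\), set \(\rho_k = B_k/B_{k-1}>0\) and note that successive summands have ratio \(|a_k|/(\rho_{k-1}\rho_k)\), so a good lower bound on \(\rho_k\) controls the tail. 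The crude estimate \(\rho_k \ge \sqrt{-a_k}\) from the proof of \autoref{lem:Bpositive} is too weak (it only forces this ratio below \(1\) in the limit), so I would sharpen it, via the Riccati recursion \(\rho_k = b_k - |a_k|/\rho_{k-1}\) and induction using the monotonicity of the coefficients, to \(\rho_k \ge \Lambda_k\), where \(\Lambda_k = (b_k + \sqrt{\Delta(k)})/2\) is the larger local characteristic root. In the non-degenerate regime \(\Delta_\infty := \lim_n \Delta(n) > 0\), the limiting roots \(\lambda < \Lambda\) are distinct (consistent with the Poincaré--Perron asymptotic \(\rho_k \to \Lambda\)), the summand ratio tends to \(\lambda/\Lambda < 1\), and \(T_n\) is dominated by an explicit geometric series with \(\eps_n \to 0\).

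The main obstacle is the boundary regime \(\Delta_\infty = 0\), where the limiting characteristic roots coalesce, \(\sqrt{-a_k}\) and \(\Lambda_k\) merge, and the summand ratio tends to \(1\): here the minimal and dominant solutions separate only subexponentially and the naive estimates degenerate. To treat it I would import the explicit asymptotics of \autoref{ex:KoomanAsymptotics}(2), namely \(B_n \sim n^{(1-2\alpha+2\gamma)/4}\exp(2\sqrt{(\beta-\alpha-\gamma)n})\) for the dominant solution, to produce an effective lower bound on \(\rho_k\) of the form \(\sqrt{-a_k}\,(1 + c/\sqrt{k})\); this forces the tail \(T_n\) to converge at rate roughly \(\exp(-c'\sqrt{n})\), again giving a computable \(\eps_n \to 0\). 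Matching the constants and error terms in these subexponential asymptotics to get a genuinely effective \(\eps_n\) is the delicate point of the argument.

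Finally I assemble the procedure: in parallel, compute \(u_N\) (halting with ``not \(<\mu\)'' if some \(u_N \le 0\)) and compute \(\ell_n = f_n - \eps_n\) (halting with ``\(<\mu\)'' once \(\ell_n > -u_0/u_{-1}\)). Correctness follows from \(\ell_n \to \mu\): if \(-u_0/u_{-1} < \mu\) the second test eventually fires, if \(-u_0/u_{-1} > \mu\) the first does, and only \(-u_0/u_{-1} = \mu\) escapes both — exactly the outcome asserted by \autoref{prop:compxtomu}.
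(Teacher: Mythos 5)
Your overall architecture---two interleaved semi-decision procedures, one firing on each strict side of $\mu$, with equality as the sole escaping case---is the right reading of the statement, and your ``easy direction'' (halting on a non-positive term certifies $-u_0/u_{-1}\ge f_N>\mu$) coincides with what the paper does. But your certificate for the hard direction is genuinely different from the paper's, and it contains a real gap. The paper (\autoref{ap:compxtomu}) never encloses $\mu$ numerically: it runs the Riccati dynamics on the ratios $x_n=u_n/u_{n-1}$ and shows (\autoref{lem:expand}, built on \autoref{prop:xgeqlambn_inc}, \autoref{prop:xgeqlambinf_inc} and \autoref{prop:xgeqlambinf_inceqroot}) that a strict initial gap forces $x_n$ to cross one of finitely many explicitly computable thresholds ($\lambda_n$, $\lambda_\infty$, $g_2(n)/(2g_3(n))$, or $\sqrt{-g_1(n+1)/g_3(n+1)}$), and that any such crossing is a finite, elementary certificate of permanent positivity. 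That design deliberately avoids needing any effective rate of convergence of $f_n$ to $\mu$. You instead propose computable lower bounds $\ell_n=f_n-\eps_n\nearrow\mu$ via the telescoped tail $T_n$, which stands or falls with effective lower bounds on $\rho_k=B_k/B_{k-1}$.

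Two concrete problems with that. First, the inductive claim $\rho_k\ge\Lambda_k$ does not follow from the Riccati step alone: $\rho_k=b_k-|a_k|/\rho_{k-1}\ge b_k-|a_k|/\Lambda_{k-1}$ dominates $\Lambda_k=b_k-|a_k|/\Lambda_k$ only when $\Lambda_{k-1}\ge\Lambda_k$, so you need the eventual-monotonicity case split the paper carries out in \autoref{monoroots} and a different invariant when $\seq{\Lambda_n}{n}$ is increasing. Second, and decisively, in the coalescing regime $\Delta(n)\to 0$ your tail bound needs an \emph{effective} strengthening $\rho_k\ge\sqrt{-a_k}\,(1+c/\sqrt{k})$ with an explicit $c>0$. \autoref{ex:KoomanAsymptotics}(2) supplies only asymptotic equivalences with unspecified constants and no error terms, so it cannot be ``imported'' to produce a computable $\eps_n$; moreover, in the boundary subcase $\beta_0=\alpha+\gamma_0$ (which does satisfy the hypotheses $(+,-)$ and $\Delta(n)\ge 0$, cf.\ \autoref{prop:111repeatedRoot}) the minimal and dominant solutions separate only polynomially, so even the $\exp(-c'\sqrt{n})$ shape of your $\eps_n$ is wrong and $T_n$ decays only polynomially. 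Until an effective lower bound on $\rho_k$ is proved in these degenerate regimes, your ``$<\mu$'' branch has no computable $\eps_n\to 0$ and never fires---precisely the situation the paper's elementary thresholds $\sqrt{-g_1(k+1)/g_3(k+1)}$ and $g_2(k)/(2g_3(k))$ in \autoref{prop:xgeqlambinf_inceqroot} are designed to handle.
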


%\autoref{thm:IfMinimialityThenPositivity} follows from \autoref{prop:pos}, \autoref{prop:noPositiveMinimalSolution}, \autoref{cor:MinimalMuPositiveMu},  and \autoref{prop:compxtomu}.
%The rest of this note focuses on
%the decidability of whether a rational sequence is minimal.
We deduce that if one can decide whether a holonomic sequence \(\seq{u_n}{n}\) that solves recurrence \eqref{eq:lowDegreeOGForm} is minimal, then one can decide whether \(\seq{u_n}{n}\) is a positive solution.
%This concludes the proof of the second statement in \autoref{th:oracleRel}.
\begin{proof}[Proof of \autoref{th:oracleRel}(1)]
Assume we have an oracle for the Minimality Problem for solutions $\seq[\infty]{u_n}{n=-1}$ to
recurrences of the form \eqref{eq:lowDegreeOGForm}.
Given such a recurrence, the existence of a positive solution
is decidable by combining \autoref{prop:pos}, \autoref{prop:noPositiveMinimalSolution}, and \autoref{cor:MinimalMuPositiveMu}. 
We may thus focus on instances where the associated recurrence relation
admits positive solutions. 
Notice that \autoref{prop:pos} and \autoref{cor:MinimalMuPositiveMu}
imply the existence of minimal solutions.
A trivial solution is straightforward to detect.
If $\seq{u_n}{n}$ is minimal, then it is positive by \autoref{prop:pos} and
\autoref{cor:MinimalMuPositiveMu}. Assume now that the sequence is dominant. If the signature of the associated
recurrence relation is $(-,+)$, then the sequence is not positive by \autoref{prop:pos}. 
Assume
then that the signature is $(+,-)$. 
By \autoref{prop:compxtomu}, one can detect if $-u_0/u_{-1} < \mu$.
The case \(-u_0/u_{-1}>\mu\) can also be detected as the sequence contains a negative term. 
This process is equivalent to deciding whether $\seq{u_n}{n}$ is positive.
\end{proof}

\section{Minimality for Degree-1 Holonomic Sequences}
Recall from \autoref{cor:minimalityDecidable} that the problem of whether a recurrence relation
of the form \eqref{eq: ogform} admits a minimal solution is
decidable. 
In the present section, we focus on
%We are motivated by
%\autoref{th:oracleRel}(1) to focus on 
the Minimality Problem for such recurrences.
%, i.e., given
%a recurrence relation of such form (and known to admit a minimal solution), accompanied with
%two initial values $u_{-1}$ and $u_0$, decide whether the defined sequence is a minimal
%solution.
For ease of notation, we parametrise the problem as follows.
\begin{problem}[$\textsc{Minimality}(j,k,\ell)$]
Given a solution $\seq[\infty]{v_n}{n=-1}$ to \eqref{eq: ogform} with $\deg(g_3) = j$, $\deg(g_2) = k$,
and $\deg(g_1) = \ell$, decide whether $\seq{v_n}{n}$ is
minimal.
\end{problem}
Problem $\textsc{Minimality}(0,0,0)$ asks one to determine whether a holonomic sequence that solves a
second-order \(C\)-finite recurrence is a minimal solution. 
Notice that
this is a special case of $\textsc{Minimality}(1,1,1)$ (multiply each of the coefficients
by $(n+1)$) and is therefore not treated separately in the sequel.
In this section we are interested in the decidability of $\textsc{Minimality}(j,k,\ell)$
subject to the restriction that $j,k,\ell \leq 1$.
The main result of this section is the following.
\begin{theorem}\label{thm:MinimalityRedToPeriodLike}

For $j,k,\ell \leq 1$, $\textsc{Minimality}(j,k,\ell)$ reduces to determining whether a period,
an exponential period, or a period-like integral is equal to zero.
\end{theorem}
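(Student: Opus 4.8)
The plan is to prove \autoref{thm:MinimalityRedToPeriodLike} by reducing the Minimality Problem to a question about the value of a polynomial continued fraction, and then to show that this value is expressible as a ratio of integrals of the required shape (periods, exponential periods, or period-like integrals). The starting point is \autoref{thm: pincherle}: a recurrence $u_n = b_n u_{n-1} + a_n u_{n-2}$ has a minimal solution if and only if the continued fraction $\KF(a_n/b_n)$ converges, and when it does, the minimal solution is (up to scaling) characterised by the ratio of its first two terms equalling $-u_0/u_{-1} = \mu$, where $\mu$ is the value of the continued fraction. Since by \autoref{cor:minimalityDecidable} we can already decide \emph{whether} a minimal solution exists, the residual content of \autoref{thm:MinimalityRedToPeriodLike} is: given that a minimal solution exists, deciding whether a \emph{particular} solution $\seq{v_n}{n}$ is minimal amounts to deciding whether $-v_0/v_{-1}$ equals the algebraic-number-or-continued-fraction value $\mu$. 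By \autoref{cor:PCFmin} this is precisely the PCF Equality Problem, so the task becomes: express $\mu$ as a quotient of integrals of the appropriate kind.

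First I would reduce to the normalised form \eqref{eq: sform} and dispose of the degenerate signatures $(+,+)$, $(-,-)$ and the cases where some $g_i$ vanishes, exactly as in \autoref{sub:linrec} and \S\ref{sec:posmin}; these admit no minimal solution or are handled trivially. This leaves the genuinely interesting signatures together with the discriminant sign cases already isolated in the previous section. Next, I would pass from the sequence to its generating function: writing $\mathcal{F}(x) = \sum_n u_n x^n$, the degree-$1$ recurrence \eqref{eq:lowDegreeOGForm} translates (via the standard dictionary between the shift operator on sequences and $x\frac{d}{dx}$ on power series) into a \emph{first-order} linear ODE with polynomial coefficients for $\mathcal F$, because each coefficient $g_i$ has degree at most $1$ and the recurrence is second-order. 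Solving this ODE by an integrating factor yields $\mathcal F$ as an integral of an elementary integrand $x^a(1-cx)^b \exp(\text{rational in } x)$ or similar, whose precise algebraic-versus-exponential shape is dictated by the signature and by whether $\Delta$ forces a genuine exponential factor (cf.\ the two regimes in \autoref{ex:KoomanAsymptotics}).

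The core of the argument is then to identify $\mu$ with a ratio of such integrals. The value $\mu = \lim_n A_n/B_n$ of the continued fraction equals $-\lim_n u_0^{\min}/u_{-1}^{\min}$ for the minimal solution, and the minimal solution is the one whose associated power-series solution of the ODE has the \emph{smaller} radius of convergence / the subdominant asymptotics; by Poincar\'e--Perron and Kooman's refinement (\autoref{thm: poincareperron}, \autoref{ex:KoomanAsymptotics}) the two independent solutions correspond to the two branches of the ODE solution, and the minimal one is singled out by a boundary condition at the ODE's singular point. Evaluating the integrating-factor solution between the relevant singular points (e.g.\ from $0$ to the finite singularity, or a Hankel-type contour in the exponential regime) produces exactly a period, an exponential period, or a period-like integral, and the constant $\mu$ is the quotient of two such definite integrals arising from the two initial conditions $u_{-1}, u_0$. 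Consequently $-v_0/v_{-1} = \mu$ holds if and only if a single $\Q$-linear combination of these integrals vanishes, which is the desired equality-to-zero statement.

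I expect the main obstacle to be the case analysis that matches each signature and each sign of $\Delta$ to the correct integral shape, and in particular handling the degenerate subcase where the characteristic polynomial of the associated Poincar\'e recurrence has a \emph{repeated} root (the $(x-1)^2$ example flagged after \autoref{thm:contFracConvChar}), since there the two solutions are \emph{not} separated in modulus and the clean Poincar\'e--Perron dichotomy fails; one must instead extract the subdominant solution from a logarithmic or polynomial correction term, and verify that the resulting integral still falls within the period-like class. A secondary technical point is ensuring that the integrating factor and the limits of integration have \emph{algebraic} data (so that the integrals are genuinely periods or exponential periods in the Kontsevich--Zagier sense, rather than integrals over transcendentally-specified domains), which is where the hypothesis that the sequence is \emph{real-algebraic} and the coefficients lie in $\Q[n]$ is essential.
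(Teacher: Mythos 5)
Your high-level route --- generating function, first-order ODE solved by an integrating factor, and a vanishing condition on a definite integral out to the dominant singularity --- is the same as the paper's for the cases \eqref{eq: lowDegreeForm111}--\eqref{eq: lowDegreeForm101}, and you correctly isolate the repeated-root subcase and the need for algebraic data. However, two points in your sketch are genuine gaps rather than routine details.

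First, you claim the minimal solution is the one whose power series has the \emph{smaller} radius of convergence; this is inverted. By \autoref{thm: perron} and Cauchy--Hadamard, a dominant solution gives radius exactly $1/\Lambda$ while a minimal (subdominant) solution gives radius $1/|\lambda| \geq 1/\Lambda$ (\autoref{lem:radiusOfConvergence}). Minimality is therefore detected by $\mathcal{F}$ and all its left derivatives staying \emph{finite} at $1/\Lambda$, i.e.\ by regularity rather than by an earlier singularity, and the whole criterion collapses if the orientation is reversed. Second, you treat $\int_0^{1/\Lambda}\mathcal{I}(y)t(y)\,dy$ as a convergent integral whose vanishing can be tested directly. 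Near $1/\Lambda$ the integrand behaves like $(1-\Lambda y)^{\nu-1}H(y)$, and when $\nu\leq 0$ this integral diverges; the correct condition is $C_0=0$ (a \emph{regularised} integral, namely the integrand minus its divergent tail $\sum_{n\leq-\nu}c_n(1-\Lambda y)^{n+\nu-1}$, plus explicit algebraic correction terms) or, when $-\nu\in\N_0$, the decidable algebraic condition $C_1=0$ on a logarithmic coefficient (\autoref{clm:integralAsymptotics}, \autoref{lem:FallDerivativesExistCharacterisation}). Establishing that exactly one ratio $-u_0/u_{-1}$ annihilates this functional --- uniqueness via a sign argument (\autoref{lem:vanishingIntegral}) and existence, which for \eqref{eq: lowDegreeForm101} needs a separate continuity and sign-realisation argument in $u_0$ --- is where most of the paper's work lies and is missing from your plan. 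Finally, for $\textsc{Minimality}(0,1,0)$ the coefficient of $\mathcal{F}'$ in your ODE vanishes to order two at the origin (an irregular singular point), so the paper abandons the ODE route there and instead obtains the exponential period directly from the Bessel-type integral representation of $\hypgeo{0}{1}$.
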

For definitions and discussion of periods, exponential periods, and period-like integrals
see \autoref{app:Conjectures}.

Observe that the cases for which some of the coefficient polynomials are identically $0$ are dealt with in Subsection \ref{sub:linrec}.
Hence, throughout this section, we assume that $j,k,\ell \in \{0,1\}$, i.e., none of the coefficient polynomials are identically $0$. %\todo{$\{0,1\}$}
We thus focus on recurrences of the form \eqref{eq:lowDegreeOGForm}, and establish the
following conventions. 
In the case that $\deg(g_3) = 0$, we understand that $\alpha_1 = 0$. 
A
similar convention is applied to the polynomials $g_2$ and $g_1$. On the other hand, we shall
always assume that the values $\alpha_0$, $\beta_0$, and $\gamma_0$ are non-zero in accordance
with the assumption that the polynomial coefficients do not vanish on non-negative integers.
One further assumption is made: the recurrence relations considered in this section are assumed
to admit minimal solutions. This is no loss of generality, as this is
a decidable property, as per Corollary~\ref{cor:minimalityDecidable}.

The proof of \autoref{thm:MinimalityRedToPeriodLike} is spread over several subsections with intermediate results. 
On the face of it,
we have eight different problems to consider. We first reduce the number of problems to five by
establishing some interreductions between the problems $\textsc{Minimality}(j,k,\ell)$ for
different values of the parameters $j$, $k$, $\ell$. We further employ minimality-preserving
transformations to obtain certain canonical instances of each of the remaining problems (\autoref{cor:minimalityReductionForms}).
These are then showed to reduce to checking whether a period-like integral vanishes.
For four of the cases we analyse an associated \emph{generating function} that
connects our sequences to the theory of differential equations.
The conclusion of the statement can be can be pieced together from
\autoref{prop:minimalityRepeatedRoots}, \autoref{prop:minimality111_110_101}, and \autoref{subsec:Minimality010}.

\subsection{Interreductions of \texorpdfstring{{Minimality}$(j,k,\ell)$}{\textsc{Minimality}(j,k,l)}}
\label{subsec:interreductionsLowDegrees}
In this subsection we establish some interreductions of the Minimality Problem for
degree-1 holonomic sequences. We also identify some canonical instances on which we focus
thereafter.

\begin{proposition}\label{prop:linearReductions}
\hfil %The following reductions among the problems $\textsc{Minimality}(j,k,\ell)$ hold.
\begin{enumerate}

\item $\textsc{Minimality}(0,k,1)$ reduces to $\textsc{Minimality}(1,k,0)$ and vice versa.
%The problem of deciding whether a given solution to \eqref{eq:lowDegreeOGForm} with
%$\deg(g_3) + \deg(g_1) = 1$ is minimal reduces to the problem of deciding whether a solution to an instance of \eqref{eq:lowDegreeOGForm} with $\deg(g_3) = 1$ and $\deg(g_1) = 0$ is minimal.

\item
$\textsc{Minimality}(1,0,0)$ reduces to $\textsc{Minimality}(1,1,1)$.
%The problem of deciding whether a given solution to \eqref{eq:lowDegreeOGForm} with $\deg(g_3) + \deg(g_1) = 1$ and $\deg(g_2) = 0$ is minimal reduces to the problem of deciding whether a solution to an instance of \eqref{eq:lowDegreeOGForm} with $\deg(g_3) = \deg(g_2) = \deg(g_1) = 1$ is minimal.

\item $\textsc{Minimality}(1,1,1)$ reduces to the Minimality Problem for solutions to a recurrence of the form
%$\textsc{Minimality}(1,1,1)$,
%The problem of deciding whether a given solution to \eqref{eq:lowDegreeOGForm} with
%$\deg(g_3) = \deg(g_1) = \deg(g_2) = 1$ is minimal reduces to the problem of deciding whether a solution to an instance of the form
\begin{equation}
\label{eq: lowDegreeForm111}
(n+\alpha)u_n = (\beta_1 n + \beta_0)u_{n-1} + (\gamma_1 n + \gamma_0)u_{n-2},
\end{equation}
where the coefficients are elements of $\Q[n]$, $\beta_1 > 0$ and $|\gamma_1| = \beta_1$. In
the case that $\beta_1^2 + 4\gamma_1 = 0$ in the above recurrence, then  the problem further reduces to the Minimality
Problem for solutions to a recurrence of the form
\begin{equation}\label{eq: lowDegreeForm111RepeatedRoots}
(n+\alpha) u_n = (2n + \beta_0) u_{n-1} - (n+\gamma_0) u_{n-2},
\end{equation}
where the coefficients are in $\Q[n]$.

\item $\textsc{Minimality}(1,1,0)$ reduces to the Minimality Problem for solutions to a recurrence of the
form
%The problem of deciding whether a given solution to \eqref{eq:lowDegreeOGForm} with
%$\deg(g_3) = \deg(g_2) = 1 $ and $\deg(g_1) = 0$ is minimal reduces to the problem of deciding whether a solution to an instance of the form
\begin{equation}
\label{eq: lowDegreeForm110} (n+\alpha)u_n = (\beta_1 n + \beta_0)u_{n-1} + \gamma_0 u_{n-2},
\end{equation}
where the coefficients are in $\Q[n]$, $\beta_1 > 0$ and $|\gamma_0| = \beta_1$.

\item $\textsc{Minimality}(1,0,1)$ reduces to the Minimality Problem for solutions to
a recurrence of the form
\begin{equation}
\label{eq: lowDegreeForm101} (n+\alpha)u_n = \beta_0 u_{n-1} + (n+\gamma_0) u_{n-2},
\end{equation}
where $\alpha_0, \gamma_0 \in \Q$ and $\beta_0 \in \overline{\mathbb{Q}} \cap \R_{>0}$.
\end{enumerate}
\end{proposition}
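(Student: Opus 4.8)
The plan is to obtain the reductions in items~(1) and~(3)--(5) by applying \emph{minimality-preserving} transformations to \eqref{eq:lowDegreeOGForm} and reading off the resulting normal form; item~(2) is different, and I treat it last. I will use three transformations. First, after the sign- and root-normalisations of Subsection~\ref{sub:linrec}, an index shift together with division of the whole recurrence by the (nonzero) leading coefficient of $g_3$ lets me assume $g_3(n) = n+\alpha$. Second, I will use the substitution $u_n = \left(\prod_{k=0}^{n} g_3(k)\right) v_n$ from \eqref{eq: sform}, which replaces $g_3(n)v_n = g_2(n)v_{n-1}+g_1(n)v_{n-2}$ by $u_n = g_2(n)u_{n-1}+g_1(n)g_3(n-1)u_{n-2}$. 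Third, I will use the geometric rescaling $u_n = c^n w_n$ with $c\neq 0$, under which $g_3$ is unchanged while $g_2 \mapsto c^{-1}g_2$ and $g_1 \mapsto c^{-2}g_1$. Each preserves linear independence and all ratios $u_n/v_n$ of solutions, hence preserves minimality; for the second this is exactly the content of Subsection~\ref{sub:linrec}. I would verify each claimed reduction by writing down the explicit transformation and checking the degrees.

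For item~(1) I would run \eqref{eq: sform} on both sides. In a \textsc{Minimality}$(0,k,1)$ instance $g_3$ is constant, so \eqref{eq: sform} gives $u_n = g_2(n)u_{n-1}+\alpha_0 g_1(n)u_{n-2}$; in a \textsc{Minimality}$(1,k,0)$ instance $g_1$ is constant, so it gives $u_n = g_2(n)u_{n-1}+\gamma_0 g_3(n-1)u_{n-2}$. Both lie in the common family $u_n = g_2(n)u_{n-1}+c(n)u_{n-2}$ with $\deg g_2 = k$ and $c$ an arbitrary linear polynomial, and conversely every instance of this family is the \eqref{eq: sform}-image of both a $(0,k,1)$ instance (solve $\alpha_0 g_1(n)=c(n)$) and a $(1,k,0)$ instance (solve $\gamma_0 g_3(n-1)=c(n)$, e.g.\ $\gamma_0=1$, $g_3(n)=c(n+1)$), after a shift ensuring the new coefficient has no nonnegative integer root. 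Transporting the distinguished solution through these maps yields the interreduction.

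For items~(3)--(5) I would first normalise $g_3 = n+\alpha$ and then spend the rescaling parameter $c$ on the remaining leading coefficients. In item~(3), choosing $c$ with $|c| = |\gamma_1|/|\beta_1| \in \Q$ and $\sign c = \sign \beta_1$ yields $\beta_1>0$ and $|\gamma_1|=\beta_1$; in the repeated-root case $\beta_1^2+4\gamma_1=0$ one has $\gamma_1=-\beta_1^2/4$, and the rational rescaling $c=\beta_1/2$ instead sends $(\beta_1,\gamma_1)$ to $(2,-1)$, giving \eqref{eq: lowDegreeForm111RepeatedRoots}. Item~(4) is identical with the constant $\gamma_0$ in place of $\gamma_1 n+\gamma_0$, taking $|c|=|\gamma_0|/|\beta_1|$. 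Item~(5) is the case $\deg g_2 = 0$, so $c$ must instead normalise $g_1$: the choice $c=\sqrt{\gamma_1}$ makes $g_1$ monic, i.e.\ $g_1(n)=n+\gamma_0'$ with $\gamma_0'=\gamma_0/\gamma_1\in\Q$, while turning the constant $\beta_0$ into $\beta_0/\sqrt{\gamma_1}$. This is precisely why item~(5) must allow an algebraic $\beta_0\in\overline{\mathbb{Q}}\cap\R_{>0}$ rather than a rational one: the irrationality $\sqrt{\gamma_1}$ is genuinely forced once the sole rescaling degree of freedom is used to monicise $g_1$.

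The main obstacle is item~(2). Unlike the others it cannot be realised by any of the three transformations: the integer $\deg g_3 + \deg g_1 - 2\deg g_2$ is invariant under all of them (and under multiplying the whole recurrence by a polynomial or cancelling a common polynomial factor), yet it equals $1$ for a $(1,0,0)$ instance and $0$ for a $(1,1,1)$ instance, so no minimality-preserving algebraic normalisation can connect them. I would therefore argue directly at the level of the decision problem using Pincherle's Theorem (\autoref{thm: pincherle}): the given $(1,0,0)$ solution is minimal if and only if $-u_0/u_{-1}=\mu$, where $\mu$ is the value of the (convergent) associated continued fraction. Because the generating function of a $(1,0,0)$ sequence solves an especially simple first-order linear differential equation, $\mu$ is an explicit ratio of confluent (Bessel-type) special-function values; the crux is to exhibit an explicitly constructible $(1,1,1)$ recurrence whose continued fraction has the same value $\mu$ and to transfer the initial data, so that a single query to a \textsc{Minimality}$(1,1,1)$ oracle decides the equality $-u_0/u_{-1}=\mu$. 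Pinning down this last identification is the step I expect to demand the most work.
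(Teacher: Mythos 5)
Items~(1) and~(3)--(5) of your proposal are essentially the paper's own argument: item~(1) via the interreduction between \eqref{eq: ogform} and \eqref{eq: sform}, and items~(3)--(5) via geometric rescalings $u_n \mapsto \kappa^n u_n$ with $\kappa$ chosen to normalise the leading coefficients (including the forced algebraic $\beta_0$ in item~(5) and the degenerate choice $\kappa=\beta_1/2$ in the repeated-root case). Your invariant observation for item~(2) --- that $\deg g_3 + \deg g_1 - 2\deg g_2$ is preserved by shifts, polynomial rescalings, the \eqref{eq: sform} substitution and geometric rescaling, and differs between $(1,0,0)$ and $(1,1,1)$ --- is correct and is a nice way to see that these transformations alone cannot suffice.

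However, item~(2) is where your proposal has a genuine gap. You correctly diagnose that a new idea is needed, but the route you sketch (computing the continued-fraction value $\mu$ of the $(1,0,0)$ instance as a ratio of Bessel-type functions and then ``exhibiting'' a $(1,1,1)$ recurrence with the same value) is precisely the step you leave open, and it is not clear how to carry it out in a way that a \textsc{Minimality}$(1,1,1)$ oracle can exploit: the oracle answers questions about minimality of a given solution of a given recurrence, not about equality of a ratio of initial terms with an externally specified special-function value. The paper's missing idea is a \emph{decimation}: from $(n+\alpha)u_n = \beta u_{n-1} + \gamma u_{n-2}$ one eliminates the odd-indexed terms (substituting $\beta u_{n-3} = (n+\alpha-2)u_{n-2} - \gamma u_{n-4}$) to find that $z_n := u_{2n}$ satisfies
\begin{equation*}
(2n+\alpha-1)(2n+\alpha)\, z_n = \bigl(4\gamma n + \beta^2 + \gamma(2\alpha-3)\bigr) z_{n-1} - \gamma^2 z_{n-2},
\end{equation*}
whose quadratic leading coefficient splits so that the \eqref{eq: sform}-type transformation lands in a genuine $(1,1,1)$ instance. (In continued-fraction language this is the even contraction, which is exactly the construction your sketch is groping for.) Note that your invariant is \emph{not} preserved by this step, which is why it escapes your obstruction. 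The remaining work in the paper --- which your proposal would also need --- is to verify that minimality transfers between $\seq{u_n}{n}$ and $\seq{u_{2n}}{n}$: linear independence of $\seq{u_n}{n}$ and $\seq{v_n}{n}$ must be shown to imply that of $\seq{u_{2n}}{n}$ and $\seq{v_{2n}}{n}$ (a short initial-value computation), and the existence of the limit $\lim_n u_n/v_n$ for recurrences admitting a minimal solution is invoked to pass back from minimality of the even subsequence to minimality of the full sequence. Without this construction your item~(2) is not a proof.
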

Notice that sequences satisfying \eqref{eq: lowDegreeForm101} are not necessarily holonomic (as defined in this note), as we allow the coefficient $\beta_0$ to be irrational. We note that
the reduction of $\textsc{Minimality}(1,0,1)$ to \eqref{eq: lowDegreeForm101} 
yields values of the 
parameter $\beta_0$ that are real algebraic numbers of degree at most $2$.
%\todo[color=green!40!]{Is there a follow-up to this remark?  Can we decide minimality in this case with irrational coefficients?}
%\todo{Minimality of this case reduces to checking if a period-like integral is $0$. The remark could be removed, but I think the content should be pointed out.}

\begin{proof} %\todo{move part of proof to appendix? Perhaps all?}
We prove Item 2 below. Item 1 follows immediately from the interreductions of the recurrence relations \eqref{eq: ogform} and \eqref{eq: sform} described in the preliminaries.  The other items follow from minimality preserving transformations
of the form $\seq{u_n}{n} \mapsto \seq{\kappa^n u_n}{n}$ for some appropriate constants
$\kappa$. In the last item we need to know that the recurrence admits a minimal solution if and only if $\alpha_1 \gamma_1 > 0$ (compare to \autoref{lem:whenHasMinimalSolution}). See \autoref{app:interreductions} for the complete proofs.

%\begin{enumerate}
%\item
%Follows immediately from the interreductions of the recurrence relations \eqref{eq: ogforma} and \eqref{eq: sform} described in the preliminaries.
%
%\item
%
%
Let us normalise the recurrence relation by dividing through by the leading coefficient of $g_3$.
The normalised recurrence is given by
\begin{equation}\label{eq: 100}
(n + \alpha)u_n = \beta u_{n-1} + \gamma u_{n-2}
\end{equation}
with coefficients in \(\Q[n]\).  
We can assume $\alpha := \alpha_0/\alpha_1 > 1$ by taking an appropriate shift.

Suppose that $\seq{u_n}{n}$ is a solution to \eqref{eq: 100}. 
We observe that $\seq{u_n}{n}$ satisfies the recurrence
\begin{equation}\label{eq:modifiedRecurrence}
(n+\alpha - 1)(n+\alpha) u_n = (2\gamma n + \beta^2 + \gamma( 2\alpha - 3))u_{n-2} - \gamma^2 u_{n-4}
\end{equation}
for $n\geq 3$.
This observation follows easily by substituting  $\beta u_{n-3} = (n+\alpha - 2) u_{n-2} - \gamma u_{n-4}$ performing straightforward algebraic manipulations:
\begin{align*}
(n+\alpha - 1)(n+\alpha) u_n &= (n + \alpha -1)(\beta u_{n-1} + \gamma u_{n-2})\\
%	&= \beta(n + \alpha - 1)u_{n-1} + \gamma(n + \alpha - 1) u_{n-2}  \\
%	&= (\beta^2 + \gamma(n+\alpha -1)) u_{n-2} + \beta \gamma u_{n-3}\\
%	&= (\beta^2 + \gamma(n+\alpha -1)) u_{n-2} + \gamma(n + \alpha - 2)u_{n-2} - \gamma^2 u_{n-4}\\
	&= (2\gamma n + \beta^2 + \gamma( 2\alpha - 3))u_{n-2} - \gamma^2 u_{n-4}.
\end{align*}

Let us now define the sequence $\seq[\infty]{z_n}{n=0}$ by $z_n = u_{2n}$ for each $n\in\N_0$.
Then
$z_0 = u_0$ and
\begin{equation*}
z_1 = u_2 = \frac{1}{\alpha+2} (\beta u_1 + \gamma u_0)  = \mleft(\frac{\beta^2}{(\alpha+2)(\alpha+1)}+\gamma \mright)u_0 + \frac{\beta \gamma}{(\alpha + 2)(\alpha + 1)}u_{-1}.
\end{equation*}
It is clear that for $n\geq 2$
\begin{equation}\label{eq:modifiedRecurrence2}
(2n+\alpha - 1)(2n+\alpha) z_n = (4\gamma n + \beta^2 + \gamma( 2\alpha - 3)) z_{n-1} - \gamma^2 z_{n-2}
\end{equation}
by \eqref{eq:modifiedRecurrence} with the mapping $n\mapsto 2n$.
This establishes a one-to-one correspondence between the solutions to
\eqref{eq:modifiedRecurrence2} and the solutions to \eqref{eq: 100}.
We claim that this equivalence of solutions preserves minimality.
%\todo{One computes $1+\frac{g_3(n-1)g_1(n)}{g_2(n)g_2(n-1)} = \frac{\beta^2}{2\gamma n} + \mathcal{O}(n^{-2})$, so the recurrence \eqref{eq:modifiedRecurrence2} has a minimal solution by
%\autoref{thm:contFracConvChar}, which is to be expected. The (interesting) point is that this is a repeated characteristic root case. It might be worth pointing out, perhaps not.}
%
%
To conclude the claim, observe that a solution to \eqref{eq:modifiedRecurrence2} can be transformed into a solution to the recurrence
\begin{equation*}
(2n+\alpha - 1) v_n = (4\gamma n + \beta^2 + \gamma( 2\alpha - 3)) v_{n-1} - \gamma^2 (2n+\alpha-2)v_{n-2}
\end{equation*}
such that this transformation preserves minimality (using the same minimality preservation reduction from \eqref{eq: ogform} to \eqref{eq: sform}).

Let us prove that minimality is preserved as claimed.
We first show that the solutions $\seq{u_n}{n}$ and $\seq{v_n}{n}$ to \eqref{eq: 100} are
linearly independent if and only if the solutions $\seq{u_{2n}}{n}$ and $\seq{v_{2n}}{n}$ to
\eqref{eq:modifiedRecurrence2} are linearly independent.
One direction is trivial: if $\seq{u_n}{n}$ and $\seq{v_n}{n}$ are linearly dependent, then so are $\seq{u_{2n}}{n}$
and $\seq{v_{2n}}{n}$. Assume thus that $\seq{u_n}{n}$ and $\seq{v_n}{n}$ are linearly independent, but $\seq{u_{2n}}{n} = \ell \seq{v_{2n}}{n}$ for some $\ell \in \R$.
Consider the solution $\seq[\infty]{U_n}{n=-1} := \seq{u_n}{n} - \ell \seq{v_n}{n}$ to
\eqref{eq: 100}. We have $U_{-1} = u_{-1} - \ell v_{-1}$ and $U_0 = 0$. Now $U_{-1} \neq 0$, as
otherwise $\seq{u_n}{n}$ would be proportional to $\seq{v_n}{n}$. Observe now that
$U_{1} = \tfrac{\gamma}{1+\alpha}U_{-1}$ and $U_2 = \tfrac{\beta}{\alpha+2} U_1 = \tfrac{\beta \gamma}{(1+\alpha)(2+\alpha)}U_{-1} \neq 0 = u_2-\ell v_2$.
This is a contradiction. We have established that $\seq{u_{2n}}{n}$ and $\seq{v_{2n}}{n}$
are necessarily linearly independent.

Assume now that $\seq{u_n}{n}$ is a minimal solution to \eqref{eq: 100} and let $\seq{v_n}{n}$ be a dominant solution. 
Then % take solution $\seq{w_n}{n}$, we thus have
%\begin{equation*}
$0 = \lim_{n\to \infty} {u_n}/{v_n} = \lim_{n\to \infty} {u_{2n}}/{v_{2n}}.$
%\end{equation*}
Since $\seq{u_{2n}}{n}$ and $\seq{v_{2n}}{n}$ are linearly independent solutions to \eqref{eq:modifiedRecurrence2}, $\seq{u_{2n}}{n}$ is necessarily minimal.

Conversely, assume that the linearly independent solutions 
$\seq{u_n}{n}$ and $\seq{v_n}{n}$ to \eqref{eq: 100} are such that $\seq{u_{2n}}{n}$ is a
minimal solution to \eqref{eq:modifiedRecurrence2} (such a solution exists by the previous paragraph, as we assume \eqref{eq: 100} to admit a minimal solution). Then, since $\seq{v_{2n}}{n}$ is linearly
independent to $\seq{u_{2n}}{n}$, we have $0 = \lim_{n\to\infty} {u_{2n}}/{v_{2n}}$. As the
recurrence relation \ref{eq: 100}  admits a minimal solution by assumption, it can be shown
%\autoref{lem:limit_of_ratios} we know
that the limit $\lim_{n\to\infty}{u_n}/{v_n}$
exists (\cite[\S IV.1.5]{LW1992}), so $\lim_{n\to\infty} {u_n}/{v_n} = 0$. We have established that $\seq{u_n}{n}$ is
a minimal solution to \eqref{eq: 100}.
\end{proof}

\begin{corollary}\label{cor:minimalityReductionForms}
The decidability of \textsc{Minimality}$(j,k,\ell)$ with $j,k,\ell \in \{0,1\}$ reduces to deciding
\textsc{Minimality}$(0,1,0)$ and the Minimality Problem for solutions to recurrences of the
form \eqref{eq: lowDegreeForm111}, \eqref{eq: lowDegreeForm111RepeatedRoots},
\eqref{eq: lowDegreeForm110}, and \eqref{eq: lowDegreeForm101}.
\end{corollary}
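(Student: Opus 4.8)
The plan is to treat this as a purely combinatorial consequence of \autoref{prop:linearReductions}, exhausting all eight instances of $\textsc{Minimality}(j,k,\ell)$ with $j,k,\ell\in\{0,1\}$ and collapsing each of them onto one of the five target problems by \emph{composing} reductions already established. Since reducibility is transitive, it suffices to exhibit, for every one of the eight triples, a finite chain of reductions terminating in $\textsc{Minimality}(0,1,0)$ or in the Minimality Problem for a recurrence of the form \eqref{eq: lowDegreeForm111}, \eqref{eq: lowDegreeForm111RepeatedRoots}, \eqref{eq: lowDegreeForm110}, or \eqref{eq: lowDegreeForm101}.

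First I would dispose of the instances that are already in target form or reduce in a single step. The triple $(0,1,0)$ is itself one of the listed canonical problems, so nothing is needed. By Item~5 of \autoref{prop:linearReductions}, $(1,0,1)$ reduces directly to \eqref{eq: lowDegreeForm101}, and by Item~4, $(1,1,0)$ reduces directly to \eqref{eq: lowDegreeForm110}. Next I would route the remaining triples through Item~1, which interreduces $\textsc{Minimality}(0,k,1)$ with $\textsc{Minimality}(1,k,0)$: applying it to $(0,1,1)$ yields $(1,1,0)$, hence \eqref{eq: lowDegreeForm110} as above, while applying it to $(0,0,1)$ yields $(1,0,0)$.

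It then remains to funnel everything with trivial $\beta$- and $\gamma$-behaviour into the form~\eqref{eq: lowDegreeForm111}. By Item~2, $(1,0,0)$ reduces to $(1,1,1)$; this also resolves the triple $(1,0,0)$ itself and, via the previous paragraph, $(0,0,1)$. The $C$-finite case $(0,0,0)$ is, as observed in the text preceding the problem definition, exhibited as a special case of $(1,1,1)$ by multiplying every coefficient by $(n+1)$. Finally, every instance that has arrived at $(1,1,1)$ reduces by Item~3 to a recurrence of the form~\eqref{eq: lowDegreeForm111}, and in the degenerate subcase $\beta_1^2 + 4\gamma_1 = 0$ further to~\eqref{eq: lowDegreeForm111RepeatedRoots}. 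This accounts for all eight triples.

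There is no genuine analytic difficulty here: the substantive content — the minimality-preserving substitutions $\seq{u_n}{n}\mapsto\seq{\kappa^n u_n}{n}$, the even-index decimation behind Item~2, and the normalisations fixing the signs and moduli of the leading coefficients — is entirely discharged inside \autoref{prop:linearReductions}, which I am free to invoke. The only point demanding care is \emph{completeness} of the case analysis: one must check that all eight triples are covered and that each reduction chain indeed terminates at one of the five named problems. I would therefore present the argument as an explicit enumeration over the eight triples, making the exhaustiveness transparent, and note once at the outset that transitivity of reducibility licenses the composition of the individual steps.
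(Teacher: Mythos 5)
Your proposal is correct and is essentially the argument the paper intends: the corollary is stated without a separate proof precisely because it is the composition, over all eight triples, of the reductions in \autoref{prop:linearReductions} (together with the earlier remark that $\textsc{Minimality}(0,0,0)$ embeds into $\textsc{Minimality}(1,1,1)$). Your explicit enumeration of the eight cases and the routing of $(0,0,1)$ and $(0,1,1)$ through Item~1 into $(1,0,0)$ and $(1,1,0)$ respectively matches the intended chains exactly.
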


%We postpone handling $\textsc{Minimality}(0,1,0)$ to \autoref{subsec:Minimality010}. Until
%further notice, we thus only
Before delving into the proof of \autoref{thm:MinimalityRedToPeriodLike}, we establish some notation.
Consider the recurrences
\eqref{eq: lowDegreeForm111}--\eqref{eq: lowDegreeForm101}.
Dividing through by $g_3(n) = n+\alpha$, i.e., putting such recurrences into the form
\eqref{eq: ogformb}, we obtain Poincar\'{e} recurrences, since
$\lim_{n\to\infty}{g_2(n)}/{g_3(n)} = \beta_1$ and
$\lim_{n\to\infty}{g_1(n)}/{g_3(n)} = \gamma_1$. 
%\todo{Is this convention used already in section 3?}
Let $\lambda$ and $\Lambda$ be the roots
of the associated characteristic polynomial such that $|\lambda| \leq |\Lambda|$. As
$\beta_1$ and $\gamma_1$ are not zero simultaneously in these recurrences, at least one of
the roots is non-zero. Hence $\Lambda \neq 0$.

Now recurrences of the form \eqref{eq: lowDegreeForm111RepeatedRoots} are a subset of
recurrences of the form \eqref{eq: lowDegreeForm111}. To avoid cluttering the text we shall
differentiate the two as follows: when referring to recurrences of the form
\eqref{eq: lowDegreeForm111} we always assume that $\beta_1^2 + 4\gamma_1 \neq 0$. Thus the characteristic roots are always distinct for relation \eqref{eq: lowDegreeForm111},
while \eqref{eq: lowDegreeForm111RepeatedRoots} has a single repeated characteristic root.

The next lemma gives necessary and sufficient conditions for the existence of minimal solutions.

% recurrences admit a minimal solution.
%
\begin{lemma}\label{lem:whenHasMinimalSolution} \hfil
	\begin{enumerate}
	\item Recurrence relations associated to $\textsc{Minimality}(0,1,0)$ always admits a minimal solution.
%	$u_{n} = (\beta_1 + \beta_0)u_{n-1} + \gamma_0 u_{n-2}$
% 	always admits a minimal solution.
%	relation associated to instances of $\textsc{Minimality}(0,1,0)$
%	always admit a minimal solution.
%	
	\item A recurrence of the form \eqref{eq: lowDegreeForm111}, \eqref{eq: lowDegreeForm110}, or
\eqref{eq: lowDegreeForm101} admits a minimal solution if and only if the associated
characteristic roots are real.
	\item The recurrence \eqref{eq: lowDegreeForm111RepeatedRoots} admits a minimal solution if and only if $\beta_0 - \alpha_0 - \gamma_0 \geq 0$.
	\end{enumerate}
\end{lemma}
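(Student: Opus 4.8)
The plan is to verify each of the three items by applying the convergence and asymptotic machinery developed earlier in the paper, principally \autoref{thm:contFracConvChar} together with Pincherle's Theorem (\autoref{thm: pincherle}), and the Kooman asymptotics recorded in \autoref{ex:KoomanAsymptotics}. For each recurrence we first pass to the normalised form $u_n = b_n u_{n-1} + a_n u_{n-2}$ by dividing through by $g_3(n) = n+\alpha$, so that existence of a minimal solution is equivalent to convergence of the continued fraction $\KF(a_n/b_n)$. Recall that $\lambda,\Lambda$ denote the characteristic roots (with $|\lambda|\le|\Lambda|$) of $x^2 - \beta_1 x - \gamma_1$; reality of the roots is governed by the sign of the discriminant $\beta_1^2 + 4\gamma_1$.

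For Item 1, the recurrence for $\textsc{Minimality}(0,1,0)$ has $g_3$ and $g_1$ of degree $0$ and $g_2$ of degree $1$, so after normalising we obtain $a_n \to \gamma_0/\alpha_0$ but $b_n = g_2(n)/g_3(n)$ grows linearly in $n$. I would compute the rational function $r(n) = 1 + 4q(n)/(p(n-1)p(n))$ from \autoref{thm:contFracConvChar} with $p(n)=g_2(n)$ (degree $1$) and $q(n)=g_1(n)g_3(n-1)$ (degree $0$ after the transformation to \eqref{eq: sform}), and observe that $\deg(r) = -2$ with $\lim_{n\to\infty} r(n) n^2$ determined by the leading coefficients; since the degree drop makes the relevant limit comfortably satisfy the threshold $\ge -1/4$, convergence always holds and a minimal solution always exists. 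For Item 2, I would treat \eqref{eq: lowDegreeForm111}, \eqref{eq: lowDegreeForm110}, and \eqref{eq: lowDegreeForm101} uniformly: each is a Poincar\'e recurrence with $\Lambda \neq 0$, and reality of the characteristic roots is exactly the condition $|\lambda|\neq|\Lambda|$ needed to invoke the Poincar\'{e}--Perron Theorem (\autoref{thm: poincareperron}), which then produces solutions with $u_{n+1}^{(1)}/u_n^{(1)}\sim\lambda$ and $u_{n+1}^{(2)}/u_n^{(2)}\sim\Lambda$; the ratio of the slower-growing to the faster-growing tends to $0$, yielding a minimal solution. Conversely, when the roots are complex (equal modulus), \autoref{thm: perron} gives $\limsup\sqrt[n]{|u_n^{(i)}|}=|\lambda|=|\Lambda|$ for both, and one argues as in the paper's $\Delta<0$ discussion that no minimal solution can exist because the ratio of two generic solutions oscillates without a limit.

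Item 3 is the delicate case, since \eqref{eq: lowDegreeForm111RepeatedRoots} has a single repeated characteristic root $x=1$, so the Poincar\'{e}--Perron dichotomy degenerates and one must use the finer subexponential asymptotics of \autoref{ex:KoomanAsymptotics}(2). With $\beta=\beta_0$, $\gamma=\gamma_0$ in that example, provided $\beta_0 - \alpha_0 - \gamma_0 > 0$ (matching the hypothesis $\beta>\alpha+\gamma$ there) the two linearly independent solutions behave like $n^{(1-2\alpha+2\gamma)/4}\exp(\pm 2\sqrt{(\beta_0-\alpha_0-\gamma_0)n})$, so their ratio is $\exp(-4\sqrt{(\beta_0-\alpha_0-\gamma_0)n})\to 0$ and the $\exp(-\cdots)$ branch is minimal. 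When $\beta_0-\alpha_0-\gamma_0 < 0$ the square root becomes imaginary and the two solutions oscillate with equal polynomial modulus, so no minimal solution exists; the boundary value $\beta_0-\alpha_0-\gamma_0 = 0$ must be handled separately, and I expect this to be the main obstacle. Here the Kooman asymptotics of \autoref{ex:KoomanAsymptotics}(2) no longer apply directly (the exponential factor collapses), and I would instead appeal to the continued-fraction convergence criterion of \autoref{thm:contFracConvChar}: computing $r(n) = 1 + 4\gamma_1 + O(1/n)$ with $\gamma_1 = -1$ forces the boundary of case (1), where $\deg(r)\le -2$ and the threshold $\lim_{n\to\infty} r(n)n^2 \ge -1/4$ must be checked exactly. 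I would verify that at $\beta_0-\alpha_0-\gamma_0 = 0$ this limit equals precisely $-1/4$, so convergence (hence a minimal solution) still holds, giving the closed inequality $\beta_0-\alpha_0-\gamma_0\ge 0$ as stated.
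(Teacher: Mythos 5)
Your overall strategy---deciding existence of minimal solutions via the continued-fraction convergence criterion of \autoref{thm:contFracConvChar} combined with Pincherle's Theorem---is the same as the paper's, which applies \autoref{thm:contFracConvChar} uniformly to all five cases by computing the asymptotic expansion of $r(n) = 1 + 4\frac{g_1(n)g_3(n-1)}{g_2(n)g_2(n-1)}$. Your Item 1 reaches the right conclusion, though for the wrong case of the theorem: here $r(n) = 1 + o(1)$, so $\deg(r) = 0$ (not $-2$; you have computed the degree of $r-1$), and one invokes case (2) with $\lim r(n) = 1 > 0$.

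The genuine gap is in Item 2. You claim that reality of the characteristic roots ``is exactly the condition $|\lambda|\neq|\Lambda|$ needed to invoke the Poincar\'{e}--Perron Theorem,'' but this is false for \eqref{eq: lowDegreeForm101}: there the normalised recurrence has $b_n \to 0$ and $a_n \to 1$, so the characteristic roots are $\pm 1$---real but of \emph{equal} modulus---and \autoref{thm: poincareperron} does not apply. This is precisely why the paper falls back on $r(n) = 1 + 4n^2/\beta_0^2 + o(n^2)$ (case (2) of \autoref{thm:contFracConvChar} with $d=2$) for that recurrence; alternatively one can use \autoref{ex:KoomanAsymptotics}(1), whose two solutions have ratio $\sim (-1)^n n^{-\beta_0} \to 0$. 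Your converse direction (complex roots imply no minimal solution) is also unsupported: the $\Delta<0$ discussion in the appendix concerns positivity, not minimality, and ``the ratio of two generic solutions oscillates'' does not by itself rule out the existence of \emph{some} pair witnessing minimality---whereas divergence of the continued fraction plus Pincherle does. In Item 3, the subcase $\beta_0 - \alpha - \gamma_0 < 0$ is likewise asserted rather than proved (\autoref{ex:KoomanAsymptotics}(2) is only stated under $\beta > \alpha + \gamma$); the clean route is again $r(n) = c_1 n^{-1} + c_2 n^{-2} + o(n^{-2})$ with $c_1 = \beta_0 - \alpha - \gamma_0$, which forces divergence when $c_1 < 0$. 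Finally, at the boundary $c_1 = 0$ the relevant limit is $c_2 = \frac{\alpha-\gamma_0}{2}\bigl(\frac{\alpha-\gamma_0}{2}-1\bigr)$, which is always $\geq -1/4$ by completing the square but equals $-1/4$ only when $\alpha - \gamma_0 = 1$, so your prediction that it ``equals precisely $-1/4$'' would not survive the computation, although the conclusion that convergence holds does.
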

\begin{proof}
We can determine whether a minimal solution exists using the criteria in \autoref{thm:contFracConvChar} that discusses the asymptotic properties of the function $r(n) = 1 + 4\frac{g_1(n)g_3(n-1)}{g_2(n)g_2(n-1)}$.

In Items 2 and 3, notice that the characteristic roots are real if and only if $\beta_1^2 + 4 \gamma_1 \geq 0$.
Further, the characteristic roots are distinct if and only if $\beta_1^2 + 4\gamma_1 \neq 0$.
	\begin{enumerate}
	\item  We have $r(n) = 1 + o(1)$. By \autoref{thm:contFracConvChar}, the recurrence
	always admits a minimal solution.
	\item  First, consider recurrence \eqref{eq: lowDegreeForm111}. Here we have
%\begin{equation*}
$r(n) = c_0 + o(1)$,
%\end{equation*}
where $c_0 = 1+4\gamma_1/\beta_1^2$. 
Since we assume $\beta_1^2 + 4\gamma \neq 0$, we have that  $c_0 \neq 0$.
 By \autoref{thm:contFracConvChar}, the recurrence admits a minimal
solution if and only if $c_0 > 0$. 
But $c_0 > 0$ if and only if $\lambda$, $\Lambda \in \R$.
Second, consider recurrence \eqref{eq: lowDegreeForm110}.  In this case $r(n) = 1 + o(1)$.
 So the
recurrence always has a minimal solution and, in addition, we have $\lambda = 0$ and $\Lambda = \beta_1 \in \R$.
Finally, consider recurrence \eqref{eq: lowDegreeForm101}. 
In this case $r(n) = 1 + 4n^2/\beta_0^2 + o(n^2)$. 
The recurrence has a minimal solution by \autoref{thm:contFracConvChar} and the
characteristic roots are $\pm 1$.

	\item Consider recurrence \eqref{eq: lowDegreeForm111RepeatedRoots}. Here
$r(n) = {c_1}n^{-1} + {c_2}n^{-2} + o(n^{-2})$, where $c_1 = \beta_0 - \alpha - \gamma_0$,
and
$ c_2 = (\beta_0-\alpha)c_1 + (\alpha - \beta_0/2)(\alpha_0 - \beta_0/2 - 1)$.
There are two cases to consider. If $c_1 \neq 0$, then the recurrence admits a minimal solution if and only if $c_1 > 0$. 
Now assume that $c_1 = 0$, i.e., $\beta_0 = \alpha + \gamma_0$. 
Then
$c_2 = \frac{\alpha - \gamma_0}{2}(\frac{\alpha - \gamma_0}{2}-1)$. In this case the recurrence admits
a minimal solution if and only if $c_2 \geq -{1}/{4}$. 
This inequality is always true since \(x(x-1) + 1/4 = (x-1/2)^2 \ge 0\).
\qedhere
 \end{enumerate}
\end{proof}

In the following subsection, we show that \autoref{thm:MinimalityRedToPeriodLike} holds
for $\textsc{Minimality}(0,1,0)$. We then consider the other recurrences thereafter.

\subsection{\texorpdfstring{{Minimality}$(0,1,0)$}{\textsc{Minimality}(0,1,0)}}\label{subsec:Minimality010}
%\gk{I note the class cannot print \textsc{Minimality} in a subsection title}

In this subsection we consider solutions to the recurrence
\begin{equation}\label{eq:010Recurrence}
u_{n} = (\beta_1 n + \beta_0)u_{n-1} + \gamma_0 u_{n-2}.
\end{equation}

For a minimal solution $\seq{u_n}{n}$ to this recurrence, we have
\begin{equation*}
-u_0/u_{-1} = \KF_{n=1}^{\infty} \mleft( \frac{\gamma_0}{\beta_1 n + \beta_0} \mright)
= \beta_0 \frac{
				\hypgeo{0}{1}(;\beta_0/\beta_1;\gamma_0 /\beta_1^2)
			}{
				\hypgeo{0}{1}(;\beta_0/\beta_1+1;\gamma_0 /\beta_1^2)
			}-\beta_0
\end{equation*}
(see \cite[\S VI.4.1]{LW1992}). Hence the Minimality Problem for the above recurrence reduces to checking the equality
\begin{equation}\label{eq:hypergeometricEquation}
(\beta_0u_{-1} - u_0) \hypgeo{0}{1} (;\beta_0/\beta_1+1;\gamma_0 /\beta_1^2) 
 = \beta_0 \hypgeo{0}{1}(;\beta_0/\beta_1;\gamma_0 /\beta_1^2).
\end{equation}
The \textit{Bessel functions of the first kind}, sometimes called \textit{cylinder functions}, \(J_s(z)\) are a family of functions that solve Bessel's differential equation \cite{abramowitz1972handbook, andrews1999special, cuyt2008handbook}.
For  \(z,s\in\C\) the function \(J_{s}(z)\) is defined by the hypergeometric series \cite[equation 9.1.69]{abramowitz1972handbook}
	\begin{equation*}
		J_s(z) := \sum_{k=0}^\infty \frac{(-1)^k}{k! \Gamma(s+k+1)} \mleft(\frac{z}{2} \mright)^{2k+s} = \frac{1}{\Gamma(s+1)} \mleft( \frac{z}{2} \mright)^{s} \hypgeo{0}{1}(; s+1; -z^2/4).
	\end{equation*}
We obtain the principal branch of \(J_{s}(z)\) by assigning \((z/2)^s\) its principal value.
When \(\Re(s)>-1/2\)  we have the following integral representation \cite[equation 4.7.5]{andrews1999special},
	\begin{equation*}
		J_s(z) = \frac{1}{\sqrt{\pi}\Gamma(s+1/2)} \mleft(\frac{z}{2}\mright)^s \int_{-1}^{1} \eu^{\iu zt} (1-t^2)^{s-1/2} \, dt.
	\end{equation*}
%%The Bessel functions are related to the function \(\hypgeo{0}{1}\) by \cite[equation 9.1.69]{abramowitz1972handbook}
%	\begin{equation*}
%	 J_s(z) = \frac{1}{\Gamma(s+1)} \mleft( \frac{z}{2} \mright)^{s} \hypgeo{0}{1}(; s+1; -z^2/4).
%	\end{equation*}
%
%
%
Hence for \(\Re(s)>-1/2\),  we have the following integral representation
\begin{equation*}
\hypgeo{0}{1}(;s+1;z) =
		\frac{\Gamma(s+1)}{\sqrt{\pi}\Gamma(s+1/2)}
			\int_{-1}^{1} \eu^{-2 \sqrt{z} t} (1-t^2)^{s-1/2} \, dt.
\end{equation*}
Let us return to minimal solutions of the aforementioned recurrence relation.
By substitution into \eqref{eq:hypergeometricEquation} and linearity of the integral, 
% use \Gamma(x+1)=x\Gamma(x) here
 we see that
$\textsc{Minimality}(0,1,0)$ reduces to checking the equality
	\begin{equation} \label{eq:integralEquation010}
	\int_{-1}^1   \eu^{-2\sqrt{\gamma_0} t/\beta_1} (1-t^2)^{\beta_0/\beta_1 - 3/2}  \mleft( \frac{2(\beta_0 u_{-1} - u_0)}{2\beta_0-1}(1-t^2) -1 \mright)\, dt = 0.
	\end{equation}
To ensure that the integral converges absolutely note that we can shift the recurrence so that ${\beta_0}/{\beta_1} > 3/2$.
The integral on the left-hand side is an exponential period, and thus we have proved
\autoref{thm:MinimalityRedToPeriodLike} in the case of $\textsc{Minimality}(0,1,0)$.
% if $\gamma_0 > 0$. Otherwise it is not.

\subsection{Generating function analysis}
For the remainder of this section, we only consider recurrences
\eqref{eq: lowDegreeForm111}--\eqref{eq: lowDegreeForm101}. By \autoref{lem:whenHasMinimalSolution}, we thus
assume that the associated characteristic roots are real. We may now choose $\Lambda > 0$:
indeed, $\beta_1 > 0$ in the first three recurrences implies that the dominant root is
positive. In the fourth recurrence, the roots are $\pm 1$, and we are free to choose
$\Lambda = 1$. (The assumption of $\beta_0 > 0$ in \eqref{eq: lowDegreeForm101} is used in the
sequel. We will explicitly recall this fact when needed, but, for now, this is not important.)

Let $\seq{u_n}{n \geq -1}$ be a non-trivial
solution to one of the recurrences of the form 
\eqref{eq: lowDegreeForm111}--\eqref{eq: lowDegreeForm101}. 
We associate to $\seq{u_n}{n}$ the
 generating series
\begin{equation*}
\mathcal{F}(x) = \sum_{n=0}^{\infty} u_{n-1} x^{n+\alpha}.
\end{equation*}
We consider analytic properties of the generating function defined by the above generating series. We first observe that the series has a positive radius of convergence. Recall that
$\Lambda > 0$ in this analysis.

\begin{lemma}\label{lem:radiusOfConvergence}
Let $\seq{u_n}{n}$ be a non-trivial solution to one of aforementioned recurrences. If
$\seq{u_n}{n}$ is a dominant (resp., minimal) solution, then the series $\mathcal{F}(x)$
has radius of convergence ${1}/{\Lambda}$
(resp., ${1}/{|\lambda|}$, which we understand as $\infty$ if $\lambda = 0$.)
%(Note $1/0 = \infty$ in \(\hat{\C}\).)
\end{lemma}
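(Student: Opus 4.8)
The plan is to compute the radius of convergence through the Cauchy--Hadamard formula and to read off the exponential growth rate of the coefficient sequence from the Poincar\'e--Perron theory. Write $\mathcal{F}(x) = x^{\alpha}G(x)$ with $G(x) = \sum_{n=0}^{\infty} u_{n-1}x^{n}$. Since the prefactor $x^{\alpha}$ is analytic and non-vanishing away from the origin, the series $\sum_n u_{n-1}x^{n+\alpha}$ converges precisely where $G$ does, so $\mathcal{F}$ and $G$ share the same radius of convergence $R = 1/\limsup_{n\to\infty} |u_{n-1}|^{1/n}$. Re-indexing and using $(n-1)/n \to 1$ gives $R = 1/L$, where $L := \limsup_{n\to\infty} |u_n|^{1/n}$. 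The lemma thus reduces to showing $L = \Lambda$ when $\seq{u_n}{n}$ is dominant and $L = |\lambda|$ when $\seq{u_n}{n}$ is minimal.

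First I would treat the distinct-modulus case $|\lambda| \neq |\Lambda|$, which among the recurrences at hand covers \eqref{eq: lowDegreeForm111} and \eqref{eq: lowDegreeForm110}. Dividing through by $g_3(n) = n+\alpha$ yields a Poincar\'e recurrence with characteristic polynomial $x^2 - \beta_1 x - \gamma_1$, so \autoref{thm: poincareperron} supplies linearly independent solutions $\seq{u_n^{(1)}}{n}$ and $\seq{u_n^{(2)}}{n}$ with $u_{n+1}^{(1)}/u_n^{(1)} \sim \lambda$ and $u_{n+1}^{(2)}/u_n^{(2)} \sim \Lambda$. The ratio criterion then gives $\limsup |u_n^{(1)}|^{1/n} = |\lambda|$ and $\limsup |u_n^{(2)}|^{1/n} = |\Lambda|$, and moreover $|u_n^{(1)}/u_n^{(2)}|^{1/n} \to |\lambda|/|\Lambda| < 1$ forces $u_n^{(1)}/u_n^{(2)} \to 0$; hence $\seq{u_n^{(1)}}{n}$ spans the minimal solutions and $\seq{u_n^{(2)}}{n}$ is dominant. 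A minimal solution is a scalar multiple of $\seq{u_n^{(1)}}{n}$, giving $L = |\lambda|$ (read as $R = \infty$ when $\lambda = 0$, as occurs for \eqref{eq: lowDegreeForm110}). A dominant solution has the form $\alpha_1 u_n^{(1)} + \alpha_2 u_n^{(2)}$ with $\alpha_2 \neq 0$; since $u_n^{(1)}/u_n^{(2)} \to 0$, we get $u_n/u_n^{(2)} \to \alpha_2 \neq 0$ and therefore $L = |\Lambda| = \Lambda$.

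The main obstacle is the equal-modulus case $|\lambda| = |\Lambda|$, namely the repeated-root recurrence \eqref{eq: lowDegreeForm111RepeatedRoots} (where $\lambda = \Lambda = 1$) and recurrence \eqref{eq: lowDegreeForm101} (where the roots are $\pm 1$), since there the clean ratio asymptotics of \autoref{thm: poincareperron} are unavailable. The saving observation is that $1/\Lambda = 1/|\lambda| = 1$ in both cases, so it suffices to prove that \emph{every} non-trivial solution satisfies $L = 1$; one may invoke either the root asymptotics of \autoref{thm: perron} or, more concretely, the finer asymptotics of \autoref{ex:KoomanAsymptotics}. For \eqref{eq: lowDegreeForm101}, \autoref{ex:KoomanAsymptotics}(1) gives basis solutions growing like $n^{(\beta_0+\gamma_0-\alpha)/2}$ and $(-1)^n n^{(-\beta_0+\gamma_0-\alpha)/2}$, so every non-trivial combination has purely polynomial growth and hence $L = 1$. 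For \eqref{eq: lowDegreeForm111RepeatedRoots} with $\beta_0 - \alpha - \gamma_0 > 0$, \autoref{ex:KoomanAsymptotics}(2) gives basis solutions of the form $n^{c}\exp(\pm 2\sqrt{(\beta_0-\alpha-\gamma_0)n})$; as the exponential correction is $e^{o(n)}$, any non-trivial combination again yields $L = 1$. The boundary subcase $\beta_0 - \alpha - \gamma_0 = 0$, which still admits a minimal solution by \autoref{lem:whenHasMinimalSolution}(3), likewise produces only sub-exponential growth and so $L = 1$. In every equal-modulus case both minimal and dominant solutions therefore have radius of convergence $1 = 1/\Lambda = 1/|\lambda|$, completing the argument.
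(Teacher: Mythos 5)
Your proof is correct and follows essentially the same route as the paper, whose entire proof reads ``This follows from \autoref{thm: perron} by the Cauchy--Hadamard theorem.'' You fill in the details the paper leaves implicit---the reduction to $\limsup_n |u_n|^{1/n}$, the identification of which basis solution a given minimal or dominant solution tracks, and the split between distinct and equal characteristic moduli (handled via \autoref{thm: poincareperron} and \autoref{ex:KoomanAsymptotics} respectively, rather than the root-test form of \autoref{thm: perron})---but the essential idea, namely that the radius of convergence is the reciprocal of the exponential growth rate supplied by Perron-type asymptotics, is identical.
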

\begin{proof}
This follows from \autoref{thm: perron} by the Cauchy--Hadamard theorem.
\end{proof}

Now the generating series $\mathcal{F}$ defines a continuous and differentiable function
%\todo{This is not analytic at $0$ unless $\alpha \in \N$.}
in the interval $[0,{1}/{|\Lambda|})$ (regardless of whether $\seq{u_n}{n}$ is dominant or
not). Further, $\mathcal{F}$ is analytic in the interval
$(0,1/|\Lambda|)$. It can be shown (see \autoref{app:ode}) that for a given solution $\seq{u_n}{n}$, the generating function satisfies the differential equation
\begin{equation*}%\label{eq:DEStandardForm}
\mathcal{F}'(x) + s(x) \mathcal{F}(x) = t(x),
  \end{equation*}
where
\begin{align}\label{eq:sx}
s(x) &= \frac{( \gamma_0 + 2 \gamma_1 - \alpha \gamma_1)x
+ \beta_0 + \beta_1 - \alpha\beta_1}{\gamma_1x^2 + \beta_1 x - 1 } \quad \text{and} \\
t(x) &= \frac{(\beta_0+\beta_1)u_{-1} - (\alpha + 1)u_0 - \alpha u_{-1}x^{-1} }	{\gamma_1 x^2 + \beta_1 x - 1}x^{\alpha}. \nonumber
\end{align}
Note that both $s$ and $t$ are integrable over a neighbourhood of $0$
under our assumption that $\alpha > 1$.
Standard methods then yield a solution to the differential equation. Namely,
by noting that $\mathcal{F}(0)=0$ as $\alpha > 1$, we have the
following solution in the interval $[0,1/\Lambda)$ as follows:
\begin{equation}\label{eq:DEGeneralSolution}
\mathcal{F}(x) = \frac{1}{\mathcal{I}(x)} \int_{0}^{x} \mathcal{I}(y) t(y)\, dy,
\end{equation}
where $\mathcal{I}(x) $ is the integrating factor $\exp(\int_0^x s(y)\, dy)$.

Notice here that the denominator $\gamma_1 x^2 + \beta_1 x - 1$ of $t(x)$ and $s(x)$ has roots $1/\Lambda$ and $1/\lambda$ (if $\lambda \neq 0$).

\begin{lemma}\label{lem:vanishingIntegral}
There is at most one non-trivial choice of $u_{-1}$ and $u_0$, up to scaling, for which
$\int_{0}^{1/\Lambda} \mathcal{I}(y)t(y)\,dy$ vanishes.
\end{lemma}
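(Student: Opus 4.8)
The plan is to analyze the map $(u_{-1}, u_0) \mapsto \int_{0}^{1/\Lambda} \mathcal{I}(y)t(y)\,dy$ and show it is a nonzero linear functional on the two-dimensional space of initial data. First I would observe that, from the explicit form of $t(x)$ in \eqref{eq:sx}, the integrand $\mathcal{I}(y)t(y)$ depends on $u_{-1}$ and $u_0$ only through the numerator factor $(\beta_0+\beta_1)u_{-1} - (\alpha+1)u_0 - \alpha u_{-1}y^{-1}$, which is \emph{linear} in the pair $(u_{-1}, u_0)$. Consequently, writing
\begin{equation*}
\mathcal{J}(u_{-1},u_0) := \int_{0}^{1/\Lambda} \mathcal{I}(y)t(y)\,dy,
\end{equation*}
we may separate it as $\mathcal{J}(u_{-1},u_0) = c_{-1}\,u_{-1} + c_0\,u_0$, where
\begin{align*}
c_{-1} &= \int_{0}^{1/\Lambda} \mathcal{I}(y)\,\frac{\big((\beta_0+\beta_1) - \alpha y^{-1}\big)y^{\alpha}}{\gamma_1 y^2 + \beta_1 y - 1}\,dy, \\
c_0 &= \int_{0}^{1/\Lambda} \mathcal{I}(y)\,\frac{-(\alpha+1)y^{\alpha}}{\gamma_1 y^2 + \beta_1 y - 1}\,dy.
\end{align*}
A linear functional $(u_{-1},u_0)\mapsto c_{-1}u_{-1} + c_0 u_0$ on a two-dimensional space has a kernel that is at most one-dimensional, i.e.\ at most one nontrivial choice up to scaling, \emph{provided} $(c_{-1}, c_0) \neq (0,0)$. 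Thus the whole lemma reduces to proving that the coefficients $c_{-1}$ and $c_0$ do not both vanish.

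The key step is therefore to rule out $c_0 = 0$ (it suffices to handle one coefficient). Here I would exploit the sign structure of the integrand of $c_0$ on the open interval $(0, 1/\Lambda)$. The integrating factor $\mathcal{I}(y) = \exp(\int_0^y s)$ is strictly positive throughout, and $y^{\alpha} > 0$. The denominator $\gamma_1 y^2 + \beta_1 y - 1$ has its smallest positive root at $1/\Lambda$ (and its other root, if any, at $1/\lambda$ with $|\lambda| \le |\Lambda|$ so $1/|\lambda| \ge 1/\Lambda$); hence on the open interval $(0, 1/\Lambda)$ this quadratic does not vanish and keeps a constant sign — indeed it is negative at $y = 0$, so it is negative on all of $(0, 1/\Lambda)$. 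Therefore the integrand of $c_0$, namely $\mathcal{I}(y)\,\frac{-(\alpha+1)y^{\alpha}}{\gamma_1 y^2 + \beta_1 y - 1}$, is \emph{strictly positive} on $(0, 1/\Lambda)$, since $-(\alpha+1) < 0$ divided by a negative quantity is positive. A strictly positive continuous integrand over an interval of positive length yields a strictly positive integral, so $c_0 > 0$ and in particular $c_0 \neq 0$.

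The main obstacle will be the convergence and integrability bookkeeping at the two endpoints. Near $y = 0$, integrability is already guaranteed by the paper's standing assumption $\alpha > 1$, which makes $s$ and $t$ integrable on a neighbourhood of $0$ (and the $y^{\alpha}$ factor kills any $y^{-1}$ singularity from the numerator). Near $y = 1/\Lambda$, the denominator vanishes to first order, so the integrand blows up like $(1/\Lambda - y)^{-1}$ multiplied by the behaviour of $\mathcal{I}(y)$; I would need to verify, using the explicit $s(x)$, that $\mathcal{I}(y)$ vanishes to sufficient order at $y = 1/\Lambda$ to render the integral convergent — this is exactly the regime where the generating-function solution \eqref{eq:DEGeneralSolution} is valid and $\mathcal{F}$ extends continuously, as established in \autoref{lem:radiusOfConvergence} and the surrounding discussion. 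Since the sign of the integrand is preserved wherever it is defined, the strict-positivity argument for $c_0$ survives this limiting process, and the conclusion that $\mathcal{J}$ vanishes on at most a one-dimensional subspace follows.
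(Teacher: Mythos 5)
Your proposal is correct and follows essentially the same route as the paper: both arguments rest on the linearity of the integral in $(u_{-1},u_0)$ together with the observation that the integrand obtained by setting $u_{-1}=0$ (your coefficient $c_0$) has constant, non-vanishing sign on $(0,1/\Lambda)$ because $\mathcal{I}(y)y^{\alpha}>0$ and the quadratic $\gamma_1 y^2+\beta_1 y-1$ keeps the sign of its value $-1$ at $y=0$ throughout that interval. The paper phrases the conclusion as a contradiction from two distinct vanishing pairs rather than as the kernel of a non-zero linear functional, but this is only a cosmetic difference.
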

\begin{proof}
Observe that the integral is of the form
\begin{equation*}
\int_{0}^{1/\Lambda} \frac{ \mathcal{I}(y) }{\gamma_1 y^2 + \beta_1 y - 1}y^{\alpha-1} (A y + B)\, dy
\end{equation*}
where only $A$ and $B$ depend on $u_{-1}$ and $u_0$. 
We note $\mathcal{I}(y)t(y)/(Ay+B)$ has constant sign in the domain \((0,1/\Lambda)\).
This is easily seen by analysing the signs of the numerator and denominator.
For the numerator, \(\mathcal{I}(y)y^{\alpha-1}\) and for the quadratic in the denominator, we note the domain 
is either totally contained
in $[1/\lambda,1/\Lambda]$ if $\lambda < 0$ or is disjoint from $[1/\Lambda, 1/\lambda)$ (resp., $[1/\Lambda, \infty)$) if $\lambda > 0$ (resp., $\lambda = 0$).
%
%: $\mathcal{I}(y), y \geq 0$ in the domain of integration, and 
%the denominator has constant sign as $(0,1/\Lambda)$ is either totally contained
%in $[1/\lambda,1/\Lambda]$ (if $\lambda < 0$) or is disjoint from $[1/\Lambda, 1/\lambda)$ (if $\lambda \ge 0$).

Assume that the integral vanishes for some choice of $u_{-1}$ and $u_0$. We first show that
$u_{-1} \neq 0$. Indeed, if $u_{-1} = 0$, then the integral takes the form
\begin{equation*}
A \int_{0}^{1/\Lambda} \frac{ \mathcal{I}(y) }{\gamma_1 y^2 + \beta_1 y - 1}y^{\alpha}\, dy.
\end{equation*}

The integrand has constant sign and does not vanish on \([0,1/\Lambda)\) (unless $A=0$, which occurs precisely when $u_0 = u_{-1} = 0$).
%, by inspection). 
We deduce that the integral does not vanish.

Assume now that the integral vanishes for two distinct pairs \((u_{-1}, u_0)\) and \((u_{-1}', u_0')\). As $u_{-1}, u_{-1}' \neq 0$ we can, without loss of generality, assume the pairs are of the form \((1, u_0)\) and \((1, u_0')\) with \(u_0\neq u_0'\).
Substitution of these initial values into the integral gives the following
\begin{equation*}
\int_{0}^{1/\Lambda} \frac{ \mathcal{I}(y) }{\gamma_1 y^2 + \beta_1 y - 1}y^{\alpha - 1}
(Ay + B)\, dy
= 0 =
\int_{0}^{1/\Lambda} \frac{ \mathcal{I}(y) }{\gamma_1 y^2 + \beta_1 y - 1}y^{\alpha - 1}(A' y + B)\, dy.
\end{equation*}
By linearity, we conclude that the integral vanishes with the choice \((0, u_1-u_1')\).
This contradicts our earlier observation and concludes the proof.
\end{proof}

The following lemma makes explicit the integrands $\mathcal{I}(y)t(y)$ of the recurrences
\eqref{eq: lowDegreeForm111}--\eqref{eq: lowDegreeForm101}.

\begin{lemma}\label{lem:DifferentGFForms}
%The generating function is of the form
The integrating factor $\mathcal{I}(x)$ in \eqref{eq:DEGeneralSolution}, is as follows.
\begin{enumerate}
\item For recurrences \eqref{eq: lowDegreeForm111} and \eqref{eq: lowDegreeForm101} we have
\begin{equation*}
\mathcal{I}(x) = ({1}/{\Lambda}-x)^{\nu} |x-{1}/{\lambda}|^{\nu'}
= \frac{1}{\Lambda^{\nu} |\lambda|^{\nu'}}
(1-\Lambda x)^{\nu} |1-\lambda x|^{\nu'},
\end{equation*}
where $\nu = \mathcal{A}(\lambda)$, $\nu' = -\mathcal{A}(\Lambda)$, and
\begin{equation*}\label{eq:mathcalA}
   \mathcal{A}(x) = \frac{ (\alpha - \gamma_0/\gamma_1 - 2)x + (\beta_0+\beta_1 -\alpha \beta_1)}{\Lambda-\lambda}.
\end{equation*}

\item For recurrence \eqref{eq: lowDegreeForm111RepeatedRoots} we have
\begin{equation*}
\mathcal{I}(x) = \exp \mleft(\frac{\beta_0 - \alpha -\gamma_0}{x-1}\mright) (1-x)^{2 + \gamma_0 -\alpha}.
\end{equation*}

\item For recurrence \eqref{eq: lowDegreeForm110} we have 
\begin{equation*}
\mathcal{I}(x) = \eu^{\nu' x} (\beta_1^{-1} - x)^{\nu}
 = \beta_1^{-\nu} \eu^{\nu' x} (1 - \beta_1 x)^{\nu}
\end{equation*}
where $\nu' = \gamma_0 / \beta_1$ and $\nu =  {\beta_0}/{\beta_1} - \alpha + 1 + \gamma_0/\beta_1^2$.
\end{enumerate}
\end{lemma}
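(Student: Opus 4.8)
The plan is to compute the integrating factor $\mathcal{I}(x) = \exp\bigl(\int_0^x s(y)\,dy\bigr)$ directly, by decomposing the rational function $s$ into partial fractions and integrating termwise, treating the four recurrences according to the multiplicity and location of the roots of the common denominator $D(x) = \gamma_1 x^2 + \beta_1 x - 1$. Write $N(x) = (\gamma_0 + 2\gamma_1 - \alpha\gamma_1)x + (\beta_0 + \beta_1 - \alpha\beta_1)$ for the numerator of $s$ appearing in \eqref{eq:sx}, so that $s = N/D$. The associated characteristic polynomial is $x^2 - \beta_1 x - \gamma_1$, whence Vieta's relations give $\lambda + \Lambda = \beta_1$ and $\lambda\Lambda = -\gamma_1$; substituting these confirms the fact already noted in the text, namely that $D(x) = \gamma_1 (x - 1/\Lambda)(x - 1/\lambda)$ when $\lambda \neq 0$, while $D(x) = \beta_1 x - 1$ when $\lambda = 0$. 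The four recurrences are thereby distinguished: two distinct nonzero roots for \eqref{eq: lowDegreeForm111} and \eqref{eq: lowDegreeForm101}, a single root (with $\lambda = 0$) for \eqref{eq: lowDegreeForm110}, and a double root at $x = 1$ for \eqref{eq: lowDegreeForm111RepeatedRoots}.

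For the two-distinct-roots case I would write $s(x) = \tfrac{P}{x - 1/\Lambda} + \tfrac{Q}{x - 1/\lambda}$ with residues $P = N(1/\Lambda)/\bigl(\gamma_1(1/\Lambda - 1/\lambda)\bigr)$ and $Q = N(1/\lambda)/\bigl(\gamma_1(1/\lambda - 1/\Lambda)\bigr)$. Using $\gamma_1 = -\lambda\Lambda$ one checks $\gamma_1(1/\Lambda - 1/\lambda) = \Lambda - \lambda$, together with the key simplification $(\gamma_0 + 2\gamma_1 - \alpha\gamma_1)/\Lambda = (\alpha - \gamma_0/\gamma_1 - 2)\lambda$ (which again follows from $\gamma_1 = -\lambda\Lambda$ via $\gamma_1/\Lambda = -\lambda$ and $1/\Lambda = -\lambda/\gamma_1$); this identifies $P = \mathcal{A}(\lambda) = \nu$ and, symmetrically, $Q = -\mathcal{A}(\Lambda) = \nu'$. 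Integrating from $0$ and absorbing the boundary constants into the prefactor—here using that on $(0,1/\Lambda)$ one has $x - 1/\Lambda < 0$, so $|x - 1/\Lambda| = 1/\Lambda - x$—yields the first displayed form, and factoring out $\Lambda$ and $|\lambda|$ produces the equivalent expression in $(1 - \Lambda x)$ and $|1 - \lambda x|$.

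The degenerate cases are handled by the same integration after the appropriate decomposition. When $\gamma_1 = 0$ (recurrence \eqref{eq: lowDegreeForm110}) the division $s(x) = \tfrac{\gamma_0}{\beta_1} + \tfrac{\gamma_0/\beta_1 + \beta_0 + \beta_1 - \alpha\beta_1}{\beta_1 x - 1}$ produces, upon integration, the exponential factor $\eu^{\nu' x}$ with $\nu' = \gamma_0/\beta_1$ and a power of $(\beta_1^{-1} - x)$ with exponent $\nu = \beta_0/\beta_1 - \alpha + 1 + \gamma_0/\beta_1^2$. When the root is double (recurrence \eqref{eq: lowDegreeForm111RepeatedRoots}) I would expand $s(x) = -\bigl(\tfrac{a}{x-1} + \tfrac{b}{(x-1)^2}\bigr)$: the $(x-1)^{-2}$ term integrates to $b/(x-1)$, giving the exponential factor, while the simple pole gives the power $(1-x)^{-a}$. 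The one point demanding care—and the likeliest source of a sign slip—is that in \eqref{eq: lowDegreeForm111RepeatedRoots} the constant term of $g_1$ is $-\gamma_0$ rather than $\gamma_0$; feeding $\gamma_0 \mapsto -\gamma_0$, $\gamma_1 = -1$, $\beta_1 = 2$ into $N$ gives $a = \alpha - \gamma_0 - 2$ and $b = \beta_0 - \alpha - \gamma_0$, which are exactly the exponent $2 + \gamma_0 - \alpha$ and exponential coefficient $\beta_0 - \alpha - \gamma_0$ claimed. In all four cases the normalisation $\mathcal{I}(0) = 1$ fixes the remaining multiplicative constant, completing the verification.
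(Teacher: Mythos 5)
Your proposal is correct and follows essentially the same route as the paper: a partial-fraction decomposition of $s$ (using Vieta's relations to identify the roots of $\gamma_1 x^2 + \beta_1 x - 1$ with $1/\lambda$ and $1/\Lambda$, and the residues with $\mathcal{A}(\lambda)$ and $-\mathcal{A}(\Lambda)$), followed by termwise integration, with the same substitutions $\beta_1=2$, $\gamma_1=-1$, $\gamma_0\mapsto-\gamma_0$ and $\gamma_1=0$ in the degenerate cases. The only quibble is your closing remark: the forms stated in the lemma are not in fact normalised to $\mathcal{I}(0)=1$, but since an integrating factor is only determined up to a nonzero multiplicative constant, which cancels in \eqref{eq:DEGeneralSolution}, this is immaterial.
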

We have deliberately chosen to share notation between the different items in the above
lemma (especially the parameter $\nu$): in the sequel, we shall treat several of the cases simultaneously.
\begin{proof}
The first two claims follow from a straightforward integration of partial fractions. 
One only
notes that the roots of $\gamma_1 x^2 + \beta_1 x - 1$ are $1/\lambda$ and $1/\Lambda$.
Further, in the first claim we have
$s(x) = \frac{\mathcal{A}(\lambda)}{x-1/\Lambda} - \frac{\mathcal{A}(\Lambda)}{x-1/\lambda}$.  
In the second claim we have 
	\begin{equation*}
		s(x) = \frac{2 + \gamma_0 - \alpha_0}{x-1} + \frac{\alpha_0 + \gamma_0 - \beta_0}{(x-1)^2}.
	\end{equation*}
 We remark that in this case we substitute $\beta_1 = 2$, $\gamma_1 = -1$, and $\gamma_0 \mapsto -\gamma_0$ in \eqref{eq:sx}.

In the last case, we substitute $\gamma_1 =0$, so 
	\begin{equation*}
		s(x) = \frac{\gamma_0}{\beta_1} + \frac{\beta_0/\beta_1 + 1 - \alpha + \gamma_0/\beta_1^2}{x-1/\beta_1}. \qedhere
	\end{equation*}
\end{proof}

Our approach to deciding minimality hinges on identifying when $\mathcal{F}(x)$ and all of
its (left) derivatives exist at $1/\Lambda$. Indeed, when $\seq{u_n}{n}$ is a minimal
solution to \eqref{eq: lowDegreeForm111}, then $\mathcal{F}(x)$ is known to be analytic in a neighbourhood of \(1/\Lambda\).
%$1/\Lambda \in [0,1/|\lambda|)$. 
As we shall shortly see, this connection holds for all the cases at hand. Let us first discuss the Minimality Problem for instances of
\eqref{eq: lowDegreeForm111RepeatedRoots}.

\subsection{Minimality for recurrence \texorpdfstring{\eqref{eq: lowDegreeForm111RepeatedRoots}}{111RepeatedRoots}}
Recall that recurrence \eqref{eq: lowDegreeForm111RepeatedRoots} admits a minimal
solution if and only if $\beta_0 \geq \alpha + \gamma_0$ by
\autoref{lem:whenHasMinimalSolution}.
We shall consider the cases $\beta_0 > \gamma_0 + \alpha$ and $\beta_0 = \gamma_0 + \alpha$
separately.

Recall from \autoref{ex:KoomanAsymptotics} that in the case $\beta_0 > \gamma_0 + \alpha$, recurrence
\eqref{eq: lowDegreeForm111RepeatedRoots} admits two linearly independent solutions \(\seq{u_n^{(1)}}{n}\) and \(\seq{u_n^{(2)}}{n}\) such that
\begin{align*}
u_n^{(1)} &\sim  n^{(1-2\alpha +2\gamma_0)/4} \exp(2\sqrt{(\beta_0 - \alpha - \gamma_0)n}) \quad \text{and} \\
u_n^{(2)} &\sim  n^{(1-2\alpha +2\gamma_0)/4} \exp(-2\sqrt{(\beta_0 - \alpha - \gamma_0)n}).
\end{align*}
Notice that $\seq{u_n^{(2)}}{n}$ is a minimal solution and that $\seq{u_n^{(1)}}{n}$ is
dominant. 
From the asymptotics above, it is evident that
$\sum_{n=0}^{\infty} u_{n-1}^{(1)} = \infty$ and $\sum_{n=0}^{\infty} u_{n-1}^{(2)}$ is finite. 
Hence, by Abel's theorem, 
we have that $\lim_{x \to 1-} \mathcal{F}(x) = \infty$ (resp., $\lim_{x \to 1-} \mathcal{F}(x)$ is finite) for the generating function $\mathcal{F}$.
In particular, we have proved the following lemma.
\begin{lemma} \label{lem:minimalcriterionF}
A non-trivial solution $\seq{u_n}{n}$ to \eqref{eq: lowDegreeForm111RepeatedRoots} with $\beta_0 > \gamma_0 + \alpha$ is minimal if and only
if the left limit of corresponding generating $\lim_{x\to 1-}\mathcal{F}(x)$ exists and is finite.
\end{lemma}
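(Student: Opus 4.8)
The plan is to decompose an arbitrary non-trivial solution in the basis $\{\seq{u_n^{(1)}}{n}, \seq{u_n^{(2)}}{n}\}$ furnished by the Kooman asymptotics of \autoref{ex:KoomanAsymptotics}, and then to read off the convergence of the coefficient sum $\sum_{n} u_{n-1}$ directly from these asymptotics. Writing $u_n = c_1 u_n^{(1)} + c_2 u_n^{(2)}$, recall that $\seq{u_n}{n}$ is minimal precisely when the dominant component vanishes, i.e.\ when $c_1 = 0$; otherwise the solution is dominant. Since $\lambda = \Lambda = 1$ here, \autoref{lem:radiusOfConvergence} tells us that $\mathcal{F}$ has radius of convergence $1$ for both minimal and dominant solutions, so the relevant boundary point is $x = 1$ in every case, and $\mathcal{F}(x) = x^{\alpha}\sum_{n\geq 0} u_{n-1} x^{n}$, where the prefactor $x^\alpha$ is continuous and tends to $1$ as $x\to 1-$.

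First I would record the elementary growth estimates. Under the standing hypothesis $c := \beta_0 - \alpha - \gamma_0 > 0$, so the factor $\exp(2\sqrt{cn})$ appearing in $u_n^{(1)}$ outgrows, and the factor $\exp(-2\sqrt{cn})$ appearing in $u_n^{(2)}$ decays faster than, any fixed power $n^{(1-2\alpha+2\gamma_0)/4}$. Consequently $\sum_n u_{n-1}^{(1)}$ diverges while $\sum_n u_{n-1}^{(2)}$ converges absolutely, regardless of the polynomial prefactor.

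For the forward implication, suppose $\seq{u_n}{n}$ is minimal, so $c_1 = 0$. Then $\sum_n u_{n-1} = c_2 \sum_n u_{n-1}^{(2)}$ is a convergent series, and Abel's theorem yields that $\lim_{x\to 1-}\mathcal{F}(x) = \sum_n u_{n-1}$ exists and is finite. For the converse, suppose instead that $\seq{u_n}{n}$ is dominant, so $c_1 \neq 0$; then $u_n \sim c_1 u_n^{(1)}$, and in particular the terms $u_{n-1}$ are eventually of constant sign $\sign(c_1)$. The partial sums of $\sum_n u_{n-1}$ therefore diverge monotonically in the tail to $\sign(c_1)\cdot\infty$, and the divergent form of Abel's theorem gives $\lim_{x\to 1-}\mathcal{F}(x) = \sign(c_1)\cdot\infty$, which is not finite. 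Hence a finite left limit forces $c_1 = 0$, that is, minimality, and the equivalence follows.

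The point to keep honest is the converse direction: one must show that the dominant case produces an \emph{infinite} Abel limit, not merely a non-convergent coefficient sum. This rests on the eventual constant sign of $u_{n-1}$, which follows from $u_n \sim c_1 u_n^{(1)}$ together with the positivity of $\exp(2\sqrt{cn})$ and of $n^{(1-2\alpha+2\gamma_0)/4}$; only with eventual constant sign in hand may one legitimately invoke the divergent version of Abel's theorem to conclude that the limit equals $\pm\infty$ rather than oscillating. Everything else is a routine application of the asymptotics and of the standard Abel limit theorem.
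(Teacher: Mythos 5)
Your argument is correct and follows essentially the same route as the paper: invoke the Kooman asymptotics of \autoref{ex:KoomanAsymptotics} for the two basis solutions, observe that $\sum_n u_{n-1}^{(2)}$ converges while $\sum_n u_{n-1}^{(1)}$ diverges, and conclude via Abel's theorem. You merely make explicit a step the paper leaves implicit --- decomposing a general non-trivial solution in this basis and justifying the eventual constant sign needed for the divergent form of Abel's theorem in the dominant case --- which is a welcome tightening rather than a different approach.
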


\begin{proposition}\label{prop:minimalityRepeatedRoots}
Let $\seq{u_n}{n}$ be a non-trivial solution to \eqref{eq: lowDegreeForm111RepeatedRoots} where
$\beta_0 > \alpha + \gamma_0$. Then $\seq{u_n}{n}$ is minimal if and only if
\begin{equation*}
\int_{0}^{1} \exp \left( \frac{\beta_0 - \alpha - \gamma_0}{y-1} \right)
	(1-y)^{\gamma_0 - \alpha} y^{\alpha-1}(Ay - B)\, dy = 0,
\end{equation*}
where $A = (2+\beta_0)u_{-1} - (\alpha+1)u_0$ and $B = \alpha u_{-1}$.
\end{proposition}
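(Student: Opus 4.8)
The plan is to combine the minimality criterion of \autoref{lem:minimalcriterionF} with the explicit solution formula \eqref{eq:DEGeneralSolution} for the generating function. By \autoref{lem:minimalcriterionF}, a non-trivial solution $\seq{u_n}{n}$ to \eqref{eq: lowDegreeForm111RepeatedRoots} with $\beta_0 > \alpha + \gamma_0$ is minimal if and only if $\lim_{x\to 1-}\mathcal{F}(x)$ exists and is finite. Recall that the dominant characteristic root here is $\Lambda = 1$ (the single repeated root), so the relevant endpoint is indeed $x = 1/\Lambda = 1$. The strategy is therefore to examine the behaviour of the explicit formula $\mathcal{F}(x) = \mathcal{I}(x)^{-1}\int_0^x \mathcal{I}(y)t(y)\,dy$ as $x \to 1-$ and show that finiteness of the limit is equivalent to the vanishing of the definite integral $\int_0^1 \mathcal{I}(y)t(y)\,dy$.

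First I would substitute the explicit integrating factor from \autoref{lem:DifferentGFForms}(2), namely $\mathcal{I}(x) = \exp\left(\frac{\beta_0-\alpha-\gamma_0}{x-1}\right)(1-x)^{2+\gamma_0-\alpha}$, into the general solution. The key observation is the behaviour of $\mathcal{I}(x)$ as $x\to 1-$: since $\beta_0 - \alpha - \gamma_0 > 0$ by hypothesis, the argument of the exponential tends to $-\infty$, so $\mathcal{I}(x)\to 0$. Consequently $1/\mathcal{I}(x)\to +\infty$, and the limit $\lim_{x\to1-}\mathcal{F}(x)$ is finite if and only if the numerator $\int_0^x \mathcal{I}(y)t(y)\,dy$ tends to $0$ fast enough to cancel this blow-up—equivalently, if and only if the limiting integral $\int_0^1 \mathcal{I}(y)t(y)\,dy$ equals $0$. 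The next step is to read off $t(y)$ from \eqref{eq:sx} after the appropriate substitutions ($\beta_1 = 2$, $\gamma_1 = -1$, $\gamma_0 \mapsto -\gamma_0$) used in \autoref{lem:DifferentGFForms}; combining the polynomial factor from $t(y)$ with $\mathcal{I}(y)$ should yield precisely the integrand $\exp\left(\frac{\beta_0-\alpha-\gamma_0}{y-1}\right)(1-y)^{\gamma_0-\alpha}y^{\alpha-1}(Ay-B)$ with $A = (2+\beta_0)u_{-1} - (\alpha+1)u_0$ and $B = \alpha u_{-1}$, after the denominator $\gamma_1 y^2 + \beta_1 y - 1 = -(y-1)^2$ cancels one power of $(1-y)$.

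The main obstacle will be rigorously justifying the equivalence between finiteness of $\lim_{x\to1-}\mathcal{F}(x)$ and vanishing of the full integral—this is a delicate L'Hôpital-type argument because both numerator and the reciprocal $1/\mathcal{I}(x)$ diverge or vanish at the endpoint. If $\int_0^1 \mathcal{I}(y)t(y)\,dy \neq 0$, then the numerator tends to a nonzero constant while $1/\mathcal{I}(x)\to\infty$, giving $\mathcal{F}(x)\to\pm\infty$, so the solution is dominant. If instead $\int_0^1 \mathcal{I}(y)t(y)\,dy = 0$, one must show the resulting $\tfrac{0}{0}$-type expression has a finite limit; here I would write $\int_0^x \mathcal{I}(y)t(y)\,dy = -\int_x^1 \mathcal{I}(y)t(y)\,dy$ and apply L'Hôpital to the ratio $\frac{\int_x^1 \mathcal{I}(y)t(y)\,dy}{\mathcal{I}(x)}$, reducing the limit to $\lim_{x\to1-}\frac{-\mathcal{I}(x)t(x)}{\mathcal{I}'(x)} = \lim_{x\to1-}\frac{-t(x)}{s(x)}$ using $\mathcal{I}'(x) = s(x)\mathcal{I}(x)$, which is manifestly finite since $t(x)/s(x)$ has no singularity at $x=1$ once we note the single pole of $s$ dominates. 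The careful bookkeeping of the exact exponents—ensuring the extra factor $(1-y)^{\gamma_0-\alpha}$ is integrable near $y=1$ despite the possibly negative exponent, which is controlled by the exponential decay of $\mathcal{I}$—completes the argument and matches the stated integrand exactly.
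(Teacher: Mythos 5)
Your proposal is correct and follows the paper's overall framework (the criterion of \autoref{lem:minimalcriterionF} applied to the explicit solution \eqref{eq:DEGeneralSolution} with $\mathcal{I}$ from \autoref{lem:DifferentGFForms}(2)), and your forward direction---nonzero integral forces $\mathcal{F}(x)\to\pm\infty$ because $\mathcal{I}(x)^{-1}$ blows up at $x=1$---is exactly the paper's argument. Where you genuinely diverge is the converse. The paper does not attempt the $\tfrac{0}{0}$ analysis at all: it invokes \autoref{lem:vanishingIntegral} (at most one non-trivial choice of $(u_{-1},u_0)$ up to scaling annihilates the integral) together with the fact that a minimal solution exists and, by the forward direction, must be that unique choice. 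Your route instead proves sufficiency directly via L'H\^{o}pital applied to $\int_x^1\mathcal{I}(y)t(y)\,dy\,/\,\mathcal{I}(x)$, using $\mathcal{I}'=s\,\mathcal{I}$ to reduce the limit to $\lim_{x\to1-}t(x)/s(x)$; this is valid because $s$ and $t$ both have poles of order exactly two at $x=1$ (the leading coefficient of $s$ being $\alpha+\gamma_0-\beta_0\neq 0$), so the ratio is finite. Your computation buys a self-contained argument that even exhibits the finite value of $\lim_{x\to1-}\mathcal{F}(x)$, whereas the paper's uniqueness argument is shorter and reuses a lemma needed for the other cases anyway. Two slips in your prose worth correcting: the denominator $-(y-1)^2$ cancels \emph{two} powers of $(1-y)$ from $(1-y)^{2+\gamma_0-\alpha}$, not one (which is how you land on the exponent $\gamma_0-\alpha$); and $s$ has a \emph{double} pole at $x=1$ in this repeated-root case, not a single one---it is the matching of pole orders between $t$ and $s$, not domination, that makes $t/s$ finite.
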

\begin{proof}
Recall that a non-trivial solution to \eqref{eq: lowDegreeForm111RepeatedRoots} defines the generating
function $\mathcal{F}(x)$ as in
\autoref{lem:DifferentGFForms}(2). The integral in this claim is the integral
$\int_0^1 \mathcal{I}(y)t(y)\, dy$. 
The integral is absolutely converging as
$\lim_{x\to 1-}\exp (\frac{\beta_0 - \alpha - \gamma_0}{x-1}) = 0$ 
since $\beta_0 > \alpha + \gamma_0$.
(In particular, the factor
$(1-y)^{\gamma_0 - \alpha}$ does not affect the convergence.)

Suppose that \(\seq{u_n}{n}\) is a minimal sequence and let $\zeta$ be the value of the associated integral.  
Assume, for a
contradiction, that $\zeta \neq 0$. 
Then, by definition, $\lim_{x \to 1-} \mathcal{F}(x) = \sign(\zeta) \infty$, as $\mathcal{I}(x)^{-1}$ has a singularity at $x=1$. 
This contradicts the criterion in \autoref{lem:minimalcriterionF}.
Thus we conclude that the integral $\int_{0}^{1}\mathcal{I}(y)t(y)\,dy$ associated to a minimal solution \(\seq{u_n}{n}\) vanishes.

The converse argument follows from \autoref{lem:vanishingIntegral} and this concludes the proof.
%
%
%
%%Our analysis thus far has shown the "only if" claim. Let us show that there is a unique
%choice of $u_1/u_0$ for which the integral vanishes. Thus if the integral vanishes, it corresponds the corresponding solution is minimal. We may thus consider the solutions
%$u_0 = 0$ and $u_1 = 1$, or $u_0 = 1$, $u_1 = U$.
%Consider first the former. By inspection, the integrand is non-positive in the domain and,
%further, attains a negative value. It follows that the integral is negative thus, for this
%solution, the integral does not vanish. Assume that the integral vanishes for two
%distinct choices of $U$ (say $U_1$ and $U_2$). Then, equating the corresponding integrals and performing possible
%cancellations, we have that
%\begin{equation*}
%-(\alpha + 1)(A_1 - A_2)\int_{0}^{1}\exp\left( \frac{\beta_0 - \alpha - \gamma_0}{y-1} \right)(1-y)^{\gamma_0 - \alpha}y^{\alpha}\,dy = 0.
%\end{equation*}
%This is not possible as the integrand is non-negative in the domain and attains a positive value, and thus cannot vanish. This concludes the if part.
\end{proof}

We then consider the case of $\beta_0 = \alpha + \gamma_0$. The Minimality Problem for this case
is decidable as evidenced by following proposition.
\begin{proposition}\label{prop:111repeatedRoot}
Let $\seq{u_n}{n}$ be a non-trivial solution to \eqref{eq: lowDegreeForm111RepeatedRoots} with
$\beta_0 = \alpha + \gamma_0$.
If \(\alpha\leq\gamma_0+1\) then $\seq{u_n}{n}$ is minimal if and only if $u_0/u_{-1} = 1$.
If \(\alpha > \gamma_0 +1\) then $\seq{u_n}{n}$ is minimal if and only if
$u_0/u_{-1} = \frac{\gamma_0 + 1}{\alpha}$.
\end{proposition}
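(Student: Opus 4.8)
The plan is to exploit the fact that, when $\beta_0 = \alpha + \gamma_0$, recurrence \eqref{eq: lowDegreeForm111RepeatedRoots} is exactly of the shape studied in \autoref{rem:All1Sol}. First I would write $2n + \beta_0 = (n+\alpha) + (n+\gamma_0)$ and divide through by $n+\alpha$ to bring the recurrence into the form \eqref{eq: exform}, namely $u_n = (1+q_n)u_{n-1} - q_n u_{n-2}$ with $q_n = (n+\gamma_0)/(n+\alpha)$. The standing sign conventions (together with $\alpha > 1$ after the usual shift) guarantee $q_n > 0$ and $q_n, 1/q_n \neq 0$ for all $n$, so \autoref{rem:All1Sol} applies: the constant sequence $\seq{1}{n}$ is a solution, and minimality is governed entirely by the value $\xi = \sum_{k=0}^\infty \prod_{m=1}^k q_m$.

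Next I would evaluate $\xi$ explicitly. Telescoping the Pochhammer symbols gives $\prod_{m=1}^k q_m = (1+\gamma_0)_k/(1+\alpha)_k$, and matching this against the template of \autoref{ex:criticalRatioValues} (with $j=2$, $\ell=1$, upper parameters $1+\gamma_0$ and $1$, lower parameter $1+\alpha$, and argument $x=1$) identifies $\xi = \hypgeo{2}{1}(1+\gamma_0,1;1+\alpha;1)$. The asymptotic $\prod_{m=1}^k q_m \sim C k^{\gamma_0 - \alpha}$ shows the series converges precisely when $\alpha > \gamma_0 + 1$; since every summand is positive, divergence forces $\xi = +\infty$. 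This is exactly the dichotomy appearing in the statement.

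Finally I would read off minimality in each regime. Because the recurrence admits a minimal solution (\autoref{lem:whenHasMinimalSolution}(3) with $\beta_0 = \alpha + \gamma_0$), the minimal solution is unique up to scaling, so a solution is minimal if and only if the ratio $u_0/u_{-1}$ of its initial data matches that of a fixed minimal solution. When $\alpha \leq \gamma_0 + 1$ we have $\xi = \infty$, so by \autoref{rem:All1Sol} the constant sequence is itself minimal and its initial ratio is $u_0/u_{-1} = 1$. When $\alpha > \gamma_0 + 1$, Gauss's summation theorem (see \cite{andrews1999special}) yields $\xi = \Gamma(1+\alpha)\Gamma(\alpha-\gamma_0-1)/(\Gamma(\alpha-\gamma_0)\Gamma(\alpha)) = \alpha/(\alpha-\gamma_0-1)$; the minimal solution is then $w_n = v_n - \xi$, with $w_{-1} = -\xi$ and $w_0 = 1-\xi$, whose initial ratio simplifies to $w_0/w_{-1} = (\gamma_0+1)/\alpha$. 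Matching initial ratios then gives the two stated criteria.

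I expect the only genuine difficulty to be the boundary evaluation of the hypergeometric series at $z=1$: one must invoke Gauss's theorem under its convergence hypothesis $\alpha - \gamma_0 - 1 > 0$, and then argue separately, on the complementary range $\alpha \leq \gamma_0 + 1$, that the series diverges to $+\infty$ rather than merely failing to converge---here the positivity of the summands established in the first step is precisely what rules out oscillation and pins down $\xi = \infty$.
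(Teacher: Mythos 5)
Your proposal is correct and follows essentially the same route as the paper's own proof: both reduce to the setting of \autoref{rem:All1Sol} with $q_n = (n+\gamma_0)/(n+\alpha)$, identify $\prod_{m=1}^k q_m = (\gamma_0+1)_k/(\alpha+1)_k$, use the Stirling asymptotic $\sim n^{\gamma_0-\alpha}$ to decide convergence of $\xi$, and evaluate the convergent case via Gauss's $\hypgeo{2}{1}$ summation to obtain $\xi = \alpha/(\alpha-\gamma_0-1)$ and hence the initial ratio $(\gamma_0+1)/\alpha$. The only cosmetic difference is that you take the minimal solution as $v_n - \xi\cdot 1$ while the paper takes its negative, which of course yields the same ratio.
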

\begin{proof}
We are in the setting of \autoref{rem:All1Sol} with $q_n = \frac{n+\gamma_0}{n+\alpha}$.
Hence $\seq{1}{n}$  and $\seq{v_n}{n}$ defined by $v_{-1} = 0$,
$v_0 = 1$ such that
\begin{equation*}
v_n = \sum_{k=0}^n \prod_{m=1}^k \frac{m + \gamma_0}{m+\alpha} = \sum_{k=0}^n \frac{(\gamma_0 + 1)_k}{(\alpha + 1)_k}
\end{equation*}
are linearly independent solution sequences.
By a straightforward application of Stirling's approximation, $(\gamma_0 +1)_n/(\alpha + 1)_n \sim n^{\gamma_0 - \alpha}$ as \(n\to\infty\). 
Hence if $\gamma_0 - \alpha \geq -1$ the series diverges
(by comparison to the harmonic series) from which we deduce that $\seq{1}{n}$ is the minimal solution. If $\gamma_0 - \alpha < -1$,
%i.e., $\alpha > \gamma_0 + 1$, 
then $\lim_{n\to \infty} v_n$ converges to the value
\begin{equation*}
%\lim_{n\to\infty} v_n
%= 
\sum_{k=0}^{\infty} \frac{(\gamma_0 + 1)_k (1)_k}{(\alpha + 1)_k}\frac{1}{k!}
= \hypgeo{2}{1}(\gamma_0+1,1; \alpha+1;1)
= \frac{\Gamma(\alpha + 1)\Gamma(\alpha - \gamma_0  -1)}{\Gamma(\alpha - \gamma_0)\Gamma(\alpha)}
= \frac{\alpha}{\alpha - \gamma_0 - 1}.
 \end{equation*}
 In the second equality we again use Theorem~2.2.2 from \cite{andrews1999special}.
 It follows that $\seq{u_n}{n} = \frac{\alpha}{\alpha - \gamma_0 - 1}\seq{1}{n} - \seq{v_n}{n}$ is a minimal solution, and we may compute $u_0/u_{-1} = \frac{\gamma_0 + 1}{\alpha}$.
\end{proof}

Notice that the integral in \autoref{prop:minimalityRepeatedRoots} above is an exponential
period. 
As the Minimality Problem in the case $\beta_0 = \alpha + \gamma_0$ is a decidable
problem, we conclude that \autoref{thm:MinimalityRedToPeriodLike} holds for recurrences of the
form \eqref{eq: lowDegreeForm111RepeatedRoots}.

\subsection{Minimality for recurrences \texorpdfstring{\eqref{eq: lowDegreeForm111},
\eqref{eq: lowDegreeForm110}, and \eqref{eq: lowDegreeForm101}}{111, 110, and 101}}
\label{subsec:Minimality111_110_101}
Consider recurrences \eqref{eq: lowDegreeForm111}, \eqref{eq: lowDegreeForm110}, and
\eqref{eq: lowDegreeForm101}. Recall that in these instances, the characteristic roots are
distinct (when omitting the subcase \eqref{eq: lowDegreeForm111RepeatedRoots}, which was handled above).
Define $f(x) = \mathcal{I}(x)t(x)$, i.e., $f(x)$ is the integrand of \eqref{eq:DEGeneralSolution}, and $\mathcal{I}$ is as in
\autoref{lem:DifferentGFForms}(1~\&~3). Recall that $\Lambda = \beta_1$ in
\eqref{eq: lowDegreeForm110} and $(1-\Lambda x)$ is a factor of the denominator of $t(x)$.
For \(x\in [0,1/\Lambda)\) sufficiently close to \(1/\Lambda\),
\begin{equation}\label{eq:integrandAsymptotics}
f(x) = (1 - \Lambda x)^{\nu-1} \sum_{n = 0}^\infty c_n (1-\Lambda x)^n = \sum_{n + \nu \leq 0} c_n (1-\Lambda x)^{n+ \nu - 1} +
\mathcal{O}\big( (1-\Lambda x)^{n_0 + \nu - 1} \big) 
\end{equation}
where $\nu$ is given in \autoref{lem:DifferentGFForms} and
$n_0 \in \N_0$ is the least integer such that $n_0 + \nu > 0$.
In the sequel we use the notation \(H(x) = \sum_{n = 0}^\infty c_n (1-\Lambda x)^n\) for brevity. 
%	where $H(x) = \sum_{n = 0}^\infty a_n (1-\Lambda x)^n$
%$1/\Lambda$.
%(For example,
%for $\mathcal{F}$ as defined in \autoref{prop:DifferentGFForms}(1), we have $\nu = \nu_{\Lambda}-1$, as $t(x)$ has $1-\Lambda x$ in the denominator.)
%Hence for $x \in [0,{1}/{\Lambda})$ sufficiently close to $1/\Lambda$, 
% we may write
%Hence for $x \in [0,{1}/{\Lambda})$ sufficiently close to $1/\Lambda$, 
% we may write
%%
%\begin{equation*}
%f(x) = \sum_{n + \nu \leq -1} a_n (1-\Lambda x)^{n+ \nu} +
%\mathcal{O}\big( (1-\Lambda x)^{A} \big)
%\end{equation*}
%
%In particular $A > -1$ and $A \geq \nu$.
\begin{claim}\label{clm:integralAsymptotics}
For $x \in [0,{1}/{\Lambda})$ sufficiently close to $1/\Lambda$,
\begin{equation}\label{eq:intfx}
\int_{0}^{x} f(y)\, dy = \sum_{n < -\nu}
			\frac{-c_n/\Lambda }{n+ \nu}(1-\Lambda x)^{n + \nu}
			+ C_0 + C_1 \log(1-\Lambda x) + \mathcal{O}((1-\Lambda x)^{n_0 + \nu}),
\end{equation}
where $C_1 = -c_{- \nu}/\Lambda$ if $-\nu \in \N_0$ and $C_1 = 0$ otherwise, and 
\begin{equation}\label{eq:C0}
C_0 = \sum_{n < -\nu} \frac{c_n/\Lambda}{n+ \nu}
 + \int_{0}^{1/\Lambda} f(y) - \sum_{n \leq -\nu } c_n(1-\Lambda y)^{n+\nu - 1}\, dy.
\end{equation}
\end{claim}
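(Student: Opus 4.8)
The plan is to integrate the asymptotic expansion \eqref{eq:integrandAsymptotics} of $f$ term by term near the singularity $x = 1/\Lambda$, after decomposing $f$ into a finite \emph{singular part} collecting the non-integrable and borderline powers, and an integrable \emph{regular remainder}. Concretely, with $n_0 \in \N_0$ the least integer satisfying $n_0 + \nu > 0$, I would set $f_{\mathrm{sing}}(y) := \sum_{n \le -\nu} c_n (1-\Lambda y)^{n+\nu-1}$ (a finite sum) and $f_{\mathrm{reg}}(y) := f(y) - f_{\mathrm{sing}}(y)$. By \eqref{eq:integrandAsymptotics}, the remainder equals the convergent tail $\sum_{n > -\nu} c_n (1-\Lambda y)^{n+\nu-1}$, so $f_{\mathrm{reg}}(y) = \mathcal{O}\big((1-\Lambda y)^{n_0+\nu-1}\big)$ as $y \to 1/\Lambda$; since $n_0 + \nu - 1 > -1$ this is integrable up to the endpoint $1/\Lambda$ (and it is integrable near $0$ as well, as $\alpha > 1$).

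First I would integrate the singular part explicitly. Under the change of variable $u = 1-\Lambda y$, a term with $n + \nu \ne 0$ gives $\int_0^x c_n(1-\Lambda y)^{n+\nu-1}\, dy = \tfrac{-c_n/\Lambda}{n+\nu}\big((1-\Lambda x)^{n+\nu} - 1\big)$, whereas the single borderline term $n = -\nu$ (possible only when $-\nu \in \N_0$) integrates to $-\tfrac{c_{-\nu}}{\Lambda}\log(1-\Lambda x)$, the lower limit contributing nothing since $\log 1 = 0$. Summing over $n < -\nu$ then produces the leading power terms $\sum_{n<-\nu}\tfrac{-c_n/\Lambda}{n+\nu}(1-\Lambda x)^{n+\nu}$ together with the constant $\sum_{n<-\nu}\tfrac{c_n/\Lambda}{n+\nu}$ coming from the lower limit, while the borderline term yields exactly $C_1 \log(1-\Lambda x)$ with $C_1 = -c_{-\nu}/\Lambda$ (and $C_1 = 0$ when $-\nu \notin \N_0$, since then no such term occurs).

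Next I would treat the regular part by splitting $\int_0^x f_{\mathrm{reg}} = \int_0^{1/\Lambda} f_{\mathrm{reg}} - \int_x^{1/\Lambda} f_{\mathrm{reg}}$. The first term is precisely the convergent integral $\int_0^{1/\Lambda}\big(f(y) - \sum_{n \le -\nu} c_n(1-\Lambda y)^{n+\nu-1}\big)\, dy$ appearing in \eqref{eq:C0}; together with the lower-limit constant from the previous step it assembles exactly $C_0$. For the tail, the bound $f_{\mathrm{reg}}(y) = \mathcal{O}\big((1-\Lambda y)^{n_0+\nu-1}\big)$ gives $\int_x^{1/\Lambda} f_{\mathrm{reg}}(y)\, dy = \mathcal{O}\big((1-\Lambda x)^{n_0+\nu}\big)$, as integrating the dominating power raises its exponent by one. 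Collecting the three contributions yields \eqref{eq:intfx}.

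The argument is essentially routine, and I expect the main difficulty to be bookkeeping rather than analysis. The delicate point is the consistent treatment of the borderline index $n = -\nu$: it is the unique source of the logarithm, must be excluded from the power sum $\sum_{n<-\nu}$ and from its associated lower-limit constant, yet must be kept inside the subtracted sum $\sum_{n \le -\nu}$ that defines $C_0$ (so that $f_{\mathrm{reg}}$ is genuinely integrable). Keeping these index ranges aligned across $f_{\mathrm{sing}}$, the power terms, the constant, and the integral in \eqref{eq:C0} is where care is required; the only genuine analytic input, namely the integrability and order of $f_{\mathrm{reg}}$ near $1/\Lambda$, is immediate from the convergence of the series $H$ in \eqref{eq:integrandAsymptotics}, itself guaranteed by \autoref{lem:DifferentGFForms}.
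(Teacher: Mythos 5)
Your proposal is correct and takes essentially the same route as the paper's own proof: your $f_{\mathrm{sing}}$ is exactly the paper's $r(y)=\sum_{n\le-\nu}c_n(1-\Lambda y)^{n+\nu-1}$, and the decomposition $\int_0^x f = \int_0^{1/\Lambda}(f-r) - \int_x^{1/\Lambda}(f-r) + \int_0^x r$, with explicit termwise integration of $r$ (producing the power terms, the lower-limit constant, and the logarithm from the borderline index $n=-\nu$) and the $\mathcal{O}((1-\Lambda x)^{n_0+\nu})$ bound on the tail, is precisely the argument in the appendix. The index bookkeeping you flag as the delicate point is handled identically there.
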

\begin{proof}
This can be seen by integrating the series termwise (cf.\, \cite[Theorem~VI.9(ii)]{flajolet2009analytic}.) See \autoref{subapp:IntegralAsymptotics} for a proof.
\end{proof}
We are interested in the behaviour of
$\mathcal{F}(x) = \mathcal{I}(x)^{-1} \int_{0}^{x} f(y)dy$ as $x \to 1/\Lambda-$.
%Now $\mathcal{F}(x) = \mathcal{I}(x)^{-1}\int_0^{x} f(y)\,dy$.
By inspecting \autoref{lem:DifferentGFForms}(1~\&~3) we notice that in both cases
$\mathcal{I}(x)^{-1} = (1-\Lambda x)^{-\nu}$ multiplied by a function that
is analytic at $1/\Lambda$.
Multiply \eqref{eq:intfx} through by $\mathcal{I}^{-1}(x)$. Then, modulo the addition of
analytic terms that vanish at \(x=1/\Lambda\),
% of order \(\mathcal{O}((1-\Lambda x)^{n_0}G(x))\) 
\(\mathcal{F}(x)\) is given by the product of an analytic function that does not vanish at $1/\Lambda$ and
\begin{equation} \label{eq:solutionSeries}
\sum_{n  < -\nu}\frac{-c_n / \Lambda}{n + \nu}(1-\Lambda x)^n
+ (C_0 + C_1 \log(1-\Lambda x) ) (1-\Lambda x)^{-\nu }. 
% + \mathcal{O}((1-\Lambda x)^{n_0}).
\end{equation}

The next lemma identifies when $\lim_{x\to 1/\Lambda-}\mathcal{F}^{(\ell)}(x)$ exists for each $\ell \geq 0$.

\begin{lemma}\label{lem:FallDerivativesExistCharacterisation}

Consider the parameter $\nu$ in \eqref{eq:integrandAsymptotics}.
\begin{enumerate}
\item Suppose that $-\nu \notin \N_0$.  Then the lower limits of $\mathcal{F}$ and its derivatives are finite at $1/\Lambda$ if and only if \(C_0=0\); that is,
\begin{equation}\label{eq:IntegralEquation}
\int_{0}^{1/\Lambda} f(y) - \sum_{n  \leq -\nu } c_n(1-\Lambda y)^{n+\nu - 1} \, dy = \sum_{n < -\nu} \frac{-c_n/\Lambda}{n+ \nu}.
\end{equation}
\item Suppose that $-\nu \in \N_0$. Then the lower limits of $\mathcal{F}$ and its derivatives are finite at $1/\Lambda$ if and only if
$C_1 = 0$.%$c_{-\nu} = 0$. %, where $c_{-\nu-1}$ is as defined in \eqref{eq:nu}. 
\end{enumerate}
\end{lemma}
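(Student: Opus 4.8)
The plan is to reduce the entire statement to a local analysis, near $x = 1/\Lambda$, of the single model function $T(x) = (C_0 + C_1\log(1-\Lambda x))(1-\Lambda x)^{-\nu}$. Recall from \eqref{eq:solutionSeries} and the discussion preceding it that, modulo the addition of functions analytic (and vanishing) at $1/\Lambda$, $\mathcal{F}(x)$ equals an analytic function $g$ with $g(1/\Lambda)\neq 0$ times the expression $\sum_{n<-\nu}\frac{-c_n/\Lambda}{n+\nu}(1-\Lambda x)^n + T(x)$. The finite sum here is a polynomial in $(1-\Lambda x)$ with non-negative integer exponents, hence analytic, so it and all its derivatives have finite left-limits at $1/\Lambda$ and may be absorbed into the analytic part. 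First I would record the elementary fact that, for a function on $[0,1/\Lambda)$, the property of having finite left-limits of \emph{all} its derivatives at $1/\Lambda$ is preserved under multiplication by $g$ and under multiplication by $1/g$; since $g(1/\Lambda)\neq 0$, the reciprocal $1/g$ is again analytic at $1/\Lambda$, so the Leibniz rule transports the property in both directions. Consequently $\mathcal{F}$ and all its derivatives have finite left-limits at $1/\Lambda$ if and only if $T$ does, and the whole problem is reduced to understanding the derivatives of $T$.

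Writing $w = 1-\Lambda x$, so that $\frac{d}{dx} = -\Lambda\,\frac{d}{dw}$ and $w\to 0^+$, I would compute $\ell$-th derivatives of the two constituents of $T$. For the pure power one has $\frac{d^\ell}{dx^\ell}(C_0 w^{-\nu}) = C_0(-\Lambda)^\ell(-\nu)(-\nu-1)\cdots(-\nu-\ell+1)\,w^{-\nu-\ell}$, while the Leibniz rule gives $\frac{d^\ell}{dw^\ell}(w^{-\nu}\log w) = P_\ell\,w^{-\nu-\ell}\log w + Q_\ell\,w^{-\nu-\ell}$ for explicit constants $P_\ell,Q_\ell$. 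In Case~1 (where $-\nu\notin\N_0$) we have $C_1 = 0$, so $T = C_0 w^{-\nu}$. Here the descending product $(-\nu)(-\nu-1)\cdots(-\nu-\ell+1)$ never vanishes, precisely because $-\nu$ is not a non-negative integer, and the exponent $-\nu-\ell$ tends to $-\infty$; hence if $C_0\neq 0$, then every sufficiently high derivative of $T$ diverges. Finiteness therefore forces $C_0 = 0$, which is conversely sufficient since it makes $T\equiv 0$. Rearranging the definition \eqref{eq:C0} of $C_0$ then yields exactly the integral equation \eqref{eq:IntegralEquation}, giving the stated equivalence in this case.

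For Case~2 (where $-\nu = m\in\N_0$) I would split $T = C_0 w^{m} + C_1 w^{m}\log w$. The monomial $C_0 w^{m}$ is analytic and contributes only finite derivative limits, so only the logarithmic term is at issue. For $\ell < m$, every derivative $\frac{d^\ell}{dw^\ell}(w^{m}\log w)$ is a combination of $w^{m-\ell}\log w$ and $w^{m-\ell}$ with $m-\ell\geq 1$, both of which tend to $0$; but at $\ell = m$ the Leibniz expansion leaves the term $m!\log w$, which tends to $-\infty$. Hence if $C_1\neq 0$, the $m$-th derivative of $T$ already diverges, whereas $C_1 = 0$ renders $T = C_0 w^{m}$ analytic and all derivatives finite. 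This establishes the equivalence with $C_1 = 0$. The point requiring the most care, and the likeliest source of error, is the reduction in the first paragraph: one must verify that passing modulo analytic summands and multiplying by the analytic, non-vanishing factor $g$ genuinely preserves the existence of all one-sided derivative limits, and not merely the $0$-th limit, since it is this fact that licenses replacing $\mathcal{F}$ by the single model function $T$ throughout.
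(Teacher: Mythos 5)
Your proposal is correct and follows essentially the same route as the paper's proof: both reduce, via \eqref{eq:solutionSeries}, to deciding when the model function $\bigl(C_0 + C_1\log(1-\Lambda x)\bigr)(1-\Lambda x)^{-\nu}$ has finite left-limits of all derivatives at $1/\Lambda$, and then case-split on whether $-\nu\in\N_0$. You merely spell out two steps the paper leaves implicit — that multiplication by the analytic non-vanishing factor and absorption of analytic summands preserve the property in question, and the explicit derivative computations forcing $C_0=0$ (resp.\ $C_1=0$) — both of which are handled correctly.
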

\begin{proof}
From \eqref{eq:solutionSeries} we have that the lower limits of
$\mathcal{F}$ and its derivatives are finite at $1/\Lambda$ if and only if this is so for the function \(C_0 (1-\Lambda x)^{-\nu} + C_1 \log(1-\Lambda x)(1-\Lambda x)^{-\nu}\).
\begin{enumerate}
\item Assume that $-\nu \notin \N_0$. Then $C_1 = 0$ by definition. Hence  the lower limits of $\mathcal{F}(x)$ and its left derivatives are finite
at $1/\Lambda$ if and only if $C_0 = 0$ and so we have the desired result.
\item 
Assume that $-\nu \in \N_0$. Then the first term in the above function is analytic at $1/\Lambda$. 
Hence the function $\mathcal{F}(x)$ and all its left derivatives exist at $1/\Lambda$ if and only if $C_1 = 0$.
%  
%which is equivalent for \eqref{eq:IntegralEquation} to hold by \autoref{clm:integralAsymptotics}.
%
%Assume then that $-\nu - 1 \geq 0$ is a non-negative integer. Then the first term in the above function is analytic at $1/\Lambda$. Hence, the function $\mathcal{F}(x)$ and all its (left) derivatives exist at $1/\Lambda$ if and only if $C_1 = 0$.
\qedhere
\end{enumerate}
\end{proof}

%
%We need one more crucial lemma.
%
%\begin{lemma}\label{lem:dominantToInfinity}
Recall from \autoref{ex:KoomanAsymptotics} that recurrence \eqref{eq: lowDegreeForm101} admits two linearly independent solutions \(\seq{u_n^{(1)}}{n}\) and \(\seq{u_n^{(2)}}{n}\) such that 
\begin{equation*}
u_n^{(1)} \sim
	n^{\frac{1}{2}(\beta_0 + \gamma_0 - \alpha)}, \quad \text{and} \quad
u_n^{(2)} \sim \mleft(-1 \mright)^n
	n^{\frac{1}{2}(-\beta_0 + \gamma_0 - \alpha)}.
\end{equation*}
Since \(\beta_0>0\), \(\seq{u_n^{(1)}}{n}\) is a dominant solution.
For \(\ell\) sufficiently large it is immediate that
$\sum_{n=0}^{\infty} n(n-1)\cdots (n-\ell) u_{n-1}^{(1)} = \infty$.
By Abel's theorem, $\lim_{x \to 1-}\mathcal{F}^{(\ell)}(x) = \infty$ for the corresponding generating
function $\mathcal{F}$.
We thus have the following corollary.

\begin{corollary}\label{cor:dominantSolutionToInfinity}
For any dominant solution to \eqref{eq: lowDegreeForm101}, the corresponding
generating function has a derivative for which $\lim_{x\to 1-}\mathcal{F}^{(\ell)}(x) = \pm \infty$.
\end{corollary}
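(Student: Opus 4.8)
The plan is to reduce the statement to a single termwise-divergence computation combined with Abel's theorem. First I would fix an arbitrary dominant solution $\seq{u_n}{n}$ and expand it in the basis $\seq{u_n^{(1)}}{n}$, $\seq{u_n^{(2)}}{n}$ recalled just above, writing $u_n = c_1 u_n^{(1)} + c_2 u_n^{(2)}$. Since $\seq{u_n}{n}$ is dominant it is not a scalar multiple of the minimal solution $\seq{u_n^{(2)}}{n}$, so $c_1 \neq 0$. The crucial observation is that $\beta_0 > 0$ forces the exponent $\tfrac12(\beta_0 + \gamma_0 - \alpha)$ of $u_n^{(1)}$ to be strictly larger than the exponent $\tfrac12(-\beta_0 + \gamma_0 - \alpha)$ of the oscillating minimal solution. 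Hence the minimal contribution is asymptotically negligible, $u_n \sim c_1\, n^{\frac12(\beta_0 + \gamma_0 - \alpha)}$ by \autoref{ex:KoomanAsymptotics}, and in particular $\seq{u_n}{n}$ is eventually of constant sign $\sigma := \sign(c_1)$.

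Next I would differentiate the generating series termwise. Since $\mathcal{F}(x) = \sum_{n\geq 0} u_{n-1} x^{n+\alpha}$ has radius of convergence $1/\Lambda = 1$ (by \autoref{lem:radiusOfConvergence} with $\Lambda = 1$), so does each $\mathcal{F}^{(\ell)}$, and $\mathcal{F}^{(\ell)}(x) = \sum_{n \geq 0} u_{n-1} \big(\prod_{j=0}^{\ell-1}(n+\alpha - j)\big) x^{n+\alpha - \ell}$. For $n$ large the polynomial prefactor is positive (as $\alpha > 1$) and grows like $n^{\ell}$, so the coefficient of $x^{n+\alpha-\ell}$ is asymptotic to $c_1\, n^{\ell + \frac12(\beta_0+\gamma_0-\alpha)}$; it is therefore eventually of the fixed sign $\sigma$, and once $\ell > -1 - \tfrac12(\beta_0 + \gamma_0 - \alpha)$ the series of these coefficients diverges. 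Splitting off the finitely many initial terms and applying the constant-sign divergent version of Abel's theorem to the tail then yields $\lim_{x\to 1-} \mathcal{F}^{(\ell)}(x) = \sigma \cdot \infty = \pm\infty$, which is exactly the claim.

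The main point requiring care is the Abel step, since the coefficients are only \emph{eventually} of constant sign. The plan is to subtract the finite head of the series (a continuous, hence bounded, function at $x = 1$), factor out $\sigma$, and apply Abel's theorem to a genuinely nonnegative divergent power series, noting that the common positive factor $x^{\alpha - \ell} \to 1$ does not affect the limit. I would also record the explicit threshold $\ell > -1 - \tfrac12(\beta_0 + \gamma_0 - \alpha)$ so that the existential claim is met. An alternative way to neutralise the minimal part would be to observe that the generating function of $\seq{u_n^{(2)}}{n}$ has its dominant singularity at $x = -1$ (because of the $(-1)^n$ factor) and is thus analytic at $x = 1$; however, the growth-rate domination argument above absorbs this automatically and keeps the proof self-contained.
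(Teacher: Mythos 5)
Your proof is correct and follows essentially the same route as the paper: the asymptotics of \autoref{ex:KoomanAsymptotics} show that (a suitable derivative of) the coefficient sequence is eventually of one sign and sums to $\pm\infty$, and Abel's theorem transfers this to $\lim_{x\to 1-}\mathcal{F}^{(\ell)}(x)=\pm\infty$. The only difference is that you make explicit the reduction from an arbitrary dominant solution $c_1 u_n^{(1)}+c_2 u_n^{(2)}$ (with $c_1\neq 0$) to the asymptotics of $u_n^{(1)}$, together with the head/tail split in the Abel step, both of which the paper leaves implicit.
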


We will now show for each recurrence relation \eqref{eq: lowDegreeForm111},
\eqref{eq: lowDegreeForm110}, and \eqref{eq: lowDegreeForm101}, there exists a choice of
$u_{-1}$ and $u_0$ such that $\mathcal{F}$ and all its left derivatives exist at $1/\Lambda$.

\begin{lemma}
For each of the recurrences \eqref{eq: lowDegreeForm111}, \eqref{eq: lowDegreeForm110}, and
\eqref{eq: lowDegreeForm101}, there is a choice of $u_0$ and $u_{-1}$ such that the left limits of the corresponding
generating function $\mathcal{F}$ and all its derivatives exist at $1/\Lambda$.
\end{lemma}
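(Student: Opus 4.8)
The plan is to exploit the fact that the entire construction is homogeneous and linear in the initial data $(u_{-1},u_0)$, and then to reduce the assertion to a one-dimensional kernel computation. First I would record the key linearity observation: the integrand $f(y)=\mathcal{I}(y)t(y)$ of \eqref{eq:DEGeneralSolution} depends on the solution only through the factor $t$, and by \eqref{eq:sx} the factor $t(y)$ is proportional to $Ay+B$, where $A=(\beta_0+\beta_1)u_{-1}-(\alpha+1)u_0$ and $B=-\alpha u_{-1}$ are homogeneous linear forms in $(u_{-1},u_0)$ (these specialise appropriately when $\beta_1=0$ for \eqref{eq: lowDegreeForm101} or $\gamma_1=0$ for \eqref{eq: lowDegreeForm110}, so all three recurrences are covered). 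The integrating factor $\mathcal{I}$ from \autoref{lem:DifferentGFForms}(1~\&~3) does not involve the initial data at all. Hence $f$, each of its Taylor coefficients $c_n$ at $1/\Lambda$ from \eqref{eq:integrandAsymptotics}, and therefore the quantities $C_0$ and $C_1$ of \autoref{clm:integralAsymptotics}, are all homogeneous linear functionals of $(u_{-1},u_0)\in\R^2$.

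With this in hand I would invoke \autoref{lem:FallDerivativesExistCharacterisation} and treat its two regimes uniformly. If $-\nu\notin\N_0$, the finiteness of the left limits of $\mathcal{F}$ and of all its derivatives at $1/\Lambda$ is equivalent to the single scalar equation $C_0=0$; if $-\nu\in\N_0$, it is equivalent to $C_1=0$, i.e.\ to $c_{-\nu}=0$. In both regimes this is one homogeneous linear equation in the two unknowns $u_{-1},u_0$. Since a linear functional on the two-dimensional space $\R^2$ has kernel of dimension at least one, the equation admits a non-trivial solution $(u_{-1},u_0)\neq(0,0)$; for this choice \autoref{lem:FallDerivativesExistCharacterisation} yields exactly the claimed conclusion. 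Note that the same dimension count simultaneously settles all three recurrences, so no case-by-case evaluation of the integrals is needed at this stage.

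I do not expect a genuine obstacle, since the argument ultimately reduces to a dimension count. The only steps requiring care are the homogeneity and linearity claim (immediate from the explicit shape of $t$ and the independence of $\mathcal{I}$ from $(u_{-1},u_0)$) and the uniform handling of the two cases $-\nu\in\N_0$ and $-\nu\notin\N_0$, both of which collapse to a single linear constraint. For completeness I would also observe that the map $(u_{-1},u_0)\mapsto(A,B)$ is invertible, its determinant being $-\alpha(\alpha+1)\neq0$ since $\alpha>1$; this is not needed for existence, but it confirms that the constraint genuinely cuts out a proper subspace of the parameter plane unless the relevant functional vanishes identically, in which case every non-trivial choice of $(u_{-1},u_0)$ already works. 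This existence result is precisely what, together with \autoref{lem:vanishingIntegral}, will later pin down the minimal solution uniquely up to scaling.
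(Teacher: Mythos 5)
Your argument is correct, and it reaches the conclusion by a genuinely different and more economical route than the paper. The observations you make --- that $\mathcal{I}$ is independent of the initial data, that $t$ (hence $f$, the coefficients $c_n$, and therefore $C_0$ and $C_1$) is a homogeneous linear functional of $(u_{-1},u_0)\in\R^2$, and that a single linear condition on a two-dimensional space always has a non-trivial kernel --- do combine with \autoref{lem:FallDerivativesExistCharacterisation} to produce a non-trivial solution with the required property, uniformly across all three recurrences and both regimes $-\nu\in\N_0$ and $-\nu\notin\N_0$. The paper proceeds quite differently: for \eqref{eq: lowDegreeForm111} and \eqref{eq: lowDegreeForm110} it bypasses \autoref{lem:FallDerivativesExistCharacterisation} entirely, noting via \autoref{lem:radiusOfConvergence} that the minimal solution (which exists by standing assumption) has a generating function with radius of convergence $1/|\lambda|>1/\Lambda$, hence analytic at $1/\Lambda$; only for \eqref{eq: lowDegreeForm101}, where both characteristic roots have modulus $1$ and that shortcut fails, does it fix $u_{-1}=1$ and solve the resulting affine equation in $u_0$ explicitly, proving non-degeneracy ($J'(A)\neq 0$ via a sign argument with dominant solutions, and $K^{(\ell)}(1)+\ell K^{(\ell-1)}(1)\neq 0$ in the integer case). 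What the paper's longer computation buys is the extra fact that the linear constraint is not identically zero, so the admissible initial data form exactly a line and one may always normalise $u_{-1}=1$; your dimension count yields existence only. For the present lemma and its use in \autoref{prop:minimality111_110_101} existence suffices, and non-degeneracy of the functional follows a posteriori anyway, since dominant solutions exist and fail the finiteness criterion. You also correctly handle the one point needing care, namely excluding the trivial solution: a kernel of dimension at least one contains a non-zero vector, and the invertibility of $(u_{-1},u_0)\mapsto(A,B)$ confirms the corresponding solution of the recurrence is non-trivial.
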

\begin{proof}
For recurrences \eqref{eq: lowDegreeForm111} and \eqref{eq: lowDegreeForm110} the result follows from \autoref{lem:radiusOfConvergence} 
as a minimal solution necessarily defines such $u_{-1}$ and $u_0$. 
We may thus focus on recurrence
\eqref{eq: lowDegreeForm101}.
Let us put $u_{-1} = 1$. By \autoref{lem:DifferentGFForms}(3) we have
\begin{equation*}
f(y) = (1-y)^{\nu - 1} (1+y)^{\nu'-1} y^{\alpha-1}(Ay - \alpha)
\end{equation*}
where $\nu = 1 + \frac{ \gamma_0 - \alpha + \beta_0}{2}$,
$\nu' = \frac{\alpha - \gamma_0 +\beta_0}{2} - 1= -\nu + \beta_0$,
$A = \beta_0 - (\alpha + 1)u_0$.
In terms of \eqref{eq:integrandAsymptotics}, we have
$\sum_{n=0}^{\infty} c_n (1-\Lambda x)^n = (1+y)^{\nu'-1} y^{\alpha-1}(Ay - \alpha)$.

In light of \autoref{lem:FallDerivativesExistCharacterisation}, we consider two cases. (Recall
here that $\Lambda = 1$ and $\lambda = -1$.)
\begin{enumerate}
\item
Assume first that $-\nu \notin \N_0$. Then, by \autoref{lem:FallDerivativesExistCharacterisation}(1),
we need to establish a choice of $u_0$ for which \eqref{eq:IntegralEquation} holds.
Assume first that $\nu > 0$. Then equation \eqref{eq:IntegralEquation} is simply
$\int_{0}^{1} f(y)\,dy = 0$, which is equivalent to
%\begin{equation*}
%\int_{0}^{1} (Ay -\alpha)(1-y)^{\nu - 1} (1+y)^{\nu'-1}y^{\alpha-1}\,dy
%= 0.
%\alpha\int_{0}^{1} (1-y)^{\nu_{\Lambda}-1} (1+y)^{\nu_{\lambda}-1}y^{\alpha-1}\,dy.
%\end{equation*}
\begin{equation*}
A\int_{0}^{1} (1-y)^{\nu - 1} (1+y)^{\nu'-1}y^{\alpha}\,dy = 
\alpha\int_{0}^{1} (1-y)^{\nu-1} (1+y)^{\nu'-1}y^{\alpha-1}\,dy.
\end{equation*}
Clearly we can choose $u_0$ for this equation to hold as the integral on the left does not vanish (the integrand is strictly positive at each point in the domain of integration).

Assume that $\nu < 0$ (and recall that $-\nu \notin \N_0$).
Consider the parameters in \eqref{eq:IntegralEquation}.
We write
$(1+y)^{\nu'-1}y^{\alpha-1} = \sum_{n=0}^{\infty} d_n (1-y)^n$ when $y$ is close to $1$. Then
$\sum_{n=0}^{\infty} c_{n}(1-y)^n = (A y - \alpha)\sum_{n=0}^{\infty}d_{n}(1-y)^n$,
so that $c_n = A(d_n - d_{n-1}) - \alpha d_{n}$ (with the convention $d_{-1} = 0$).
Recall that $n_0$ is the least element of $\N_0$ such that $n_0 + \nu > 0$. We have
\begin{equation*}
\sum_{n \leq - \nu}c_n(1-y)^n
%= \sum_{n=0}^{n_0 - 1} c_n(1-y)^n
%&= \sum_{n=0}^{n_0 - 1} (A(d_n - d_{n-1}) - \alpha d_{n})(1-y)^n\\
%&= \sum_{n=0}^{n_0 - 1} (A - \alpha)d_n(1-y)^{n} -A \sum_{n=0}^{k} d_{n-1}(1-y)^n\\
%&= \sum_{n=0}^{n_0 - 1} (A - \alpha)d_n(1-y)^{n} -A(1-y) \sum_{n=0}^{k-1} d_{n}(1-y)^{n}\\
%&= (Ay - \alpha)\sum_{n=0}^{n_0 - 2} d_n(1-y)^{n} + (A-\alpha)d_{n_0 - 1}(1-y)^{n_0 - 1}\\
=    (Ay - \alpha) \sum_{n \leq - \nu} d_n(1-y)^n + A d_{n_0 - 1}(1-y)^{n_0}.
\end{equation*}
Now recall that
$\Lambda = 1$ and $\lambda = -1$ in \eqref{eq: lowDegreeForm101}.
Then the equality in \eqref{eq:IntegralEquation} holds if and only if the following expression is equal to zero
\begin{equation*}
\int_{0}^{1} (A y - \alpha)\biggl( K(y) - \sum_{n \leq -\nu} \!\!\! d_n (1-y)^{n  + \nu - 1}\biggr) \, dy
-A\frac{d_{n_0-1}}{n_0 + \nu} + \sum_{n  < -\nu}\frac{A(d_n - d_{n-1}) - \alpha d_{n}}{n+ \nu}.
\end{equation*}
Here $K(y) := (1-y)^{\nu - 1} (1+y)^{\nu'-1} y^{\alpha-1}$.
We shall henceforth call the above expression \(J(A)\).
We now show that \(J(A)\) is continuous in $A$ (and so is continuous in $u_0$).
%By linearity, we may extract the term
%$\int_0^1 -Ad_{n_0-1}(1-y)^{n_0 + \nu - 1}\,dy = -A\frac{d_{n_0-1}}{n_0 + \nu}$ from the integral. 
%
%Define
%%\begin{multline*}
%%J(A):=\int_{0}^{1} (A y - \alpha)
%%	\biggl( K(y) - \sum_{n \leq -\nu} \!\!\! d_n (1-y)^{n + \nu - 1}\biggr) \, dy\\
%%+ \sum_{n < -\nu}\frac{A(d_n - d_{n-1}) - \alpha d_{n}}{n+\nu} - \frac{A d_{{n_0}-1}}{n_0+\nu}.
%%\end{multline*}
\begin{claim}
The function $J$ is continuous. Moreover, it is differentiable for $A$ in any open, bounded interval $(a,b)$.
\end{claim}
\begin{proof}
To show that \(J\) is differentiable, it suffices to show that one can \textit{pass the differentiation under the integral sign} \cite[\S20.4]{priestley1997integration} so that
	\begin{equation*}
	\frac{d}{dA} \int_{0}^{1} (A y - \alpha)\biggl( K(y) - \sum_{n \leq -\nu} \!\!\! d_n (1-y)^{n  + \nu - 1}\biggr) \, dy = \int_0^1 \frac{\partial}{\partial A} g(y,A) \, dy.
	\end{equation*}
%is differentiable in $A$.
%, as the sum outside is clearly differentiable in $A$. 
This process is permitted because
%, with $g(y,A)$ denoting the integrand, that, for each fixed $A \in (a,b)$,
$y\mapsto g(y,A)$ is integrable by construction;
the derivative $\partial g(y,A)/\partial A = yK(y) - y \sum_{n\leq -\nu} d_n(1-y)^{n+\nu - 1}$ exists for each \(y\in(0,1)\); and
\(|\partial g(y,A)/\partial A|\) is an integrable function independent of \(A\).
%(iii) there exists an integrable function $G$, independent of $A$, such that
%$|\frac{\partial}{\partial A}g(y,A)| \leq G(y)$ for almost all $y$ and for all $A \in (a,b)$.
%(See, e.g., \cite[\S20.4]{priestley1997integration}.)
%\todo[color=green!40!]{Use \cite[20.4]{priestley1997integration}}
%%% Statement in Priestley
%%A sufficient (but not necessary) condition for \lq differentiation under the integral sign\rq. Let f be a real-valued function defined on \R\times J, where J is an open interval in \R, and assume that, for each fixed t\in J,
%%i) x\maptso f(x,t) is integrable,
%%ii) f_t(x,t) exists for almost all x,
%%iii) there exists an integrable function g, independent of y such that 
%%$|\partial f(y,A)/\partial A| \leq g(y)$ for almost all x and for all y\in J.
%%Then one can \lq pass the differentiation under the integral sign\rq
%
%Now (i) is immediate by construction, and (ii) is satisfied as $\partial g(y,A)/\partial A = yK(y) - y \sum_{n\leq -\nu} d_n(1-y)^{n+\nu - 1}$. We note that $\partial g(y,A)/\partial A$ is independent of $A$, so (iii) is satisfied by $G(y) := |\partial g(y,A)/\partial A|$.
\end{proof}

Now, by Leibniz's rule we have
\begin{equation*}
J'(A) = \int_{0}^{1} y K(y) - y\sum_{n  \leq - \nu} \!\!\! d_n (1-y)^{n + \nu - 1} \, dy - \frac{d_{n_0-1}}{n_0 + \nu}
+ \sum_{n < -\nu}\frac{d_n - d_{n-1}}{n+\nu}.
\end{equation*}
Hence $J$ is either constant or linear in $A$ depending on whether $J'(A) = 0$.
%\todo{The previous argument was not sufficient. We need the knowledge that $(0,1,...)$ is not
%a minimal solution.}

\begin{claim}
We claim that $J'(A) \neq 0$.
\end{claim}
\begin{proof}
Recall that a dominant solution to \eqref{eq: lowDegreeForm101} defines a generating function
for which $\lim_{x \to 1-} \mathcal{F}^{(\ell)}(x) = \pm \infty$ for sufficiently large $\ell$. 
From \eqref{eq:solutionSeries}, we notice that the sign of the above limit is
determined by the sign of $C_0$ (since the analytic term omitted from the expression does not
vanish at $1/\Lambda$). It thus suffices to exhibit two choices of $u_0$ (with $u_{-1} = 1$)
for which both the signs in the limit are realised.

%As recurrence \eqref{eq: lowDegreeForm101} admits a minimal solution, the continued
%fraction associated to it converges by \autoref{thm: pincherle}. Now the associated continued
%fraction satisfies the assumptions of \autoref{prop:continued}, so it converges to a
%finite value.
%
%\todo{Possibly refer to or around \autoref{prop:pos} for finiteness of continued fraction.}
Now the solution $\seq{v_n}{n}$ defined by $v_{-1} = 0$, $v_0 = 1$, to
\eqref{eq: lowDegreeForm101} is dominant. Indeed, \autoref{prop:pos} together with the
discussion preceding it imply that the sequence $(-1)^n v_n$ is dominant (it satisfies a
recurrence with signature $(-,+)$), and dominance is inherited by $\seq{v_n}{n}$. Let
$\seq{v_n'}{n}$ be a
minimal solution to \eqref{eq: lowDegreeForm101}. We may assume that $v'_{-1} = 1$, as it is linearly independent to $\seq{v_n}{n}$. Then the solutions given by $\pm \seq{v_n}{n} + \seq{v_n'}{n}$
%and $-\seq{v_n}{n} + \seq{v'}{n}$ 
%
define generating functions that each diverge to \(\pm \infty\) (with opposite signs) and \(\pm v_0 + v'_0 =1\) as required.
%infinite values
%with opposite signs. 
%The first term in both equals $1$.
\end{proof}
We deduce that $J(A)$ is a degree one polynomial in $A$. In particular, there is a choice of $A$ such that $J(A) = 0$. This concludes the proof of the first case.

\item
Assume second that $-\nu \in \N_0$. Then, by \autoref{lem:FallDerivativesExistCharacterisation}(2),
we need to exhibit a choice of $u_0$ for which $C_1 = 0$. This is equivalent to $c_{-\nu} = 0$. Recall that
$\sum_{n=0}^{\infty} c_n(1-y)^n = (A y - \alpha) (1+y)^{\nu'-1}y^{\alpha-1} =: H(y)$. So,
$c_{-\nu} = 0$ if and only if $H^{(-\nu)}(1) = 0$. We further introduce
$K(y) = (1+y)^{\nu'-1}y^{\alpha-1}$ (so $H(y) = K(y)(Ay -\alpha)$) and $\ell = -\nu$.
Now if $\ell = 0$, then
$u_0 = \frac{\beta_0 - \alpha}{\alpha + 1}$ forces $H(1) = 0$. For $\ell \in\N$ we have
\begin{equation*}
H^{(\ell)}(x) = \sum_{k=0}^{\ell}\binom{\ell}{k} K^{(\ell-k)}(x) \frac{d^k}{dx^k}(Ax + B) =
K^{(\ell)}(x) (A x - \alpha) + \ell K^{(\ell - 1)}(x)A
\end{equation*}
and so $H^{(\ell)}(1) = A(K^{(\ell)}(1) + \ell K^{(\ell-1)}(1)) - \alpha K^{(\ell)}(1)$.
As long as $K^{\ell}(1) + \ell K^{\ell - 1}(1) \neq 0$, we can choose $u_0$ in a suitable way
to force $H^{(\ell)}(1) = 0$. Next we show that $K^{(\ell)}(1) + \ell K^{\ell}(1) \neq 0$ to conclude the proof.
Recall that \(K(y) = (1+y)^{\nu'-1}y^{\alpha - 1}\)
we have
\begin{equation*}
K^{(\ell)}(y) = \sum_{k=0}^{\ell} \binom{\ell}{k} \frac{d^{k}}{dy^k}(1+y)^{\ell + \beta_0 - 1} \frac{d^{\ell - k}}{dy^{\ell - k}}y^{\alpha-1}.
%=\sum_{k=0}^{\ell} \binom{\ell}{k} (\ell + \beta_0)^{\underline{\ell - k}}(1+y)^{k + \beta_0 + %1} y^{\alpha - k} (\alpha - 1)^{\underline{k}}.
\end{equation*}
In our current working $\beta_0,\gamma_0 > 0$ and so we have the following inequalities.
First,
 \(\nu' = -\nu + \beta_0 = \ell + \beta_0 > \ell\). % is assumed in \eqref{eq: lowDegreeForm101}. 
Second,
\(\alpha = 2\nu' + \gamma_0 + 2 - \beta_0 = 2\ell + \gamma_0 + \beta_0 + 2 > \ell\).
%as $\beta_0,\gamma_0 > 0$ in our current working. 
%Observe now that $\frac{d^k(1+y)^{s}}{dy^k} = s(s - 1)\cdots (s - k + 1)(1+y)^{s - k}$ for any $k\geq 1$. Further, if $k \leq s + 1$, then coefficient is non-negative, while if $k < s + 1$ it is positive. Similar observations hold for $\frac{d^ky^{s}}{dy^k}$. As $\nu_{\lambda} - 1 = \ell + \beta_0 - 1$ and $\ell < \ell + \beta_0$, and similarly $\alpha > \ell$, 
From the preceding inequalities, analysis of the summands in the above binomial expansion shows that \(K^{(\ell)}(1)>0\) and similarly
% and \( is a sum of non-negative numbers, and since $\ell \geq 1$, there is at least one positive term in the sum. Similarly, 
$K^{(\ell - 1)}(1) \ge 0$. % can be shown to be a non-negative number. 
We conclude that
$K^{(\ell)}(1) + \ell K^{(\ell-1)}(1) \neq 0$ as required. \qedhere
\end{enumerate}
\end{proof}

%We have established the following proposition:
\begin{proposition}\label{prop:minimality111_110_101}
A solution to recurrence \eqref{eq: lowDegreeForm111}, \eqref{eq: lowDegreeForm110}, or
\eqref{eq: lowDegreeForm101} is minimal if and only if left limits of the corresponding generating function
$\mathcal{F}$ and its derivatives exist at $1/\Lambda$.
Hence the decidability of the minimality problem in these instances reduces
to checking the equalities in \autoref{lem:FallDerivativesExistCharacterisation}.
\end{proposition}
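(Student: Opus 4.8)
The plan is to use the singular expansion \eqref{eq:solutionSeries} together with \autoref{lem:FallDerivativesExistCharacterisation} to turn the analytic condition on $\mathcal{F}$ into a single linear constraint on the initial data, and then to identify the resulting family of initial data with the minimal solutions. Modulo analytic terms vanishing at $1/\Lambda$, the function $\mathcal{F}$ equals a non-vanishing analytic factor times the expression \eqref{eq:solutionSeries}; its sum $\sum_{n < -\nu}\tfrac{-c_n/\Lambda}{n+\nu}(1-\Lambda x)^n$ involves only non-negative integer powers and is therefore analytic, so the sole possible singularity is the term $(C_0 + C_1 \log(1-\Lambda x))(1-\Lambda x)^{-\nu}$. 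Consequently the left limits of $\mathcal{F}$ and of all its derivatives exist at $1/\Lambda$ if and only if this term is analytic, which by \autoref{lem:FallDerivativesExistCharacterisation} is exactly the equality $C_0 = 0$ (when $-\nu \notin \N_0$) or $C_1 = 0$ (when $-\nu \in \N_0$); equivalently, if and only if $\mathcal{F}$ is analytic at $1/\Lambda$. Since $C_0$ and $C_1$ depend linearly on $u_{-1}$ and $u_0$, the set $V$ of initial data satisfying this condition is a linear subspace of the two-dimensional solution space. As each recurrence is assumed to admit a minimal solution, the minimal solutions form a one-dimensional line $L$, and it suffices to prove $V = L$.

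For the recurrences \eqref{eq: lowDegreeForm111} and \eqref{eq: lowDegreeForm110} the two inclusions follow directly from \autoref{lem:radiusOfConvergence}. If $\seq{u_n}{n}$ is minimal then $|\lambda| < |\Lambda|$ forces the radius of convergence of $\mathcal{F}$ to be $1/|\lambda| > 1/\Lambda$ (read as $\infty$ when $\lambda = 0$), so $\mathcal{F}$ is analytic at $1/\Lambda$ and lies in $V$; hence $L \subseteq V$. Conversely, if $\seq{u_n}{n}$ is dominant then $\mathcal{F}$ has radius of convergence exactly $1/\Lambda$ and so is not analytic there, whence the condition fails and the solution lies outside $V$. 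As every non-trivial solution is either minimal or dominant, this gives $V = L$.

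The delicate case, which I expect to be the main obstacle, is \eqref{eq: lowDegreeForm101}: here the characteristic roots are $\pm 1$, so the minimal and dominant solutions share the radius of convergence $1/\Lambda = 1$ and the radius argument no longer separates them. For this case I would instead argue by dimension. The preceding lemma guarantees that $V \neq \{0\}$, while \autoref{cor:dominantSolutionToInfinity} shows that every dominant solution has some derivative tending to $\pm\infty$ at $1$ and hence fails the condition. Thus $V$ contains no dominant solution; since every non-minimal solution is dominant, $V \subseteq L$, and as $V \neq \{0\}$ with $\dim L = 1$ we conclude $V = L$.

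In all three cases $V = L$, which is precisely the claimed equivalence between minimality and the existence of all left limits of $\mathcal{F}$ and its derivatives at $1/\Lambda$. The decidability statement then follows from \autoref{lem:FallDerivativesExistCharacterisation}: deciding minimality amounts to testing $C_0 = 0$ or $C_1 = 0$, and unwinding $C_0$ via \eqref{eq:C0} (and $C_1 = -c_{-\nu}/\Lambda$) expresses these as the vanishing of integrals of the form $\int_{0}^{1/\Lambda} \mathcal{I}(y)t(y)\,dy$, i.e.\ period-like or exponential-period integrals, as required for \autoref{thm:MinimalityRedToPeriodLike}.
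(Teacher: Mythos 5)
Your overall architecture matches the paper's: reduce the analytic condition to the linear constraints $C_0=0$ / $C_1=0$ via \eqref{eq:solutionSeries} and \autoref{lem:FallDerivativesExistCharacterisation}, note that the admissible initial data form a linear subspace $V$, and identify $V$ with the line $L$ of minimal solutions, treating \eqref{eq: lowDegreeForm101} by a dimension count ($V\neq\{0\}$ from the existence lemma, and $V$ avoids dominant solutions by \autoref{cor:dominantSolutionToInfinity}). The gap is in your treatment of \eqref{eq: lowDegreeForm111} and \eqref{eq: lowDegreeForm110}: you infer that a dominant solution lies outside $V$ from ``$\mathcal{F}$ has radius of convergence exactly $1/\Lambda$ and so is not analytic there.'' That implication is false in general: a power series with radius of convergence $R$ is only guaranteed to have \emph{some} singularity on the circle $|x|=R$, not one at the particular boundary point $x=R$ (for instance $\sum_{n\geq 1} (-1)^n x^n/n^2$ has radius $1$ yet extends analytically past $x=1$). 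So, as written, the inclusion $V\subseteq L$ is not established for these two recurrences; you are also implicitly equivocating between ``analytic at the point $1/\Lambda$'' and ``radius of convergence exceeding $1/\Lambda$''.

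The conclusion is true and the repair is short. The paper obtains it from the \emph{other} inclusion together with uniqueness: $L\subseteq V$ (your radius argument for the minimal solution is sound, since $1/\Lambda$ then lies strictly inside the disc of convergence), and the functional cutting out $V$ does not vanish identically on the two-dimensional solution space (\autoref{lem:vanishingIntegral}), so $\dim V\leq 1$ and hence $V=L$ --- i.e.\ exactly the dimension count you already deploy for \eqref{eq: lowDegreeForm101}, run in the opposite direction. Alternatively, your own route can be saved by invoking Pringsheim's theorem: by Poincar\'e--Perron a dominant solution satisfies $u_{n+1}/u_n\to\Lambda>0$, so its coefficients are eventually of constant sign and the positive real point $1/\Lambda$ of the circle of convergence is necessarily a singular point. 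Either patch closes the gap; the remainder of your argument, including the final unwinding of $C_0$ and $C_1$ into period-like and exponential-period integrals, is in line with the paper.
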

\begin{proof}
We have established that, for all the recurrences, a dominant solution defines a generating
function for which some derivative tends to $\pm \infty$ as $x \to 1/\Lambda -$. This is evident from
\eqref{eq:solutionSeries} for the recurrences \eqref{eq: lowDegreeForm111} and
\eqref{eq: lowDegreeForm110}, as a minimal solution defines a generating function which is
analytic at $1/\Lambda$. For \eqref{eq: lowDegreeForm101}, this is established in
\autoref{cor:dominantSolutionToInfinity}.

On the other hand, \autoref{lem:FallDerivativesExistCharacterisation} shows that there exists
a solution for which $\mathcal{F}$ and all its derivatives exist at $1/\Lambda$. We conclude
that this choice must correspond to a minimal solution. Hence, minimality reduces to checking
\autoref{lem:FallDerivativesExistCharacterisation}.
\end{proof}

%\begin{remark}
The equalities in \autoref{lem:FallDerivativesExistCharacterisation} involve
checking whether a period, exponential period, or a period-like integral equals $0$ as follows.  We conclude that \autoref{thm:MinimalityRedToPeriodLike} holds for solutions of recurrences of
the form \eqref{eq: lowDegreeForm111}, \eqref{eq: lowDegreeForm110}, and
\eqref{eq: lowDegreeForm101}. (Notice that it is decidable whether the second equality in \autoref{lem:FallDerivativesExistCharacterisation} holds or not.)
%
%\begin{itemize}
%\item We reduce the Minimality Problem for instances \eqref{eq: lowDegreeForm111}, \eqref{eq: lowDegreeForm110}, and \eqref{eq: lowDegreeForm101} to checking the equality \eqref{eq:IntegralEquation}.
%\todo[inline, color=green!40!]{We should definitely include definitions of such period-like numbers before this list.  Lots of the justification here is not clear without a reference to a definition and probably redundant if there is a definition of such numbers given in the prose.  I think we can update the NT appendix from the ICALP preprint for such purposes.}
\begin{itemize}

\item For \eqref{eq: lowDegreeForm111}, the equation can be rearranged to obtain
%\begin{equation*}
%\int_{0}^{1/\Lambda} \frac{1}{\Lambda^{\nu} |\lambda|^{\nu'}}
%(1-\Lambda x)^{\nu} |1-\lambda x|^{\nu'} t(x) - \sum_{n  \leq -\nu } c_n(1-\Lambda y)^{n+\nu - 1} \, dy = \sum_{n < -\nu} \frac{-c_n/\Lambda}{n+ \nu},
%\end{equation*}
%
%Suggestion for the above equation:
\begin{equation*}
\int_{0}^{1/\Lambda} \frac{1}{\Lambda^{\nu} |\lambda|^{\nu'}}
(1-\Lambda x)^{\nu} |1-\lambda x|^{\nu'} t(x) - \sum_{n  \leq -\nu } c_n(1-\Lambda y)^{n+\nu - 1} + \sum_{n < -\nu} \frac{c_n}{n+ \nu}\, dy = 0,
\end{equation*}
where $t(y)$ is an algebraic function, $\lambda$ and $\Lambda$ are the characteristic roots of
the recurrence. The parameters $\nu$ and $\nu'$ are algebraic numbers of degree at most $2$.%
\footnote{We have $\nu, \nu' \in \Q(\Lambda)$, and they can be rational even if $\Lambda$ is not.}
If they are rational, then the integral is a period
and the parameters $c_n$ are algebraic numbers.
If $\nu$ and $\nu'$ are irrational, then the integral is period-like:
%\gk{Should we be more concrete about what
%counts as a period-like integral? (it is more than irrational exponents in the integrand).  I guess the place to do it is earlier on.
%Perhaps Joel will say something in the introduction.}
the parameters $c_n$ are algebraic multiples of derivatives of
$|1-\lambda x|^{\nu'}t(x)(1-\Lambda x)$ evaluated at $1/\Lambda$,
i.e., are algebraic multiples of algebraic numbers to algebraic powers.
%\footnote{For non-zero algebraic numbers $a$, $b$ with $b$ irrational, we have
%$a^b = \int_{0}^a b x^{b-1}\,dx$ if $b > 0$. Otherwise
%$a^b = -\int_0^{1/a}b x^{-b-1}\,dx$. Thus the number $a^b$ is a period-like integral.
%} 
%\gk{Check integrals in footnote.  The factors on the RHS seem off to me.}
%\todo{fixed}
%It is clear that this equation can be rearranged to have $0$ on one side, and a period-like
%number on the other.
Hence the integral on the left is period-like.
%\gk{I see this part has been rephrased, but it hasn't made the point any clearer.  It is easy to make one side of an equation equal to zero, but it would be better to read an equation with a single integral on one side and 0 on the other. I have put a suggestion in a shaded box above.}
%\todo{This is better.}

\item For the recurrence \eqref{eq: lowDegreeForm110}, the equation can be rearranged to
\begin{equation*}
\int_{0}^{1/\beta_1} \eu^{\nu' y} (\beta_1^{-1} - y)^{\nu}t(y) - \sum_{n  \leq -\nu } c_n(1-\beta_1 y)^{n+\nu - 1} + \sum_{n < -\nu} \frac{c_n}{n+ \nu} \, dy = 0,
\end{equation*}
where $t(y)$ is an algebraic function. The integral is an exponential period, as the parameters
$\nu$ and $\nu'$ are rational. In this case the numbers $c_n$ are exponential periods, as they
are algebraic multiples of $\eu^{\nu'}$: they are derivatives of
$\eu^{\nu' y}t(y)(1-y)$ evaluated at $1$. The integral on the left is an exponential period.

\item For \eqref{eq: lowDegreeForm101}, the equation can be rearranged to obtain
\begin{equation*}
\int_{0}^{1}(1-y)^{\nu - 1} (1+y)^{\nu'-1} y^{\alpha-1}(Ay - \alpha) - \sum_{n  \leq -\nu } c_n(1-y)^{n+\nu - 1} + \sum_{n < -\nu} \frac{c_n}{n+ \nu}  \, dy = 0.
\end{equation*}
The parameters $\nu$ and $\nu'$ here are algebraic numbers, as they are
in $\Q(\beta_0)$ and $\beta_0$ is a real algebraic number. If they are
rational, then the integral is a period, and the numbers $c_n$ are algebraic. If $\nu$ and
$\nu'$ are irrational, then the integral is period-like and the numbers $c_n$ are
algebraic multiples of algebraic numbers to algebraic powers.
The integral of the left is thus period-like.
\end{itemize}
%\end{remark}

\section{Conclusions}
In the Minimality Problem we are faced with the problem of comparing the ratio of the first
two terms of a solution against the value of a polynomial continued fraction. The problem
becomes trivial if the value of the polynomial continued fraction is transcendental, as no real algebraic solution is
then minimal. For degree-1 holonomic sequences, these values can often be expressed using
hypergeometric functions: \cite[\S 6.4]{LW1992} expresses such a value as a quotient of two
(contiguous) hypergeometric functions with real algebraic parameters evaluated at real
algebraic points. A characterisation of transcendence for such numbers is not known. For
holonomic sequences of higher degree, we see values of associated polynomial continued fraction using hypergeometric
functions in \autoref{ex:criticalRatioValues}. One can obtain several transcendental
values $1-1/\xi$ for the associated continued fraction:
using the construction, we see that possible values of $\xi$ include
\begin{itemize}
\item $\eu^k = \hypgeo{0}{0}(; ; k)$ for $k\in \Q$;
\item $\cos(k) = \hypgeo{0}{1}(;1/2; -k^2/4)$ for $k \in \Q$;
\item $\log(1 + k)/k = \hypgeo{2}{1}(1,1;2;-k)$ for $k \in \Q$, $|k|<1$; and
\item $\zeta(s) = \hypgeo{\mathnormal{s}+1}{\mathnormal{s}}(1,\ldots,1; 2,\ldots,2; 1)$, for $s\in \N$, $s\geq 2$, where $\zeta(s):= \sum_{n=1}^{\infty}n^{-s}$.
\end{itemize}
(The first three equalities can be seen from the classical Taylor series expansions of the functions.
The last equality follows after cancellations.)
Many of these values are known to be transcendental. For the Riemann
zeta function, Euler proved that if \(s\) is a positive integer then \(\zeta(2s)\) is a
rational multiple of \(\pi^{2k}\) and so it follows that \(\zeta(2k)\) is transcendental. The
arithmetic study of the values of \(\zeta(2k+1)\) is a major undertaking. For example,
{Ap\'{e}ry's constant} \(\zeta(3)\) is irrational \cite{apery1979irrationalite} (not known to be transcendental) and research
has shown that infinitely many values of \(\zeta(2k+1)\) are irrational
\cite{Rivoal2000LaFonctionZetaDeRiemann}.
For the time being, there is a lack of understanding of the arithmetic properties of
values of hypergeometric functions with rational (or
algebraic%
\footnote{Notice that
the parameters of the hypergeometric functions found in \autoref{ex:criticalRatioValues} can be
algebraic.}) parameters evaluated at rational (or algebraic) points, though the study
has spanned several decades and several striking results have been established (see, e.g.,
\cite{Wolfart1988Werte, BeukersH1989Monodromy,Beukers2010Algebraic,
Gorelov2013Algebraic_independence} and references therein).
%\todo{Missing concluding sentence for the discussion above}

In general, establishing transcendence of a number is a very challenging task, while establishing irrationality can be easier. This aspect in mind, let us
restrict our consideration of holonomic sequences to those with rational elements. 
The following proposition shows that $\textsc{Minimality}(0,1,0)$ becomes a trivial problem under this restriction.

\begin{proposition}
If $\seq{u_n}{n}$ is a minimal solution to recurrence \eqref{eq:010Recurrence} then $u_0 / u_{-1}$ irrational.
\end{proposition}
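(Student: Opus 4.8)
The plan is to identify $u_0/u_{-1}$ with (minus) the value of a concrete continued fraction and then prove irrationality of that value by a Lambert-style infinite descent. By the computation preceding \eqref{eq:hypergeometricEquation} together with \autoref{thm: pincherle}, any minimal solution of \eqref{eq:010Recurrence} satisfies $-u_0/u_{-1} = \mu$, where $\mu := \KF_{n=1}^{\infty}\bigl(\gamma_0/(\beta_1 n + \beta_0)\bigr)$ is the value of the associated convergent continued fraction (here $u_{-1}\neq 0$, since $\mu$ is finite). It therefore suffices to prove that $\mu$ is irrational. After the computable shift of Subsection~\ref{sub:linrec} we may assume $\beta_1 > 0$ and $\beta_1 n + \beta_0 > 0$ for all $n \ge 1$, while $\gamma_0$ retains a fixed nonzero sign.

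First I would replace the continued fraction by an equivalent one with integer partial quotients. Writing $\beta_1 = P/Q$, $\beta_0 = R/Q$ and $\gamma_0 = g/t$ with $P,R,g\in\Z$ and $Q,t\in\N$, the standard equivalence transformation (\cite[\S1.4]{cuyt2008handbook}) with scaling $r_0 = 1$ and $r_n = Qt$ for $n\ge 1$ yields an equivalent continued fraction $\KF_{n=1}^{\infty}(\tilde a_n/\tilde b_n)$ of the same value $\mu$, where $\tilde b_n = t(Pn+R)\in\Z$ and $\tilde a_n\in\Z$ (in fact $\tilde a_n = Q^2 t g$ for $n\ge 2$). Since $\tilde b_n \to \infty$ while $\tilde a_n$ is eventually constant, there is an $N$ with $\tilde b_n > |\tilde a_n| + 1 > 0$ for all $n \ge N$.

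Next I would control the tails $\xi_n := \KF_{k=n}^{\infty}(\tilde a_k/\tilde b_k)$. By the \'Sleszy\'nski--Pringsheim criterion, the domination $\tilde b_n \ge |\tilde a_n| + 1$ (for $n\ge N$) ensures that each tail converges to a finite value with $|\xi_n| \le 1$; combined with the strict inequality and the identity $\xi_n = \tilde a_n/(\tilde b_n + \xi_{n+1})$ this gives $0 < |\xi_n| < 1$ for $n \ge N$ (nonvanishing because $\tilde a_n \neq 0$). Suppose now, for a contradiction, that $\mu\in\Q$. As $\mu$ is the image of $\xi_N$ under the finite composition of the integer-coefficient linear fractional maps $w\mapsto \tilde a_k/(\tilde b_k + w)$ for $1\le k < N$, the number $\xi_N$ is rational as well. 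Writing $\xi_n = p_n/q_n$ in lowest terms with $q_n\ge 1$, the bound $0<|\xi_n|<1$ forces $0<|p_n|<q_n$, while $\xi_{n+1} = (\tilde a_n q_n - \tilde b_n p_n)/p_n$ exhibits $\xi_{n+1}$ as an integer fraction with denominator $p_n$, so $q_{n+1}\le |p_n| < q_n$. This produces an infinite strictly decreasing sequence $q_N > q_{N+1} > \cdots \ge 1$ of positive integers, which is absurd.

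The argument is not deep once set up: it is precisely Lambert's classical proof of the irrationality of $\tan$ at rational arguments, here in the guise of the continued fraction $\KF(\gamma_0/(\beta_1 n + \beta_0))$. Accordingly, the main obstacle is purely preparatory: clearing denominators so that the descent runs over genuine integers, and verifying that the partial numerators are eventually strictly dominated by the partial denominators so that every tail lands in $(-1,1)\setminus\{0\}$---this domination is exactly what forces the denominators to strictly decrease. Granting these reductions, the descent closes and $\mu$ is irrational; hence $u_0/u_{-1} = -\mu$ is irrational, as claimed.
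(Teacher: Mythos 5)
Your proof is correct and follows essentially the same route as the paper's: both normalise to an equivalent continued fraction with integer partial quotients whose partial denominators eventually dominate the (eventually constant) partial numerators, and then conclude irrationality of the limit $\mu=-u_0/u_{-1}$. The only difference is that the paper cites the classical criterion from Chrystal, whereas you prove it inline via the Lambert--Legendre infinite descent on the denominators of the tails.
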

%\begin{remark}
%It can be shown
%\todo[color=green!40!]{Give a reference? Or justification for this statement?}
%%that the continued fraction is irrational. So no rational solution to the recurrence can be minimal.
%%\end{remark}
%\todo{proof given}
\begin{proof}
%Consider an instance of $\textsc{Minimality}(0,1,0)$ as in the beginning of this subsection.
Normalise the recurrence relation as follows.
First, multiply through by $\alpha\in\N$ to obtain the recurrence
$\alpha u_{n} = (\beta_1' n + \beta_0') u_{n-1} + \gamma_0' u_{n-2}$ with \(\alpha,\beta_1',\beta_0',\gamma_0'\in\Z\).
Second, use the transformation described in \eqref{eq: sform} to obtain a recurrence of the form
$v_n = b_n v_{n-1} + a_n v_{n-2}$ where $b_n =\beta_1' n + \beta_0' \in \Z[n]$
and $a_n = \alpha\gamma_0' \in \Z$ is constant.
Without loss of generality we may assume that each $b_n$ is positive by considering the
sequence $\seq{(-1)^n v_n}{n}$ if necessary.
The continued fraction
$\KF_{n=1}^{\infty} ({\alpha\gamma_0'}/{b_n})$ associated to minimal solutions \(\seq{v_n}{n}\) of the transformed recurrence satisfies $\alpha\gamma_0' < b_n + 1$ for sufficiently large \(n\in\N\).
Continued fractions of this form converge to irrational values (see
\cite[\S XXXIV,~pp.~512--514]{chrystal1959algebra}).
%\todo{This is \S~XXXIV, pp. 512--514 in \url{http://djm.cc/library/Algebra_Elementary_Text-Book_Part_II_Chrystal_edited02.pdf}}
This is sufficient to prove the result because
the bijection between solution sequences \(\seq{u_n}{n}\) and \(\seq{v_n}{n}\) preserves rationality and minimality. 
\end{proof}

A similar conclusion holds for certain instances of
$\textsc{Minimality}(j,1,\ell)$ with $j + \ell = 1$ (precisely when
$|g_3(n-1)g_1(n)| < |g_2(n)| + 1$ assuming $g_i \in \Z[n]$). Again, for such recurrences the restricted Minimality Problem becomes trivial.

Let us pursue this line of thought and discuss the restricted Minimality Problem for degree-$1$ holonomic sequences in general.
Consider then recurrences of the form \eqref{eq: lowDegreeForm111}, where we understand
that the roots are of distinct moduli. We may invoke a conjecture of Zudilin
\cite{zudilin2002remarks} (see also \autoref{conj:Zudilin} for the precise statement)
that makes the following prediction: 
if a second-order Poincar\'{e} recurrence relation, in which the coefficients are in $\Q(n)$,
has irrational characteristic roots,
then all rational solutions to the recurrence are dominant.
If the conjecture is true
then the restricted Minimality Problem can be trivially answered for such recurrences 
with irrational characteristic roots.
Hence, the only interesting instances are the
recurrences which have rational characteristic roots. 
By the discussion at the end of
\autoref{subsec:Minimality111_110_101}, the Minimality Problem of rational solutions to such
recurrences reduces to checking whether a period is equal to $0$. 
This is conjectured to be decidable by Kontsevich and Zagier \cite{kontsevich2001periods} (see \autoref{conj:KZ} for the precise statement and discussion).

Consider then recurrences of the form \eqref{eq: lowDegreeForm111RepeatedRoots}.
In this case the (unrestricted) Minimality Problem reduces to checking whether an exponential period
is zero (\autoref{prop:111repeatedRoot}), which is also conjectured to be decidable
\cite{kontsevich2001periods}.
We conclude that $\textsc{Minimality}(1,1,1)$ restricted to rational
solution sequences is decidable subject to Zudilin's conjecture and the aforementioned
conjectures on periods and exponential periods.

We may say something a bit stronger.
From the interreductions between the different parametrized versions of the Minimality Problem established in \autoref{subsec:interreductionsLowDegrees},
and subject to the aforementioned conjectures by Kontsevich and Zagier, and Zudilin, restrictions of the problems
$\textsc{Minimality}(j,k,\ell)$ with $j,k,\ell \in \{0,1\}$ are decidable except for
the case $j = \ell = 1$, $k=0$.

\newpage
\appendix

\section{Conjectures}\label{app:Conjectures}

\subsection{A rationality conjecture for holonomic sequences}

The following conjecture is due to to Zudilin \cite{zudilin2002remarks}:

\begin{conj} \label{conj:Zudilin}
Suppose that
\(u_{n} = b_nu_{n-1} + a_nu_{n-2}\)
is a second-order Poincar\'{e} recurrence
with \(a_n,b_n\in\mathbb{Q}(n)\).
Assume that the characteristic roots \(\lambda\) and \(\Lambda\)
associated to the recurrence satisfy \(0<|\lambda|<|\Lambda|\). 
Suppose that there 
exist two rational linearly independent solutions \(\seq{v_n}{n}\) and
\(\seq{w_n}{n}\) satisfying
\(v_{n+1}/v_n \sim \lambda\) and \(w_{n+1}/w_n \sim \Lambda\) as
\(n\to\infty\). 
Then \(\lambda\) and \(\Lambda\) are rational numbers.
\end{conj}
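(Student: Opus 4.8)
The plan is to first pin down the algebraic type of $\lambda$ and $\Lambda$, and then to rule out the single bad case. Writing $a_n = a + a'/n + O(n^{-2})$ and $b_n = b + b'/n + O(n^{-2})$, the Poincar\'{e} limits $a = \lim_n a_n$ and $b = \lim_n b_n$ are rational, being determined by the (rational) coefficients of $a_n, b_n \in \Q(n)$; likewise the subleading coefficients $a'$ and $b'$ are rational. Since $\lambda + \Lambda = b$ and $\lambda\Lambda = -a$ are rational, $\lambda$ and $\Lambda$ are the two roots of a quadratic over $\Q$, so $[\Q(\lambda):\Q]\le 2$. If the discriminant $b^2 + 4a$ is a square in $\Q$, then $\lambda, \Lambda \in \Q$ and we are done; if $b^2 + 4a < 0$, then $\lambda$ and $\Lambda$ are complex conjugates with $|\lambda| = |\Lambda|$, contradicting the hypothesis $|\lambda| < |\Lambda|$. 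Thus the only case to exclude is that in which $\lambda$ and $\Lambda$ are distinct \emph{real quadratic irrationals} conjugate over $\Q$.

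Second, I would exploit the Galois symmetry. Let $\sigma$ be the nontrivial element of $\Gal(\Q(\lambda)/\Q)$, so $\sigma(\lambda) = \Lambda$. Every coefficient $a_n, b_n$ and every term of the two \emph{rational} solutions $\seq{v_n}{n}$ and $\seq{w_n}{n}$ is fixed by $\sigma$. The refined Poincar\'{e}--Perron (Birkhoff--Trjitzinsky) asymptotics give $v_n \sim c\,\lambda^n n^{\rho}$ and $w_n \sim d\,\Lambda^n n^{\rho'}$, where the characteristic exponent works out to
\[
\rho = \frac{b'\lambda + a'}{\lambda(\lambda - \Lambda)} \in \Q(\lambda),
\]
and one checks that $\sigma$ carries the entire $\lambda$-adapted expansion (exponent and all subleading coefficients) to the $\Lambda$-adapted one. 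The heart of the argument would then couple this with the rationality of $\seq{v_n}{n}$: clearing denominators in the recurrence bounds the denominators of $v_n$ by a product of values of $g_3$, so the $p$-adic sizes of $v_n$ are controlled, while the archimedean size is $|v_n| \asymp |\lambda|^n n^{\rho}$. Feeding both into the product formula $\prod_v |v_n|_v = 1$ should impose arithmetic rigidity on $\lambda$; concretely, one would try to show that the simultaneous existence of a rational minimal solution growing like $\lambda^n$ and a rational dominant solution growing like $\Lambda^n = \sigma(\lambda)^n$ is incompatible with $\lambda$ irrational, for instance via a Diophantine-approximation estimate on the convergents of the associated continued fraction $\KF(a_n/b_n)$, or via the $G$-function machinery of Andr\'{e}--Chudnovsky.

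The main obstacle, and the reason this is stated as a conjecture rather than a theorem, is precisely that $\sigma$ does not commute with the archimedean limit: the ratios $v_{n+1}/v_n$ form a $\sigma$-fixed sequence of rationals whose limit $\lambda$ is nevertheless moved by $\sigma$, so no purely formal or algebraic contradiction is available. One genuinely has to tie the finite places to the archimedean growth, and the standard arithmetic inputs (transcendence measures, $G$-function estimates, the product formula) do not by themselves deliver the required incompatibility in full generality. I would therefore expect any complete proof to hinge on a new Diophantine estimate linking the denominators of the rational solutions to the irrationality of $\lambda$, and I would regard establishing that estimate as the crux; absent it, the honest outcome is the reduction above together with the partial and conditional consequences one can draw from it.
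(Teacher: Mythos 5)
This statement is an open conjecture due to Zudilin; the paper does not prove it, but merely records it (in Appendix~\ref{app:Conjectures}) and later uses it as a conditional hypothesis. So no proof attempt can be ``checked against the paper's proof'' --- there is none --- and your own write-up, read carefully, concedes as much in its final paragraph.

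What you actually establish is only the preliminary reduction: since $a=\lim_n a_n$ and $b=\lim_n b_n$ are rational, $\lambda$ and $\Lambda$ are roots of a quadratic over $\Q$, and the case of non-real roots is excluded by $|\lambda|\neq|\Lambda|$, so the content of the conjecture is exactly the exclusion of conjugate real quadratic irrationals. That part is correct but elementary. Everything after it --- the Galois-twisted asymptotic expansions, the product-formula argument, the appeal to $G$-function machinery or a ``new Diophantine estimate'' --- is a programme, not an argument: no estimate is stated, let alone proved, and as you yourself observe, the Galois automorphism $\sigma$ fixes every term $v_n$ while moving the archimedean limit $\lambda$, so there is no formal contradiction to extract. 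The genuine gap is therefore the entire arithmetic core of the statement, and it is a gap shared with the literature: this is precisely why the result is a conjecture. A fair summary of your submission is ``a correct reduction to the essential case, followed by an honest admission that the essential case is open,'' which is a reasonable thing to write about a conjecture but is not a proof.
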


%%%%%%%%%%
%%%%%%%%%%
%%%%%%%%%%

\subsection{A decidability conjecture for periods}
%{A decidability conjecture for generalised periods}
Kontsevich and Zagier's seminal paper \cite{kontsevich2001periods} defines a \emph{period} to be a complex number whose real and imaginary parts 
%	\begin{quote}
%	 \emph{a complex number whose real and imaginary parts are values of absolutely convergent integrals of rational functions with rational coefficients over domains in \(\R^n\) given by polynomial inequalities with rational coefficients.}
%	\end{quote}
can be written as absolutely convergent integrals of the form
	\begin{equation*}
		\int_\sigma \frac{P(x_1,\ldots, x_n)}{Q(x_1,\ldots, x_n)}\, dx_1 \cdots dx_n
	\end{equation*}
	where \(P,Q\in\mathbb{Q}[x_1,\ldots, x_n]\), \(Q\) is not the zero polynomial, and the domain \(\sigma\subset \R^n\) is given by polynomial inequalities with rational coefficients.
%Likewise, a complex number is a period if both its real and imaginary parts are periods.
It can be shown that one can replace rational numbers by algebraic numbers, and rational functions
by algebraic functions (with algebraic coefficients) in the above definition.
%{Periods} are defined as a class of integrals of \(n\)-forms on algebraic varieties (both defined over the algebraic numbers).
The set of periods \(\mathcal{P}\) form a countable sub-algebra of \(\C\) and it is easily seen that \(\overline{\mathbb{Q}} \subset \mathcal{P} \subset \C\).  Two initial examples are:
	\begin{equation*}
		\log(\alpha) = \int_1^\alpha \frac{1}{x}\, dx \quad \text{with } \alpha\in\overline{\Q} \qquad \text{and} \qquad \pi = \int_{x^2+y^2 \le 1} \, dxdy.
	\end{equation*} 
%with \(\alpha\in\overline{\mathbb{Q}}\).  
Given two algebraic numbers \(\alpha\) and \(\beta\), the problem of determining algorithmically whether \(\alpha=\beta\) is known to be decidable. 
The decidability of the equality of two periods---that is, a decision
procedure determining whether two periods (given by two explicit
integrals) are equal---is currently open. 
%(\emph{A priori} there are many different integral representations for a complex number.)
The next conjecture, see \cite[Conjecture 1]{kontsevich2001periods},
by Kontsevich and Zagier, would entail that equality of periods is decidable.

\begin{conj} \label{conj:KZ}
 Suppose that a period has two integral representations. 
One can pass between the representations via a finite sequence of admissible transformations where each transformation preserves the structure that all functions and domains of integration are algebraic with coefficients in \(\overline{\Q}\).
The admissible transformations are: linearity of the integral, a change of variables, and Stokes's formula.
% such that any intermediate step preserves the property that all functions and domains of integration are algebraic with coefficients in \(\overline{\Q}\).
\end{conj}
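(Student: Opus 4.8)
The statement is the celebrated Kontsevich--Zagier period conjecture, one of the central open problems in transcendence theory; no unconditional proof is known, so what follows is necessarily a description of the only framework in which one could hope to establish it, rather than a complete argument.

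The plan is to pass from the analytic description of periods (as integrals subject to the three admissible operations) to a cohomological one. First I would reformulate each period as a pairing $\langle \omega, \gamma\rangle$, where $\omega$ is a class in the algebraic de Rham cohomology $H^n_{\mathrm{dR}}(X/\overline{\Q})$ of a smooth variety $X$ relative to a normal-crossings divisor $D$, and $\gamma$ is a relative singular homology class in $H_n^{\mathrm{sing}}(X(\C), D(\C))$. Every absolutely convergent integral of an algebraic form over an algebraically defined domain arises in this way, after resolving singularities and compactifying so that the integrand becomes a smooth closed form and the domain a relative cycle. The content of the conjecture then becomes the assertion that the only relations among such pairings are those forced by the comparison isomorphism between de Rham and Betti cohomology.

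Next I would introduce the \emph{formal period algebra} $\mathcal{P}^{\mathrm{eff}}$, generated by quadruples $(X, D, \omega, \gamma)$ modulo exactly the relations induced by (i) bilinearity and functoriality, (ii) pullback along morphisms, and (iii) compatibility of the connecting boundary map with Stokes's theorem. The three admissible transformations in the statement correspond precisely to these defining relations, so there is a surjection $\mathcal{P}^{\mathrm{eff}} \twoheadrightarrow \mathcal{P}$ onto the ring of numerical periods, and the conjecture is \emph{equivalent} to the injectivity of this evaluation map. Following Nori's construction of motives and the formalism of Huber--M\"uller-Stach, one identifies $\mathcal{P}^{\mathrm{eff}}$ with the ring of regular functions on a torsor under the motivic Galois group, and the numerical-evaluation map with a distinguished complex point of that torsor. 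Injectivity would then follow from Grothendieck's period conjecture, which predicts that this point is Zariski-dense, equivalently that the transcendence degree of the periods of a motive equals the dimension of its motivic Galois group.

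The main obstacle is that this last step is itself wide open and of essentially the same depth as the original statement. Grothendieck's period conjecture is known only in isolated cases, such as those flowing from W\"ustholz's analytic subgroup theorem for periods of $1$-motives (which recover Baker's theorem on linear forms in logarithms), and even the algebraic independence of $\pi$ and $e$ - a trivial special case of the full transcendence predictions - lies beyond current techniques. Thus the reduction sketched above does not discharge the difficulty but merely relocates it into the motivic framework, where the genuinely hard input is a uniform lower bound on transcendence degrees of periods that no present method supplies.
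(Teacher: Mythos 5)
The statement you were asked about is not a theorem of the paper at all: it is Conjecture~1 of Kontsevich and Zagier, reproduced verbatim in the appendix precisely because it is an open problem whose truth would imply decidability of period equality. The paper offers no proof, so there is nothing to compare your argument against. You correctly recognised this, and your sketch of the conditional route --- reformulating periods as de~Rham/Betti pairings, introducing the formal period algebra of Huber--M\"uller-Stach built on Nori motives, and reducing injectivity of the evaluation map to Grothendieck's period conjecture --- is an accurate account of the only known framework, together with an honest admission that the final step is as deep as the original statement. Just be aware that, as a ``proof proposal,'' this establishes nothing; in the context of this paper the conjecture functions purely as a hypothesis under which the authors' reductions of Minimality and Positivity to period-vanishing become effective decision procedures.
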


It is currently not known whether Euler's number \(\eu\) is a period.
The following notion of exponential period was introduced in \cite{kontsevich2001periods} to extend the definition of period to a larger class containing \(\eu\).  
An \emph{exponential period} is a complex number that can be written as an absolutely convergent integral of the form
	\begin{equation*}
		\int_\sigma \eu^{-f(x_1,\ldots, x_n)} g(x_1,\ldots, x_n)\, dx_1\cdots dx_n
	\end{equation*}
where \(f\) and \(g\) are algebraic functions with algebraic coefficients and the domain \(\sigma\subset \R^n\) is a semi-algebraic set defined by polynomials with algebraic coefficients.
%
%We note this extension includes all algebraic powers of \(\eu\) and the Gamma function at rational values in its cohort.
%
%
%%	\begin{quote}
%%		\emph{is an absolutely convergent integral of the product of an algebraic function with the exponent of an algebraic function, over a real semi-algebraic set, where all the required polynomials here have algebraic coefficients.}
%%	\end{quote}
%	
\autoref{conj:KZ} is predicted to generalise to exponential periods in \cite{kontsevich2001periods}.
An overview discussing both periods and exponential periods can be found in \cite{waldschmidt2006trans}.

In this paper we encounter integrals that generalise the above concepts of period and exponential period.
A \emph{period-like integral} is a number that can be written as an absolutely convergent integral of the form
\begin{equation*}
 \int_\sigma \eu^{-f(x_1,\ldots, x_n)} g(x_1,\ldots, x_n)\, dx_1\cdots dx_n.
\end{equation*}
Here \(f\) is an algebraic function, \(g\) is the sum of algebraic functions raised to algebraic powers, and the domain \(\sigma\subset \R^n\)
is a semi-algebraic set defined by polynomials with algebraic coefficients.
\section{The PCF Equality Problem and the Minimality Problem}
\label{app:PCFmin}

\autoref{cor:PCFmin} is a straightforward application of Pincherle's Theorem (\autoref{thm: pincherle}). 
Given a solution sequence \(\seq[\infty]{v_n}{n=-1}\) 
to recurrence relation~\eqref{eq:lowDegreeOGForm}, 
let us consider the corresponding sequence \(\seq[\infty]{u_n}{n=-1}\) to the normalised recurrence (using the transformation described for \eqref{eq: sform}). This transformation preserves minimality so that \(\seq{v_n}{n}\) is a minimal solution of \eqref{eq:lowDegreeOGForm} if and only if \(\seq{u_n}{n}\) with initial terms  $u_{-1}=v_{-1}$ and $u_0=g_3(0)v_0$  is a minimal solution of \eqref{eq: sform}.
The sequence \(\seq{u_n}{n}\) is associated to the polynomial continued fraction \(\KF(a_n/b_n)\) with partial quotients $b_n= g_2(n)$ and $a_n=g_1(n)g_3(n-1)$ for each \(n\in\N\).
By \autoref{thm: pincherle}, \(\seq{u_n}{n}\) is a minimal solution to \eqref{eq: sform} if
\(\KF(a_n/b_n)\) converges to the limit $-u_0/u_{-1}$. 
Thus if one can determine the value of a polynomial continued fraction then one can determine whether \(\seq{u_n}{n}\) is a minimal solution of \eqref{eq: sform}. 
It follows that one can decide whether \(\seq{v_n}{n}\) is a minimal solution of \eqref{eq:lowDegreeOGForm}, as desired.
\gk{Can you comment on the case that \(\KF(a_n/b_n)\to\infty\) here?  The oracle should return \emph{no} for every real value, but the canonical denominators form a minimal solution sequence.}

Conversely, given a polynomial continued fraction \(b_0 + \KF(a_n/b_n)\) and a real-algebraic number $\xi$, let us construct the holonomic sequence \(\seq[\infty]{u_n}{n=-1}\) such that for each \(n\in\N\), $u_n = b_n u_{n-1} + a_n u_{n-2}$ with initial conditions $u_{-1}=1$ and 
$u_0= b_0-\xi$.
By \autoref{thm: pincherle}, 
sequence \(\seq{u_n}{n}\) is a minimal solution of the recurrence relation if and only if the continued fraction \(\KF ({a_n}/{b_n})\) converges to the value $-u_0/u_{-1}= \xi - b_0$. 
So given a holonomic sequence, if one can determine whether the sequence is a minimal solution of the associated recurrence relation then one can test the value of a polynomial continued fraction.

\section{Complex characteristic roots}\label{app:ComplexCharacteristicRoots}
%{Complex characteristic roots}
%\gk{update notation and terminology}

We study the recurrence relation \eqref{eq:lowDegreeOGForm} 
under the assumptions that 
%the coefficients are polynomials of degree at most 1, 
the recurrence relation has signature \((+,-)\)
and the discriminant
\(\Delta(n) = g_2(n)^2 + 4g_3(n)g_1(n)<0\) for each \(n\in\N\).
Our aim is to establish \autoref{prop:noPositiveMinimalSolution}. 
%
%
%
%\subsection{No Positive Solutions}
%{No positive solutions}

%\el{I modified this section to focus on sublinear polynomials (the previous one had a partial result for arbitrary polynomials and a definitive one for linear). If needed, I will integrate the other results later.}

%Our main goal in this subsection is to prove \autoref{prop: linearpos}.

Let $\seq[\infty]{u_n}{n=-1}$ be a non-trivial solution to recurrence \eqref{eq:lowDegreeOGForm} and 
$\seq[\infty]{x_n}{n=0}$ be the associated sequence with terms \(x_n = u_n/u_{n-1}\)
consider the 
function \(f_n\colon\R\setminus\{0\}\to\R\) given by
\(f_n(x)= g_3(n)x - g_2(n) - g_1(n)/x\).
Observe that \(f_n\) is continuous and has no real roots since $\Delta(n)<0$.
Furthermore, we have
\(\lim_{x\to 0^+} f_n(x) = \lim_{x\to\infty} f_n(x) = \infty\).  We thus conclude that for each \(n\in\N\), \(f_n\) is a strictly positive function on $(0,\infty)$.

\begin{lemma} \label{lem: xformulation}
We have that \(x_n = x_{n-1} - f_{n}(x_{n-1})/g_3(n)\) for each \(n\geq 1\).
Moreover, we have \(x_n = x_0 -\sum_{j=1}^n f_j(x_{j-1})/g_3(j)\). Thus, if $\seq[\infty]{x_n}{n=0}$
is a positive sequence, then it is strictly decreasing.
\end{lemma}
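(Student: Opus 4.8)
The plan is to read off the recursion for the ratio sequence $\seq{x_n}{n}$ directly from the defining recurrence, verify the claimed closed form by a one-line algebraic cancellation, telescope to get the summation identity, and finally feed in the positivity of $f_n$ already established just above the lemma. There is no genuine obstacle here: the whole statement is algebraic, and the only point requiring a word of care is that the ratios $x_n = u_n/u_{n-1}$ are well-defined, which follows from the fact (noted in the preliminaries) that a non-trivial solution cannot vanish at two consecutive indices, so that consecutive terms are nonzero.

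First I would divide the recurrence $g_3(n)u_n = g_2(n)u_{n-1} + g_1(n)u_{n-2}$ through by $u_{n-1}$. Using $u_{n-2}/u_{n-1} = 1/x_{n-1}$ this gives $g_3(n)x_n = g_2(n) + g_1(n)/x_{n-1}$, i.e.\ $x_n = g_2(n)/g_3(n) + g_1(n)/(g_3(n)x_{n-1})$. To match this against the claimed expression I would substitute the definition $f_n(x) = g_3(n)x - g_2(n) - g_1(n)/x$ and compute
\begin{equation*}
x_{n-1} - \frac{f_n(x_{n-1})}{g_3(n)} = x_{n-1} - x_{n-1} + \frac{g_2(n)}{g_3(n)} + \frac{g_1(n)}{g_3(n)x_{n-1}} = x_n,
\end{equation*}
where the two copies of $x_{n-1}$ cancel. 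This establishes the first formula for each $n\geq 1$.

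Next I would telescope. Rewriting the first formula as $x_n - x_{n-1} = -f_n(x_{n-1})/g_3(n)$ and summing over $j = 1,\dots,n$ collapses the left-hand side to $x_n - x_0$, yielding $x_n = x_0 - \sum_{j=1}^n f_j(x_{j-1})/g_3(j)$, the second asserted identity. Finally, for the monotonicity conclusion I would invoke the fact, proved in the paragraph preceding the lemma, that each $f_j$ is strictly positive on $(0,\infty)$ (this is where the hypotheses $\Delta(n)<0$ and signature $(+,-)$ enter). If $\seq{x_n}{n}$ is a positive sequence, then $x_{j-1} > 0$ and $g_3(j) > 0$ for every $j$, so each summand $f_j(x_{j-1})/g_3(j)$ is strictly positive; in particular $x_n - x_{n-1} = -f_n(x_{n-1})/g_3(n) < 0$ for all $n$, whence $\seq{x_n}{n}$ is strictly decreasing.
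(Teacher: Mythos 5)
Your proof is correct and follows essentially the same route as the paper: the paper likewise verifies by substitution that $f_n(x_{n-1}) = g_3(n)(x_{n-1}-x_n)$, telescopes the differences to obtain the summation identity, and concludes strict monotonicity from the strict positivity of $f_j$ on $(0,\infty)$ together with $g_3(j)>0$. Your additional remark on the well-definedness of the ratios is a harmless (and reasonable) extra precaution.
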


\begin{proof}
Substitution shows that \(f_n(x_{n-1}) = g_3(n)(x_{n-1} - x_n)\), 
and further we have
\begin{equation*}
x_n - x_0 = \sum_{j=1}^n x_j - x_{j-1} = - \sum_{j=1}^n f_j(x_{j-1})/g_3(j).
\end{equation*}
 It is now evident that the sequence $\seq[\infty]{x_n}{n=0}$ is strictly decreasing since both
\(g_3(j)\) and \(f_j(x_{j-1})\) are strictly positive for each \(j\in\N\) under the assumption
that $\seq[\infty]{x_n}{n=0}$ is positive.
\end{proof}

%Denoting $g_3(n) = \alpha_1 n + \alpha_0$, $g_2(n) = \beta_1 n + \beta_0$ and 
%$g_1(n) = \gamma_1 n + \gamma_0$, 
We define the functions $h_0(x)= f_0(x)$ and 
$h_\infty(x) = \alpha_1x - \beta_1 - \gamma_1/x = \lim_{n\rightarrow\infty} {f_n(x)}/n$. 
Note that \(f_n(x)= h_0(x) + nh_\infty(x)\) and the two functions \(h_0(x)\) and \(h_\infty(x)/x\) are differentiable and non-negative in the domain \(\{x\in\R : x>0\}\).
%To show that the functions are non-negative we note that
%  \begin{equation*}
%   \lim_{x\to 0+} g_k(x)/x = \lim_{x\to \infty} g_k(x)/x = \infty,
%  \end{equation*}
%
The function
\(h_0(x)\) has no real roots, 
and \(h_\infty(x)/x\) has a single real root if \(\beta_1^2 + 4\alpha_1\gamma_1 =0\) and no root otherwise.
By continuity, it follows that there exists an \(\varepsilon_0>0\) such that for all \(\{x\in\R : x>0\}\), \(h_0(x)/x > \varepsilon_0\).

\begin{proof}[Proof of \autoref{prop:noPositiveMinimalSolution}]
Let $\seq[\infty]{u_n}{n=-1}$ be a non-trivial positive solution.
If there is an \(N\in\N\) such that \(u_N = 0\) then it is clear that a subsequent term is negative and so we can assume that \(u_n>0\) for each \(n\in\{-1,0,1,\ldots\}\).
Thus for each \(n\in\N_0\), \(x_n= u_n/u_{n-1}>0\).
Since \(h_0(x)\) and \(h_\infty(x)\) are both non-negative on the domain \(\{x\in\R : x>0\}\) 
and, in addition, there exists an \(\varepsilon_0>0\) such that \(h_0(x)/x > \varepsilon_0\), 
we have that \(f_j(x_{j-1}) = h_0(x_{j-1}) + jh_\infty(x_{j-1})>\varepsilon_0\) too.
We combine this uniform bound and \autoref{lem: xformulation} to obtain
\begin{equation*}
   x_n = x_0 - \sum_{j=1}^n \frac{f_j(x_{j-1})}{g_3(j)} \le x_0 - 
   \sum_{j=1}^n \frac{\varepsilon_0}{\alpha_1 j + \alpha_0}.
\end{equation*}
Since the harmonic series diverges, we deduce that there exists an \(N\in\N\) such that \(x_N<0\), a contradiction. 
It follows that \(\seq{u_n}{n}\) is not positive.
\end{proof}

\section{Testing the initial ratio}
%{Testing the initial ratio}
\label{ap:compxtomu}
%\gk{Update notation and terminology}

The goal of this section is to prove \autoref{prop:compxtomu}:
given a non-minimal solution \(\seq[\infty]{u_n}{n=-1}\) to recurrence relation~\eqref{eq:lowDegreeOGForm} with a $(+,-)$ signature and positive discriminants,
decide if \(\seq{u_n}{n}\) is positive.
By \autoref{cor:MinimalMuPositiveMu}, an equivalent problem is to decide if \(u_0/u_{-1} \ge \mu\).
Together \autoref{cor:MinimalMuPositiveMu} and \autoref{lem:expand} determine a computable threshold for the sequence of ratios \(\seq[\infty]{x_n}{n=0}\) associated with \(\seq[\infty]{u_n}{n=-1}\)
such that \(\seq{x_n}{n}\) crosses this threshold if and only if $u_0/u_{-1} = x_0> \mu$.
Note that an upper bound on the number of steps taken in computing this threshold depends on the distance $|u_0/u_{-1} - \mu|$.

We define the \(n\)th characteristic polynomial \(\chi_n\) for recurrence relation~\eqref{eq:lowDegreeOGForm} as \(\chi_n(x) = g_3(n)x^2 - g_2(n)x - g_1(n)\) for each \(x\in \R\). 
In this section we shall assume that the associated sequences of characteristic roots \(\seq[\infty]{\lambda_n}{n=1}\) and \(\seq[\infty]{\Lambda_n}{n=1}\) are both real. 
Let $\lambda_\infty$ and $\Lambda_\infty$ be the corresponding limits, if defined, of these sequences%
\footnote{In the case of Poincar\'{e} recurrences, $\lambda_{\infty}$ and
$\Lambda_{\infty}$ coincide with the roots $\lambda$ and $\Lambda$ of the associated characteristic polynomial.}. 
Note that, from the closed form of \(\seq[\infty]{\lambda_n}{n=1}\) associated to recurrence
relations considered in this section,
%\todo{This is in the linear case with signature $(+,-)$ and the discriminants are assume positive, right? I think in the linear case, the 0,0,1 case might have $\lambda_{\infty} = \pm\infty$ due to the $\sqrt{n!}$ asymptotic term}
one can observe that the limit $\lambda_\infty$ is always finite and thus well-defined 
(which is not the case of the limit for $\Lambda_\infty$). If \(\seq[\infty]{\Lambda_n}{n=1}\)
diverges, we choose $\Lambda_\infty = +\infty$.

\subsection{Monotonicity and the characteristic roots}

The threshold described in the opening of this section depends on the monotonicity of the associated sequences of \(n\)th characteristic roots.

\begin{lemma} \label{monoroots}
 The sequences \(\seq[\infty]{\lambda_n}{n=1}\) and \(\seq[\infty]{\Lambda_n}{n=1}\) are eventually monotonic.
\end{lemma}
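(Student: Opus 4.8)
The plan is to treat the index as a continuous real parameter and recognise the two root sequences as samplings of branches of an algebraic function, which are automatically eventually monotonic. Write $\chi_t(y) = g_3(t) y^2 - g_2(t) y - g_1(t)$ with $g_3(t) = \alpha_1 t + \alpha_0$, $g_2(t) = \beta_1 t + \beta_0$, $g_1(t) = \gamma_1 t + \gamma_0$, and set $\Delta(t) = g_2(t)^2 + 4 g_3(t) g_1(t)$, a polynomial of degree at most $2$. Since $g_3$ has constant sign $1$ and no non-negative roots, and since we are in the regime where the roots are real (so $\Delta(n) \ge 0$), there is a threshold $T$ such that on $(T, \infty)$ we have $g_3(t) > 0$ and (assuming $\Delta \not\equiv 0$) $\Delta(t) > 0$. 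On this interval the two roots of $\chi_t$ are the distinct real-analytic functions
\[
\rho_{\pm}(t) = \frac{g_2(t) \pm \sqrt{\Delta(t)}}{2 g_3(t)}, \qquad \rho_+(t) > \rho_-(t),
\]
and for every $n > T$ the unordered pair $\{\lambda_n, \Lambda_n\}$ equals $\{\rho_+(n), \rho_-(n)\}$.

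The key step is to show each branch $\rho_\pm$ is eventually monotonic, for which I would argue that its derivative is itself algebraic over $\R(t)$ and therefore has no oscillation at infinity. Differentiating the defining identity $g_3(t)\rho(t)^2 - g_2(t)\rho(t) - g_1(t) = 0$ with respect to $t$ gives
\[
\bigl(2 g_3(t)\rho(t) - g_2(t)\bigr)\,\rho'(t) = g_2'(t)\rho(t) + g_1'(t) - g_3'(t)\rho(t)^2,
\]
and since $2 g_3(t)\rho_\pm(t) - g_2(t) = \pm\sqrt{\Delta(t)} \neq 0$ on $(T,\infty)$, this exhibits $\rho_\pm'$ as a rational function of $t$ and $\rho_\pm(t)$, hence as a function algebraic over $\R(t)$. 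Now a function algebraic over $\R(t)$ is either identically zero or has only finitely many zeros on $(T,\infty)$: its zeros are contained in the zero set of the (nonzero) constant coefficient of its minimal polynomial over $\R[t]$. If $\rho_\pm' \equiv 0$ then $\rho_\pm$ is constant, hence trivially monotonic; otherwise, choosing $T' > T$ beyond the finitely many zeros of $\rho_\pm'$, the derivative has constant sign on $(T', \infty)$ and $\rho_\pm$ is monotonic there. In either case each branch is eventually monotonic, and therefore so are the sequences $\seq{\lambda_n}{n}$ and $\seq{\Lambda_n}{n}$, since for large $n$ the labelling of the two roots (whether by value or by modulus) is fixed, so each of $\seq{\lambda_n}{n}$, $\seq{\Lambda_n}{n}$ eventually coincides with $\seq{\rho_+(n)}{n}$ or $\seq{\rho_-(n)}{n}$.

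It remains to dispose of the degenerate case $\Delta \equiv 0$, where the two roots coincide with $g_2(t)/(2 g_3(t))$, a rational function whose derivative is rational and hence of constant sign for large $t$; this gives eventual monotonicity directly. The main obstacle I anticipate is not the analytic estimate itself but the bookkeeping around the consistency of the root labelling and the degenerate subcases: I would confirm that on $(T,\infty)$ the sign pattern of the roots (both positive, or one of each sign, as dictated by the $(+,-)$ signature and $\sign(g_3)=1$) is constant, so that the modulus ordering eventually agrees with, or is the constant reversal of, the value ordering; this guarantees that $\seq{\lambda_n}{n}$ and $\seq{\Lambda_n}{n}$ are each a genuine restriction of a single monotonic branch rather than alternating between the two.
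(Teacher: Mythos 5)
Your proof is correct, and it follows the same overall strategy as the paper's: extend the root sequences to real-analytic functions of a continuous parameter via the quadratic formula, and show that their derivatives eventually have constant sign. Where you genuinely differ is in how that last step is justified. The paper computes $\lambda'(x)$ explicitly: after simplification its numerator is a constant, $-2(B^2-AC)$ in their notation, and its denominator involves $\sqrt{\Delta(x)}$ together with the factor $g_2(x)C+2g_3(x)B+C\sqrt{\Delta(x)}$, whose sign changes at most once; a four-way case analysis on the signs of $C$ and of $g_2(x)C+2g_3(x)B$ then yields that $\sign(\lambda')$ is eventually constant. You instead exhibit $\rho_\pm'$ as an element of $\R(t,\sqrt{\Delta})$ by implicit differentiation and invoke the general fact that a function algebraic over $\R(t)$ is either identically zero or has finitely many zeros. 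Your route is cleaner and strictly more general --- it never uses the degree-$1$ hypothesis on the coefficients and would carry over verbatim to higher-degree recurrences --- at the cost of being less explicit: the paper's computation reads off the direction of monotonicity and an explicit onset threshold directly from the coefficients, which is mildly relevant because this lemma feeds into an effective threshold-crossing procedure. Your additional care over the degenerate case $\Delta\equiv 0$ and over the consistency of the root labelling is sound but largely unnecessary in the paper's setting, where $\lambda_n$ is \emph{defined} by the closed form with the minus sign in front of the radical (your $\rho_-(n)$), and the $(+,-)$ signature forces both roots to be positive so that the modulus and value orderings agree.
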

\begin{proof}
Let us define a function \(\lambda \colon \R\to\R\) given by
\begin{equation*}
\lambda(x) = \frac{g_2(x) - \sqrt{g_2(x)^2 + 4g_1(x)g_3(x) }}{2g_3(x)}.
\end{equation*}
Note that for each \(n\in\N\), \(\lambda(n) = \lambda_n\).
Then we can write the derivative of the function in terms of constants
$A, B$ and $C$ (see \cite{Liu2010positivity}) as follows:
\begin{align*}
%\label{eq:lambda'1}
\lambda'(x) = & -\frac{g_2(x) C +2g_3(x) B - C \sqrt{g_2(x)^2+4g_3(x)g_1(x)}}{2g_3(x)^2\sqrt{g_2(x)^2-4g_3(x)g_1(x)}}\\
%\label{eq:lambda'2}
= & - \frac{2(B^2-AC)}{\sqrt{g_2(x)^2+g_3(x)g_1(x)}(g_2(x)C+2g_3(x)B+C\sqrt{g_2(x)^2+g_3(x)g_1(x)})}.
\end{align*}
From the above equations, we have the following cases:
\begin{itemize}
	\item If $C\geq 0$ and $g_2(x)C+2g_3(x)B\geq 0$, then $\sign(\lambda'(x)) = -\sign(B^2-AC) $. %Equation~\ref{eq:lambda'2},
	\item If $C\leq 0$ and $g_2(x)C+2g_3(x)B\geq 0 $, then $\sign(\lambda'(x)) =-1$. %Equation~\ref{eq:lambda'1},
	\item If $C\geq 0$ and $g_2(x)C+2g_3(x)B\leq 0 $, then $\sign(\lambda'(x)) =1$. %from Equation~\ref{eq:lambda'1},
	\item If $C\leq 0$ and $g_2(x)C+2g_3(x)B\leq 0 $, then $\sign(\lambda'(x)) = \sign(B^2-AC) $. %from Equation~\ref{eq:lambda'2},
\end{itemize}

The sign of $g_2(x)C+2g_3(x)B$ changes at most once. 
Thus, the sign of $\lambda'$ is eventually constant and therefore 
$\lambda\colon \R\to\R$ is eventually monotonic. 
It follows that \(\seq[\infty]{\lambda_n}{n=1}\) is eventually monotonic.
A similar argument proves that \(\seq[\infty]{\Lambda_n}{n=1}\) is eventually monotonic. 
\end{proof}

\subsection{A threshold for positivity}

Let  \(\seq[\infty]{u_n}{n=-1}\) be a solution sequence of the recurrence with a $(+,-)$ signature
\eqref{eq:lowDegreeOGForm} and let \(\seq[\infty]{x_n}{n=0}\) be its associated sequence of ratios. 
Without loss of generality, we shall assume that the sequence \(\seq[\infty]{\lambda_n}{n=1}\) is monotonic.
We first consider the case where \(\seq[\infty]{\lambda_n}{n=1}\)
is decreasing. 
%In the following result we see that, under certain conditions, if there exists \(k\in\N\) such that \(x_k\ge \lambda_k\) then the sequence \((u_n)\) is positive. % In this case, if the sequence of ratios, as defined by Equation~(\ref{eq:recur4}), goes above the sequence \((\lambda_n)\), then the sequence \((u_n)\) is positive:
%
%%
%%
\begin{proposition}
	\label{prop:xgeqlambn_inc}
Suppose that $\seq[\infty]{\lambda_{n}}{n=1}$ is decreasing, that there exists a $k\in \nats$ 
such that $x_{k} \geq \lambda_{k}$ and that \(u_{-1}, u_0, \ldots, u_{k}>0\), 
then \(\seq[\infty]{u_n}{n=-1}\) is positive.
\end{proposition}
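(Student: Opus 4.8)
The plan is to analyse the first-order dynamics of the ratio sequence $\seq[\infty]{x_n}{n=0}$ induced by the recurrence, and to show that the hypothesis $x_k \geq \lambda_k$ propagates forward as the invariant $x_n \geq \lambda_n$ for every $n \geq k$.

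First I would record the ratio recurrence. Dividing \eqref{eq:lowDegreeOGForm} by $g_3(n)u_{n-1}$ (legitimate since the hypothesis guarantees $u_{-1}, \ldots, u_k > 0$) gives $x_n = \phi_n(x_{n-1})$, where $\phi_n(x) = g_2(n)/g_3(n) + g_1(n)/(g_3(n)x)$. Two facts about $\phi_n$ drive the argument. Because the signature is $(+,-)$ we have $g_1(n) < 0$ and $g_3(n) > 0$, so $\phi_n'(x) = -g_1(n)/(g_3(n)x^2) > 0$ and $\phi_n$ is strictly increasing on $(0,\infty)$. Moreover the fixed points of $\phi_n$ are exactly the roots of $\chi_n$, namely $\lambda_n$ and $\Lambda_n$; in particular $\phi_n(\lambda_n) = \lambda_n$. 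Both roots are positive, since their product $-g_1(n)/g_3(n)$ and their sum $g_2(n)/g_3(n)$ are positive.

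The core of the proof is then the induction $x_n \geq \lambda_n$ for all $n \geq k$. The base case $n = k$ is the hypothesis. For the inductive step, suppose $x_{n-1} \geq \lambda_{n-1}$. Here is where the assumption that $\seq[\infty]{\lambda_n}{n=1}$ is decreasing is decisive: it gives $\lambda_{n-1} \geq \lambda_n$, whence $x_{n-1} \geq \lambda_{n-1} \geq \lambda_n > 0$. Applying the increasing map $\phi_n$ and using that $\lambda_n$ is its fixed point yields $x_n = \phi_n(x_{n-1}) \geq \phi_n(\lambda_n) = \lambda_n$, completing the induction. In particular $x_n \geq \lambda_n > 0$ for every $n \geq k$.

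Finally I would deduce positivity of the sequence itself: by hypothesis $u_{-1}, \ldots, u_k > 0$, and for $n > k$ the identity $u_n = x_n u_{n-1}$ together with $x_n > 0$ gives $u_n > 0$ by induction on $n$, so every term is positive. The only genuinely delicate point, which I expect to be the main obstacle, is the interaction between the iteration and the \emph{moving} fixed point $\lambda_n$: one cannot apply $\phi_n$ directly to the inequality $x_{n-1} \geq \lambda_{n-1}$, and it is precisely the decreasing behaviour of $\seq[\infty]{\lambda_n}{n=1}$ that lets us first replace $\lambda_{n-1}$ by the smaller quantity $\lambda_n$ before invoking monotonicity of $\phi_n$. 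This is also why the companion case of increasing $\seq[\infty]{\lambda_n}{n=1}$ must be handled separately.
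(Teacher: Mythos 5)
Your proposal is correct and follows essentially the same route as the paper: the paper's proof is exactly the chain $g_3(k+1)x_{k+1} \ge g_2(k+1) + g_1(k+1)/\lambda_k \ge g_3(k+1)\lambda_{k+1}$, which is your observation that $\phi_{k+1}$ is increasing on $(0,\infty)$ with fixed point $\lambda_{k+1}$, combined with $\lambda_k \ge \lambda_{k+1}$, followed by the same induction. Your write-up merely makes explicit the monotone-map/fixed-point structure and the positivity of the roots that the paper leaves implicit.
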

\begin{proof}
From the assumptions and the recurrence relation for \(\seq[\infty]{x_n}{n=0}\), we obtain the following inequalities:
%The following inequalities are derived from: the recurrence relation for \((x_n)\),
% the assumptions that \((\lambda_n)\) is decreasing, and 
% that there exists \(k\in\N\) such that \(x_{k}\ge \lambda_{k}\).
% We have
\(
   g_3(k+1)x_{k+1} \ge  g_2(k+1) + g_1(k+1)/\lambda_{k} \ge g_3(k+1)\lambda_{k+1}
\)
and so \(x_{k+1}\ge \lambda_{k+1}\).  It follows by induction that \(x_n\ge \lambda_n >\lambda_\infty\) for all \(n>k\).  Thus \(\seq[\infty]{u_n}{n=-1}\) is positive.
\end{proof}

\noindent We obtain a similar threshold for positivity when \(\seq[\infty]{\lambda_n}{n=1}\) is 
increasing.

\begin{proposition}
	\label{prop:xgeqlambinf_inc}
Suppose that $\seq[\infty]{\lambda_n}{n=1}$ is increasing, \(\lambda_\infty<\Lambda_\infty\), there exists $k\in \nats$ such that 
$x_{k} \geq \lambda_\infty$ and that \(u_{-1}, u_0, \ldots, u_{k}>0\), 
then \(\seq{u_n}{n}\) is positive.
\end{proposition}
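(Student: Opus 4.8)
The plan is to mirror the inductive argument used for \autoref{prop:xgeqlambn_inc}, replacing the moving threshold $\lambda_n$ by the fixed threshold $\lambda_\infty$. Writing the recurrence for the ratios as $g_3(n)x_n = g_2(n) + g_1(n)/x_{n-1}$ and recalling that $g_1(n) < 0$ under the $(+,-)$ signature, the map $x \mapsto g_3(n)^{-1}(g_2(n) + g_1(n)/x)$ is strictly increasing on $(0,\infty)$. Hence, to propagate the invariant $x_{n-1} \geq \lambda_\infty$ to $x_n \geq \lambda_\infty$, it suffices to check that $\lambda_\infty$ is a subfixed point of this map for every $n$, that is,
\[
g_2(n) + \frac{g_1(n)}{\lambda_\infty} \geq g_3(n)\lambda_\infty,
\]
which, after multiplying through by $\lambda_\infty > 0$, is exactly the inequality $\chi_n(\lambda_\infty) \leq 0$.

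The heart of the proof is therefore the claim that $\chi_n(\lambda_\infty) \leq 0$ for every $n$, equivalently $\lambda_n \leq \lambda_\infty \leq \Lambda_n$. The first inequality is immediate, since $\seq{\lambda_n}{n}$ is increasing with limit $\lambda_\infty$. For the second, I would exploit that the coefficients are affine in $n$: writing $\chi_n(x) = n\,\tilde\chi(x) + \chi_\star(x)$ with $\tilde\chi(x) = \alpha_1 x^2 - \beta_1 x - \gamma_1$ and $\chi_\star(x) = \alpha_0 x^2 - \beta_0 x - \gamma_0$, the relation $\chi_n(\lambda_n) = 0$ gives $\tilde\chi(\lambda_n) = -\chi_\star(\lambda_n)/n \to 0$, so by continuity $\tilde\chi(\lambda_\infty) = 0$ (here I use that $\lambda_\infty$ is finite, as recorded at the start of this appendix). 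Consequently $\chi_n(\lambda_\infty) = \chi_\star(\lambda_\infty)$ is independent of $n$. Evaluating at large $n$, the hypothesis $\lambda_\infty < \Lambda_\infty$ together with $\Lambda_n \to \Lambda_\infty$ gives $\lambda_\infty < \Lambda_n$, whence $\chi_n(\lambda_\infty) \leq 0$; since this value is constant in $n$, we conclude $\chi_n(\lambda_\infty) \leq 0$ for \emph{all} $n$.

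With the claim in hand, the induction runs cleanly: from $x_k \geq \lambda_\infty$ and $x_{n-1} \geq \lambda_\infty$ one deduces $g_3(n)x_n \geq g_2(n) + g_1(n)/\lambda_\infty \geq g_3(n)\lambda_\infty$, so $x_n \geq \lambda_\infty > 0$ for every $n \geq k$. Positivity of the whole sequence then follows by a second, routine induction: the hypothesis supplies $u_{-1},\dots,u_k > 0$, and for $n > k$ the relation $u_n = x_n u_{n-1}$ with $x_n > 0$ and $u_{n-1} > 0$ keeps every term positive.

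I expect the only genuine obstacle to be the claim $\lambda_\infty \leq \Lambda_n$ for \emph{all} $n$, not merely for large $n$. It is tempting but insufficient to argue this purely asymptotically, since both $\seq{\lambda_n}{n}$ and $\seq{\Lambda_n}{n}$ may be increasing, and one must rule out $\Lambda_n < \lambda_\infty$ at small indices. The affine-in-$n$ observation that makes $\chi_n(\lambda_\infty)$ independent of $n$ is precisely what upgrades the easy asymptotic inequality to a uniform one, and the hypothesis $\lambda_\infty < \Lambda_\infty$ is indispensable here: were the two limits to coincide (the equal-modulus regime), one could no longer guarantee $\chi_n(\lambda_\infty) \leq 0$.
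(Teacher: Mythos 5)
Your proof is correct, and its core is the same as the paper's: establish $\chi_n(\lambda_\infty)\leq 0$ (equivalently $g_2(n)+g_1(n)/\lambda_\infty\geq g_3(n)\lambda_\infty$), then run the induction $x_{n-1}\geq\lambda_\infty\Rightarrow x_n\geq\lambda_\infty$ using that $x\mapsto g_2(n)+g_1(n)/x$ is increasing on $(0,\infty)$ when $g_1(n)<0$. Where you differ is in how the key inequality $\lambda_\infty\leq\Lambda_n$ is secured for \emph{all} $n$: the paper simply declares ``we can assume without loss of generality that $\lambda_\infty<\Lambda_n$'', implicitly appealing to a shift of the recurrence, whereas you prove it outright by writing $\chi_n(x)=n\,\tilde\chi(x)+\chi_\star(x)$, deducing $\tilde\chi(\lambda_\infty)=0$ from $\chi_n(\lambda_n)=0$ and the finiteness of $\lambda_\infty$, and concluding that $\chi_n(\lambda_\infty)=\chi_\star(\lambda_\infty)$ is independent of $n$, so its sign can be read off asymptotically. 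This is a genuine improvement: the paper's shift is not entirely innocent, since after shifting one would still need to propagate the hypothesis $x_k\geq\lambda_\infty$ through the discarded initial indices, which is precisely the issue at stake. Your constancy observation removes the need for any shift and makes the uniform inequality unconditional. The rest (the induction on the ratios and the final positivity of $\seq{u_n}{n}$ via $u_n=x_nu_{n-1}$) matches the paper.
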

\begin{proof}
We can assume without loss of generality that $\lambda_\infty < \Lambda_n$. 
As a consequence, $\lambda_\infty\in [\lambda_n, \Lambda_n)$, and so we have 
$g_3(n)\lambda_\infty \leq g_2(n) + g_1(n)/\lambda_\infty$.
From this result and the existence of \(k\in\N\) such that \(x_{k}\ge \lambda_\infty\),
we have
\(
   g_3(k+1)x_{k+1} \ge  g_2(k+1) + g_1(k+1)/\lambda_\infty \ge g_3(k+1)\lambda_\infty
\)
and so \(x_{k+1}\ge \lambda_\infty\).  
It follows by induction that \(x_n\ge \lambda_\infty\) for all \(n>k\).  Thus \(\seq[\infty]{u_n}{n=-1}\) is positive.
\end{proof}

The case when we have a single repeated characteristic root \(\lambda_\infty=\Lambda_\infty\) is more involved.

\begin{proposition}
	\label{prop:xgeqlambinf_inceqroot}
Suppose that recurrence~\eqref{eq:lowDegreeOGForm} has a single repeated characteristic root.  
Let us assume that $\seq[\infty]{\lambda_{n}}{n=1}$ is increasing, there exists an $k\in \nats$ such that $x_{k} \geq \sqrt{{-g_1({k+1})}/{g_3({k+1})}}$ or 
$x_{k} \geq g_2({k})/({2g_3({k}}))$, 
and \(u_{-1}, u_0, \ldots, u_{k}>0\).  Then \(\seq[\infty]{u_n}{n=-1}\) is positive.
\end{proposition}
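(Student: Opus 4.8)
The plan is to run the same barrier-type induction on the ratio sequence $\seq[\infty]{x_n}{n=0}$, $x_n = u_n/u_{n-1}$, that underlies \autoref{prop:xgeqlambn_inc} and \autoref{prop:xgeqlambinf_inc}, but with a barrier tailored to the coalescing roots. First I would reduce the claim to positivity of the tail of $\seq{x_n}{n}$: since $u_{-1},\dots,u_k>0$ we have $x_0,\dots,x_k>0$, and if $x_n>0$ for all $n>k$ then $u_n=x_nu_{n-1}>0$ for all $n>k$, so $\seq{u_n}{n}$ is positive. By \autoref{lem: xformulation} (rearranged) we have $x_{n+1}=T_{n+1}(x_n)$, where
\[
T_{m}(x)=\frac{g_2(m)}{g_3(m)}+\frac{g_1(m)/g_3(m)}{x}=b_m-\frac{c_m}{x},
\]
with $b_m=g_2(m)/g_3(m)=\lambda_m+\Lambda_m>0$ and $c_m=-g_1(m)/g_3(m)=\lambda_m\Lambda_m>0$ (signature $(+,-)$). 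Each $T_m$ is strictly increasing on $(0,\infty)$, its fixed points are exactly the characteristic roots $\lambda_m\le\Lambda_m$, and $T_m(\theta)-\theta=-(\theta-\lambda_m)(\theta-\Lambda_m)/\theta\ge 0$ for every $\theta\in[\lambda_m,\Lambda_m]$. This last \emph{barrier inequality} is the engine of the argument.

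Next I would record the key feature of the repeated-root case: since the limiting characteristic polynomial has a double root $\lambda_\infty=\Lambda_\infty$, the two root-sequences squeeze together, with $\lambda_n\nearrow\lambda_\infty$ and $\Lambda_n\searrow\lambda_\infty$ eventually, consistent with \autoref{monoroots} and the standing hypothesis that $\seq{\lambda_n}{n}$ is increasing. Both the geometric mean $G_n=\sqrt{-g_1(n)/g_3(n)}=\sqrt{\lambda_n\Lambda_n}$ and the vertex $m_n=g_2(n)/(2g_3(n))=(\lambda_n+\Lambda_n)/2$ then lie in $[\lambda_n,\Lambda_n]$ by AM--GM and converge to $\lambda_\infty$; as a square root of (resp.\ a scalar times) a ratio of two linear polynomials, each is eventually monotone.

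I would then treat the two disjuncts in turn. For the geometric-mean threshold I claim that $x_n\ge G_{n+1}$ is an invariant once $G$ is non-increasing: if $x_n\ge G_{n+1}$ then, using $c_{n+1}=G_{n+1}^2$ and $b_{n+1}=\lambda_{n+1}+\Lambda_{n+1}\ge 2G_{n+1}$,
\[
x_{n+1}=T_{n+1}(x_n)\ge b_{n+1}-\frac{c_{n+1}}{G_{n+1}}=b_{n+1}-G_{n+1}\ge 2G_{n+1}-G_{n+1}=G_{n+1}\ge G_{n+2}.
\]
For the vertex threshold the same computation, now with $c_{n+1}=G_{n+1}^2\le m_{n+1}^2$, yields $x_{n+1}\ge 2m_{n+1}-G_{n+1}^2/m_{n+1}\ge m_{n+1}\ge m_{n+2}$, so $x_n\ge m_{n+1}$ is an invariant once $m$ is non-increasing (the base case $x_k\ge m_k\ge m_{k+1}$ using monotonicity). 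Feeding the hypothesis $x_k\ge G_{k+1}$ (resp.\ $x_k\ge m_k$) into the relevant invariant keeps $x_n$ bounded below by a fixed positive quantity for all $n>k$, and positivity of $\seq{u_n}{n}$ follows.

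The main obstacle is the bookkeeping that ties each disjunct to the regime in which its barrier actually propagates: the displayed inequalities only close when the chosen threshold is non-increasing, which (after a short computation comparing $G_n^2$ and $m_n$ against $\lambda_\infty^2$ and $\lambda_\infty$) is exactly the regime in which that threshold \emph{dominates} $\lambda_\infty$ and hence strictly separates $x_k$ from the repelling root $\lambda_n$ for all subsequent $n$. Because $m_n\ge G_n$ always, only the vertex case needs separate attention, and I would verify the eventual monotonicity and the strict separation from $\lambda_\infty$ through the signs of $\alpha_1\gamma_0-\alpha_0\gamma_1$ and $\alpha_0\beta_1-\alpha_1\beta_0$. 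The finitely many indices before monotonicity sets in are absorbed by the hypothesis $u_{-1},\dots,u_k>0$, which is the delicate point to make precise.
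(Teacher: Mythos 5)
Your overall strategy---a barrier induction on the ratio sequence via the map $T_m(x)=g_2(m)/g_3(m)+\bigl(g_1(m)/g_3(m)\bigr)/x$, with the two barriers $G_n=\sqrt{-g_1(n)/g_3(n)}$ and $m_n=g_2(n)/(2g_3(n))$ and AM--GM tying them together---is the same as the paper's. Your treatment of the regime in which $m_n$ is non-increasing is sound: there $m_n\ge\lambda_\infty$, one application of $T_{k+1}$ converts either hypothesis into $x_{k+1}\ge m_{k+1}\ge\lambda_\infty$, and the fixed threshold $\lambda_\infty$ then propagates exactly as in \autoref{prop:xgeqlambinf_inc}. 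In the paper this is the case $C:=\alpha_0\beta_1-\alpha_1\beta_0<0$, since $\tfrac{g_2(n+1)}{g_3(n+1)}-\tfrac{g_2(n)}{g_3(n)}=\tfrac{C}{g_3(n)g_3(n+1)}$.

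The gap is in the complementary case $C\ge 0$. When $C>0$ the vertex sequence $m_n$ is strictly increasing for \emph{every} $n$, climbing to $\lambda_\infty$ from below; the regime in which your invariant fails to close is therefore permanent, not confined to ``finitely many indices before monotonicity sets in,'' and in any event $k$ is supplied by the hypothesis, so you are not free to postpone the start of the induction. Nor can you fall back on the fixed threshold $\lambda_\infty$, since $x_k\ge m_k$ with $m_k<\lambda_\infty$ gives no separation from $\lambda_\infty$. What is needed here is to propagate the \emph{moving} barrier: from $x_n\ge m_n$ one must show $x_{n+1}\ge T_{n+1}(m_n)\ge m_{n+1}$, and the second inequality is $m_n m_{n+1}\ge G_{n+1}^2$, equivalently $g_2(n)g_2(n+1)+4g_1(n+1)g_3(n)\ge 0$. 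This is precisely where the repeated-root hypothesis enters: the quadratic's leading coefficient is $\beta_1^2+4\alpha_1\gamma_1=0$, and the paper bounds what remains below by $\beta_1\beta_0+4\alpha_0\gamma_1\ge\tfrac{\alpha_0}{\alpha_1}(\beta_1^2+4\alpha_1\gamma_1)=0$, using $\beta_0\ge\beta_1\alpha_0/\alpha_1$ (i.e.\ $C\ge0$). Your proposal never isolates this inequality, and without it the induction does not close when $C\ge 0$.
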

\begin{proof}
Consider the constant $C = \alpha_0  \beta_1 - \alpha_1\beta_0$.
%(recall that $g_3(n)=\alpha_1 n + \alpha_0$, $g_2(n) = \beta_1 n + \beta_0$ and $g_1(n) = \gamma_1 n + \gamma_0$).
We start with the case $C<0$. In this case the sequence with terms given by \(g_2(n)/g_3(n)\) is decreasing as
  \begin{equation*}
  \frac{g_2(n+1)}{g_3(n+1)} - \frac{g_2(n)}{g_3(n)} = \frac{C}{g_3(n)g_3(n+1)},
  \end{equation*}
and additionally \(g_2(n)/(2g_3(n)) \ge \lambda_\infty$. 
We obtain the following inequalities using our assumption on $k\in \nats$:
\begin{equation*}
x_{k+1} \geq \frac{g_2(k+1)}{g_3(k+1)} - \frac{\sqrt{-g_3(k+1)g_1(k+1)}}{g_3(k+1)}
%= \frac{b({k+1})}{2a({k+1})} + \frac{b({k+1})-\sqrt{-a({k+1})c({k+1})}}{2a({k+1})}
\geq  \frac{g_2({k+1})}{2g_3({k+1})}
\geq \lambda_\infty.
\end{equation*}
The result in this case follows similarly to the method outlined in \autoref{prop:xgeqlambinf_inc}.

Consider the case $C\geq 0$. 
Let us show that for all $n\geq k+1$ we have $x_n \geq {g_2(n)}/({2g_3(n)})$.
We outline the inductive step of the proof.  
Suppose that  $n\geq k+1$ and assume the inductive hypothesis holds for $n$.  Then
%Then we have:
\begin{equation*}
x_{n+1} %\geq& \frac{b_{n+1}}{a_{n+1}} + \frac{2c_{n+1}a_n}{b_na_{n+1}}\\
\geq \frac{g_2({n+1})}{2g_3({n+1})} + \frac{g_2({n+1})g_2(n) + 4g_1({n+1})g_3(n)}{2g_2(n)g_3({n+1})}
%=& \frac{b_{n+1}}{2a_{n+1}} + \frac{b_n^2 + 4c_{n}a_n}{2b_na_{n+1}}+ \frac{\beta_1\beta_0 + 4\alpha_0\gamma_1}{2b_na_{n+1}}\\
\geq \frac{g_2({n+1})}{2g_3({n+1})} + \frac{\beta_1\beta_0 + 4\alpha_0\gamma_1}{2g_2(n)g_3(n+1)}.
\end{equation*}
As $C\geq 0$, we have that $\beta_0 \geq \beta_1\alpha_0/{\alpha_1}$.
Thus we obtain
  \begin{equation*}
   x_{n+1} \ge \frac{g_2({n+1})}{2g_3({n+1})} + \frac{\beta_1\beta_0 + 4\alpha_0\gamma_1}{2g_2(n)g_3(n+1)} \ge \frac{g_2({n+1})}{2g_3({n+1})},
  \end{equation*}
which concludes the induction step.
It follows that the sequence $\seq{x_n}{n}$, and so $\seq{u_n}{n}$ remains positive.
\end{proof}

Let $\seq[\infty]{\mu_n}{n=0}$ denote the sequence of ratios associated to a solution to recurrence~\eqref{eq:lowDegreeOGForm} with initial ratio $\mu_0=\mu$.  We have the following:
\begin{lemma}
\label{lem:expand}
Let \(\seq[\infty]{u_n}{n=-1}\) be a solution to recurrence~\eqref{eq:lowDegreeOGForm} and 
\(\seq[\infty]{x_n}{n=0}\) the sequence of consecutive ratios.  Suppose that there exists 
$\varepsilon>0$ such that $x_0>\mu + \varepsilon$.  Then  
for each \(n\in\N\), we have the following results.
\begin{enumerate}
\item If $\seq[\infty]{\lambda_n}{n=1}$ is decreasing, then for all $n\in \N$, either 
$x_n> \mu_n + \varepsilon$ or $x_n\geq \lambda_n$.
\item If $\seq[\infty]{\lambda_{n}}{n=-1}$ is increasing, then for all $n\in \N$ one of the 
following occurs: $x_n> \mu_n + \varepsilon$, $x_n\geq \lambda_\infty$, 
$x_n \geq{g_2({n})}/({2g_3({n})})$ or $x_n\geq \sqrt{{-g_1({n+1})}/{g_3({n+1})}}$.
\end{enumerate}
\end{lemma}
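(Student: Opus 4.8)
The plan is to prove both items by induction on $n$, treating each statement as an invariant of the one-step map on ratios. Writing $x_n=\phi_n(x_{n-1})$ and $\mu_n=\phi_n(\mu_{n-1})$ with $\phi_n(t)=g_2(n)/g_3(n)+g_1(n)/(g_3(n)t)$, I first record the engine of the argument: for positive $x_{n-1},\mu_{n-1}$ a direct computation gives
\[
x_n-\mu_n=\frac{-g_1(n)/g_3(n)}{x_{n-1}\,\mu_{n-1}}\,(x_{n-1}-\mu_{n-1})=\frac{\lambda_n\Lambda_n}{x_{n-1}\mu_{n-1}}\,(x_{n-1}-\mu_{n-1}),
\]
since $-g_1(n)/g_3(n)=\lambda_n\Lambda_n$ is the product of the $n$th characteristic roots. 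Hence the gap $\delta_n:=x_n-\mu_n$ retains its sign and satisfies $\delta_n\ge\delta_{n-1}$ exactly when $x_{n-1}\mu_{n-1}\le\lambda_n\Lambda_n$. Alongside this I use two elementary facts read off the recurrence: first, $x_{n-1}\ge\lambda_n\Rightarrow x_n\ge\lambda_n$ (feed $1/x_{n-1}\le1/\lambda_n$ into $x_n=\phi_n(x_{n-1})$ and use that $\lambda_n$ is a fixed point of $\phi_n$); and second, $x_{n-1}\ge\sqrt{-g_1(n)/g_3(n)}\Rightarrow x_n\ge g_2(n)/(2g_3(n))$, where the last inequality is equivalent to $\Delta(n)\ge0$ and so holds by assumption.

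For Item~1 ($\seq{\lambda_n}{n}$ decreasing) the induction is clean. The base case $n=0$ is the hypothesis $x_0>\mu_0+\varepsilon$. For the step, assume the disjunction at $n-1$. If $x_{n-1}\ge\lambda_{n-1}$ then $x_{n-1}\ge\lambda_n$ by monotonicity of $\seq{\lambda_n}{n}$, and the first fact yields $x_n\ge\lambda_n$. Otherwise $x_{n-1}>\mu_{n-1}+\varepsilon$; if moreover $x_{n-1}\ge\lambda_n$ we again obtain $x_n\ge\lambda_n$, whereas if $x_{n-1}<\lambda_n$ then $\mu_{n-1}<x_{n-1}<\lambda_n$ forces $x_{n-1}\mu_{n-1}<\lambda_n^2\le\lambda_n\Lambda_n$, so the engine gives $\delta_n>\delta_{n-1}>\varepsilon$. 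In each case the disjunction holds at $n$.

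For Item~2 ($\seq{\lambda_n}{n}$ increasing) I run the same induction against the larger list of admissible thresholds, splitting on whether $\lambda_\infty<\Lambda_\infty$ or $\lambda_\infty=\Lambda_\infty$. The gap-growth step is identical: if the gap disjunct holds at $n-1$ and the threshold $x_{n-1}\ge\sqrt{-g_1(n)/g_3(n)}$ fails, then $\mu_{n-1}<x_{n-1}<\sqrt{\lambda_n\Lambda_n}$ gives $x_{n-1}\mu_{n-1}<\lambda_n\Lambda_n$ and hence $\delta_n>\varepsilon$; if instead that threshold holds, the second fact gives $x_n\ge g_2(n)/(2g_3(n))$, again an admissible disjunct. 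The remaining task is to propagate the thresholds $x_{n-1}\ge\lambda_\infty$ and $x_{n-1}\ge g_2(n-1)/(2g_3(n-1))$; these are precisely the step-inequalities already established inside the proofs of \autoref{prop:xgeqlambinf_inc} (when $\lambda_\infty<\Lambda_\infty$) and \autoref{prop:xgeqlambinf_inceqroot} (in the repeated-limit case, splitting on the sign of $C=\alpha_0\beta_1-\alpha_1\beta_0$), which I invoke to conclude that an admissible threshold holds at $n$.

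The main obstacle is this last bookkeeping in Item~2: ensuring that every threshold present at step $n-1$ maps to one of the four listed disjuncts at step $n$ uniformly across the sub-cases, in particular at the repeated-root-at-infinity boundary $\lambda_\infty=\Lambda_\infty$ and across the change of sign of $C$, where the monotonicity of $g_2(n)/(2g_3(n))$ and of $-g_1(n)/g_3(n)$ must be tracked carefully. By contrast, the gap-growth mechanism and the whole of Item~1 are routine once the engine identity is in hand. I would also record at the outset the two structural properties of the minimal-solution ratio on which the lemma implicitly rests, namely $\mu_n>0$ for every $n$ (so that the engine identity applies) and $\mu_n\to\lambda_\infty$ (which, combined with the lemma, is what ultimately forces a threshold to be crossed in finitely many steps).
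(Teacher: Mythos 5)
Your proposal is correct and follows essentially the same route as the paper: an induction in which either a threshold disjunct propagates one step (via the one-step monotonicity arguments of Propositions~\ref{prop:xgeqlambn_inc}--\ref{prop:xgeqlambinf_inceqroot}) or the gap $x_n-\mu_n$ grows, because the identity $x_n-\mu_n=\frac{-g_1(n)/g_3(n)}{x_{n-1}\mu_{n-1}}(x_{n-1}-\mu_{n-1})$ has amplification factor exceeding $1$ when $\mu_{n-1}<x_{n-1}<\sqrt{-g_1(n)/g_3(n)}$. Your phrasing of the key bound via the root product $\lambda_n\Lambda_n=-g_1(n)/g_3(n)$ is equivalent to the paper's appeal to $\chi_{n+1}\bigl(\sqrt{-g_1(n+1)/g_3(n+1)}\bigr)\le 0$, and your explicit flagging of the implicit hypotheses $\mu_n>0$ and $\mu_n\to\lambda_\infty$ is a welcome clarification rather than a deviation.
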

\begin{proof} \hfil
\begin{enumerate}
\item Suppose that $\seq[\infty]{\lambda_{n}}{n=1}$ is decreasing. We proceed by induction. 
The base case is given by hypothesis. Assume the induction hypothesis holds for $n\in \N$.
If $x_n \geq \lambda_{n}$, then, as in the proof of Proposition~\ref{prop:xgeqlambn_inc},
$x_{n+1}\geq \lambda_{n+1}$. 
Similarly, if $x_n \geq \lambda_{n+1}$, we have
\begin{equation*}
g_3(n+1)x_{n+1} = g_2({n+1}) + \frac{g_1({n+1})}{x_n} \geq g_2({n+1}) + 
\frac{g_1({n+1})}{\lambda_{n+1}}\geq g_3({n+1})\lambda_{n+1}
\end{equation*}
and so
$x_{n+1}\geq \lambda_{n+1}$. 
Otherwise, we have the following inequalities:
\begin{equation*}
x_{n+1}-\mu_{n+1} =\frac{g_1({n+1})}{x_ng_3({n+1})}-\frac{g_1({n+1})}{\mu_ng_3({n+1})}
%=& \frac{c_{n+1}(\mu_n-x_n)}{a_{n+1}x_n\mu_n}\\
> \frac{-g_1({n+1})\varepsilon}{g_3({n+1})x_n\mu_n}
> \frac{-g_1({n+1})\varepsilon}{g_3({n+1})\lambda_{n+1}^2}
> \varepsilon.
\end{equation*}
The last inequality holds since $\chi_{n+1}(\sqrt{-g_1(n+1)/g_3(n+1)})<0$ when 
$\Delta({n+1})\geq 0$.

\item Suppose that $\seq[\infty]{\lambda_{n}}{n=1}$ is increasing. 
The respective inductive proofs for the $x_n\geq \lambda_\infty$, 
$x_n\geq g_2({n})/{2g_3({n})}$
and $x_n\geq \sqrt{-g_1({n+1})/{g_3({n+1})}}$ follow by Propositions~\ref{prop:xgeqlambinf_inc} and \ref{prop:xgeqlambinf_inceqroot}. %we obtain 
%$x_{n+1}\geq \lambda$ or $x_{n+1}\geq\frac{b_{{n+1}}}{2a_{{n+1}}}$, respectively.
Otherwise, as before, we have:
\begin{equation*}
x_{n+1}-\mu_{n+1} > \frac{-g_1({n+1})\varepsilon}{g_3({n+1})x_n\mu_n} \geq \varepsilon.
\end{equation*}
    \end{enumerate}
The proof is complete.
\end{proof}

\begin{proof}[Proof of \autoref{prop:compxtomu}]
Let $\seq[\infty]{y_n}{n=0}$ denote a sequence of ratios of consecutive terms of a solution 
to recurrence~\eqref{eq:lowDegreeOGForm}. If there exists $n_0\geq 0$ such that 
$y_{n_0} < \min(\lambda_\infty, \lambda_{n_0+1})$, then it can be shown that 
for all $n\geq n_0$, $y_{n} < \min(\lambda_\infty, \lambda_{n+1})$ and 
$\seq[\infty]{y_n}{n=n_0}$ is a decreasing sequence. If $\seq[\infty]{y_n}{n=0}$ is positive,
then it is converging, which is impossible as the only possible limits of such 
a sequence of ratios are $\lambda_\infty$ and $\Lambda_\infty$.

The sequence $\seq{\mu_n}{n}$ being positive, it thus satisfies for all $n\geq 0$ that 
$\mu_n \geq \min(\lambda_\infty, \lambda_{n+1})$. 
It follows from \autoref{lem:expand} 
that the positivity of a solution sequence $\seq{u_n}{n}$ and its sequence of consecutive ratios
\(\seq[\infty]{x_n}{n=0}\) is determined by one the threshold crossings 
given in Propositions~\ref{prop:xgeqlambn_inc},~\ref{prop:xgeqlambinf_inc} 
and~\ref{prop:xgeqlambinf_inceqroot}.
One can thus detect whether $x_0>\mu$ by computing an initial number of terms in the sequence 
\(\seq{x_n}{n}\). On the one hand, this algorithm is guaranteed to terminate.
On the other hand, the number of steps does not have an upper bound independent on the distance
between $x_0$ and $\mu$.
\end{proof}

\section{The Positivity and the Ultimate Positivity Problems}
\label{app:Ultpos}

In this section we establish the second statement in \autoref{th:oracleRel}: that the Positivity and Ultimate Positivity Problems in this setting are interreducible problems. 
As before we separate our discussion according to the signature of recurrence \eqref{eq:lowDegreeOGForm}.
The two degenerate cases that are solved using first-order recurrence relations are discussed in \autoref{sub:linrec}.
In the following discussion we assume that a given sequence $\seq[\infty]{u_n}{n=-1}$ satisfying recurrence \eqref{eq:lowDegreeOGForm} has $u_{-1},u_0\neq 0$.
Otherwise,
if \(u_{-1},u_0=0\) then the sequence is trivially positive, and if only one of the initial terms is zero then a suitable shift gives initial terms that are non-zero.

Let us first consider that recurrence \eqref{eq:lowDegreeOGForm} has signature \((+,-)\) and, without loss of generality, assume that \(\sign(\Delta(n))\) is constant.
Suppose that $\seq[\infty]{u_n}{n=-1}$ is a solution of \eqref{eq:lowDegreeOGForm}.
We can assume that the initial terms \(u_{-1}\) and \(u_0\) have the same sign.
For otherwise $u_0$ and $u_1$ have the same sign, so one can shift the sequence by one step 
to obtain this property.
As the first two terms have the same sign, the ratio is positive.
We can thus rely on the results 
of~\autoref{sec:posmin}.
If the sequence of discriminants is negative, we can 
deduce from~\autoref{prop:noPositiveMinimalSolution} that the sign of $\seq[\infty]{u_n}{n=-1}$
changes infinitely often and thus there are no positive nor ultimately positive sequences.
If the sequence of discriminants is positive, with an initial shift,
we can assume that for all $n\in \N, g_2(n)/g_3(n)\geq \max(\lambda_n,\lambda)$ (as $\lambda$ is
finite). 
We then have that the sequence
$\seq[\infty]{u_n}{n=-1}$ changes sign at most once. 
Indeed, if there exists $n_0\geq 0$ 
such that $u_{n_0}$ does not have the same sign as $u_{n_0-1}$, then 
$u_{n_0+1}/u_{n_0}\geq g_2(n_0+1)/g_3(n_0+1)$ and from \autoref{prop:xgeqlambn_inc}, 
\autoref{prop:xgeqlambinf_inc} and~\autoref{prop:xgeqlambinf_inceqroot}, 
this implies that the sequence of ratios will remain positive.
As the sign of the sequence changes at most once, a sequence is positive if and only if it is 
ultimately positive.

Let us now consider the case that \eqref{eq:lowDegreeOGForm} has signature \((-,+)\).
Assume first that $u_{-1},u_0>0$. Then, as shown in \autoref{prop:pos}, 
the only positive solution sequences are those that are minimal.
Moreover, as this holds
also for any shift of the sequence, the only ultimately positive sequences are those that are minimal.
Now if \(u_{-1},u_0<0\), then through similar reasoning, $\seq[\infty]{u_n}{n=-1}$
is neither positive nor ultimately positive.
Now assume that the two initial terms have opposite signs. Without loss of generality, one can 
assume that $u_0> 0$. 
Consider the sequence $\seq[\infty]{v_n}{n=-1}$ such that for all 
$n\in\N, v_n =(-1)^n u_n$. 
This sequence starts with two positive terms and
satisfies the recurrence relation
\begin{equation}
\label{eq:moinsu}
g_3(n) v_n= -g_2(n) v_{n-1} + g_1(n) v_{n-2},
\end{equation}
which has signature \((+,+)\).
We conclude that the transformed sequence \(\seq[\infty]{v_n}{n=-1}\) has constant sign,
which implies that the signs in sequence $\seq[\infty]{u_n}{n=-1}$ alternate.
Thus $\seq[\infty]{u_n}{n=-1}$ is neither positive nor ultimately positive.

Let us now consider the case the \eqref{eq:lowDegreeOGForm} has signature \((+,+)\).
Suppose that $\seq[\infty]{u_n}{n=-1}$ is a solution to recurrence~\eqref{eq:lowDegreeOGForm}.
If \(u_{-1}\) and \(u_0\) have the same sign, then trivially the sequence has constant sign.
Assume that \(u_{-1}\) and \(u_0\) have opposite signs, let us assume without loss of generality that $u_0> 0$.
Consider the sequence $\seq[\infty]{v_n}{n=-1}$ such that for each 
$n\in\N, v_n =(-1)^n u_n$. 
This sequence starts with two positive terms and
satisfies recurrence relation~\eqref{eq:moinsu}.
As seen earlier, in this case we can detect with either a positivity or an ultimate 
positivity oracle whether the sequence $\seq[\infty]{v_n}{n=-1}$ remains positive or
if its sign alternates. 
In the later case, need only determine whether the sign alternates on the
even or odd terms to decide whether $\seq[\infty]{u_n}{n=-1}$ is positive, which can be achieved by computing a finite number of terms.

It is trivial to see that there are no positive nor ultimately positive non-trivial solutions when \eqref{eq:lowDegreeOGForm} has signature \((-,-)\).

\section{Interreductions Between Degree-1 Holonomic Sequences}\label{app:interreductions}

%We will use the following technical lemma, whose proof can be assembled
%from \cite[\S IV.1.3]{LW1992}, to establish reductions between instances of the Minimality Problem.
%%
%\begin{lemma}\label{lem:limit_of_ratios}
%Suppose that $\seq{u_n}{n}$ and $\seq{v_n}{n}$ are linearly independent solutions to \eqref{eq: ogform}.  There is no index \(n\in\{-1,0,1,\ldots\}\) for which $u_n = 0 = v_n$.
%%
%If the recurrence relation \eqref{eq: ogform} admits a minimal solution, then for any
%linearly independent pair $\seq{u_n}{n}$, $\seq{v_n}{n}$ of its solutions, the limit
%$\lim_{n\to\infty}{u_n}/{v_n}$ exists in \(\hat{\C}\).\todo{Move to appendix}
%\end{lemma}

In this short appendix, we prove the remaining cases of \autoref{prop:linearReductions}.

\begin{proof}[Proof of \autoref{prop:linearReductions}]\hfil
\begin{enumerate}
\item
Follows immediately from the interreductions of the recurrence relations \eqref{eq: ogform} and \eqref{eq: sform} described in the preliminaries.

\addtocounter{enumi}{1}

\item 
We proceed as follows. As $\alpha_1 \neq 0$, we may divide through by $\alpha_1$ if necessary. We set $\alpha:= \alpha_0/\alpha_1$. Next, if $\beta_1 \neq |\gamma_1|$,
we consider the sequence
$\seq{(\sign(\gamma_1)\beta_1/\gamma_1)^n u_n}{n}$ instead, as this sequence satisfies
the recurrence
\begin{equation*}
(n+\alpha) v_n = \sign(\gamma_1)\frac{\beta_1}{\gamma_1}(\beta_1 n + \beta_0)v_{n-1} + \frac{\beta_1^2}{\gamma_1^2}(\gamma_1 n + \gamma_0)v_{n-2}.
\end{equation*}
%
%\label{eq: lowDegreeForm}
Clearly minimality is preserved in this translation and $\sign(\sign(\gamma_1)\beta_1^2/\gamma_1) = 1$.
Hence the desired result follows.
%We have reduced the Minimality Problem for recurrence \eqref{eq:lowDegreeOGForm} to the recurrence of the form \eqref{eq: lowDegreeForm111}.

Assume now that $\beta_1^2 + 4\gamma_1 = 0$ in \eqref{eq: lowDegreeForm111}.
 It follows that $\gamma_1 = -\beta_1$ (as $|\gamma_1| = |\beta_1|$), and since
$\beta_1(\beta_1 - 4) = 0$ with $\beta_1 \neq 0$, it follows that $\beta_1 = 4$. 
Now the sequence $\seq{(1/2)^n u_n}{n}$ satisfies recurrence
\eqref{eq: lowDegreeForm111RepeatedRoots} and minimality is clearly preserved in this transformation.

\item Analogous to the first part of the above case.

\item In this case the recurrence admits a minimal solution if and only if
$\gamma_1 \alpha_1 > 0$. 
This follows by an application of \autoref{thm:contFracConvChar} with
$r(n) = 1 + 4\frac{g_1(n)g_3(n-1)}{g_2(n)g_2(n-1)} = 4\frac{\alpha_1 \gamma_1}{\beta_0^2}n^2 + o(n^2)$
%, and so has a minimal solution if and only if $\gamma_1 \alpha_1 > 0$.
The reduction to \eqref{eq: lowDegreeForm101} then follows by considering the sequence $\seq{(\sign(\beta_0)\sqrt{\alpha_1/\gamma_1})^n u_n}{n}$.
\qedhere
\end{enumerate}
\end{proof}

\section{Analytic properties of the generating function} 

\subsection{Associated differential equation}\label{app:ode}
We consider a differential equation associated to the recurrence relation
\eqref{eq:lowDegreeOGForm}.
We assume here that $\deg(a) = 1$. In particular, we have $\alpha_1,\alpha_0 > 0$. By dividing through by $\alpha_1$, we may take $\alpha_1 = 1$. By shifting, we may further assume that $\alpha:= \alpha_0 > 1$. The recurrence relation we consider is thus of the form
\begin{equation}\label{eq:reca1=1}
(n+\alpha) u_n = (\beta_1 n + \beta_0)u_{n-1} + (\gamma_1 n + \gamma_0)u_{n-2}. 
\end{equation}
We allow here $\beta_1 = 0$ or $\gamma_1 = 0$.

Consider the generating series
$\mathcal{F}(x) = \sum_{n = -1}^{\infty}u_n x^{n + \alpha}$.
By relation \eqref{eq:reca1=1}, we have that
\begin{equation*}
  \sum_{n=2}^\infty g_3(n)u_{n-1} x^{n+\alpha} = \sum_{n=2}^\infty g_2(n)u_{n-2} x^{n+\alpha} + \sum_{n=2}^\infty g_1(n)u_{n-3} x^{n+\alpha}.
\end{equation*}
Observe now that
 \begin{align*}
   \sum_{n=2}^\infty g_3(n) u_{n-1} x^{n+\alpha} &= x \sum_{n=2}^\infty (n+\alpha)u_{n-1} x^{n+\alpha-1}
    = x \mathcal{F}'(x) -(\alpha + 1) u_0 x^{\alpha+1}
    				-\alpha u_{-1} x^{\alpha}.
  \end{align*}
In a similar fashion one can write
  \begin{equation*}
   \sum_{n=2}^\infty g_2(n) u_{n-1} x^{n+\alpha} 
		= \beta_1 x^2 \mathcal{F}'(x) + (\beta_0 + (1-\alpha)\beta_1)x\mathcal{F}(x) - (\beta_0+\beta_1)u_{-1} x^{\alpha+1},
  \end{equation*}
and
  \begin{equation*}
   \sum_{n=2}^\infty g_1(n) u_{n-2} x^{n+\alpha}
   		= \gamma_1x^3\mathcal{F}'(x) + (\gamma_0+(2-\alpha)\gamma_1)x^2\mathcal{F}(x).
  \end{equation*}
When we combine the above three results we obtain the first-order differential equation \(\mathcal{F}'(x) + s(x) \mathcal{F}(x) = t(x)\)
%the following first-order differential equation to the recurrence relation in \eqref{eq:reca1=1}
%%
%\begin{equation*}%\label{eq:DEStandardForm}
%\mathcal{F}'(x) + s(x) \mathcal{F}(x) = t(x)
%  \end{equation*}
%%
with the functions \(s\) and \(t\) defined in \eqref{eq:sx}.
%\begin{equation*}
%s(x) = \frac{(\gamma_0+(2-\alpha)\gamma_1)x + \beta_0 + (1-\alpha)\beta_1 }
%			{\gamma_1 x^2 + \beta_1 x - 1}
%\end{equation*}
%%
%and
%%
%\begin{equation*}
%t(x) = \frac{(\alpha_0+\alpha_1)u_1 - (\beta_0+\beta_1)u_0 + \alpha_0u_0x^{-1} }	{1 - \beta_1 x - \gamma_1 x^2}x^{\alpha}.
%\end{equation*}
%
%Observe now that both $s$ and $t$ are integrable over an interval containing $0$
%under our assumption that $\alpha > 1$.
%Standard methods then yield a solution to the differential equation. Namely,
%by noting that $\mathcal{F}(0)=0$ as $\alpha > 1$, we have the
%solution in an interval containing $0$ as follows:
%%
%
%\begin{equation}\label{eq:DEGeneralSolution}
%\mathcal{F}(x) = \frac{1}{\mathcal{I}(x)} \int_{0}^{x} \mathcal{I}(y) t(y)\, dy,
%\end{equation}
%where $\mathcal{I}(x) $ is the integrating factor $\int s(y)\, dy$.

\subsection{Proof of \autoref{clm:integralAsymptotics}}\label{subapp:IntegralAsymptotics}

We will show that
\begin{equation*}
\int_{0}^{x} f(y)\, dy = \sum_{n  < -\nu}
			\frac{-c_n/\Lambda }{n+ \nu}(1-\Lambda x)^{n + \nu}
			+ C_0 + C_1 \log(1-\Lambda x) + \mathcal{O}((1-\Lambda x)^{A+1}),
\end{equation*}
where $C_1 = -c_{-\nu}/\Lambda$ if \(\nu\) is a non-positive integer and $C_1 = 0$ otherwise, and 
\begin{equation*}
C_0 = \sum_{n < - \nu } \frac{c_n/\Lambda}{n+ \nu}
 + \int_{0}^{1/\Lambda} f(y) - \sum_{n \leq -\nu } c_n(1-\Lambda y)^{n+\nu - 1}\, dy.
\end{equation*}

%<<<<<<< .mine
%
%Recall that $f(x) = (1-\Lambda x)^\nu H(x)$ for some analytic function $H(x)$, and
%$H(x) = \sum_{n=0}^{\infty} a_n (1-\Lambda x)^n$ when $x$ is close enough to $1/\Lambda$,
%say $x \in (1/\Lambda-\epsilon, 1/\Lambda)$, where $\epsilon > 0$ is small enough.
%Then, for $x$ in this interval, we may write 
%\begin{equation*}
%\int_{0}^{x} f(y)\,dy = \int_{0}^{1/\Lambda - \epsilon} f(y)\,dy
%		+ \int_{1/\Lambda - \epsilon}^{x} f(y)\,dy.
%\end{equation*}
%||||||| .r310
%
%Recall that $f(x) = (1-\Lambda x)^\nu H(x)$ for some analytic function $H(x)$, and
%$H(x) = \sum_{n=0}^{\infty} a_n (1-\Lambda x)^n$ when $x$ is close enough to $1/\Lambda$,
%say $x \in (1/\Lambda-\epsilon/\Lambda, 1/\Lambda)$, where $\epsilon > 0$ is small enough.
%Then, for $x$ in this interval, we may write 
%\begin{equation*}
%\int_{0}^{x} f(y)\,dy = \int_{0}^{1/\Lambda - \epsilon} f(y)\,dy
%		+ \int_{1/\Lambda - \epsilon}^{x} f(y)\,dy.
%\end{equation*}
%=======
%>>>>>>> .r311
%
%
\begin{proof}
Let us write $r(y) = \sum_{n\leq -\nu}c_n(1-\Lambda y)^{n+\nu - 1}$.
Then
	\begin{equation*}
		\int_0^x f(y)\, dy = \int_0^{1/\Lambda} f(y)-r(y)\, dy - \int_{x}^{1/\Lambda} f(y) -r(y)\, dy + \int_0^x r(y) \, dy.
	\end{equation*}
%	\todo{Should it be mentioned that the first integral on the right-hand side is defined/convergent? Perhaps reordering the proof would be sufficient?}
%	\todo[color=green!40!]{The claim is made in the notes before it is shown that the first integral is defined/convergent.  Placing the claim later in the notes would suffice.}
We study the three integrals on the right-hand side.
The first integral can be written in terms of \(C_0\) as follows:
	\begin{equation*}
		\int_0^{1/\Lambda} f(y)-r(y)\, dy = \int_{0}^{1/\Lambda} f(y) - \sum_{n \leq -\nu } c_n(1-\Lambda y)^{n + \nu - 1} \, dy =
C_0 - \sum_{n < -\nu} \frac{c_n/\Lambda}{n + \nu}.
	\end{equation*}
The integrand in the second integral is analytic in its domain and so, by integrating the power series expansion, we obtain the estimate
\begin{equation*}
\int_{x}^{1/\Lambda} f(y)-r(y) \,dy =  \mathcal{O}((1-\Lambda x)^{n_0 + \nu}).
\end{equation*}
For the third integral we have
	\begin{multline*}
		\int_0^x r(y) \, dy = \sum_{n\le -\nu} \int_0^x c_n(1-\Lambda y)^{n+\nu - 1}\, dy \\
		= \sum_{n < -\nu} \frac{-c_n/\Lambda}{n+\nu}(1-\Lambda x)^{n+\nu} + C_1\log(1-\Lambda x) + \sum_{n<-\nu} \frac{c_n/\Lambda}{n + \nu}.
	\end{multline*}
In the above $C_1 = 0$ if $-\nu \notin \N_0$, otherwise $C_1 = -c_{-\nu}/\Lambda$.
Combining these three results gives the desired form.
\end{proof}

\section{Justification for \autoref{ex:KoomanAsymptotics}}\label{app:asymptotics} 
%\gk{of or for?}
%\todo{for}

The aim of this appendix is to establish the claimed asymptotic behaviours of solutions to the
recurrence relations in \autoref{ex:KoomanAsymptotics}. The proof of this is a straightforward
application of the framework given by Kooman in \cite{kooman2007asymptotic}, but we give a
proof for the sake of completeness.

Recall that the recurrence relations in hand are
\begin{subequations}
\begin{align}
\label{eq:appRecurrences101}
(n+\alpha) v_n &= \beta v_{n-1} + (n + \gamma) v_{n-2}, \quad \text{and} \\
\label{eq:appRecurrences111}
(n + \alpha) v'_n &= (2n + \beta) v'_{n-1} - (n + \gamma) v'_{n-2}.
\end{align}
\end{subequations}
In the former recurrence, we assume $\beta > 0$, and in the latter we assume
$\beta > \alpha + \gamma$.

For the duration of this appendix, the sequence $\seq{v_n}{n}$ (resp., $\seq{v'_n}{n}$) always
refers to a solution to \eqref{eq:appRecurrences101} (resp., \eqref{eq:appRecurrences111}).
We first describe a minimality preserving transformation to obtain recurrences of a suitable form.

%We apply the connection between recurrences of the form \eqref{eq: ogform} and recurrences of
%the following form: form \eqref{eq: kform}.

%The next transformations scales the characteristic roots associated to \eqref{eq: ogform} and %we use this form of the recurrence in the sequel to detect whether a solution of a given %recurrence is minimal.
Given a solution \(\seq[\infty]{u_n}{n=-1}\) to \eqref{eq: ogform}, we define the sequence
\(\seq[\infty]{w_n}{n=-1}\) so that \(w_{-1}=v_{-1}\) and \(v_n = w_n \prod_{j=-1}^n \tfrac{g_2(j)}{2g_3(j)}\) for each \(n\in\N_0\).  It is easily shown that \(\seq{v_n}{n}\) satisfies recurrence \eqref{eq: ogform} if and only if \(\seq{w_n}{n}\) satisfies the following recurrence
\begin{equation*} \label{eq: kform}
			w_n = 2w_{n-1} + \frac{4g_1(n)g_3(n-1)}{g_2(n-1)g_2(n)} w_{n-2}
\end{equation*}
% \label{eq: kforma} \\
%			w_n &= 2w_{n-1} - \frac{4q_n}{p_{n-1}p_n} w_{n-2} , \quad n\in\N \label{eq: kformb}
%		\end{align}
%	\end{subequations}	
%where \(p_n\) and \(q_n\) are rational functions of the polynomial coefficients from \eqref{eq: ogforma} given by \(p_n = g_2(n)/g_3(n)\) and \(q_n = - g_1(n)/g_3(n)\).
%\todo{This transformation is currently only used in the appendix?}

%
%Recall that given a solution \(\seq[\infty]{u_n}{n=-1}\) to
%\eqref{eq: ogform}.  the above recurrence, we define the sequence
%\(\seq[\infty]{w_n}{n=-1}\) so that \(w_{-1}=u_{-1}\) and
%\(u_n = w_n \prod_{j=-1}^n \tfrac{g_2(j)}{2g_3(j)}\) for each \(n\in\N_0\).
Let $\seq{w_n}{n}$
(resp., $\seq{w'_n}{n}$) be the sequence obtained by applying the above transformation to
$\seq{v_n}{n}$ (resp., $\seq{v'_n}{n}$). The recurrence relations satisfied by
$\seq{w_n}{n}$ and $\seq{w'_n}{n}$ take the respective forms
\begin{subequations}
\begin{align}
\label{eq:appKForm101}
	w_n &= 2w_{n-1} + \frac{4(n+\gamma) (n + \alpha - 1)}{\beta^2} w_{n-2}\\
\label{eq:appKForm111}
	w'_n &= 2w'_{n-1} - \frac{4(n+\gamma) (n + \alpha - 1)}{(2n + \beta)(2n + \beta - 2)} w'_{n-2}.
\end{align}
\end{subequations}

Now Kooman's characterisation deals with recurrences of the above form. 
In order to establish the asymptotic behaviour of solutions to recurrences \eqref{eq:appRecurrences101} and \eqref{eq:appRecurrences111}, it suffices to combine the
asymptotic equalities of solutions to \eqref{eq:appKForm101} and \eqref{eq:appKForm111} with the asymptotic behaviour of the product
$\prod_{j=-1}^n \tfrac{g_2(j)}{2g_3(j)}$ as $n \to \infty$. Let us first take care of the
asymptotics of the latter term. % for our recurrences \eqref{eq:appRecurrences101} and \eqref{eq:appRecurrences111}.

\begin{lemma}\hfil
\begin{enumerate}
\item We have $\prod_{j=-1}^n \frac{\beta}{2(n + \alpha)}\sim C\frac{\beta^n}{2^n n!} n^{-\alpha}$
for some constant $C \neq 0$.
\item We have $\prod_{j=-1}^n \frac{2n + \beta}{2(n + \alpha)}\sim C n^{\beta / 2 - \alpha}$ for some constant $C \neq 0$.
%\gk{Are these non-zero constants?}
%\todo{fixed}
\end{enumerate}
\end{lemma}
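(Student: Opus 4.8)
The plan is to put each product into closed form in terms of the Gamma function and then invoke the standard ratio asymptotic $\Gamma(n+a)/\Gamma(n+b)\sim n^{a-b}$ as $n\to\infty$, which is an immediate consequence of Stirling's formula. (The summand in each product is of course indexed by $j$, so the factors are $\beta/(2(j+\alpha))$ and $(2j+\beta)/(2(j+\alpha))$, respectively.) The basic identity I would repeatedly use is $\prod_{j=-1}^n (j+c) = (c-1)\,\Gamma(c+n+1)/\Gamma(c)$, valid whenever the factors are nonzero.

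For the first product I would write
\begin{equation*}
\prod_{j=-1}^n \frac{\beta}{2(j+\alpha)} = \frac{\beta^{n+2}}{2^{n+2}}\cdot\frac{1}{\prod_{j=-1}^n (j+\alpha)} = \frac{\beta^2\,\Gamma(\alpha)}{4(\alpha-1)}\cdot\frac{\beta^n}{2^n}\cdot\frac{1}{\Gamma(\alpha+n+1)}.
\end{equation*}
Writing $1/\Gamma(\alpha+n+1) = (1/n!)\cdot\Gamma(n+1)/\Gamma(\alpha+n+1)$ and using $\Gamma(n+1)/\Gamma(\alpha+n+1)\sim n^{-\alpha}$ then gives the claimed equivalence with $C=\beta^2\Gamma(\alpha)/(4(\alpha-1))$.

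For the second product I would first cancel the factor $2$ to rewrite each factor as $(j+\beta/2)/(j+\alpha)$, so that
\begin{equation*}
\prod_{j=-1}^n \frac{2j+\beta}{2(j+\alpha)} = \frac{\prod_{j=-1}^n (j+\beta/2)}{\prod_{j=-1}^n (j+\alpha)} = \frac{(\beta/2-1)\,\Gamma(\alpha)}{(\alpha-1)\,\Gamma(\beta/2)}\cdot\frac{\Gamma(\beta/2+n+1)}{\Gamma(\alpha+n+1)}.
\end{equation*}
The ratio asymptotic $\Gamma(\beta/2+n+1)/\Gamma(\alpha+n+1)\sim n^{\beta/2-\alpha}$ then yields the stated equivalence with $C=(\beta/2-1)\Gamma(\alpha)/((\alpha-1)\Gamma(\beta/2))$.

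The computation itself is routine; the only genuine point requiring attention is the non-vanishing of $C$, which is precisely the content of the assertion ``$C\neq0$''. This reduces to checking that the finite prefactor has no zero factor and that the relevant Gamma values are finite and nonzero. Under the standing assumptions this holds: $\alpha>1$ gives $\alpha-1>0$ and $\Gamma(\alpha)$ finite and nonzero; $\beta>0$ handles the first case; and the requirement that the polynomial coefficients do not vanish at non-negative integers forces $g_2$ to be nonzero at $j=0,1,2,\dots$, i.e.\ $\beta/2\notin\{0,-1,-2,\dots\}$, so that $\Gamma(\beta/2)$ avoids its poles in the second case. The one subtlety is the boundary factor from the index $j=-1$, namely $g_2(-1)\neq0$ (equivalently $\beta\neq2$ in the second case); I would dispatch this by noting that it contributes only a fixed nonzero constant to $C$ under the above conditions, and that should it fail one may apply a harmless unit shift of the recurrence, which does not change the asymptotic order of the product, before extracting the asymptotics.
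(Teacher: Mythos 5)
Your proposal is correct and follows essentially the same route as the paper: express each product in closed form via the Gamma function (equivalently, Pochhammer symbols) and then apply Stirling's formula, in your case through the standard ratio asymptotic $\Gamma(n+a)/\Gamma(n+b)\sim n^{a-b}$. Your explicit attention to the non-vanishing of $C$ (in particular the boundary factor at $j=-1$) is a welcome refinement that the paper leaves implicit in its hypothesis that the linear factors do not vanish for $n\geq -1$.
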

\begin{proof}
The claims follow quite straightforwardly from the following observations.
First, for $a \neq 0$, $a n + b \neq 0$ for all $n \in\{-1,0,\ldots\}$ and
\begin{equation*}
\prod_{j=-1}^n (a j + b) = a^{n+2} \prod_{j=0}^{n+1}(j-1 + b/a)
= a^{n+2} (b/a - 1)_{n+1}
= a^{n+2} \frac{\Gamma( b/a + n)}{\Gamma(b/a - 1)}.
\end{equation*}
Second, by Stirling's formula, we have
\(\Gamma(x) \sim \sqrt{2\pi} \eu^{-x}x^{x - {1}/{2}}\) as  \(\Re(x)\to\infty\).
\end{proof}

We then establish the asymptotic behaviour of solutions to recurrences \eqref{eq:appKForm101} and \eqref{eq:appKForm111}.
\begin{lemma}\hfil
\begin{enumerate}
\item Recurrence relation \eqref{eq:appKForm101} admits two linearly independent solutions
that have the following asymptotic equalities
\begin{equation*}
w_n \sim (n-1)! ( \pm 2/\beta)^n
n^{\frac{1}{2}(\pm\beta + \gamma +  \alpha)  + 1 }.
\end{equation*}
%The two symbols $\pm$ here are dependent ($+$ corresponds to $+$ and $-$ to $-$).

%
%
\item Recurrence relation \eqref{eq:appKForm111} admits two linearly independent solutions
that have the following asymptotic equalities
\begin{equation*}
w'_n \sim n^{{1}/{4} + {(\alpha - \beta + \gamma)}/{2}} \exp(\pm 2 \sqrt{(\beta - \alpha - \gamma)n}).
\end{equation*}
\end{enumerate}
\end{lemma}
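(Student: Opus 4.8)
The plan is to treat both relations as instances of the normal form $w_n = 2w_{n-1} + c_n w_{n-2}$, with $c_n$ rational in $n$, and to read off the asymptotics from the behaviour of the roots $\rho_n^\pm = 1 \pm \sqrt{1+c_n}$ of the variable characteristic equation $x^2 - 2x - c_n = 0$, invoking Kooman's classification \cite{kooman2007asymptotic} only to certify that the formal asymptotic solutions I construct are realised by genuine, linearly independent solutions. First I would record the two regimes. For \eqref{eq:appKForm101} one has $c_n = 4(n+\gamma)(n+\alpha-1)/\beta^2$, so $1+c_n = \tfrac{4}{\beta^2}n^2(1+O(1/n))$ and the roots diverge and stay separated, $\rho_n^\pm \sim \pm 2n/\beta$; this is the non-coalescing (Poincar\'{e}--Perron-type) regime. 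For \eqref{eq:appKForm111} a short cancellation in
\[
1 + c_n = 1 - \frac{4(n+\gamma)(n+\alpha-1)}{(2n+\beta)(2n+\beta-2)} = \frac{\beta - \alpha - \gamma}{n} + O\!\left(\frac{1}{n^2}\right)
\]
shows the two roots coalesce at $1$ like $\rho_n^\pm = 1 \pm \sqrt{(\beta-\alpha-\gamma)/n}\,(1+O(1/n))$; this is the delicate coalescing regime, and since $c_n$ is rational the correction to $1+c_n$ involves only integer powers of $1/n$, a fact I will exploit below.

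For the separated case \eqref{eq:appKForm101} I would substitute the ansatz $w_n = \Gamma(n)\,r^n\,n^s$ and match order by order after dividing the recurrence by $w_n$, using $\Gamma(n-1)/\Gamma(n) = 1/(n-1)$ and $\Gamma(n-2)/\Gamma(n) = 1/((n-1)(n-2))$ together with the expansions of $(1-j/n)^s$. The leading balance forces $4/(\beta^2 r^2) = 1$, i.e.\ $r = \pm 2/\beta$, and the coefficient of $1/n$ then equals $\pm\beta + \gamma + \alpha + 2 - 2s$ (the sign of $\beta$ tracking the choice of $r$), which vanishes exactly when $s = \tfrac12(\pm\beta + \gamma + \alpha) + 1$. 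This produces the two formal solutions $w_n \sim (n-1)!\,(\pm 2/\beta)^n\, n^{\frac12(\pm\beta+\gamma+\alpha)+1}$, and Kooman's theorem in the separated regime certifies that genuine linearly independent solutions have precisely these asymptotics.

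For the coalescing case \eqref{eq:appKForm111} I would instead use the WKB-type ansatz $w'_n = \exp(2\kappa\sqrt{n})\,n^\sigma$ and expand the ratios $w'_{n-1}/w'_n$ and $w'_{n-2}/w'_n$ in powers of $\epsilon = n^{-1/2}$, using $\sqrt{n-1}-\sqrt n = -\tfrac12\epsilon - \tfrac18\epsilon^3 + \cdots$ and $\sqrt{n-2}-\sqrt n = -\epsilon - \tfrac12\epsilon^3 + \cdots$. Substituting $c_n = -1 + (\beta-\alpha-\gamma)\epsilon^2 + O(\epsilon^4)$ into $1 = 2\,w'_{n-1}/w'_n + c_n\,w'_{n-2}/w'_n$, the coefficients of $\epsilon^0$ and $\epsilon^1$ vanish automatically, the coefficient of $\epsilon^2$ gives $\kappa^2 = \beta - \alpha - \gamma$, and the coefficient of $\epsilon^3$ collapses, using $\kappa^3 = \kappa(\beta-\alpha-\gamma)$, to $\kappa(\tfrac12 - (\beta-\alpha-\gamma) - 2\sigma) = 0$, forcing $\sigma = \tfrac14 + \tfrac12(\alpha - \beta + \gamma)$. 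Thus $w'_n \sim n^{1/4 + (\alpha-\beta+\gamma)/2}\exp(\pm 2\sqrt{(\beta-\alpha-\gamma)n})$, with the universal $1/4$ emerging exactly at order $\epsilon^3$; Kooman's analysis of the coalescing case again supplies matching linearly independent solutions.

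The main obstacle is the coalescing regime \eqref{eq:appKForm111}: here the limiting characteristic roots are equal in modulus (both tend to $1$), so the classical Poincar\'{e}--Perron theorem (\autoref{thm: poincareperron}) does not apply and one must genuinely lean on Kooman's finer framework to pass from a formal solution to a true one. The only real subtlety in the computation is the bookkeeping of half-integer powers of $n$; it is essential that $1+c_n$, being rational, expands in integer powers of $1/n$, since this is precisely what makes the $\epsilon$-graded matching close at order $\epsilon^3$ and pins down both $\kappa$ and $\sigma$ without requiring the next coefficient of $1+c_n$.
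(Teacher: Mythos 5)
Your proposal is correct and follows essentially the same route as the paper: both pass to the normal form $w_n = 2w_{n-1} + c_n w_{n-2}$, expand the coefficient $c_n$ (equivalently Kooman's $\mathfrak{C}_n = -c_n$) to the first two orders, and then lean on Kooman's classification in \cite{kooman2007asymptotic} to obtain genuine linearly independent solutions with the stated asymptotics---the paper simply cites \cite[Ex.~1]{kooman2007asymptotic} with the cases $a=2$ and $a=-1$ and reads off the exponents, whereas you re-derive them via the $\Gamma(n)r^n n^s$ and WKB ans\"atze (and your matching computations, including the order-$\epsilon^3$ collapse giving $\sigma = \tfrac14 + \tfrac12(\alpha-\beta+\gamma)$, check out). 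Your explicit identification of the coalescing regime as the point where Poincar\'e--Perron fails, and of the rationality of $1+c_n$ as what closes the half-integer-power bookkeeping, is a worthwhile clarification but not a different proof.
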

\begin{proof}
To apply Kooman's characterisation, we require knowledge of the asymptotic behaviour of the
coefficient of $w_{n-2}$ (resp., $w'_{n-2}$) in the corresponding recurrence relation.
In fact, Kooman studies recurrences of the form $x_{n+2} = 2 x_{n+1} - \mathfrak{C}_n x_n$.
(Notice the signature of this recurrence relation.) So, writing
$\mathfrak{C}_{n-2} = -\frac{4 g_3(n-1) g_1(n)}{g_2(n)g_2(n-1)}$,
we need the knowledge of the terms in the asymptotic expansion of $\mathfrak{C}_{n}$.

\begin{enumerate}
\item We may express $\mathfrak{C}_n = -\frac{4 (n+\alpha+1)(n+\gamma + 2)}{\beta^2} = -(\frac{2}{\beta})^2 n^2 - (\frac{2}{\beta})^2(\gamma + 3 +\alpha ) n + \mathcal{O}(1)$.
Now \cite[Ex.~1]{kooman2007asymptotic}, case $a=2$, establishes asymptotic equalities for solutions to this recurrence. (The parameters $C$ and $A$ there are assigned the values
$(\frac{2}{\beta})^2$ and $(\frac{2}{\beta})^2(\gamma + 3 +\alpha )$, respectively.)
Recalling that $\beta > 0$, the claimed asymptotic equalities are seen to hold after cancellations.

\item For $n$ large enough, we may express $\mathfrak{C}_n$ as a Laurent series:
$\mathfrak{C}_n = \frac{4(n + \alpha + 1)(n + \gamma + 2)}{(2n + \beta + 4)(2n + \beta + 2)} =  1 +  (\alpha - \beta + \gamma) \frac{1}{n} + \mathcal{O}(n^{-2})$. Now
\cite[Ex.~1]{kooman2007asymptotic}, case $a=-1$, establishes asymptotics for this recurrence,
as we assume $\beta - \alpha - \gamma \neq 0$.
(Again, the parameter $C$ there is assigned the value $ \beta - \alpha - \gamma$). The claimed
asymptotics equalities follow.\qedhere
\end{enumerate}
\end{proof}

The asymptotic equalities in \autoref{ex:KoomanAsymptotics} follow from the above two lemmas.
%
%\begin{shaded}
%\begin{enumerate}
%\item Recurrence relation \eqref{eq:appRecurrences101} admits linearly independent solutions for which
%\begin{equation*}
%v_n \sim (\pm 1 )^n n^{\frac{1}{2}(\pm \beta + \gamma - \alpha)}.
%\end{equation*}
%%
%\item Recurrence relation \eqref{eq:appRecurrences111} admits linearly independent
%solutions for which
%\begin{equation*}
%v_n \sim n^{(1 + 2\gamma - 2\alpha)/4} \exp(\pm 2\sqrt{(\beta - \alpha - \gamma)n}).
%\end{equation*}
%\end{enumerate}
%\end{shaded}
%\todo{I don't think the shaded part is needed.}

%\nocite{*}
\bibliographystyle{plainurl}
\bibliography{biblio}

\end{document}